\newtheorem{thm}{Theorem}[section]
\newtheorem{dfn}[thm]{Definition}
\newtheorem{lemma}[thm]{Lemma}
\newtheorem{prop}[thm]{Proposition}
\newtheorem{cor}[thm]{Corollary}
\newtheorem{THM}{Theorem}
\newtheorem{PROP}[THM]{Proposition}
\newtheorem{CONJ}[THM]{Conjecture}
\theoremstyle{remark}
\newtheorem{rem}[thm]{Remark}
\newcommand{\mb}{\mathbb}
\newcommand{\mc}{\mathcal}
\newcommand{\R}{\mb R}
\newcommand{\C}{\mb C}
\newcommand{\Hj}{\mb H}
\newcommand{\Pj}{\mb P}
\newcommand{\Z}{\mb Z}
\newcommand{\D}{\mb D}
\newcommand{\K}{\mathcal K}
\newcommand{\F}{\mc F}
\newcommand{\G}{\mc G}
\newcommand{\rato}{\dashrightarrow}
\DeclareMathOperator{\Aut}{Aut}
\DeclareMathOperator{\AutFix}{Aut^{\mathrm{fix}}}
\DeclareMathOperator{\Bir}{Bir}
\DeclareMathOperator{\BirFix}{Bir^{\mathrm{fix}}}
\DeclareMathOperator{\Bim}{Bim}
\DeclareMathOperator{\End}{End}
\DeclareMathOperator{\EndFix}{End^{\mathrm{fix}}}
\DeclareMathOperator{\alg}{alg}
\DeclareMathOperator{\trans}{tr}
\newcommand{\KF}{K_{\F}}
\newcommand{\KG}{K_{\G}}
\newcommand{\KX}{K_X}
\newcommand{\TF}{T_{\F}}
\newcommand{\NF}{N_{\F}}
\newcommand{\CNF}{N^*_{\F}}
\newcommand{\KNF}{K_{X/\F}}
\DeclareMathOperator{\WDiv}{WDiv}
\DeclareMathOperator{\divisor}{div}
\DeclareMathOperator{\Cl}{Cl}
\DeclareMathOperator{\discrep}{discrep}
\DeclareMathOperator{\Bl}{Bl}
\DeclareMathOperator{\PSL}{PSL}
\DeclareMathOperator{\im}{Im}
\DeclareMathOperator{\codim}{codim}
\DeclareMathOperator{\II}{II}
\DeclareMathOperator{\Pic}{Pic}
\DeclareMathOperator{\sing}{sing}
\DeclareMathOperator{\id}{id}
\DeclareMathOperator{\Aff}{Aff}
\DeclareMathOperator{\dev}{dev}
\DeclareMathOperator{\Id}{Id}
\DeclareMathOperator{\Hom}{Hom}
\DeclareMathOperator{\ord}{ord}
\DeclareMathOperator{\Alb}{Alb}
\DeclareMathOperator{\alb}{alb}
\numberwithin{equation}{section}
\begin{document}
\title{Rational endomorphisms of codimension one holomorphic foliations}
\author[Lo Bianco]{Federico Lo Bianco}
\address{Federico Lo Bianco \\Aix Marseille Univ\\
		CNRS, Centrale Marseille, I2M\\
		Marseille\\
		France}
\email{\href{mailto:federico.lo-bianco@univ-amu.fr}{federico.lo-bianco@univ-amu.fr}}

\author[Pereira]{Jorge Vitório Pereira}
\address{Jorge Vitório Pereira \\IMPA, Estrada Dona Castorina, 110, Horto\\ Rio de Janeiro,
Brasil}
\email{\href{mailto:jvp@impa.br}{jvp@impa.br}}

\author[Rousseau]{Erwan Rousseau}
\address{Erwan Rousseau \\ Institut Universitaire de France
	\& Aix Marseille Univ\\
		CNRS, Centrale Marseille, I2M\\
		Marseille\\
		France}
\email{\href{mailto:erwan.rousseau@univ-amu.fr}{erwan.rousseau@univ-amu.fr}}

\author[Touzet]{Fr\'ed\'eric Touzet}

\address{Fr\'ed\'eric Touzet \\ Univ Rennes, CNRS, IRMAR - UMR 6625, F-35000 Rennes, France.}
\email{\href{mailto:frederic.touzet@univ-rennes1.fr}{frederic.touzet@univ-rennes1.fr}}

\thanks{This work was supported by the ANR project \lq\lq FOLIAGE\rq\rq{}, ANR-16-CE40-0008  and CAPES-COFECUB Ma 932/19 project. The second author was supported by Cnpq and FAPERJ.}

\begin{abstract}
    In this work, we study dominant rational maps preserving singular holomorphic codimension one foliations on projective manifolds and that exhibit non-trivial transverse dynamics.
\end{abstract}

\maketitle
\sloppy
\setcounter{tocdepth}{1}
\tableofcontents

\section{Introduction}
In this article we investigate the rational symmetries of a singular holomorphic codimension one foliation.

\subsection{Earlier works}
To our knowledge, the first work dealing with  foliations invariant by rational maps is due to Brunella, who, answering a question by Cerveau, proved that no algebraic foliation on $\mathbb P^2$ is preserved by a non-elementary polynomial automorphism of $\C^2$. He obtained this result as a corollary of his study on minimal models for foliated surfaces, see \cite{MR1708643}.
Later, foliations on surfaces invariant by birational maps were studied in depth by Cantat and Favre in \cite{MR1998612}. They explored the classification of birational transformations of surfaces \cite{MR1867314} to give a precise description of the foliated surfaces with an infinite group of birational transformations.  Simultaneously,  it was proved in \cite{MR1957664} that foliations of maximal Kodaira dimension on projective surfaces have a finite group of birational transformations.

Foliations on projective surfaces invariant by non-invertible rational maps were classified in \cite{MR2818727} by exploring
the birational classification of foliations on projective surfaces according to their Kodaira dimension, by Brunella, McQuillan, and Mendes.

The analog problem for foliations on higher dimensional manifolds is much less studied. Some activity on the subject was spurred by a question posed  by Guedj in \cite[page 103]{MR2932434}: if $h$ is a bimeromorphic map of a compact Kähler manifold which is not cohomogically hyperbolic, is it true that $h$  preserves a foliation? A negative answer  was provided by Bedford, Cantat, and Kim in \cite{MR3277202}, by exhibiting families of pseudo-automorphisms of  rational $3$-folds which are not cohomologically hyperbolic and do not preserve any foliation. Their strategy explores the explicit form of the pseudo-automorphisms they construct, particularly the existence of certain invariant surfaces, and uses the available knowledge about foliations on surfaces invariant by rational maps.

\subsection{Transversely projective foliations}
In this work, we focus on the following question: given a projective manifold $X$ and a dominant rational map $f:X \dashrightarrow X$ preserving a codimension one foliation $\F$,  under which conditions some iterate of $f$ preserves each leaf of $\F$? When this happens, we say that the transverse action of $f$ is finite.  More generally, we ask under which conditions the semigroup of dominant rational maps of $X$ admits a
finite index subsemigroup that preserves each leaf of $\F$ or, more succinctly, under which conditions the transverse action of $\End(\F)$ is finite?

Our approach to this question was inspired by a conjecture of Cerveau and Lins Neto, which predicts that codimension one foliations on projective manifolds are either rational pull-backs of foliations on surfaces or admit a (singular) transversely projective structure in the sense of \S\ref{SS:transv proj}. A confirmation of this conjecture would reduce the study of rational maps with infinite transverse action on codimension one foliations to the transversely projective case.

\begin{THM}\label{THM:transv proj}
    Let $X$ be a projective manifold and let $\F$ be a transversely projective foliation of codimension one on $X$.
    If the transverse action of $\End(\F)$  is infinite then $\F$ is virtually transversely additive.
\end{THM}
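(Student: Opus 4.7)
The plan is to translate the infinite transverse action hypothesis into an algebraic constraint on the monodromy of the transversely projective structure of $\F$, and then exploit this constraint to produce a closed rational 1-form defining $\F$ after a finite covering of $X$. Concretely, I would first set up the dictionary between endomorphisms and the monodromy: fix a transversely projective structure for $\F$ and let $\rho : \pi_1(X\setminus D) \to \PSL(2,\C)$ be its monodromy on the complement of a divisor $D$ containing the polar locus of the structure and the indeterminacy locus of $f$. Setting $\Gamma = \rho(\pi_1(X\setminus D))$, any $f \in \End(\F)$ preserves the transverse projective structure and, after lifting to a suitable flat $\PSL(2,\C)$-principal bundle, descends to a well-defined element of $N_{\PSL(2,\C)}(\Gamma)/\Gamma$. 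The transverse action of $\End(\F)$ thus embeds into this quotient, and the hypothesis becomes that its image is infinite.

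The main step is then a case analysis according to the Zariski closure $H = \overline{\Gamma}^{Z} \subseteq \PSL(2,\C)$. When $H = \PSL(2,\C)$, or $H$ is conjugate to a lattice in $\PSU(2)$ or $\PSU(1,1)$, rigidity of Zariski-dense subgroups in simple algebraic groups (respectively Mostow-type rigidity) forces $N(\Gamma)/\Gamma$ to be finite; when $H = \C^{*}$, the formula $N_{\PSL(2,\C)}(\C^{*})/\C^{*} \cong \Z/2$ yields the same conclusion. One is then left with the Borel case $H \subseteq \Aff(\C)$, where a direct analysis inside $\Aff(\C)$, combined with the constraint that $\Gamma$ arises as the monodromy of a foliation on a projective manifold (so is finitely generated and subject to Galois/Hodge-theoretic restrictions), is used to show that the linear part $\Gamma \to \C^{*}$ has finite image, i.e., $\Gamma \cap \C$ has finite index in $\Gamma$. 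Passing to the associated finite étale cover $U_0 \to U := X\setminus D$ then gives a foliation with monodromy in $(\C,+)$ defined by a closed meromorphic 1-form; compactifying to a finite cover $Y \to X$ yields the virtual transverse additivity of $\F$.

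The principal obstacle is the Borel case: \emph{a priori}, a Zariski-dense subgroup $\Gamma \subseteq \Aff(\C)$ can have infinite $N(\Gamma)/\Gamma$ (for instance when the translation lattice $\Gamma \cap \C$ is stabilized by units of a number field acting by multiplication), so one must combine the arithmetic structure of $\Gamma$ with its foliation-theoretic origin to rule out a non-trivial linear part up to finite index. A secondary difficulty is the extension of the additive structure across the polar divisor $D$ on the cover $Y$, which is handled within the framework of transversely projective structures with polar divisors developed earlier in the paper.
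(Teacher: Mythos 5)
Your strategy is genuinely different from the paper's, but it has two gaps that I do not see how to close, and the first one occurs precisely in the case the theorem is really about. The dictionary ``transverse action $\hookrightarrow N_{\PSL(2,\C)}(\Gamma)/\Gamma$'' already fails for non-invertible $f\in\End(\F)$: the element $\varphi_f$ with $\dev\circ\tilde f=\varphi_f\circ\dev$ only satisfies $\varphi_f\,\rho(\gamma)\,\varphi_f^{-1}=\rho(f_*\gamma)$, and since $f_*$ is not surjective on $\pi_1$ one merely gets $\varphi_f\Gamma\varphi_f^{-1}$ equal to a finite-index subgroup of $\Gamma$; so $\varphi_f$ lies in the commensurator of $\Gamma$, not its normalizer. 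More seriously, even for birational maps there is no rigidity theorem asserting that a finitely generated Zariski-dense subgroup of $\PSL(2,\C)$ has finite normalizer modulo itself: Mostow-type rigidity applies to lattices, whereas the monodromy groups that survive the Corlette--Simpson analysis in the non-additive, non-surface case are arithmetic groups $U(P,\Phi)$ acting on a polydisk, embedded in $\PSL(2,\C)$ via one factor --- these are typically not discrete in that factor, and their commensurators are infinite modulo $\Gamma$ (that is the defining feature of arithmeticity). So the rigidity step collapses exactly where it is needed. (The torus case is also misstated: if $\Gamma\subset\C^*$ then $N(\Gamma)$ contains all of $\C^*$, so $N(\Gamma)/\Gamma$ is infinite; the theorem holds there only because abelian monodromy with regular singularities yields virtual transverse additivity directly, cf.\ Proposition \ref{abelian monodromy}, not because the transverse action is forced to be finite.) Finally, the Borel case is left open by your own admission, and it is not the only open case.

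For contrast, the paper never argues on the monodromy group by rigidity. After reducing to the purely transcendental case, it invokes the structure theorem \ref{THM:D} (which packages Corlette--Simpson); the additive and surface cases are dispatched by the surface classification, and in the Shimura-modular case the endomorphism is shown to descend to the base $B$ of the factorization, where it preserves an associated transversely hyperbolic foliation by the \emph{uniqueness of $\rho$-equivariant holomorphic maps to the disc} (Theorems \ref{T:Uniqueness} and \ref{TH:uniquemonodromy}). Finiteness of $\End(B,\G)$ is then obtained geometrically --- the complement of the invariant divisor in $B$ carries a polarized variation of Hodge structure with generically injective period map, hence is of log general type by Brunebarbe--Cadorel, and foliations on such spaces have finite $\Bir=\End$ --- after which the orbit-closure results of Section \ref{S:orbits} convert small orbit dimension into virtual transverse additivity. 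If you want to salvage the monodromy route, you would need a substitute for the normalizer-finiteness claim that is valid for arithmetic (hence heavily commensurated) monodromy groups and for non-invertible endomorphisms; as written, the argument would prove finiteness of the transverse action in situations where it is genuinely unknown without the geometric input above.
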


As explained in \S\ref{SS:transv add}, a transversely additive foliation is a foliation defined by a closed $1$-form with coefficients in a finite algebraic extension of $\C(X)$.

\subsection{Transversely hyperbolic foliations}
The starting point of our proof of Theorem \ref{THM:transv proj} is the description of transversely projective foliations provided by \cite{MR3522824} which relies on Corlette-Simpson's classification of rank two representations of quasi-projective fundamental groups \cite{MR2457528}. When a transversely projective foliation is neither virtually transversely additive nor  a rational pull-back of a foliation on a surface, then the main result of \cite{MR3522824}
says that the monodromy representation of the transverse structure of $\F$ is Galois conjugate to the monodromy representation of a singular transversely hyperbolic foliation (see \S\ref{S:transv hyp} for a definition).

The technical core of our proof of Theorem \ref{THM:transv proj}, presented in Sections \ref{S:transv hyp} and \ref{S:THM A}, establishes several properties of transversely hyperbolic foliations on projective manifolds which might be of independent interest: transversely hyperbolic structures defined on the complement of a divisor  extend (perhaps singularly) through the divisor (Theorem \ref{TH:localextension}), their monodromy is Zariski dense (Proposition \ref{monodromy curve in Shimura}) and determine uniquely the foliation (Theorem \ref{T:Uniqueness}).

These properties are used to show that rational symmetries of transversely projective foliations induce symmetries of an associated transversely hyperbolic foliation. Furthermore, the monodromy of this transverse hyperbolic structure gives rise to a polarized variation of Hodge structures, according to Corlette-Simpson's results. We also use a recent result by Brunebarbe-Cadorel \cite{brunebarbe2017hyperbolicity} which implies that,
after factoring out the algebraic part of $\F$, the ambient manifold of the transversely hyperbolic foliation is of log-general type. These results can be put together to show that infinite transverse action implies the existence of an infinitesimal symmetry, what is impossible for transversely hyperbolic foliations.

\subsection{Zariski dense dynamics}
The statement of Theorem \ref{THM:transv proj} gives little information about the nature
of rational endomorphisms $f:X \dashrightarrow X$ preserving a codimension one foliation $\F$. In order to have a precise description of $f$, we restrict to the class of purely transcendental foliations (i.e. through a general point of a general leaf there is no positive dimensional algebraic subvariety tangent to it) and to rational endomorphisms admitting a Zariski dense orbit.

\begin{THM}\label{THM:core of structure}
    Let $X$ be a projective manifold and let $\F$ be a transversely projective and purely transcendental foliation of codimension one on $X$.
    If $f \in \End(X,\F)$ has a Zariski dense orbit then $f$  is conjugated to a Lattes-like map.
\end{THM}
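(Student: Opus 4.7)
The strategy is to use Theorem \ref{THM:transv proj} to place ourselves in the transversely additive setting, and then to exploit the rigidity imposed by pure transcendence on such structures.

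\emph{Reduction via Theorem A.} I first observe that a Zariski dense $f$-orbit forces the transverse action of $\End(\F)$ to be infinite. If some iterate $f^N$ preserved every leaf of $\F$, the orbit $\{f^n(x_0)\}_{n \geq 0}$ would split into $N$ sub-orbits, each lying in a single leaf of $\F$; the $f^N$-invariant Zariski closures of these sub-orbits would be positive-dimensional algebraic subvarieties tangent to $\F$, which, for a generic choice of $x_0$, contradicts pure transcendence. Theorem \ref{THM:transv proj} then gives, after a finite \'etale cover $\pi: \tilde X \to X$, a closed meromorphic $1$-form $\omega$ on $\tilde X$ defining the pulled-back foliation $\tilde \F$, and a lift $\tilde f$ of (an iterate of) $f$ satisfying $\tilde f^* \omega = \lambda \omega$ for some $\lambda \in \C^*$.

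\emph{Period analysis.} Integration of $\omega$ produces a multi-valued function on $\tilde X$ whose level sets are the leaves of $\tilde \F$; the ambiguity is controlled by the period group $\Lambda \subset \C$. If $\Lambda$ were discrete of rank $0$, $1$, or $2$, a suitable exponentiation would yield a rational map from $\tilde X$ to $\C$, $\C^*$, or an elliptic curve whose fibres are the leaves of $\tilde\F$, contradicting pure transcendence. Therefore $\Lambda$ is non-discrete in $\C$, which forces $\omega$ to be of purely logarithmic type: $\omega = \sum_{i=1}^m \lambda_i\, df_i/f_i$ for some $\Z$-linearly independent $\lambda_1, \ldots, \lambda_m \in \C^*$ and rational functions $f_1, \ldots, f_m$ on $\tilde X$.

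\emph{Construction of the Lattes model.} The collection $(f_1, \ldots, f_m)$ defines a dominant rational map $\Phi: \tilde X \dashrightarrow (\C^*)^m$. The equation $\tilde f^* \omega = \lambda \omega$, after separating the $\Z$-linearly independent coefficients $\lambda_i$ from the logarithmic summands, translates into a multiplicative identity $\Phi \circ \tilde f = M \cdot \Phi$ modulo constants, where $M \in \GL_m(\Z)$ is an integer matrix having $\lambda$ among its eigenvalues. Dividing the target $(\C^*)^m$ by the lattice encoding the additive relations among the $\lambda_i$ collapses this quasi-abelian target into an abelian variety $A$ on which $\tilde f$ descends to a group endomorphism multiplying a uniformising holomorphic form by $\lambda$. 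Finally, the Zariski density of the $f$-orbit forces $\Phi$ to be generically finite onto $A$, exhibiting $f$ as birationally conjugate to the induced Lattes-like map on a finite quotient of $A$.

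The main technical obstacle is the last step: passing from the a priori quasi-abelian target $(\C^*)^m/\text{lattice}$ to a genuine abelian variety, and verifying that $\tilde f$ descends to an isogeny rather than merely a rational self-map. This should follow from the algebraicity of $\lambda$ (which is an eigenvalue of $\tilde f^*$ acting on $H^0(\tilde X, \Omega^1_{\tilde X}(\log D))$ for a suitable divisor $D$) combined with the projectivity of $X$ and the Zariski-dense orbit hypothesis, the latter being crucial to rule out the appearance of an intermediate algebraic foliation that would violate pure transcendence.
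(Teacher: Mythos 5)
Your overall plan---use the structure theory to reduce to a closed rational $1$-form, integrate it, and map $X$ to a commutative algebraic group on which $f$ becomes a group endomorphism---is the same as the paper's, but two of your steps contain genuine gaps. The ``period analysis'' is the first. A closed rational $1$-form on a projective manifold decomposes canonically as $\omega=\omega_{\log}+\omega_{\II}$ with $\omega_{\log}$ logarithmic and $\omega_{\II}$ of the second kind; non-discreteness of the period group does not force $\omega_{\II}=0$, and even a purely logarithmic form is in general \emph{not} a sum $\sum\lambda_i\,df_i/f_i$ with $f_i$ rational (a nonzero holomorphic $1$-form on an abelian variety is logarithmic with empty polar divisor and has no such expression). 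The correct statement is that $\omega_{\log}$ is pulled back from the quasi-Albanese $\Alb(X,D)$, while $\omega$ itself is pulled back from a rank-one vectorial extension $B$ of $\Alb(X,D)$ determined by the class of $\omega_{\II}$ in $\overline{H^0(X,\Omega^1_X)}$; disposing of the case $\omega_{\II}\neq 0$ (Lemma \ref{L:conclusion} in the paper) is a substantial piece of the proof that your proposal skips. Your opening reduction is also not sound as stated: if $f^N$ fixes every leaf, the Zariski closure of an $f^N$-sub-orbit lying in a single leaf can perfectly well be all of $X$, since leaves of purely transcendental foliations are typically Zariski dense (translations of a simple abelian surface preserving each leaf of a generic linear foliation have Zariski dense orbits), so it is not ``a positive-dimensional algebraic subvariety tangent to $\F$''. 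One must instead run through the trichotomy of Theorem \ref{THM:D} directly to reach the virtually transversely additive case.

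The second gap is in the final step. To exhibit $X$ birationally as $A/\Gamma$ you need the map to the commutative algebraic group to be \emph{birational}, not merely generically finite: a dominant generically finite map $\tilde X\rato A$ of degree $d>1$ makes $A$ a quotient of $\tilde X$, which is the wrong direction for a Lattes quotient (the finite group $\Gamma$ must come from the covering $\tilde X\to X$, with $\tilde X$ itself birational to $A$). The paper obtains birationality by showing that the ramification divisor of $\alb_{(X,D)}$ cannot move---using Vojta's theorem on Zariski closures of orbits in semi-abelian varieties to analyse the $f_*$-invariant components of the branch locus---hence that the logarithmic Kodaira dimension of $(X,D)$ is zero, and then invoking Kawamata's characterization of semi-abelian varieties. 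None of these ingredients appears in your proposal, and even the assertion that Zariski density forces $\Phi$ to be generically finite requires an argument (in the paper it follows from pure transcendence, because the Albanese fibres are tangent to $\F$ when $\omega$ is logarithmic, and from Lemma \ref{L:conclusion} otherwise).
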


A rational endomorphism $f: X \dashrightarrow X$ is a  Lattes-like map if there exists an abelian algebraic group $A$, a cyclic finite group $\Gamma$ acting on $A$, and a map $\varphi :A \to A$ which is the composition of a group endomorphisms with a translation and which factorizes as $\bar{\varphi}$, $\bar {\varphi}\circ p=p\circ\varphi$ through the projection $p: A \mapsto A/\Gamma$:
\begin{center}
	\begin{tikzcd}[column sep=small]
	A  \arrow[d,"p"] \arrow[rr, "\varphi" ] & & A \arrow[d,"p"]\\
	A/\Gamma \arrow[rr, "\bar{\varphi}"] & & A/\Gamma
	\end{tikzcd}
\end{center}
and such that $f$ is birationally conjugated to the
map $\bar{\varphi}$  induced by $\varphi$ on the quotient $A/\Gamma$.

Theorem \ref{THM:core of structure}, combined with  factorization results for foliations  and for rational endomorphisms (see Section \ref{S:factorization results}), provides a rough description of rational endomorphisms of transversely
projective foliations with infinite transverse action.  Its proof, which can be found in \S\ref{sec: Proof B}, relies on  reduction of singularities for foliations defined by closed rational $1$-forms (Proposition \ref{P:Seidenberg for closed forms}) and on the following result.

\begin{THM}\label{THM:qalbanese kod0}
    Let $f: X \dashrightarrow X$ be a rational map on a projective manifold $X$ with a Zariski dense orbit and let $D$ be a simple normal crossing
    divisor on $X$. If $f^*D$ has support contained in the support of $D$ then the quasi-Albanese morphism $\alb_{(X,D)} : X- D \to \Alb(X,D)$ is a  dominant  rational map with irreducible general fiber.
\end{THM}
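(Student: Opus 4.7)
The plan is to transport the dynamics of $f$ onto the semi-abelian variety $A := \Alb(X,D)$ via the universal property of the quasi-Albanese, and then to combine the Zariski density of the orbit with the rigidity of subvarieties of semi-abelian varieties of log-general type to force both conclusions.

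First, the hypothesis that the support of $f^*D$ is contained in the support of $D$ allows $f$ to restrict to a dominant rational self-map of $X \setminus D$. Functoriality of the quasi-Albanese yields a rational self-map $F : A \dashrightarrow A$ with $F \circ \alb_{(X,D)} = \alb_{(X,D)} \circ f$. Since rational maps from smooth varieties to semi-abelian varieties extend to morphisms, $F$ is a morphism, and every morphism of semi-abelian varieties has the form $F(x) = \phi(x) + c$ for some $\phi \in \End(A)$ and $c \in A$. If $x_0 \in X \setminus D$ has a Zariski dense $f$-orbit, then writing $q := \alb_{(X,D)}$, one has $F^n(q(x_0)) = q(f^n(x_0))$, and this orbit is Zariski dense in the $F$-invariant closed subvariety $A_1 := \overline{q(X \setminus D)}$.

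For dominance, I would argue by contradiction: suppose $A_1 \subsetneq A$. By Kawamata's structure theorem for closed subvarieties of semi-abelian varieties, the stabilizer $B := \{b \in A : A_1 + b = A_1\}$ is a sub-semi-abelian variety, $A_1$ is a union of $B$-cosets, and $A_1/B$ embeds in $A/B$ as a subvariety of log-general type. If $A_1/B$ is a point, then $A_1$ is a single $B$-coset and, after a translation, $q$ factors through the proper sub-semi-abelian variety $B \subsetneq A$; the universal property of the quasi-Albanese then produces a splitting $A \to B$ of this factorization, contradicting $\dim B < \dim A = h^0(X, \Omega^1_X(\log D))$. Otherwise $\dim(A_1/B) > 0$; a routine verification using that $F(A_1)$ is Zariski dense in $A_1$ shows $\phi(B) \subseteq B$, so $F$ descends to a morphism $\bar{F} : A/B \to A/B$ that preserves the log-general-type subvariety $A_1/B$ and still admits a Zariski dense orbit. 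This contradicts the rigidity of log-general-type subvarieties of semi-abelian varieties: by the Ueno--Kawamata theory together with Iitaka's finiteness of birational self-maps of varieties of general type, no such subvariety carries a dominant rational endomorphism with a Zariski dense orbit.

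For the irreducibility of the general fiber, with $q$ now known to be dominant, I would use the Stein factorization $q = q_2 \circ q_1$ with $q_1 : X \setminus D \to Y$ of connected fibers and $q_2 : Y \to A$ finite of degree $k$. In characteristic zero $q_2$ is generically étale, so pulling back the semi-abelian structure exhibits $Y$ as birational to a semi-abelian variety $A'$ via an isogeny $A' \to A$ of degree $k$. Applying the universal property of $\Alb(X,D) = A$ to the composite $X \setminus D \to A'$ produces a splitting $A \to A'$ of this isogeny, and since isogenies of connected semi-abelian varieties of degree $k > 1$ admit no section (by Poincaré reducibility), we conclude $k = 1$.

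The hard part will be the second step: descending the dynamics to the log-general-type quotient $A_1/B \subset A/B$ and quantitatively ruling out Zariski dense orbits there. While the heuristic principle---log-general-type subvarieties of semi-abelian varieties behave hyperbolically---is classical (the Bloch--Ochiai theorem on the algebraic side, Noguchi's theorem on the transcendental side), the precise dynamical statement one needs requires combining rigidity of birational groups of general-type varieties with the semi-abelian structure. The first and third steps, by comparison, are formal consequences of the functoriality and universal property of the quasi-Albanese.
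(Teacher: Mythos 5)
Your first two steps are essentially sound. The functorial descent of $f$ to an affine endomorphism $F=\phi+c$ of $A=\Alb(X,D)$ is the paper's Lemma \ref{L:induz qalbaneseX}. For dominance you take a genuinely different route: the paper restricts $F$ to the closure of the image and invokes Vojta's theorem (Mordell--Lang) to see that the Zariski closure of an orbit is a finite union of translates of semi-abelian subvarieties (Proposition \ref{P:zariski closure}), whereas you use the Ueno--Kawamata fibration of $A_1=\overline{\alb_{(X,D)}(X-D)}$ together with the finiteness of dominant self-maps of varieties of log-general type. That route can be made to work (the finiteness statement you need is due to Iitaka--Tsushima), although you leave it as a black box; the Mordell--Lang argument is shorter and is reused later in the paper.

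The third step contains a genuine gap. From the Stein factorization $\alb_{(X,D)}=q_2\circ q_1$ with $q_2:Y\to A$ finite of degree $k$, you claim that generic \'etaleness of $q_2$ exhibits $Y$ as (birational to) a semi-abelian variety and $q_2$ as an isogeny. This is false: a finite, generically \'etale cover of a semi-abelian variety may be branched along a divisor, in which case $Y$ is not a semi-abelian variety at all. For instance, a double cover of an abelian surface branched along a smooth ample divisor is a surface of general type with irregularity two, whose Albanese morphism is generically finite of degree two onto an abelian surface. Only \emph{unramified} covers are isogenies, and nothing in your argument excludes ramification of $\alb_{(X,D)}$ in codimension one over $A$; indeed your step never uses $f$, while the conclusion is false without the dynamical hypothesis. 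Controlling this ramification is precisely the technical core of the paper's proof: one shows that the ramification divisor $E$ of $\alb_{(X,D)}$ cannot move, i.e.\ that $(X,D)$ has logarithmic Kodaira dimension zero, by producing from $E$ an $f_*$-invariant hypersurface $V\subset\Alb(X,D)$, which by Corollary \ref{C:invariant hypersurface} is a translate of a codimension-one semi-abelian subvariety $B$; the endomorphism induced on the one-dimensional quotient $\Alb(X,D)/B$ is then shown to have finite order by means of an invariant $1$-form, contradicting the Zariski dense orbit. Birationality then follows from Kawamata's characterization \cite[Corollary 29]{MR622451}, and the positive-dimensional fiber case is treated by descending $f$ to the Stein factorization and repeating. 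You need some substitute for this analysis of the ramification divisor; the universal property of the quasi-Albanese alone will not provide it.
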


Theorem \ref{THM:qalbanese kod0} is proven in \ref{SS:proofTHC}  and turns out to be a consequence of Kawamata's characterization of semi-abelian varieties \cite{MR622451}, and the linearity of the Zariski closure of orbits of endomorphisms of semi-abelian varieties \cite[Fact 2.9]{MR3937327}.

\subsection{A conjecture}
We do believe that the hypothesis on the transverse structure of $\F$ is not necessary, as predicted by Cerveau-Lins Neto  conjecture.

\begin{CONJ}\label{Conj:A}
    Let $X$ be a projective manifold and let $\F$ be a codimension one foliation on $X$.
    If the transverse action of $f \in \End(X,\F)$ is infinite then  $\F$ is virtually transversely additive.
\end{CONJ}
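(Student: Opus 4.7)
The plan is to reduce Conjecture \ref{Conj:A} to Theorem \ref{THM:transv proj} via the Cerveau--Lins Neto dichotomy alluded to in the introduction: every codimension one foliation on a projective manifold should be either a rational pull-back of a foliation on a surface, or admit a (singular) transversely projective structure. Granting this dichotomy, it suffices to treat the two alternatives separately; the resulting argument is conditional, but it clarifies where the genuinely new input would be required.

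In the transversely projective case, Theorem \ref{THM:transv proj} applies directly, and infinite transverse action of $\End(X,\F)$ forces $\F$ to be virtually transversely additive, as wanted.

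The remaining case is that $\F = \pi^* \G$ for some dominant rational map $\pi : X \dashrightarrow S$ to a projective surface and some foliation $\G$ on $S$. Using the factorization results of Section \ref{S:factorization results}, we may assume that $\pi$ has connected general fibres, so that the leaves of $\F$ are (closures of) fibres of $\pi$. Any $f \in \End(X,\F)$ with infinite transverse action must then permute the fibres of $\pi$ and descend to a rational endomorphism $\bar f \in \End(S, \G)$ whose transverse dynamics on the leaf space of $\G$ coincides with the transverse dynamics of $f$ on the leaf space of $\F$. In particular $\bar f$ still has infinite transverse action, and one invokes the classification of surface foliations preserved by non-invertible rational maps in \cite{MR2818727}, together with the birational case treated in \cite{MR1998612}, to conclude that $\G$ is virtually transversely additive; pulling back by $\pi$, the same property transfers to $\F$.

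The main obstacle is of course the Cerveau--Lins Neto conjecture, which remains open. A plausible way to bypass it would be to prove a weaker dichotomy tailored to foliations admitting infinite transverse symmetries, namely: if $\F$ is neither transversely projective nor a rational pull-back from a surface, then the transverse action of $\End(X,\F)$ is automatically finite. This would presumably require genuinely new structural input, for instance a positivity statement for the conormal sheaf $\CNF$ in the non-transversely-projective regime, or an infinitesimal-symmetry argument analogous to the last step of the proof of Theorem \ref{THM:transv proj}, where the vector field generated by an infinite transverse action is forced to produce an unexpected transverse structure on $\F$. Without such an input the argument sketched above remains fundamentally conditional.
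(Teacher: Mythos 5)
The statement you are trying to prove is stated in the paper as a \emph{conjecture}, and the paper does not prove it; it only establishes it under additional hypotheses (Theorem~\ref{THM:transv proj} for transversely projective foliations, Proposition~\ref{P:orbit12} for orbit dimension at most two, Proposition~\ref{PROP:endo} for endomorphisms of manifolds with $\Pic(X)=\mathbb Z$). Your proposal is openly conditional on the Cerveau--Lins Neto dichotomy, which is exactly the missing ingredient the authors themselves identify in the introduction: ``A confirmation of this conjecture would reduce the study of rational maps with infinite transverse action on codimension one foliations to the transversely projective case.'' So what you have written is a correct account of \emph{why} the conjecture is plausible and of how it would follow from known results, but it is not a proof, and the gap you name at the end (the dichotomy itself, or some weaker substitute for it) is genuine and is precisely the open problem.

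Two smaller points about the unconditional part of your reduction. First, in the surface-pullback case you assert that after arranging connected fibres ``the leaves of $\F$ are (closures of) fibres of $\pi$''; this is false. If $\F=\pi^*\G$ with $\pi:X\dashrightarrow S$ and $\G$ a (codimension one) foliation on the surface $S$, the fibres of $\pi$ are merely \emph{tangent} to $\F$, while the leaves of $\F$ are preimages of leaves of $\G$ and hence unions of fibres. The correct way to descend $f$ to the surface is not via an arbitrary $\pi$ realizing $\F$ as a pull-back but via the maximal algebraic quotient of $\F$: this is Proposition~\ref{P:reductiontotranscendental}, and the case where that quotient is a surface is exactly Corollary~\ref{C:reductiontranscendental} (if the quotient has dimension at most one, $\F$ is algebraically integrable and there is nothing to prove). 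Second, your proposal does not use the paper's unconditional reductions — Corollary~\ref{C:reduz}, which reduces the conjecture to maps with Zariski dense orbits, and Proposition~\ref{P:orbit12} — which are the actual partial evidence the paper offers; any serious attack on the conjecture should start from there rather than from the full Cerveau--Lins Neto statement.
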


In order to verify Conjecture \ref{Conj:A}, it suffices to consider foliations invariant by rational endomorphisms with Zariski dense orbits, as shown in Corollary \ref{C:reduz}. In particular, Conjecture \ref{Conj:A} holds true when the Zariski closure of the general orbit of $f$
has dimension at most two, see Proposition \ref{P:orbit12}.

It is interesting to relate Conjecture \ref{Conj:A} to the problem of describing pairs of commuting rational endomorphisms. In the case of endomorphisms (no indeterminacies points) of $\mathbb P^n$, $n\ge 2$, the problem was studied by Dihn and Sibony in \cite{MR1931758}, under the extra assumptions that the degree of the iterates are all distinct, and without this assumption in the particular case of $\mathbb P^2$  by Kaufmann in \cite{MR3784253}.

Our next result confirms Conjecture \ref{Conj:A} for foliations on $\mathbb P^n$ invariant by endomorphisms of projective spaces
and can be seen as a (weak) analog of the results of Dihn-Sibony and Kaufmann.

\begin{PROP}\label{PROP:endo}
    If $\mathcal F$ is a codimension one foliation on a projective manifold $X$ with $\Pic(X) = \mathbb Z$ which is invariant by an endomorphism $f:X\to X$ of degree at least two then $\mathcal F$ is transversely additive.
\end{PROP}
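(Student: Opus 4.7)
The plan is to combine the numerical rigidity imposed by $\mathrm{Pic}(X)=\mathbb Z$ with Jouanolou's dichotomy on invariant algebraic hypersurfaces of codimension-one foliations. Write $\mathrm{Pic}(X)=\mathbb Z\cdot H$ with $H$ ample, and let $d\in\mathbb Z$ be the integer with $f^{*}H\equiv dH$; since $\deg f=d^{\dim X}\ge 2$ we have $d\ge 2$. Set $N_{\mathcal F}=kH$ and pick a twisted defining form $\omega\in H^{0}(X,\Omega^{1}_{X}\otimes N_{\mathcal F})$. The invariance $f^{*}\mathcal F=\mathcal F$ yields $f^{*}\omega=g\,\omega$ for a section $g\in H^{0}(X,\mathcal O((d-1)kH))$, and a pointwise inspection identifies the zero divisor $(g)_{0}$ with the horizontal part of the ramification divisor of $f$: the components $E\subset R_{f}$ with $f(E)$ tangent to $\mathcal F$, equivalently, $f(E)$ an $\mathcal F$-invariant hypersurface. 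In particular $k\ge 0$; the case $k=0$ would put $\omega$ in $H^{0}(X,\Omega^{1}_{X})$, but $\mathrm{Pic}(X)=\mathbb Z$ forces $q(X)=0$ and hence $H^{0}(X,\Omega^{1}_{X})=0$, so this case is vacuous.

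I then split according to Jouanolou's theorem. If $\mathcal F$ admits a rational first integral $F:X\dashrightarrow C$ to a curve $C$, then $\mathcal F$ is defined by $F^{*}\eta$ for any meromorphic $1$-form on $C$; since every $1$-form on a curve is automatically closed, $\mathcal F$ is defined by a closed rational $1$-form and is transversely additive. Otherwise, $\mathcal F$ has only finitely many invariant hypersurfaces $D_{1},\dots,D_{m}$, and $f$ permutes them; replacing $f$ by an iterate, we may assume $f$ preserves each $D_{j}$ set-theoretically. Every component of $(g)_{0}$ then maps into some $D_{j}$ and thus lies in $f^{-1}(D_{j})\subseteq\bigcup_{i}D_{i}$ (preimages of $\mathcal F$-invariants are $\mathcal F$-invariant), so $(g)_{0}=\sum_{j}m_{j}D_{j}$ with $m_{j}\ge 0$ and $\sum_{j}m_{j}[D_{j}]=(d-1)k[H]$ in $\mathrm{Pic}(X)$.

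The heart of the argument is to promote $\omega$ to a closed logarithmic rational $1$-form with poles on $\bigcup_{j}D_{j}$. For each $D_{j}=\{F_{j}=0\}$ the $\mathcal F$-invariance produces a local expansion $\omega=a_{j}\,dF_{j}+F_{j}\beta_{j}$, and $\lambda_{j}:=a_{j}|_{D_{j}}\in H^{0}(D_{j},N_{\mathcal F}|_{D_{j}})$ is the twisted residue. Applying $f^{*}\omega=g\,\omega$ along each $D_{j}$ shows that the $\lambda_{j}$ are eigenvectors for the induced $f^{*}$-action on these residue spaces, and the finiteness of $\{D_{1},\dots,D_{m}\}$ reduces the situation to a finite-dimensional eigenproblem. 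From these data one assembles a closed logarithmic form $\widetilde\omega=\sum_{j}\widetilde\lambda_{j}\,dF_{j}/F_{j}$, rescaled by a fixed rational section to match the twist $N_{\mathcal F}$. The difference $\omega-\widetilde\omega$ is then a section of $\Omega^{1}_{X}\otimes N_{\mathcal F}$ whose twisted residues along every $D_{j}$ vanish, and combining a further iteration of $f^{*}$ with $H^{0}(X,\Omega^{1}_{X})=0$ forces it to be zero. Hence $\omega$ is a $\mathbb C$-linear combination of logarithmic differentials and $\mathcal F$ is transversely additive.

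The main obstacle is this third step: upgrading the sectional residues $\lambda_{j}$ to genuine complex constants $\widetilde\lambda_{j}$ and reconstructing $\omega$ exactly from them. The ingredients are the finiteness of the invariant divisors (making the $f^{*}$-action on residues finite-dimensional), the numerical identity $\sum_{j}m_{j}[D_{j}]=(d-1)k[H]$, and crucially the vanishing $H^{0}(X,\Omega^{1}_{X})=0$ forced by $\mathrm{Pic}(X)=\mathbb Z$, which kills any holomorphic remainder. Once the logarithmic description is in hand, closedness of $\omega$ is automatic and transverse additivity follows.
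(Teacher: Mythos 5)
Your setup (the ramification formula $f^*\omega=g\,\omega$, the Jouanolou dichotomy, passing to an iterate so that the finitely many invariant hypersurfaces become totally invariant, and the numerical identity in $\Pic(X)=\Z$) matches the skeleton of the paper's argument, and the first-integral branch is fine. The gap is in the branch you yourself flag as ``the main obstacle'': you describe a plan for producing a closed logarithmic form, but the plan does not work as stated, and the two ideas that actually close the argument in the paper are absent.

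First, the residue of $\omega$ (or of $\omega/s$ for $s$ a section cutting out $\sum_j D_j$) along $D_j$ is a \emph{function} on $D_j$, and nothing you say forces it to be constant. The relation $f^*\omega=g\,\omega$ only gives a functional equation $r_j\circ f|_{D_j}=\lambda\, r_j$ for the residue function $r_j$, which does not imply constancy; so the candidate $\widetilde\omega=\sum_j\widetilde\lambda_j\,dF_j/F_j$ with complex constants $\widetilde\lambda_j$ cannot be assembled this way (and even granting constants, $\sum_j\widetilde\lambda_j\,dF_j/F_j$ is only globally well defined subject to a cohomological residue condition you do not verify). Second, the final vanishing step is incorrect: $\omega-\widetilde\omega$ is a section of $\Omega^1_X\otimes N_{\F}$, a twisted form, and $H^0(X,\Omega^1_X\otimes N_{\F})$ is very far from zero (it contains $\omega$ itself); the vanishing $H^0(X,\Omega^1_X)=0$ is irrelevant unless you already know the difference is an \emph{untwisted} holomorphic form, which again presupposes the residue constancy you are trying to establish.

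What the paper does instead is: (i) exploit the eigenvalue computation in $\Pic(X)\otimes\Q\simeq\Q$ to get the exact identity $N_{\F}\simeq\mathcal O_X\bigl(\sum_j D_j\bigr)$ with multiplicity one, so that $\tilde\omega=\omega/s$ is an honest (untwisted) rational $1$-form with simple poles on the reduced totally invariant divisor; (ii) observe that invariance of the polar divisor forces $d\tilde\omega$ to have at worst simple poles as well, so $\tilde\omega$ is logarithmic; and (iii) invoke the fact that the pair $(X,\mathcal E)$ is log canonical because the endomorphism is polarized (Broustet--H\"oring), so that the Deligne-type closedness theorem for logarithmic forms on log canonical pairs (Greb--Kebekus--Kov\'acs--Peternell) applies and yields $d\tilde\omega=0$. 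Step (iii) is the essential nontrivial input that replaces your residue reconstruction; without it (or some substitute, since $\bigcup_j D_j$ need not be normal crossing) the closedness of $\tilde\omega$ does not follow.
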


It is conceivable that there exists an unified approach to attack both problems: foliations invariant by rational endomorphisms and commuting endomorphisms. Perhaps, one should look for a reformulation of both problems in terms of properties of Malgrange's groupoid of rational endomorphisms, see \cite{malgrange:hal-00469778}, \cite{MR2325017} and references therein.

\subsection{Foliations of (adjoint) general type}
Our last main result generalizes \cite[Theorem 1]{MR1957664} in two different directions: no restriction is imposed on the codimension of the foliation or on the dimension of the ambient variety, and the canonical divisor of the foliation can be replaced by any convex combination of it with the Weil divisor $\KNF$ associated to the determinant of the conormal sheaf of the foliation.

\begin{THM}\label{THM:adjoint}
    Let $\varepsilon \ge 0$ be a non-negative rational number, and let $\F$ be a foliation with
    $\varepsilon$-canonical singularities on a normal projective variety $X$.
    If $\KF + \varepsilon \KNF$ is big then the group of birational automorphisms of $\F$ is finite.
\end{THM}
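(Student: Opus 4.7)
The strategy is to adapt, in the foliated setting, the classical argument that projective varieties of general type have finite birational automorphism group; the novelty is to handle simultaneously the two Weil divisors $\KF$ and $\KNF$ via the weight $\varepsilon$. First I would reduce to the smooth case by choosing a log resolution $\pi : \tilde X \to X$ on which $\F$ pulls back to a foliation $\tilde \F$; the $\varepsilon$-canonical hypothesis should translate into
\begin{equation*}
K_{\tilde\F} + \varepsilon\, K_{\tilde X / \tilde\F} \;=\; \pi^{*}\bigl(\KF + \varepsilon\, \KNF\bigr) + E,
\end{equation*}
with $E$ an effective $\pi$-exceptional $\mb Q$-divisor, so that bigness of $\KF + \varepsilon \KNF$ passes to the resolution while $\Bir(\tilde\F)=\Bir(\F)$.

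Next I would produce a pluri-adjoint birational embedding. Choose an integer $m$ with $m\varepsilon \in \mb Z$ such that the linear system $|m(\KF+\varepsilon\KNF)|$ defines a birational map $\varphi_m : X \dashrightarrow \Sigma_m \subset \mb P^N$; this follows from bigness and Kodaira's lemma. Any $f\in\Bir(\F)$ preserves $K_X$ and $\KF$ as birational divisor classes, so it preserves $\KNF=K_X-\KF$ as well; hence $f^{*}$ acts on $H^{0}(X, m(\KF+\varepsilon\KNF))$ and we obtain an injective homomorphism
\begin{equation*}
\rho : \Bir(\F) \hookrightarrow \Aut(\Sigma_m) \subset \PGL(N+1)
\end{equation*}
whose target is an algebraic group of finite type. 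Finiteness of $\Bir(\F)$ therefore reduces to showing that the identity component $\Aut(\Sigma_m)^{0}$ meets $\rho(\Bir(\F))$ only in the identity.

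A positive-dimensional such intersection would produce a non-trivial connected algebraic group $G$ acting birationally on $X$ and preserving $\F$, and thus, on a suitable smooth $G$-equivariant model, a non-zero rational vector field $v$ with $[v,T_\F]\subset T_\F$. The $G$-orbits then form a covering family of positive-dimensional rationally connected subvarieties, and the plan is to show that along a general orbit $O$ the restriction of $\KF + \varepsilon\KNF$ fails to be pseudo-effective, contradicting bigness. Concretely, a tangential component of $v$ produces a globally generated subsheaf of $T_\F|_O$ and hence anti-effectivity of $\KF$ along orbits contained in leaves, while a non-zero transverse component of $v$ produces a non-zero section of $N_\F$ along transverse orbits, forcing anti-effectivity of $\KNF$ along those; weighted by $1$ and $\varepsilon$ respectively and combined, these two obstructions should kill the restriction of $\KF + \varepsilon\KNF$ on a covering family of orbits.

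The principal obstacle will be this last step. In the extremes $\varepsilon=0$ (which, in the surface case, is the theorem of \cite{MR1957664}) and $\varepsilon=1$ (where $\KF+\KNF=K_X$ reduces the statement to the standard finiteness results for automorphism groups of varieties of general type) the argument is essentially known, but for intermediate values of $\varepsilon$ one must control the tangential and transverse contributions of $v$ separately and interpolate them with weight $\varepsilon$. I expect the cleanest route to combine a Campana-Paun-style positivity statement for subsheaves of $\Omega^{1}_X$ along the foliation with the Bogomolov-McQuillan vanishing exploited by Brunella in the surface case.
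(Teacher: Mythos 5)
Your proposal follows essentially the same route as the paper: bigness gives a pluri-adjoint birational embedding, the $\varepsilon$-canonical hypothesis makes $\Bir(X,\F)$ act linearly and faithfully on $H^0(X,\mathcal O_X(N(\KF+\varepsilon\KNF)))$, so an infinite group would contain an algebraic one-parameter subgroup, i.e.\ a $\mathbb C$- or $\mathbb C^*$-action preserving the foliation, which must then be ruled out. Two remarks. First, the justification for the action on adjoint pluricanonical sections should not be phrased as \lq\lq$f$ preserves $K_X$ and $\KF$ as birational divisor classes\rq\rq; what is actually needed, and what the paper proves (Proposition \ref{P:inducedaction} via Lemma \ref{L:standard}), is that the discrepancy inequality $a_\varepsilon(E,X,\F)\ge 0$ forces $aK_{\G}+bK_{Y/\G}-f^*(a\KF+b\KNF)$ to be effective on a common resolution, so pulled-back sections stay regular. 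Second, the step you flag as the principal obstacle is closed in the paper not by Campana--P\u{a}un positivity or Bogomolov--McQuillan vanishing but by an elementary dichotomy (Lemma \ref{L:action}): either $\F$ is uniruled, in which case both $\KF$ and $K_Y$ fail to be pseudo-effective on a smooth model and hence so does any convex combination with $\varepsilon\in[0,1)$; or the orbit closures form a covering family of rational curves $C$ generically transverse to $\F$, and a direct local normal form for the $\mathbb C$- or $\mathbb C^*$-invariant defining $q$-form along a general compact orbit yields $\KNF\cdot C=-2$ and $\KF\cdot C=0$, killing pseudo-effectivity of $\KF+\varepsilon\KNF$ for $\varepsilon>0$. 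Note that in the transverse case the tangential contribution vanishes rather than being negative, so your proposed \lq\lq interpolation\rq\rq\ of the two obstructions with weights $1$ and $\varepsilon$ does not quite work as stated; the case analysis (uniruled versus transverse orbits) is what makes the argument go through.
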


In the terminology of \cite{MR3957403}, the group of birational transformations of a foliation of adjoint general type is finite.

One draw-back of the above result, is that it is unknown in dimensions strictly greater than $3$ if every foliation is birationally equivalent to a foliation with canonical singularities or $\varepsilon$-canonical singularities when $\varepsilon$ is small.  Anyway, in dimension three there are results by Cano \cite{MR2144971} (for codimension one foliations) and McQuillan-Panazzolo \cite{MR3128985} (for dimension one foliations) which guarantee the existence of a birational model with only canonical singularities.

Extrapolating the picture drawn by the classification of foliations on surfaces according to their adjoint dimension \cite{MR3957403}, it seems reasonable to expect  that purely transcendental codimension one foliations which are not of adjoint general type are transversely projective.
If this expectation is confirmed then our results would confirm Conjecture \ref{Conj:A}.

\subsection{Structure of the paper}
Section \ref{S:foliations} presents the basic definitions of the subject, recalls the results in dimension two concerning foliations with infinite semi-group of rational endomorphisms, and recalls the definition of algebraic and transcendental parts of a foliation.

Section \ref{S:Transversely} presents the definitions of (virtually) transversely additive, transversely affine, transversely projective, and transversely hyperbolic foliations as well as their basic properties.  Both  Sections \ref{S:foliations} and \ref{S:Transversely} contain no new results except for simple observations like Proposition \ref{P:reductiontotranscendental}.

Section \ref{S:orbits} recalls factorization results for semigroups of rational endomorphisms and shows how to reduce the study of infinite transverse actions of rational endomorphisms on foliations to the study of foliations invariant by rational maps with Zariski dense orbits, cf. Corollary \ref{C:reduz}.

Section  \ref{S:transv hyp} is devoted to the study of transversely hyperbolic foliations. It is perhaps the most technical part of the paper. It proves an extension result for transversely hyperbolic foliations (Theorem \ref{TH:localextension}), shows that transversely hyperbolic structures on quasi-projective manifolds always have Zariski dense monodromy (Proposition \ref{monodromy curve in Shimura}), and that this monodromy determines the foliation (\ref{TH:uniquemonodromy}).

The proof of Theorem \ref{THM:transv proj} is presented in Section \ref{S:THM A}. It builds on the description of the global structure of transversely projective foliations on projective manifolds proved in \cite{MR3522824} and recalled in Subsection \ref{SS:transv proj} and on our results on transversely hyperbolic foliations, more specifically Theorem \ref{TH:uniquemonodromy}.

Section \ref{S:qalbanese} studies the quasi-Albanese morphism of a projective manifold admitting a rational endomorphism with Zariski dense dynamics and contains the proof of Theorem \ref{THM:qalbanese kod0}. It can be read independently from the other sections and does not involve foliations. Theorem \ref{THM:core of structure} is proved in Section \ref{S:Zariski dense}.

Section \ref{S:endo} proves Proposition \ref{P:endogeral} providing evidence toward Conjecture \ref{Conj:A} and can also be read independently from the other sections. The same holds true for Section \ref{S:adjoint} which contains the proof of Theorem \ref{THM:adjoint}. We chose to include in this last Section a brief review of the basic notions/terminology used in the birational geometry  of foliations since in it, unlikely in remainder of the paper, we are lead to work with foliations on singular varieties.

\subsection{Acknowledgements} We thank Stefan Kebekus for providing a proof of Item (\ref{I:68}) of Lemma \ref{L:log log}.

\section{Singular holomorphic foliations}\label{S:foliations}

\subsection{Foliations}
A singular holomorphic foliation $\F$ on a compact complex  manifold $X$ is determined by a pair $T_{\F}\subset T_X$ and $N^*_{\F} \subset \Omega^1_X$ of coherent subsheaves such that
\begin{enumerate}
    \item\label{I:TF} $T_{\F}$ is the annihilator of  $N^*_{\F}$, meaning that $T_{\F}$ is the kernel of the morphism
    \begin{align*}
        T_X  & \longrightarrow \Hom(N^*_{\F} , \mathcal O_X ) \\
        v  & \longmapsto  ( \omega \mapsto \omega(v)) \, ; \text{ and }
    \end{align*}
    \item\label{I:CNF} $N^*_{\F}$ is the annihilator of $T_{\F}$,meaning that $N^*_{\F}$ is the kernel of the morphism
    \begin{align*}
        \Omega^1_X  & \longrightarrow \Hom(T_{\F} , \mathcal O_X ) \\
         \omega & \longmapsto  ( v \mapsto \omega(v)) \, ; \text{ and }
    \end{align*}
    \item\label{I:involutive} $T_{\F}$  is closed under Lie bracket, meaning that the O'Neil tensor
    \begin{align*}
        \bigwedge^2 T_{\F} & \longrightarrow \frac{T_X}{T_{\F}} \\
        v_1\wedge v_2 &\longmapsto [ v_1, v_2 ] \mod T_{\F}
    \end{align*}
    vanishes identically.
\end{enumerate}

It follows from Condition (\ref{I:TF}) that $T_{\F}$ is a reflexive subsheaf of $T_X$ and, moreover, that $T_{\F}$ is saturated in $T_X$, i.e. $T_X/T_{\F}$ is torsion-free. The sheaf $T_{\F}$ is called the tangent sheaf of $\F$. Similarly, Condition (\ref{I:CNF}) implies that $N^*_{\F}$ is a reflexive and saturated subsheaf of $\Omega^1_X$, called the conormal sheaf of $\F$. The dimension of $\F$ ($\dim(\F)$) is the rank of $T_{\F}$ and the codimension of $\F$ ($\codim(\F)$) is the rank of $N^*_{\F}$. The dual of $N^*_{\F}$ is the normal sheaf of $\F$ and we will denote it by $N_{\F}$. The dual of $T_{\F}$ is the cotangent sheaf of $\F$ and is denoted by $T^*_{\F}$ or $\Omega^1_{\F}$.

The smooth locus of $\F$ is, by definition, the locus where both $T_{\F}$ and the quotient sheaf $T_X/T_{\F}$ are locally free.
The complement of the smooth locus of $\F$ is called the singular locus of $\F$ and is denoted by $\sing(\F)$.

\subsection{Leaves and tangent subvarieties}
If $\F$ is a foliation of codimension $q$ then Condition (\ref{I:involutive}) allows us to apply Frobenius's integrability theorem to guarantee the existence of an open covering $\mathcal U$ of $X- \sing(\F)$ and of holomorphic maps of maximal rank and connected fibers $f_{U} : U \to \mathbb C^q$ such that ${T_{\F}}_{|U} = \ker df$ for any $U \in \mathcal U$. If one considers the smallest equivalence relation on $X- \sing(\F)$ which identifies points on an open subset $U \in \mathcal U$ with the same images under $f_U$ then the equivalence class of a point $x \in X- \sing(\F)$  is, by definition, the leaf of
$\F$ through $x$.

If $Y \subset X$ is an irreducible subvariety then we will say that $Y$ is tangent to a foliation $\F$ if there exists a dense open subset $U$ of $Y$
such that $U$ is contained in a leaf of $\F$.

\subsection{Differential forms defining foliations}
If $\F$ is a foliation of codimension $q$ then the $q$-th exterior power of the inclusion  $N^*_{\F} \to \Omega^1_X$ twisted by the line-bundle $\mathcal N = \det(N_{\F}^*)^*$ gives rise to a twisted $q$-form  $\omega \in H^0(X, \Omega^q_X \otimes \mathcal N)$ which is locally decomposable (at a general point $x \in X$, $\omega$ can be locally expressed as the product $\omega_1 \wedge \cdots \wedge \omega_q$ of $q$ distinct $1$-forms) and integrable (the $1$-forms $\omega_i$ satisfy $\omega_i \wedge d \omega =0$). Moreover, the tangent sheaf of $\F$ can be recovered as the kernel of the morphism
\[
    T_X \longrightarrow \Omega^{q-1}_X \otimes \mathcal N
\]
defined by contraction with $\omega$.

Reciprocally, if $\omega \in H^0(X,\Omega^q_X \otimes \mathcal N)$ is a locally decomposable and integrable $q$-form then the foliation defined by $\omega$ is, by definition, the foliation $\F_{\omega}$ with tangent sheaf $T_{\F_{\omega}}$  equal to the kernel of the morphism from $T_X$ to $\Omega^{q-1}_X \otimes \mathcal N$ defined by contraction with $\omega$. In general the line-bundle $\mathcal N$ does not coincide with the line-bundle $(\det N^*_{\F_{\omega}})^*$. This only happens when the zero locus of $\omega$ has codimension at least two.

\subsection{Symmetries of foliations}
Let $f : X \dashrightarrow Y$ be a dominant meromorphic map between complex manifolds. If $\G$ is a codimension $q$ foliation on $Y$ defined by a twisted $q$-form $\omega \in H^0(Y, \Omega^q_Y \otimes \mathcal L)$ then $f^* \G$ is the codimension $q$ foliation on $X$ defined by $f^* \omega \in H^0(X, \Omega^q_X \otimes f^* \mathcal L)$.

For a foliation $\F$ on a complex manifold $X$ we will denote by $\Aut(X,\F) \subset \Aut(X)$ the subgroup of the group of automorphisms of $X$ formed by automorphisms $f: X \to X $ such that $f^* \F = \F$. Similarly, we will denote by $\Bir(X,\F) \subset \Bir(X)$ the subgroup of the group of bimeromorphic transformations $f: X \dashrightarrow X$ of $X$ such that $f^* \F = \F$. We chose to use $\Bir(X,\F)$ instead of $\Bim(X,\F)$ since our focus in this paper will be on bimeromorphisms of foliations on projective manifolds, which are birational maps thanks to Chow Theorem.

We are also interested in the semigroup (actually the monoid) $\End(X,\F)$ of meromorphic/rational endomorphisms of a foliation $\F$ which consists of dominant meromorphic maps $f: X \dashrightarrow X$ such that $f^* \F = \F$. Of course, $\Aut(X,\F) \subset \Bir(X,\F) \subset \End(X,\F)$  and it is not hard to produce examples where all the inclusions are strict.

If $f \in \Aut(X,\F), \Bir(X,\F)$, or $\End(X,\F)$ then $f$ acts on the set of leaves of $\F$. When $X$ is compact, we define $\AutFix(X,\F) \subset \Aut(X,\F)$, $\BirFix(X,\F) \subset \Bir(X,\F)$, and $\EndFix(X,\F) \subset \End(X,\F)$ as the sub(semi)groups which act trivially on the set of leaves. More precisely, $f \in \AutFix(X,\F), \BirFix(X,\F)$, or $\EndFix(X,\F)$ if, and only if,  for any general $x \in X$ the leaves of $\F$ through $x$ and $f(x)$ coincide.
Note that $\AutFix(X,\F)$ and $\BirFix(X,\F)$ are, respectively, normal subgroups of $\Aut(X,\F)$ and $\Bir(X,\F)$.  but  $\EndFix(X,\F)$ may fail to be normal (in the semigroup sense) in  $\EndFix(X,\F)$. However, it is still possible to give a reasonable sense to the quotient as a semigroup :
\begin{dfn}
	Consider on  $\End(X,\F)$ the equivalence relation $\sim$ defined by $u\sim v$ if for a general point $x$,
    the leaves through $u(x)$ and $v(x)$ coincide. This is clearly a semigroup congruence; moreover ${[\Id]}_\sim={\EndFix(X,\F)}$.
    One then sets	
    $$
        \frac{\End(X,\F)}{\EndFix(X,\F)}:=\frac{\End(X,\F)}{\sim}.
    $$
\end{dfn}

The following proposition is well-known to specialists, cf. \cite[Corollary 2]{MR1957664}.

\begin{prop}\label{P:transverse symmetries}
Let $\F$ be a codimension one foliation on a complex manifold $X$.
Suppose that $\F$ is invariant by the flow of a vector field $v\in H^0(X,T_X)$  which is not everywhere tangent to $\F$.
Then $\F$ is defined by a closed meromorphic $1$-form.
\end{prop}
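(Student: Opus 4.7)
The plan is to normalize a defining form for $\F$ using $v$ so that the contraction with $v$ becomes constant, and then exploit Cartan's formula together with Frobenius integrability.

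First I would pick a twisted $1$-form $\omega \in H^0(X,\Omega^1_X \otimes \mathcal N)$ defining $\F$, where $\mathcal N = \det(N^*_{\F})^*$. Contraction gives $\omega(v) \in H^0(X, \mathcal N)$. Since $v$ is not everywhere tangent to $\F$, the section $\omega(v)$ is not identically zero, so the ratio
\[
    \omega' := \frac{\omega}{\omega(v)}
\]
makes sense as a (globally defined) meromorphic $1$-form on $X$; the twist by $\mathcal N$ cancels. Away from its poles and zeros $\omega'$ still defines $\F$, and by construction $\iota_v \omega' \equiv 1$.

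Next I would use the hypothesis that $v$ preserves $\F$ to conclude that the Lie derivative $L_v \omega'$ is again a meromorphic $1$-form defining (the same) $\F$, hence equal to $h\,\omega'$ for some meromorphic function $h$ on $X$. Contracting both sides with $v$ and using $[\iota_v,L_v]=\iota_{[v,v]}=0$ gives
\[
    0 = L_v(1) = L_v(\iota_v \omega') = \iota_v(L_v \omega') = h \cdot \iota_v \omega' = h,
\]
so $L_v \omega' = 0$. By Cartan's formula this rewrites as $\iota_v d\omega' = 0$.

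Finally I would combine this with Frobenius integrability $\omega' \wedge d\omega' = 0$. Locally (away from the poles of $\omega'$ and from the zeros of $\omega'$ on the smooth locus of $\F$) one can write $d\omega' = \omega' \wedge \eta$ for some holomorphic $1$-form $\eta$. Contracting with $v$:
\[
    0 = \iota_v d\omega' = \iota_v(\omega') \eta - \omega' \, \iota_v(\eta) = \eta - (\iota_v \eta)\,\omega',
\]
so $\eta$ is a multiple of $\omega'$ and therefore $d\omega' = \omega' \wedge \eta = 0$. As this holds on a dense open set and $\omega'$ is meromorphic on $X$, we obtain $d\omega'=0$ globally, proving the proposition.

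I do not expect any significant obstacle: the only delicate point is making sure the rescaling by $\omega(v)$ is legitimate as a global meromorphic operation, which is handled by noticing that the $\mathcal N$-twist appears both in numerator and denominator. The remaining steps are purely local computations of Lie derivatives combined with the standard Frobenius normal form $d\omega' = \omega' \wedge \eta$.
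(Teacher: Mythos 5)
Your proof is correct, and it starts exactly as the paper does: normalize a defining twisted form by setting $\tilde\omega=\omega/\omega(v)$, observing that the twist by $\mathcal N$ cancels so that $\tilde\omega$ is a genuine meromorphic $1$-form with $\iota_v\tilde\omega\equiv 1$. Where you diverge is in how closedness is verified. The paper works in adapted local coordinates at a point where $\F$ is smooth and $v$ is transverse, writing $\omega=a(z,w)\,dz$ and $v=b(z,w)\,\partial/\partial z$, and uses the invariance to see that $b$ depends only on $z$, so $\tilde\omega=dz/b(z)$ is visibly closed. You instead argue intrinsically: invariance gives $L_v\tilde\omega=h\,\tilde\omega$, contraction with $v$ forces $h=0$, Cartan's formula then yields $\iota_v d\tilde\omega=0$, and combining this with the Frobenius relation $d\tilde\omega=\tilde\omega\wedge\eta$ kills $d\tilde\omega$ on a dense open set, hence everywhere by the identity principle. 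Your route avoids having to justify the simultaneous normal form for $\omega$ and $v$ (which the paper leaves implicit), at the cost of invoking the standard Lie-derivative characterization of an infinitesimal symmetry of a foliation; both are legitimate, and yours is the more robust formulation if one later wants the statement for merely rational vector fields, as the paper itself does in the proof of Lemma \ref{L:hyperbolic and log-general}.
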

\begin{proof}
    Let $\omega \in H^0(X,\Omega^1_X \otimes \mathcal N)$ be a twisted $1$-form on $X$ defining $\F$. By assumption $\omega(v) \in H^0(X, \mathcal N)$ is non-zero. Therefore the quotient $\tilde \omega:=\omega/\omega(v)$ is a meromorphic $1$-form.

    To prove the proposition it suffices to verify that the meromorphic form $\tilde \omega$ is closed. It is enough to check that $d\tilde \omega=0$ at smooth points of $\F$ such that $v$ is locally transverse to $\F$. In a neighborhood of such a point we can find local coordinates $(z,w_1,\ldots ,w_n)=(z,w)$ such that
    \[
        \omega=a(z,w)\, dz, \qquad v=b(z,w) \frac{\partial}{\partial z}.
    \]
    The condition that the flow of $v$ preserves $\F$ means that $b(z,w)=b(z)$ does not actually depend on $w$. Therefore $\tilde w =  dz/b(z)$ is closed as claimed.
\end{proof}

\subsection{Rational endomorphisms of foliations on projective surfaces}
As already mentioned in the introduction, the groups of birational transformations of foliations on  projective surfaces were extensively studied in \cite{MR1998612}.  There, the authors classify foliations on surfaces with infinite group of birational transformations. As a corollary, they deduce that foliations with and infinite group of birational transformations are Liouvillian integrable, see \cite[Corollary 7.3]{MR1998612}. Their proof shows that for foliations with $\Bir(S,\F)$ infinite, there exists a generically finite morphism $\pi:S' \to S$ such that $\pi^*\mc F$ is defined by a closed rational $1$-form. The proof of  \cite[Theorem A]{MR2818727} shows that the very same statement holds for the semigroup of rational endomorphisms of a foliation on a projective surface.

\begin{thm}\label{T:classification surfaces}
    Let $S$ be a projective surface and $\mc F$ a foliation on $S$.  If $\End(S, \F)$ is infinite then there exists a generically finite morphism $\pi:S' \to S$ such that $\pi^*\mc F$ is defined by a closed rational $1$-form.
\end{thm}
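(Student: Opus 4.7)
The plan is to follow the template of Cantat--Favre \cite{MR1998612}, proceeding by birational classification of foliated surfaces (Brunella--McQuillan--Mendes) according to $\kod(\mathcal F)\in\{-\infty,0,1,2\}$, and tracking carefully what changes when one replaces $\Bir(S,\mathcal F)$ by the semigroup $\End(S,\mathcal F)$. First I would reduce to the case where $S$ is smooth and $\mathcal F$ has only reduced (canonical) singularities, via Seidenberg's theorem for foliations on surfaces; this is legitimate since the conclusion is birational in $S$.

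Next I would dispose of the general-type case $\kod(\mathcal F)=2$. Here $\KF$ is big, so any $f\in\End(S,\mathcal F)$ must satisfy $f^*\KF=\KF+R$ with $R$ effective (the extra contribution coming from the ramification of $f$ along leaves of $\mathcal F$, together with contracted curves). Bigness of $\KF$ combined with the action on $N^1(S)$ bounds the possible topological degrees, and a small refinement of the argument of \cite{MR1957664} shows that $\End(S,\mathcal F)$ is finite in this situation; so one may assume $\kod(\mathcal F)\le 1$.

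Then I would apply the case-by-case classification: the foliations with $\kod(\mathcal F)\le 1$ and infinite group of birational symmetries are enumerated explicitly in \cite{MR1998612} (rational or elliptic fibrations, turbulent foliations, Riccati foliations, Hilbert modular foliations, linear foliations on abelian surfaces and their Kummer-type quotients), and in each listed class one exhibits a generically finite cover $\pi\colon S'\to S$ on which $\pi^*\mathcal F$ is defined by a closed rational $1$-form. The point to verify is that passing from $\Bir(S,\mathcal F)$ to $\End(S,\mathcal F)$ does not enlarge this list: this is the content of Theorem A of \cite{MR2818727}, whose proof substitutes the linear algebra on $\mathrm{NS}(S)_{\mathbb R}$ used by Cantat--Favre (based on the Hodge index theorem applied to the $f^*$-invariant class defining the dynamical degree) by its non-invertible analogue. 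Once this is established, each item of the list is handled by the same construction of an algebraic integrating factor as in the birational case.

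The main obstacle is exactly this last verification: for a non-invertible $f$ the spectral picture of $f^*$ on $N^1(S)$ is coarser than for a birational map, and one must rule out, for instance, the possibility of foliations of Kodaira dimension $1$ invariant by an endomorphism whose dynamical degree is a non-Galois algebraic integer, which could a priori escape the dichotomy ``isotrivial fibration vs.\ Riccati foliation''. Fortunately this is precisely what is carried out in \cite{MR2818727}, so invoking that argument and then combining with the explicit construction of closed $1$-forms in each class of the Cantat--Favre list delivers the desired cover $\pi\colon S'\to S$.
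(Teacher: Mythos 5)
Your proposal follows essentially the same route as the paper, which gives no independent argument but derives the statement directly from the Cantat--Favre classification \cite{MR1998612} together with the proof of \cite[Theorem A]{MR2818727} for the non-invertible case. Your sketch merely unpacks in more detail the combination of those two references that the paper invokes.
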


\subsection{Algebraic and transcendental parts of a foliation}\label{SS:maximal algebraic}
Let $\F$ be a foliation on a projective manifold $X$. According to \cite[Lemma 2.4]{MR3842065}, given a foliation $\F$ on a projective manifold, there exists a unique foliation $\F^{\alg}$ by algebraic leaves characterized by the following property: for a very general point $x \in X$ the germ of the leaf of $\F^{\alg}$ through $x$ coincides with the maximal germ of algebraic subvariety contained in the leaf of $\F$ through $x$. In the terminology of \cite{MR3652251}, $\F^{\alg}$ is the algebraic part of the foliation $\F$.

It also follows from \cite[Lemma 2.4]{MR3842065}, the existence of a projective manifold $Y$ (unique up to birational transformations), a dominant rational map $\pi: X \dashrightarrow Y$ with connected fibers, and a foliation $\F^{\trans}$ with trivial algebraic part  on $Y$ such that $\F = \pi^* \F^{\trans}$.  In the terminology of \cite{MR3652251}, $\F^{\trans}$ is the \emph{transcendental part} of the foliation $\F$ and the dimension of $\F^{tr}$ is called the \emph{transcendental dimension} of $\F$. We will call $\pi : X \rato Y$ the \emph{maximal algebraic quotient} of $\F$. Note that $\F^{alg}$ is nothing but the foliation on $X$ defined by $\pi$. A foliation with trivial algebraic part will be called \emph{purely transcendental}.

The birational equivalence classes of the variety $Y$ and of the rational map $\pi$ are characterized by the identification of the field $\mathbb C(\F^{\alg})$  of rational first integrals of $\F^{\alg}$ with the pull-back under $\pi$ of the field $\C(Y)$ of rational functions on $Y$.

\begin{prop}\label{P:reductiontotranscendental}
    If $\F$ is a foliation on a projective manifold $X$ then there exists a natural morphism of semigroups
    \[
        \End(X,\F) \longrightarrow \End(Y,\F^{\trans})
    \]
    which maps $\EndFix(X,\F)$ to $\EndFix(Y,\F^{\trans})$. Moreover, the induced morphism
    \[
         \frac{\End(X,\F)}{\EndFix(X,\F)} \longrightarrow \frac{\End(Y,\F^{\trans})}{\EndFix(Y,\F^{\trans})}
    \]
    is injective. In particular, if the action of $\End(Y,\F^{\trans})$ is transversely finite then the action of $\End(X,\F)$ is transversely
    finite.
\end{prop}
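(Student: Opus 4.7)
The plan is to construct the morphism $\phi : \End(X,\F) \to \End(Y,\F^{\trans})$ by descending each $f$ along the maximal algebraic quotient $\pi : X \rato Y$, and then to match the leaf equivalence on $X$ with that on $Y$ through $\pi$.

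First, I would note that the algebraic part $\F^{\alg}$ is intrinsically determined by $\F$, since at a very general point its leaf is the germ of the maximal algebraic subvariety contained in the leaf of $\F$. For any $f \in \End(X,\F)$ one has $f^*\F^{\alg} \subseteq f^*\F = \F$, and the pull-back of a foliation with algebraic leaves by a dominant rational map is again a foliation with algebraic leaves of the same dimension as $\F^{\alg}$; the maximality characterization therefore forces $f^*\F^{\alg} = \F^{\alg}$. The identification $\pi^*\C(Y) = \C(\F^{\alg})$ (the field of rational first integrals of $\F^{\alg}$) recalled just before the statement then shows that $f^*$ preserves the subfield $\pi^*\C(Y) \subset \C(X)$, inducing a field embedding $\C(Y) \hookrightarrow \C(Y)$, i.e. a dominant rational self-map $\bar f : Y \rato Y$ with $\pi \circ f = \bar f \circ \pi$. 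From $\pi^*(\bar f^*\F^{\trans}) = f^*\pi^*\F^{\trans} = f^*\F = \F = \pi^*\F^{\trans}$, and from the injectivity of $\pi^*$ on foliations (which holds because $\pi$ is dominant with connected fibers), it follows that $\bar f^*\F^{\trans} = \F^{\trans}$, so $\bar f \in \End(Y,\F^{\trans})$. Uniqueness of the descent makes $f \mapsto \bar f$ a semigroup morphism.

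Next, since $\pi$ has connected fibers, at a general point $x \in X$ the leaf of $\F = \pi^*\F^{\trans}$ through $x$ is $\pi^{-1}(L)$, where $L$ is the leaf of $\F^{\trans}$ through $\pi(x)$. Hence, for general $x$, two points $f(x)$ and $g(x)$ lie on the same leaf of $\F$ if and only if $\bar f(\pi(x)) = \pi(f(x))$ and $\bar g(\pi(x)) = \pi(g(x))$ lie on the same leaf of $\F^{\trans}$. Specializing $g = \Id$ shows that $\phi$ sends $\EndFix(X,\F)$ into $\EndFix(Y,\F^{\trans})$, and the general equivalence shows both that $\phi$ respects the congruence $\sim$ and that the induced map on quotient semigroups is injective. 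The finiteness consequence is then immediate, since an injection into a finite set has finite source.

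The only non-formal step is the leaf-to-leaf correspondence used in the second paragraph, namely that connected fibers of $\pi$ force connected leaves of $\F$ to match connected leaves of $\F^{\trans}$ bijectively over a suitable Zariski open subset of $Y$. Everything else reduces to functoriality of the algebraic part under dominant rational maps and to the field-theoretic description of the maximal algebraic quotient.
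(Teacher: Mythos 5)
Your proof is correct and follows the same overall architecture as the paper's: descend each $f\in\End(X,\F)$ along the maximal algebraic quotient $\pi:X\rato Y$, then compare the leaf-fixing congruences upstairs and downstairs. The one step you handle by a genuinely different mechanism is the descent itself. The paper argues on images: a general fiber $F$ of $\pi$ is tangent to $\F$, hence so is $f(F)$, and if $\pi(f(F))$ were positive-dimensional it would be an algebraic subvariety tangent to $\F^{\trans}$, contradicting pure transcendence; so $f$ maps fibers to fibers. You instead argue on pull-backs: $f^*\F^{\alg}$ is an algebraically integrable subfoliation of $\F$ of the same dimension, so the maximality characterization forces $f^*\F^{\alg}=\F^{\alg}$, and the identification $\pi^*\C(Y)=\C(\F^{\alg})$ then produces $\bar f$. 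Both hinge on the same defining property of $\F^{\alg}$, but your field-theoretic route makes the uniqueness of the descent and the semigroup-morphism property $\overline{f\circ g}=\bar f\circ\bar g$ immediate, at the cost of the (routine but needed) verifications that pull-back preserves algebraic integrability and subfoliation inclusions. On the second half, you are actually more explicit than the paper: the paper dispatches injectivity of the induced map on quotients with the remark that the ``kernel'' of the top arrow lies in $\EndFix(X,\F)$, which literally only treats comparison with the identity, whereas the congruence $\sim$ requires exactly the generic leaf correspondence $\pi^{-1}(L)\leftrightarrow$ leaf of $\F$ that you isolate as the non-formal step; that correspondence does hold over a Zariski open subset of $Y$ where $\pi$ is a submersion with connected fibers, since fibers of $\pi$ are tangent to $\F$. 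So there is no gap; your write-up is, if anything, a slightly more careful version of the intended argument.
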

\begin{proof}
    Let $\pi: X \rato Y$ be the maximal algebraic quotient of $\F$. Fix $f\in \End(X,\F)$ and suppose that for a general fibre $F$ of $\pi$ we have $\pi(f(F))\neq \{\text{pt.}\}$. Since $F - \sing(\F)$ is contained in a leaf of $\F$ and $f$ preserves the foliation $\F$, $f(F)$ is also tangent to $\F$.
    Thus $\pi(f(F))$ is tangent to $\F^{\trans}$. But this is impossible since $\F^{\trans}$ is the transcendental part of $\F$. It follows that $f \in \End(X,\F)$ induces a rational transformation of $Y$ preserving $\F^{\trans}$. Clearly, if $f \in \EndFix(X,\F)$ then the induced transformation belongs to $\EndFix(Y,\F^{\trans})$. In this way, we obtain a natural commutative diagram
    \[
    \begin{tikzcd}
        \End(X,\F) \arrow[r] \arrow[d] &  \End(Y,\F^{\trans}) \arrow[d] \\
        \displaystyle{\frac{\End(X,\F)}{\EndFix(X,\F)}} \arrow[r] &  \displaystyle{\frac{\End(Y,\F^{\trans})}{\EndFix(Y,\F^{\trans})}}
    \end{tikzcd}
    \]
    of semigroup morphisms.  Since the kernel of the top arrow is contained in $\EndFix(X,\F)$, it follows that the bottom arrow is
    injective.
\end{proof}

\begin{cor}\label{C:reductiontranscendental}
    Let $\F$ be a codimension one foliation on a projective manifold $X$ and let $\pi\colon X\rato Y$ be the maximal algebraic quotient of $\F$.
    If the action of $\End(X,\F)$ is not transversely finite and the transcendental dimension of $\F$ is one (i.e., $\dim Y=2$) then there exists a generically finite morphism $\pi:Y' \to Y$ such that $\pi^*\mc F$ is  defined by a closed rational $1$-form.
\end{cor}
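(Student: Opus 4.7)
The plan is to reduce the problem to the classification of rational endomorphisms of foliated projective surfaces, Theorem \ref{T:classification surfaces}, by transferring the transverse dynamics from $X$ down to the base $Y$ of the maximal algebraic quotient via Proposition \ref{P:reductiontotranscendental}.

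First, I would observe that $\F^{\trans}$ is a codimension one foliation on the projective surface $Y$. Indeed, the general fibre of the maximal algebraic quotient $\pi\colon X\rato Y$ is tangent to $\F$ and by construction coincides (generically) with a leaf of $\F^{\alg}$, so the codimension of $\F^{\trans}$ on $Y$ equals the codimension of $\F$ on $X$, which is one; together with $\dim Y=2$ this puts us in the surface setting.

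Next, I would invoke Proposition \ref{P:reductiontotranscendental}, which provides an injective morphism of semigroups
\[
    \frac{\End(X,\F)}{\EndFix(X,\F)} \hookrightarrow \frac{\End(Y,\F^{\trans})}{\EndFix(Y,\F^{\trans})}.
\]
The hypothesis that the action of $\End(X,\F)$ is not transversely finite means precisely that the left-hand quotient is infinite; by injectivity the right-hand quotient is infinite as well, and in particular $\End(Y,\F^{\trans})$ itself is an infinite semigroup. Applying Theorem \ref{T:classification surfaces} to the codimension one foliation $\F^{\trans}$ on the projective surface $Y$ then furnishes a generically finite morphism $p\colon Y'\to Y$ such that $p^*\F^{\trans}$ is defined by a closed rational $1$-form on $Y'$, which is the sought conclusion (the morphism $p$ plays the role of the cover denoted $\pi$ in the statement).

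I do not anticipate a real obstacle: the corollary is essentially a formal assembly of Proposition \ref{P:reductiontotranscendental} with the surface classification recalled in Theorem \ref{T:classification surfaces}. The only point deserving some care is the passage from \textquotedblleft not transversely finite\textquotedblright\ on $X$ to the honest infiniteness of $\End(Y,\F^{\trans})$; but since a quotient of a finite set is finite, the infiniteness of the quotient semigroup on $Y$ at once forces $\End(Y,\F^{\trans})$ to be infinite, so the hypothesis of Theorem \ref{T:classification surfaces} is met.
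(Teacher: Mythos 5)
Your proposal is correct and follows exactly the paper's argument, which is stated in one line as "Theorem \ref{T:classification surfaces} combined with Proposition \ref{P:reductiontotranscendental}"; you have simply spelled out the details (the contrapositive of the last assertion of Proposition \ref{P:reductiontotranscendental}, the resulting infiniteness of $\End(Y,\F^{\trans})$, and the application of the surface classification). No gaps.
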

\begin{proof}
    The result follows from Theorem \ref{T:classification surfaces} combined with Proposition \ref{P:reductiontotranscendental}.
\end{proof}

\section{Transversely homogeneous codimension one foliations}\label{S:Transversely}
From a dynamical point of view, the simplest possible foliations on projective manifolds are the ones with all leaves algebraic. It is well-known that the general leaves of  one such foliation are defined by the level sets of a rational map, see \cite{MR1017286}. Because of that, foliations with all leaves algebraic are called algebraically integrable foliations.

For foliations of codimension one, there is a well-established hierarchy of generalizations of the concept of algebraically integrable foliations. In order of growing complexity: the transversely additive, transversely affine, and transversely projective foliations.

\subsection{Transversely additive foliations}\label{SS:transv add}
The definition of a transverse additive structure for a codimension one foliation is relatively simple.

\begin{dfn}\label{def additive}
    Let $\F$ be a codimension one foliation on a complex manifold $X$. A \emph{transverse additive structure} for $\F$ is a closed meromorphic
    $1$-form $\omega$ such that $\F= \F_{\omega}$.
\end{dfn}

Two transverse additive structures $\omega_1, \omega_2$ for $\F$ are said to be equivalent if there exists a constant $\lambda \in \mathbb C^*$ such that $\omega_1 = \lambda \cdot \omega_2$. We will say that a foliation $\F$ is transversely additive if there exists an transversely additive structure for $\F$.

A slightly more general class is the class of virtually transversely additive foliations.  A foliation is virtually transversely additive if there exists a generically finite dominant meromorphic map $\pi: Y \rato X$ such that $\pi^* \F$ is transversely additive.

\subsection{Transversely affine foliations}\label{SS:transversely affine}
For a thorough discussion of the definition transverse affine structure for a codimension one foliation presented below,  see \cite{MR3294560}.

\begin{dfn}\label{def affine}
    Let $\F$ be a codimension one foliation on a complex manifold $X$ defined by a twisted $1$-form $\omega \in H^0(X, \Omega^1_X \otimes N_{\F})$ with
    singularities of codimension at least two. A \emph{transverse affine structure} for $\F$ is a flat meromorphic connection $\nabla$ on $N_{\F}$ such that $\nabla(\omega)=0$.
\end{dfn}

If $\F$ admits two distinct transverse affine structures $\nabla_1, \nabla_2$ then they differ by a closed meromorphic $1$-form defining $\F$. Thus, their difference is a transverse additive structure for $\F$.  A foliation is transversely affine if it admits a transverse affine structure.

Every virtually transversely additive foliation is also a transversely affine foliation. In this case, the flat meromorphic connection $\nabla$ on $N_{\F}$ is a logarithmic connection with finite monodromy, see \cite[Section 2.1]{2017arXiv171208330L}.

\subsection{Transversely projective foliations}\label{SS:transv proj}
We recall below the definition of a (singular or meromorphic) transverse projective structure for codimension one singular holomorphic foliations presented in  \cite{MR2337401, MR3522824}.  For alternative definitions, see references therein. It is a generalization of the classical definition of transverse projective structures for codimension one smooth foliations presented in \cite{MR1120547}.

\begin{dfn}\label{def projective}
    Let $\F$ be a codimension one foliation on a complex manifold $X$. A \emph{transverse projective structure} for $\F$ is the data of a triple $(E, \nabla, \sigma)$ where
    \begin{enumerate}
        \item $E$ is a rank $2$ vector bundle;
        \item $\nabla$ is a flat meromorphic connection on $E$;
        \item $\sigma \colon X \rato \mathbb P(E)$ is a meromorphic section of $\Pj(E)\to X$ such that, if $\mc R$
        denotes the Riccati foliation on $\Pj(E)$ with general leaf given by  projectivization of a flat section of $\nabla$, $\F=\sigma^*\mc R$.
    \end{enumerate}
\end{dfn}

Two transverse projective structures  $(E_1, \nabla_1, \sigma_1)$ and $(E_2,\nabla_2, \sigma_2)$ for a foliation $\F$ are equivalent (see \cite{MR3522824}) if there exists a bimeromorphic bundle transformation $\Phi : \mathbb P(E_1) \rato \mathbb P(E_2)$ such that $\Phi^*(\mc R_2) = \mc R_1$ and which makes  the following diagram
\[
\begin{tikzcd}
    \mathbb P (E_1)  \arrow[dr] \arrow[rr,dashed, "\Phi"] & & \mathbb P(E_2) \arrow[dl] \\
    & \arrow[ul,bend left, dashed, "\sigma_1"] X \arrow[ur,bend right,dashed, "\sigma_2" right]&
\end{tikzcd}
\]
commutative.

We will say that a foliation is transversely projective if it admits a transverse projective structure. When $X$ is a  projective manifold, as a consequence of GAGA's principle, such a projective structure can be defined up to equivalence by a triple $(E,\nabla,\sigma)$ where $E$ is the \textit{trivial} rank $2$ vector bundle over $X$.

It can be verified that every transversely affine foliation is also a transversely projective foliation, see \cite{MR3294560}.

As in the case of transversely additive and transversely affine foliations, the existence of two non-equivalent projective structures for $\F$ implies that $\F$ admits a more restrictive transverse structure as stated in the following lemma.

\begin{lemma} \label{uniqueness projective structure}
    Suppose $\F$ is a codimension one foliation on a projective manifold $X$ that admits more than one transverse projective structure. Then, there exists $\pi \colon X'\to X$, where $\pi$ is  generically finite of topological degree at most two, such that $\pi^* \F$ is defined by a closed rational $1$-form. In particular, $\F$ is virtually transversely additive.
\end{lemma}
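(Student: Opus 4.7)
The strategy is to measure the failure of the two projective structures to be equivalent by a Schwarzian obstruction --- a non-zero meromorphic quadratic differential on $X$ vanishing on $T_{\F}$ --- and to recover a closed $1$-form defining $\F$ by extracting its square root on a double cover.

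Let $(E_i,\nabla_i,\sigma_i)$, $i=1,2$, denote the two structures. Over an open set $U\subset X\setminus\sing(\F)$ avoiding the polar loci, both admit local developing maps: holomorphic submersions $f_i\colon U\to\Pj^1$ whose fibers are leaves of $\F|_U$ and which pull back the standard $\PSL(2,\C)$-structure on $\Pj^1$ to the prescribed transverse structure. Since $f_1$ and $f_2$ have the same fibers, locally $f_2=\varphi\circ f_1$ for a biholomorphism $\varphi$ between open subsets of $\Pj^1$, and one sets
\[
  q := S(\varphi)(f_1)\,(df_1)^2,
\]
a local section of $(N^*_{\F})^{\otimes 2}$, where $S$ denotes the Schwarzian derivative. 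The plan is to verify that $q$ glues to a global meromorphic section of $(N^*_{\F})^{\otimes 2}$ on $X$. The cocycle identity $S(g\circ h)=(h')^{2}\,S(g)\circ h+S(h)$, together with $S(M)=0$ for $M\in\PSL(2,\C)$, shows that replacing $(f_1,f_2)$ by $(M_1\circ f_1,M_2\circ f_2)$ with $M_i\in\PSL(2,\C)$ sends $\varphi$ to $M_2\circ\varphi\circ M_1^{-1}$: left composition by $M_2$ does not alter $S(\varphi)$, while right composition by $M_1^{-1}$ produces a chain-rule factor that exactly cancels the change of $(df_1)^{2}$. Hence $q$ is independent of the $\PSL(2,\C)$-ambiguity in the developing maps; in particular it is invariant under the two monodromies and descends to a meromorphic section of $(N^*_{\F})^{\otimes 2}$ on $X$. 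The inequivalence hypothesis forces $\varphi$ not to be M\"obius, so $S(\varphi)\not\equiv 0$ and $q\neq 0$.

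Let $\pi\colon X'\to X$ be the double cover obtained by extracting a square root of $q$, namely the subvariety $\{\omega^{2}=q\}$ of the total space of $N^*_{\F}$; this is generically finite of degree at most two (degree one precisely when $q$ is already a square in $(N^*_{\F})^{\otimes 2}$). On $X'$ there is a tautological meromorphic section $\omega$ of $\pi^*N^*_{\F}$, viewed as a meromorphic $1$-form, satisfying $\omega^{2}=\pi^{*}q$. Since $\omega$ is a non-zero section of $\pi^*N^*_{\F}$, it defines $\pi^*\F$. Locally on $X'$ write $\omega=h(f_1)\,df_1$ with $h(z)^2=S(\varphi)(z)$; since $\omega$ is a function of $f_1$ times $df_1$,
\[
  d\omega = h'(f_1)\,df_1 \wedge df_1 = 0.
\]
Thus $\omega$ is closed, so $\pi^*\F$ is defined by a closed rational $1$-form and $\F$ is virtually transversely additive.

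The principal technical obstacle is the globalization of $q$: one must check carefully that the locally defined Schwarzian expression patches to a meromorphic section of $(N^*_{\F})^{\otimes 2}$ across the singular locus of $\F$ and the polar divisors of the connections $\nabla_i$. Once this step is secured, the remaining steps (passage to the double cover, extraction of the square root, closedness of $\omega$) are essentially formal, the closedness of $\omega$ being automatic from its local form as a function of $f_1$ multiplied by $df_1$.
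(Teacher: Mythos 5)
Your strategy is exactly the one behind the paper's proof, which is a one-line citation to \cite[Lemma 2.20]{MR2324555}: measure the discrepancy of the two structures by the Schwarzian quadratic differential $q=S(\varphi)(f_1)\,(df_1)^{\otimes 2}\in (N^*_{\F})^{\otimes 2}$, and extract its square root on a degree $\le 2$ cover to produce a closed $1$-form $h(f_1)\,df_1$ defining the pulled-back foliation. The M\"obius-invariance computation via the cocycle $S(g\circ h)=(h')^2 S(g)\circ h+S(h)$ and the closedness of $\omega=h(f_1)\,df_1$ are both correct, and this is the same mechanism the cited lemma uses (compare also Lemma \ref{L:uniqueness} in the paper, which runs the same Schwarzian computation in a special case).

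The one genuine gap is the step you yourself flag and then set aside: the meromorphic extension of $q$ from the open set where the developing maps exist to all of $X$. This is not a Riemann-extension formality. The developing maps are only defined away from the polar divisors $H_1\cup H_2$ of the two connections, which have codimension one, and a holomorphic section on the complement of a hypersurface can a priori have an essential singularity there; nothing in your construction bounds the growth of $S(\varphi)(f_1)$ near $H_1\cup H_2$. Without this extension you do not have a global meromorphic section of $(N^*_{\F})^{\otimes 2}$, hence no algebraic double cover $X'$ and no rational $1$-form on it. The standard way to close the gap -- and the way the cited proof is organized -- is to avoid developing maps altogether: since $X$ is projective, each structure may be represented with $E$ trivial, hence by a triple of global meromorphic $1$-forms $(\omega_0,\omega_1,\omega_2)$ satisfying the $\mathfrak{sl}_2$-type integrability relations, with $\omega_0$ defining $\F$ in both triples. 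Comparing the two triples produces the quadratic differential directly as an algebraic expression in these global meromorphic data, so its meromorphy on all of $X$ is automatic; one then recovers your local formula $q=S(\varphi)(f_1)(df_1)^{\otimes 2}$ on the regular locus. The same remark applies to your one-sentence claim that inequivalence forces $S(\varphi)\not\equiv 0$: in the developing-map picture the converse implication ($q\equiv 0$ implies the structures are equivalent) again requires extending a generically defined bundle isomorphism bimeromorphically across the polar locus, whereas in the meromorphic presentation it is immediate.
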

\begin{proof}
     Follows from the proof of \cite[Lemma 2.20]{MR2324555}.
\end{proof}

If $X$ is a projective manifold carrying a transversely projective foliation $\F$ and $\pi:X \dashrightarrow Y$ is a dominant rational map between projective manifolds, the foliation $\pi^* \F$ is also transversely projective. Just exploit the fact that the underlying bundle $E$ attached to the given projective structure can be chosen to be trivial. It is worth noticing that the converse also holds:

\begin{lemma}\label{L:CasaleCRAS}
	Let $\pi : X \dashrightarrow Y$ be a dominant rational map between projective manifolds. If $\mathcal F$ is a codimension one
	foliation on $Y$ such that $\pi^* \mathcal F$ is transversely projective
	then $\mathcal F$ itself is also transversely projective.
\end{lemma}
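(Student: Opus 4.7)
The plan is to descend a transverse projective structure from $X$ to $Y$ by translating it into the language of projective triples of meromorphic $1$-forms, which is available on any projective manifold since the underlying rank $2$ bundle in Definition \ref{def projective} may be assumed trivial (as recalled just before the statement). In this language, a transverse projective structure for a codimension one foliation defined by a meromorphic $1$-form $\omega_0$ amounts to a choice of meromorphic $1$-forms $\omega_1,\omega_2$ satisfying Maurer-Cartan type integrability identities, modulo the gauge action coming from $\mathrm{PGL}_2$-valued rational bundle transformations.

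Start from a defining form $\omega_Y$ for $\F$ on $Y$; then $\omega_0:=\pi^*\omega_Y$ defines $\pi^*\F$, and by hypothesis $\omega_0$ can be completed to a projective triple $(\omega_0,\omega_1,\omega_2)$ on $X$. Frobenius integrability of $\F$ on $Y$, combined with $d^2=0$, produces meromorphic $1$-forms $\alpha_Y,\beta_Y$ on $Y$ satisfying the first two Maurer-Cartan identities downstairs; the first two identities upstairs then force $\omega_1-\pi^*\alpha_Y$ and $\omega_2-\pi^*\beta_Y$ to be meromorphic multiples of $\omega_0$. The heart of the argument is to use the gauge freedom --- transformations adding multiples of $\omega_0$ to $\omega_1$, multiples of $\omega_0$ and $\omega_1$ to $\omega_2$, together with rescalings --- to absorb these ``vertical'' deviations, at the cost of simultaneously altering $\alpha_Y$ and $\beta_Y$ on $Y$ in a controlled way.

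Once the triple has been put in the form $(\pi^*\omega_Y,\pi^*\alpha_Y,\pi^*\beta_Y)$, the remaining Maurer-Cartan relation upstairs reads $\pi^*(d\beta_Y-\alpha_Y\wedge\beta_Y)=0$. Since $\pi$ is dominant, $\pi^*$ is injective on meromorphic $2$-forms, so $d\beta_Y=\alpha_Y\wedge\beta_Y$ holds on $Y$, and $(\omega_Y,\alpha_Y,\beta_Y)$ is a projective triple on $Y$ producing the desired transverse projective structure for $\F$.

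The main obstacle will be the gauge step: showing that the two vertical deviations can be \emph{simultaneously} eliminated by gauge transformations whose parameters themselves descend to $Y$. A safety net is provided by Lemma \ref{uniqueness projective structure}, which cleanly splits the problem into two branches. In the ``essentially unique structure'' branch, pulling the projective structure on $\pi^*\F$ back along the two projections $X\times_Y X\dashrightarrow X$ yields two equivalent structures on the common pullback, producing descent data from which a projective structure on $Y$ can be reconstructed. In the remaining ``virtually transversely additive'' branch, $\pi^*\F$ is defined by a closed rational $1$-form on some finite cover, and the trace of this form along $\pi$ gives, after a finite base change on $Y$, an additive structure for $\F$ — and an additive structure is in particular a projective structure.
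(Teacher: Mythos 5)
Your framework (Godbillon--Vey/projective triples with a trivialized bundle) is the same one used in the reference the paper cites for this lemma (Casale's Lemma 3.2; the paper itself gives no argument beyond that citation), and your first two paragraphs correctly reduce the problem to the following: after gauging away the term $g\omega_0$ in $\omega_1$ (which is legitimate, since it only changes the representative of the structure on $X$ and requires nothing of $g$), one is left with a triple $(\pi^*\omega_Y,\pi^*\alpha_Y,\pi^*\beta_Y+F\omega_0)$ for some $F\in\mathbb C(X)$, and the whole content of the lemma is to produce from $F$ a solution in $\mathbb C(Y)$ of the corresponding linear first-order equation $dF\wedge\omega_Y+2F\,\omega_Y\wedge\alpha_Y=\alpha_Y\wedge\beta_Y-d\beta_Y$. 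This step you do not carry out. Note that no gauge transformation fixing $\omega_0$ and $\omega_1$ is available (the stabilizer of the first two entries of a triple is trivial), so the residual term $F\omega_0$ is exactly the Schwarzian ambiguity and cannot be ``absorbed''; it must be shown that \emph{some} solution of its equation descends, and a general solution $F$ need not lie in $\pi^*\mathbb C(Y)$.

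Your safety net does not close this gap. In the ``unique structure'' branch, knowing that $p_1^*\mathcal P$ and $p_2^*\mathcal P$ are \emph{equivalent} on $X\times_Y X$ is not effective descent data: you would need a cocycle condition on the equivalences and an effectivity statement for descent of $\mathbb P^1$-bundles with Riccati foliations and sections along a dominant \emph{rational} map, none of which is formal --- indeed, ``descend the structure along $\pi$'' is precisely the statement being proved, so this branch is circular as written. (The second branch is essentially fine, except that the naive trace of the closed $1$-form can vanish identically; restricting to a generic multisection of $X'\dashrightarrow Y$ is the safe move.) The standard way to finish is: first restrict the triple to a generic multisection $Z\subset X$ so that $\pi|_Z:Z\to Y$ is generically finite of some degree $d$; then observe that the equation satisfied by $F|_Z$ is linear with coefficients pulled back from $Y$, so $\tfrac1d\operatorname{Tr}_{\mathbb C(Z)/\mathbb C(Y)}(F|_Z)$ is again a solution and lies in $\mathbb C(Y)$, yielding the triple $(\omega_Y,\alpha_Y,\beta_Y+\tfrac1d\operatorname{Tr}(F)\,\omega_Y)$ on $Y$. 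With that trace step supplied, your argument becomes a complete proof; without it, the decisive point is missing.
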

\begin{proof}
	This is \cite[Lemma 3.2]{MR1955577}, see also \cite[Lemma 2.1]{2017arXiv171208330L}.
\end{proof}

\subsection{Distinguished first integrals and monodromy representation}\label{SS:distinguished}
Fix a transverse projective structure for a transversely projective foliation $\F$. Let $H \subset X$ denote the polar hypersurface of the connection $\nabla$. Locally at points of $X_0:=X\setminus H$ the foliation $\F$ admits distinguished (meromorphic) first integrals
\[
    F_i\colon U_i \rato \Pj^1
\]
which are uniquely defined modulo composition to the left by elements of $\Aut(\Pj^1)=\PSL_2(\C)$.

If we fix $x_0 \in X_0$, the analytic continuation of one such distinguished local first integral along closed paths yields a (meromorphic) developing map
\[
    \dev \colon \widetilde X_0 \rato \mb \Pj^1\, ,
\]
where $\widetilde X_0$ denotes the universal cover of $X_0$, and a monodromy representation
\[
    \rho\colon \pi_1(X_0,x_0)\to \PSL_2(\C)
\]
such that
\[
    \rho(\gamma) \circ\dev=\dev \circ \gamma \, , \qquad \text{for all }\gamma\in \pi_1(X_0).
\]

Neither the developing map nor the monodromy representation is uniquely determined, even after
fixing the base point $x_0$. They depend on the choice of the initial local first integral. A different choice of a distinguished local first integral alters the developing mapping by left composition with an automorphism $\varphi$ of $\mathbb P^1$. After the alteration, the monodromy representation becomes $\gamma \mapsto \varphi \circ \rho(\gamma) \circ \varphi^{-1}$.

\begin{lemma}\label{L:uniqueness}
    Let $\F$  be a transversely projective foliation on a projective manifold $X$ which is not algebraically integrable. If its monodromy representation $\rho:\pi_1(X_0)\to \PSL_2(\C)$ has Zariski dense image then $\F$ admits exactly one transverse projective structure.
\end{lemma}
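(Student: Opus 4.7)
The plan is to argue by contradiction via a joint developing map. Suppose $\F$ admits two non-equivalent transverse projective structures with developing maps $\dev_1, \dev_2 \colon \widetilde{X_0} \to \Pj^1$ and monodromy representations $\rho_1, \rho_2 \colon \pi_1(X_0) \to \PSL_2(\C)$, where $\rho_1$ has Zariski dense image by hypothesis. I would form the joint map $\Phi = (\dev_1,\dev_2) \colon \widetilde{X_0} \to \Pj^1 \times \Pj^1$: since both $\dev_i$ are meromorphic first integrals of the lifted foliation and hence depend locally on a common transverse coordinate near any smooth point of $\F$, the image of $\Phi$ is one-dimensional, and its Zariski closure $\Sigma \subset \Pj^1 \times \Pj^1$ is an algebraic curve.

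Next I would analyze the Zariski closure $H \subseteq \PSL_2(\C) \times \PSL_2(\C)$ of the image of $\gamma \mapsto (\rho_1(\gamma),\rho_2(\gamma))$, which acts on $\Sigma$ by equivariance of $\Phi$. By Zariski density of $\rho_1$ the first projection $H \to \PSL_2(\C)$ is surjective, so the subgroup $H \cap (\{1\} \times \PSL_2(\C))$ is normal in $\PSL_2(\C)$. Simplicity of $\PSL_2(\C)$ leaves only two cases: either this intersection is all of $\PSL_2(\C)$ and $H = \PSL_2(\C) \times \PSL_2(\C)$, or it is trivial and $H$ is the graph of an algebraic homomorphism $\phi \colon \PSL_2(\C) \to \PSL_2(\C)$ that, since outer algebraic automorphisms of $\PSL_2(\C)$ are absent, is either trivial or inner. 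The first case is ruled out because $\PSL_2(\C) \times \PSL_2(\C)$ acts transitively on $\Pj^1 \times \Pj^1$ and so cannot preserve a proper subvariety like $\Sigma$.

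To finish I would dispose of the two remaining cases. If $\phi$ is trivial, then $\rho_2$ is trivial so $\dev_2$ descends to a rational map $X_0 \rato \Pj^1$, which is a rational first integral of $\F$; this contradicts the hypothesis that $\F$ is not algebraically integrable. If $\phi$ is inner with $\phi(g) = hgh^{-1}$, then after conjugating by $(1,h) \in \PSL_2(\C)^2$ the $H$-action on $\Pj^1 \times \Pj^1$ becomes the diagonal $\PSL_2(\C)$-action, whose only one-dimensional invariant subvariety is the diagonal; hence $\Sigma$ is the graph of $h$, so $\dev_2 = h \circ \dev_1$ and the two projective structures are equivalent, contradicting the assumption.

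The main obstacle I anticipate is justifying rigorously that $\Phi$ has one-dimensional Zariski closure, given that the developing maps are only meromorphic and are defined on the complement of possibly distinct polar divisors; handling this requires choosing a common Zariski open subset on which both projective structures are regular, identifying a fundamental group on which both monodromies are simultaneously defined, and invoking the functional dependence of meromorphic first integrals of the same codimension-one foliation.
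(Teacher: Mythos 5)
Your strategy (a joint developing map into $\Pj^1\times\Pj^1$ plus a Goursat analysis of the joint monodromy) is genuinely different from the paper's, but it has a gap at its foundation: the claim that the Zariski closure $\Sigma$ of the image of $\Phi=(\dev_1,\dev_2)$ is an algebraic curve. Functional dependence of the two first integrals only shows that the image of $\Phi$ is locally contained in analytic curves (graphs of the local transition maps $\varphi$ with $\dev_2=\varphi\circ\dev_1$); a one-dimensional analytic set can perfectly well have two-dimensional Zariski closure. In fact this happens in exactly the configuration the lemma must exclude: if the second structure comes from a closed rational $1$-form with primitive $f$ and the first developing map is $F=e^{\lambda f}$ (modulo $\Aut(\Pj^1)$), the image of $(F,f)$ is the graph of $z\mapsto e^{\lambda z}$, which is Zariski dense in $\Pj^1\times\Pj^1$. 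So the algebraicity of $\Sigma$ cannot be taken for granted; it would have to be \emph{derived} from the Zariski density of $\rho_1$, whereas your argument uses $\Sigma$ being a proper curve as an input to exploit that density, which is circular. A secondary gap: in the case where $\rho_2$ is trivial you conclude that $\dev_2$ descends to a \emph{rational} first integral, but a univalued meromorphic function on $X_0=X-H$ need not extend meromorphically across $H$ when the second structure has irregular singularities (cf.\ the remark at the end of \S\ref{S:singtransvproj}: there exist non-integrable transversely projective foliations with trivial monodromy). Without regularity you only get a transcendental first integral on $X_0$, not algebraic integrability.

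For comparison, the paper sidesteps both issues as follows. Non-uniqueness of the projective structure first forces, via Lemma \ref{uniqueness projective structure}, that (after a generically finite pull-back, harmless for both hypotheses) $\F$ is also defined by a closed rational $1$-form $\Omega$ with local primitive $f$. One then forms the Schwarzian derivative $G=\{F,f\}$ of a distinguished first integral $F$ of the original structure with respect to $f$: the cocycle rule makes $G\,(\Omega\otimes\Omega)$ a globally defined rational quadratic differential and $G$ a rational first integral of $\F$. Non-integrability forces $G$ to be constant, non-equivalence of the structures forces $G\neq 0$, and $G$ equal to a nonzero constant forces $F=e^{\lambda f}$ modulo $\Aut(\Pj^1)$, whose monodromy fixes a point of $\Pj^1$ — contradicting Zariski density. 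If you want to keep your geometric picture, the Schwarzian is precisely the invariant that detects whether your local transition maps $\varphi$ are Möbius, and working with it replaces the unjustified algebraicity of $\Sigma$ by the (correct) rationality of a first integral.
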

\begin{proof}
    One proceeds by contradiction, assuming the non-uniqueness of transverse projective structure. By  Lemma \ref{uniqueness projective structure}, it follows that, up to taking pull-back by a generically finite map (what does not affect the denseness of $\rho$, nor the non algebraic integrability), $\F$ is also defined by a closed rational one form $\Omega$.  Then, by taking the Schwartzian derivative $\{F,f \}$ of a local distinguished first integral $F$ associated to the original structure with respect to a distinguished first integral $f$ associated to the second structure(i.e., $f$ a local primitive of $\Omega$), and according to the Schwartzian derivative rule, one inherits a rational quadratic differential
    \[
        \Theta= G(\Omega\otimes\Omega)
    \]
    where $G=\{ F,f\}$ (locally defined) is a  rational first integral for $\F$ (globally defined). Because $F\not=f$ (modulo left composition by automorphisms of $\Pj^1$) $G$ does not vanish identically. Moreover $G$ can't be a non zero constant, otherwise there would exist $\lambda\in{ \C}^*$ such that $F=e^{\lambda f}$ (modulo $\Aut{( \Pj^1)}$). This would imply that the image of $\rho$ fixes a point of $\Pj^1$, thus contradicting the assumption of Zariski denseness. Then  the rational function $G$ is necessarily non constant.
    This contradicts the hypothesis that $\F$ is not algebraically integrable.
\end{proof}

\subsection{Singularities of transverse projective structures}\label{S:singtransvproj}
Transverse projective structures inherit the concept of regular and irregular singularities from flat meromorphic connections.

\begin{dfn}
    We say that a transversely projective foliation has regular singularities if there exists a representative $(E, \nabla, \sigma)$ of the transverse projective structure for $\F$ such that the connection $\nabla$ has at worst regular singularities in the sense of \cite[Chapter 2, Definition 4.2]{MR0417174}.
\end{dfn}

If a transverse projective structure has regular singularities then, as shown in \cite[Lemma 2.4]{MR3294560}, the monodromy representation determines, up to birational gauge equivalence, the Riccati foliation on $\mathbb P(E)$ defined by $\nabla$. For later use, let us state a direct consequence of this fact.

\begin{prop}\label{abelian monodromy}
    Let $\F$ be a codimension one foliation on a complex manifold endowed with a transverse projective structure having at worst regular singularities. Then the following assertions hold true.
    \begin{enumerate}
        \item If the monodromy representation is finite then $\F$ admits a meromorphic first integral.
        \item If the monodromy representation is virtually abelian then $\F$ is (virtually) transversely additive.
        \item If the monodromy representation is solvable then $\F$ is transversely affine.
    \end{enumerate}
\end{prop}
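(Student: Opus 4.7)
The key input is the fact recalled just before the statement: since the transverse projective structure has regular singularities, the monodromy representation $\rho:\pi_1(X_0)\to \PSL_2(\C)$ determines the Riccati foliation $\mc R$ on $\Pj(E)$ up to birational gauge equivalence. The plan is to produce, for each of the three algebraic conditions on $\rho$, a canonical bimeromorphic model of $\mc R$ from which the claimed additional transverse structure can be read off on the trivializing cover, and then to push the resulting objects (first integral, closed $1$-form, or closed logarithmic $1$-form) down to $X$, invoking regularity to ensure meromorphicity across the polar locus $H$.

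For (1), assume $G=\rho(\pi_1(X_0))$ is finite. Let $Y_0\to X_0$ be the finite étale Galois cover associated to $\ker\rho$; on $Y_0$ the pulled-back Riccati has trivial monodromy, so by the determination result it is birationally equivalent to the trivial $\mathbb P^1$-bundle with its horizontal foliation. Concretely, the developing map $\dev$ descends to a meromorphic first integral $\widetilde F:Y_0\rato \mathbb P^1$ of $\pi^*\F$. Post-composing with the quotient $\mathbb P^1\to \mathbb P^1/G\cong \mathbb P^1$ (finite subgroups of $\PSL_2(\C)$ have rational quotient) yields a $\Gal(Y_0/X_0)$-invariant map, hence a meromorphic first integral of $\F$ on $X_0$. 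Regular singularities at $H$ guarantee moderate growth, so the first integral extends meromorphically to $X$.

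For (2), a virtually abelian $\rho$ becomes abelian after passing to a finite étale cover, which only weakens the conclusion to ``virtually''. An abelian subgroup of $\PSL_2(\C)$ is conjugate into either the multiplicative group (two fixed points) or the additive group (one fixed point). Choosing a coordinate $z$ on $\mathbb P^1$ that diagonalizes or parabolically normalizes the action, the $1$-form $dz/z$, respectively $dz$, is $\rho$-invariant on $\mathbb P^1$. Pulling back along the developing map produces a globally defined closed meromorphic $1$-form on the cover whose kernel defines the pulled-back foliation, i.e.\ a transverse additive structure; regularity again ensures the form is meromorphic across $H$.

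For (3), solvable subgroups of $\PSL_2(\C)$ are conjugate into the affine group $\Aff(\C)$ fixing $\infty\in \Pj^1$, so one may assume $\rho(\gamma):z\mapsto a_\gamma z + b_\gamma$. Write a local defining form of $\F$ as $\omega = h\,dF$, where $F$ is a distinguished local first integral with affine monodromy. Under the monodromy transformation $F\mapsto a_\gamma F+b_\gamma$ one has $h\mapsto h/a_\gamma$, so $\eta:=-dh/h$ is monodromy-invariant (the $a_\gamma$ are locally constant) and globally defined on $X_0$; it is closed since $d\eta=0$ directly, and satisfies $d\omega=\eta\wedge\omega$. This is exactly the datum of a flat meromorphic connection on $N_\F$ making $\omega$ flat, i.e.\ a transverse affine structure in the sense of Definition \ref{def affine}, extended meromorphically to $X$ by regularity.

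The main technical point in all three items is the extension of the constructed object from $X_0$ to $X$: this is where the regular singularities hypothesis does all the work, via the cited fact that regularity controls the Riccati up to birational gauge, preventing the essential singularities that would otherwise obstruct meromorphic extension. The passage to a finite cover in (1) and (2) and the affine normalization in (3) are then bookkeeping.
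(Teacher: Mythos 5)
Your overall strategy is exactly the one the paper intends: the proposition is stated there as a direct consequence of the fact (from \cite[Lemma 2.4]{MR3294560}) that regular singularities let the monodromy determine the Riccati foliation up to birational gauge, and your items (1) and (2) flesh this out correctly — suspension/normal form on a finite cover, an invariant form ($dz/z$ or $dz$) pulled back along the developing map, and meromorphic extension across $H$ by regularity.

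There is, however, a genuine gap in item (3): it is \emph{not} true that every solvable subgroup of $\PSL_2(\C)$ is conjugate into $\Aff(\C)$. The infinite dihedral-type groups contained in the normalizer of the diagonal torus (e.g.\ the group generated by $z\mapsto 2z$ and $z\mapsto 1/z$) and the finite solvable groups $A_4$, $S_4$ are solvable but fix no point of $\Pj^1$, so your normalization $\rho(\gamma):z\mapsto a_\gamma z+b_\gamma$ and the construction of $\eta=-dh/h$ break down for them. The fix is short: a solvable subgroup of $\PSL_2(\C)$ either fixes a point of $\Pj^1$ (your affine argument applies) or is virtually abelian (it lies in the normalizer of a torus, which contains the torus with index two, or it is finite); in the latter case items (1)--(2) show $\F$ is virtually transversely additive, and the paper records in \S\ref{SS:transversely affine} that virtually transversely additive foliations are transversely affine, so the conclusion of (3) still holds. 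A milder instance of the same oversight occurs in item (2): the dichotomy ``multiplicative or additive'' for abelian subgroups of $\PSL_2(\C)$ misses the Klein four-groups $\{z,-z,1/z,-1/z\}$ up to conjugacy; this is harmless only because those groups are finite, so the ``virtually'' in the conclusion together with item (1) absorbs them — but you should say so. Finally, the sign in $d\omega=\eta\wedge\omega$ versus $(dh/h)\wedge\omega$ is off by a conventional sign; this is cosmetic.
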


In particular, if a transverse projective structure has regular singularities and the monodromy (of a small loop) around an irreducible hypersurface $D\subset H$ is trivial, then a distinguished first integral defined in a neighborhood of $\gamma$ extends meromorphically through $D$.

\begin{rem}
    Beware that algebraically integrable foliations admit infinitely many non-equivalent transverse projective structures. In particular, they admit
    transverse projective structures with monodromy Zariski dense in $\PSL(2,\C)$. Also, there exist transversely projective foliations $\F$ which
    are not transversely affine but have trivial monodromy. Of course, if this is the case, then $\F$ must have irregular singularities.
\end{rem}

\subsection{Transversely hyperbolic foliations}\label{SS:transv hyp}
We keep the notation used in Subsection \ref{SS:distinguished}. A transversely hyperbolic foliation is a transversely projective foliation with a developing map $\dev \colon \widetilde X_0 \rato \mb \Pj^1$ having image contained in the unitary disc $\mathbb D \subset \C \subset \mathbb P^1$ and a monodromy representation $\rho$ taking values in  $\Aut(\mathbb D) \subset \PSL_2(\mathbb C)$.

Although both conditions are somewhat restrictive, transversely hyperbolic foliations are central to the theory, as is shown by the structure theorem for transversely projective foliations  presented in Subsection \ref{SS:structure}.

\section{Transverse symmetries and orbit closures}\label{S:orbits}

Let $f: X \dashrightarrow X$ be a dominant rational map of a projective manifold. We will say that the orbit dimension of $f$ is equal to $d$ if, for any sufficiently general point $x \in X$, the Zariski closure of the $f$-orbit $\{ f^n(x) ; n \in \mathbb N\}$ of $x$ has dimension $d$. Similarly, if $G \subset \End(X)$ is a semi-group of dominant rational maps $X$, we define the orbit dimension of $G$ as the dimension of the Zariski closure of the $G$-orbit of a very general point $x \in X$.

This section is devoted to the proof of the following result.

\begin{prop}\label{P:orbit12}
    Let $X$ be projective manifold and let $\mathcal F$ be a purely transcendental codimension one foliation on $X$. If $\End(X,\F)$ has orbit dimension one or two then $\mathcal F$ is a virtually transversely additive foliation.
\end{prop}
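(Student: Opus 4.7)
The plan is to reduce the problem to the setting of codimension one foliations on projective surfaces, where Theorem~\ref{T:classification surfaces} produces, after a generically finite cover, a closed rational $1$-form defining the pullback foliation, and then to globalize this structure. First I parametrize the orbit closures: let $d \in \{1,2\}$ denote the orbit dimension of $\End(X,\F)$. For a general $x \in X$, the Zariski closure $W_x$ of $\End(X,\F) \cdot x$ is an irreducible algebraic subvariety of dimension $d$, and the family $\{W_x\}$ yields a dominant rational map $\pi \colon X \rato Y$ whose general fiber is some $W = W_x$. Note that each $f \in \End(X,\F)$ preserves a general fiber: indeed $f(x) \in W_x$ forces $W_{f(x)} \subseteq W_x$, with equality by dimension, so the action of $\End(X,\F)$ on $Y$ is trivial. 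Since $\F$ is purely transcendental, no positive-dimensional algebraic subvariety of $X$ can be tangent to $\F$ at a general point, so the general fiber $W$ is not contained in a leaf of $\F$.

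For $d = 2$, the general fiber $W$ is a surface on which $\F$ induces a non-trivial foliation by curves $\F|_W$. Restriction produces a semigroup of dominant rational endomorphisms of $W$ preserving $\F|_W$, infinite because a general $\End(X,\F)$-orbit is Zariski dense in $W$. By Theorem~\ref{T:classification surfaces}, there exists a generically finite cover $W' \to W$ on which the pullback of $\F|_W$ is defined by a closed rational $1$-form $\omega_W$. Moreover $\F|_W$ is not algebraically integrable (its algebraic leaves would be algebraic subvarieties of $X$ tangent to $\F$, contradicting pure transcendence), so $\omega_W$ is unique up to a multiplicative constant. As $W$ varies over $Y$, these constants can be synchronized, after passing to a further generically finite cover, to yield a single meromorphic $1$-form $\omega$ on a cover $\widetilde X \to X$ whose restriction to every general fiber is closed. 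One then checks $\omega$ is globally closed using the integrability condition $\omega \wedge d\omega = 0$ together with the fact that $d\omega$ vanishes on tangent vectors to fibers (hence descends to a $2$-form on $Y$) and that $\F$ is not a pullback from $Y$. This realizes $\F$ as virtually transversely additive.

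For $d = 1$, I reduce to the surface case by passing to two-dimensional invariant slices. Since every $f \in \End(X,\F)$ fixes a general fiber of $\pi$, for any irreducible algebraic curve $B \subset Y$ the preimage $S := \pi^{-1}(B) \subset X$ is invariant under $\End(X,\F)$ and swept out by the one-dimensional orbit closures $C_x \subset X$, $x \in S$. The foliation $\F|_S$ is codimension one on the surface $S$, preserved by the restricted semigroup $\End(X,\F)|_S$, which is infinite since its further restriction to a general $C_x \subset S$ is already infinite. Applying Theorem~\ref{T:classification surfaces} to $(S, \F|_S)$ and then globalizing over the algebraic family of such curves $B$ in $Y$, exactly as in the $d=2$ case, yields virtual transverse additivity of $\F$.

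The main obstacle in both cases is the globalization step. Fiberwise virtual transverse additivity does not formally imply its global version, so one must carefully synchronize the fiberwise closed $1$-forms, which are defined only up to multiplicative constants, into a single global closed rational $1$-form after a generically finite cover. This synchronization relies on the uniqueness of the fiberwise $\omega_W$ up to scaling, on the algebraicity of the family of fibers, and on the integrability condition for $\F$ constraining how these forms can differ across fibers.
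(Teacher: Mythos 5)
Your overall strategy for $d=2$ matches the paper's: fiber $X$ by the Zariski closures of orbits (this is Theorem~\ref{T:Belletal}, due to Bell--Ghioca--Reichstein, which you assert but do not cite --- it is not a formal consequence of "the family $\{W_x\}$ varies algebraically"), apply Theorem~\ref{T:classification surfaces} to the general fiber, and then globalize. The genuine gap is the globalization step, which you correctly identify as "the main obstacle" but then only sketch. Passing from "the restriction of $\F$ to the very general fiber is virtually transversely additive" to "$\F$ is virtually transversely additive" is exactly the content of Theorem~\ref{T:extensionoftransversestructure}, a substantial result that the paper imports from \cite{2017arXiv171208330L} (under the hypothesis, satisfied here by pure transcendence, that $\F$ restricted to the very general fiber has no rational first integral). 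Your proposed synchronization argument does not establish this: the generically finite covers $W'\to W$ produced fiberwise by Theorem~\ref{T:classification surfaces} need not fit together into an algebraic family, let alone into a single generically finite cover $\widetilde X\to X$; the fiberwise forms are only canonical up to $\C^*$ and up to the Galois action of the cover; and even granting a global $1$-form $\omega$ restricting to closed forms on fibers, the inference from "$d\omega$ vanishes on vectors tangent to fibers" to $d\omega=0$ is not justified. As written, this step is a restatement of the difficulty rather than a proof, so the argument is incomplete unless you invoke Theorem~\ref{T:extensionoftransversestructure} (or reprove it).

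For $d=1$ you take a genuinely different route from the paper. The paper analyzes the one-dimensional fibration $X\rato T$ directly: it shows the general fiber has genus $0$ or $1$, and in each case writes the map and the defining $1$-form in explicit coordinates ($g(x,z)=(x,p(x,z))$, comparison of coefficients in $g^*\omega=\frac{\partial p}{\partial z}\omega$, etc.), first deducing a transverse affine structure and only then invoking Theorems~\ref{THM:D} and~\ref{T:classification surfaces}; it also treats separately the case where every element of the semigroup has finite order (Proposition~\ref{P:dim orbit 1 bis}), which your slicing argument would need to address as well --- there the orbit of a point under a single element need not be dense in $C_x$, and the paper instead produces a $\C^*$-action and applies Proposition~\ref{P:transverse symmetries}. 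Your idea of slicing by surfaces $S=\pi^{-1}(B)$ over curves $B\subset Y$ and applying the surface classification to $(S,\F|_S)$ is plausible in outline (the restricted semigroup is infinite and dominant on $S$, and $\F|_S$ has no first integral by pure transcendence), and would be shorter than the paper's computation --- but it leans on the same unproven globalization step, now over the family of surfaces $S_t$, so the gap above is doubled rather than avoided.
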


\subsection{Results used in the proof of Proposition \ref{P:orbit12}}\label{S:factorization results}
An important tool used in the proof of Proposition \ref{P:orbit12} is the following  result due Bell, Ghioca, and Reichstein

\begin{thm}{\cite[Theorem 4.1]{MR3676045}}\label{T:Belletal}
    Let $X$ be a projective manifold and let $G \subset \End(X)$ be a semigroup of dominant rational maps. Then there exists a rational  map $ f : X \dashrightarrow T$ to a projective manifold $T$ such that
    \begin{enumerate}
        \item the map $f$ is $G$-invariant, i.e. $g\circ f = g$ for any $g \in G$; and
        \item the Zariski closure of the fiber of $f$ containing a sufficiently general point of $x \in X$ coincides with the Zariski closure of the $G$-orbit of $x$, i.e.
            \[
                \overline{f^{-1}(f(x))} = \overline{ \left\{ g(x) ; g \in G \right\} } \, .
            \]
    \end{enumerate}
\end{thm}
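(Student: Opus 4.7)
My strategy is to construct the target $T$ from the field $K := \{h \in \C(X) \mid h \circ g = h \text{ for all } g \in G\}$ of $G$-invariant rational functions on $X$. Since $K$ is a subfield of the finitely generated extension $\C(X)/\C$, a classical theorem of Zariski ensures that $K/\C$ is itself finitely generated. Choose a smooth projective model $T$ with $\C(T) = K$; the inclusion $K \hookrightarrow \C(X)$ then produces a dominant rational map $f \colon X \rato T$. Replacing $T$ by the middle variety in the Stein factorisation of $f$, I may further assume that the generic geometric fibre of $f$ is irreducible. Assertion~(1) is then tautological: for $g \in G$ and $h \in \C(T) = K$ one has $(f \circ g)^* h = g^* h = h = f^* h$, so $f \circ g$ and $f$ induce the same pullback on $\C(T)$ and therefore coincide as rational maps.

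\textbf{Dimension estimate.} Let $d$ denote the orbit dimension of $G$, so $\dim \overline{G \cdot x} = d$ for very general $x$. The $G$-invariance of $f$ yields $\overline{G \cdot x} \subseteq \overline{f^{-1}(f(x))}$, hence $\dim T \leq \dim X - d$. For the reverse inequality, consider the Zariski closure
\[
Z := \overline{\bigcup_{g \in G} \Gamma_g} \subseteq X \times X
\]
of the union of the graphs of elements of $G$. As a closed subvariety of a Noetherian space, $Z$ has finitely many irreducible components; for very general $x$ the fibre $Z \cap (\{x\} \times X)$ equals $\{x\} \times \overline{G \cdot x}$, of dimension $d$. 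This algebraic family of $d$-cycles in $X$ induces a rational map $\Phi \colon X \rato \operatorname{Chow}_d(X)$ sending $x$ to the class of $\overline{G \cdot x}$. The image of $\Phi$ has dimension $\dim X - d$, since a general fibre of $\Phi$ is an orbit closure: for very general $x$ one has $\overline{G \cdot g(x)} = \overline{G \cdot x}$ for every $g \in G$, because both are closed subvarieties of dimension $d$ with the former contained in the latter. Pulling back $\dim X - d$ algebraically independent rational functions from the image of $\Phi$ yields the same number of algebraically independent elements of $K$, proving $\dim T \geq \dim X - d$.

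\textbf{Conclusion.} Combining the two inequalities, $\dim T = \dim X - d$. For very general $x$, $\overline{f^{-1}(f(x))}$ is irreducible of dimension $d$ (by the Stein factorisation), and the subvariety $\overline{G \cdot x} \subseteq \overline{f^{-1}(f(x))}$ has a component of the same dimension $d$. That component must fill the irreducible ambient, while the remaining components of $\overline{G \cdot x}$ sit inside it by dimension reasons, so $\overline{G \cdot x} = \overline{f^{-1}(f(x))}$, establishing assertion~(2).

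\textbf{Main obstacle.} The technical heart is the construction of the Chow map $\Phi$, which requires algebraicity of the family of orbit closures. For a genuine group action this is Rosenlicht's classical theorem on geometric quotients; for a semigroup the subtlety is that orbit-closure containment is only unidirectional ($y \in \overline{G \cdot x}$ does not in general imply $\overline{G \cdot y} = \overline{G \cdot x}$). The fix is to restrict to the Zariski open locus of $X$ on which orbit closures attain the maximal dimension $d$: there the relation ``$x$ and $y$ have the same orbit closure'' becomes tractable and the Chow parametrisation is well defined.
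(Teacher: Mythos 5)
You should first note that the paper does not prove this statement at all: it is quoted verbatim from Bell--Ghioca--Reichstein \cite[Theorem 4.1]{MR3676045} (with the cyclic case attributed to Amerik--Campana), so your argument has to be measured against those proofs, whose entire technical content lies exactly where your write-up is thinnest. Your skeleton --- take $T$ with $\C(T)$ the field $K$ of $G$-invariant rational functions, and produce enough elements of $K$ by showing that the very general orbit closures move in an algebraic family parametrised by a Chow/cycle space --- is a reasonable plan and close in spirit to Amerik--Campana. But the two decisive steps are asserted, not proved. (i) The claim that for very general $x$ the fibre of $Z=\overline{\bigcup_g \Gamma_g}$ over $x$ equals $\{x\}\times\overline{G\cdot x}$: only the inclusion $\supseteq$ is immediate; the Zariski closure of a union can perfectly well have fibres strictly larger than the fibrewise closures, and excluding this (equivalently, showing that $x\mapsto \overline{G\cdot x}$ is a well-defined, $G$-invariant rational map to a cycle space on a dense open set) is precisely the heart of the cited theorem. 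Your closing ``main obstacle'' paragraph concedes the point, but ``restrict to the locus where orbit closures attain the maximal dimension $d$'' is a gesture, not an argument --- nothing is said about why the family is algebraic there, nor why $\Phi$ is then $G$-invariant.

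(ii) The step ``$\overline{G\cdot g(x)}=\overline{G\cdot x}$ because both are closed of dimension $d$ and the former is contained in the latter'' is false as stated: orbit closures need not be irreducible (the paper remarks, immediately after the theorem, that the fibres of $f$ need not be irreducible), and a $d$-dimensional closed subset of a reducible $d$-dimensional closed subset need not exhaust it. Moreover $g(x)$ need not remain very general when $x$ is, unless one first reduces to a countable (say finitely generated) sub-semigroup --- a reduction you never make, although all your ``very general'' arguments implicitly require it. Relatedly, the fibre of $\Phi$ through $x$ is $\{y:\overline{G\cdot y}=\overline{G\cdot x}\}$, which is not obviously contained in $\overline{G\cdot x}$ (for a semigroup without identity one can have $y\notin\overline{G\cdot y}$), so the assertion that the image of $\Phi$ has dimension $\dim X-d$, which is what feeds $\dim T\ge \dim X-d$, also lacks a proof. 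Granting (i) and (ii), your endgame (Stein factorisation, an irreducible fibre of dimension $d$ forced to coincide with a $d$-dimensional component of the orbit closure, hence with the whole orbit closure) is fine; but as written the proof has a genuine gap at its core, namely the algebraicity and $G$-invariance of the family of orbit closures, which is exactly what Bell--Ghioca--Reichstein (and Amerik--Campana) actually establish.
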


Note that the fibers of $f$ are not necessarily irreducible but, after replacing $G$ by a finite index sub semigroup, one can assume that this is the case.

When $G$ is the semigroup generated by one dominant rational map, the result above is due to Amerik and Campana, see  \cite[Theorem 4.1]{MR2400885}, and was originally stated in a more general setup (compact Kähler manifolds).

We will say that an orbit of a semigroup $G \subset \End(X)$   is very general if it corresponds to a very general point of the variety $T$ provided by  Theorem \ref{T:Belletal}.

We will also make use of the following result proved in \cite{2017arXiv171208330L}.

\begin{thm}\label{T:extensionoftransversestructure}.
    Let $\mathcal F$ be a codimension one foliation on a projective manifold $X$. Suppose there exist a projective variety $B$ and a morphism $g : X \to B$ with irreducible general fiber  such that $\F$ is transversely projective/transversely affine/virtually transversely additive when restricted to a very general fiber of $g$. If the restriction of $\F$ to the very general fiber does not admit a rational first integral, then $\F$ is, respectively, transversely projective/transversely affine/virtually transversely additive.
\end{thm}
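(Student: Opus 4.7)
The plan is to leverage the uniqueness of the fiberwise transverse structure, which follows from Lemma \ref{uniqueness projective structure} together with the hypothesis of no rational first integral, to glue the restrictions $\F|_{F_b}$ into a global structure on $X$.

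\textbf{Transversely projective case.} Fix a Zariski-open subset $B^\circ \subset B$ over which $g$ is smooth, $F_b$ is a projective manifold, and $\F|_{F_b}$ is transversely projective without non-constant rational first integral. Since $F_b$ is projective, each fiberwise structure may be represented by a triple $(\mathcal O_{F_b}^{\oplus 2}, \nabla_b, \sigma_b)$ with trivial underlying bundle, so it amounts to a Riccati foliation $\mathcal R_b$ on $\mathbb P^1\times F_b$ and a meromorphic section $\sigma_b \colon F_b \dashrightarrow \mathbb P^1\times F_b$ satisfying $\sigma_b^*\mathcal R_b = \F|_{F_b}$. By Lemma \ref{uniqueness projective structure} this datum is unique up to the bimeromorphic equivalence of \S\ref{SS:transv proj}. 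To make this uniqueness algebraic, I would encode transverse projective structures, as in \cite{MR2337401, MR3522824}, by triples of meromorphic $1$-forms $(\omega_0,\omega_1,\omega_2)$ on $X$ satisfying the standard integrability relations, with $\omega_0$ a rational $1$-form defining $\F$. Restricting to fibers gives such a triple on each $F_b$, and uniqueness forces the family of fiberwise triples (viewed as rational sections of a coherent sheaf on $X$) to be single-valued over $B^\circ$, hence to extend to a rational family; the corresponding global triple $(\omega_0,\omega_1,\omega_2)$ on $X$ defines the desired global transverse projective structure, as one verifies by restricting once more to a very general fiber.

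\textbf{Transversely affine and virtually transversely additive cases.} A transverse affine structure is a transverse projective structure together with a horizontal meromorphic section of the associated $\mathbb P^1$-bundle (the marked fixed point at infinity of the affine monodromy). The previous paragraph produces the global projective structure; uniqueness of the horizontal section — again enforced by the absence of a rational first integral, which rules out a $\mathbb P^1$-family of horizontal sections — lets one glue the fiberwise horizontal sections into a global one, giving the global affine structure. Equivalently, one can glue the fiberwise flat meromorphic connections on $N_{\F|_{F_b}}$ directly, using that they are uniquely determined by the twisted $1$-form defining $\F$. For virtually transversely additive structures, the fiberwise closed rational $1$-forms on a finite cover are unique up to a multiplicative constant, so after passing to a simultaneous finite étale cover $\pi \colon X' \to X$ trivializing the monodromy of the associated rank-one local system, the fiberwise closed $1$-forms patch to a single global closed rational $1$-form defining $\pi^*\F$.

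\textbf{Main obstacle.} The technical heart is promoting the analytic fiberwise uniqueness to a genuinely algebraic family on $X$: one must argue that the canonical triple $(\omega_0,\omega_1,\omega_2)$, a priori defined only on fibers over $B^\circ$, has controlled poles as $b$ varies and arises as the restriction of a global rational object. The encoding by differential $1$-forms makes this routine in principle, since it realizes the problem inside a fixed coherent sheaf, but it requires care in bookkeeping the polar divisor — in particular allowing components lying over the discriminant $B\smallsetminus B^\circ$ — and in checking that the differential relations, which hold fiberwise, upgrade to relations on $X$ after adding the transverse components $g^*\Omega^1_B$ coming from the base direction.
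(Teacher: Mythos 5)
First, note that the paper does not prove this statement at all: it is quoted verbatim from \cite{2017arXiv171208330L}, so there is no internal argument to compare yours with; your proposal has to stand on its own, and as written it has a genuine gap at its core. Your gluing mechanism rests on the claim that the fiberwise transverse structure is unique because of Lemma \ref{uniqueness projective structure} plus the absence of a rational first integral. But that lemma does not say this: it says that \emph{non}-uniqueness forces the foliation to be virtually transversely additive, and virtual transverse additivity is perfectly compatible with having no rational first integral. For instance, a linear foliation on an abelian variety defined by a closed holomorphic $1$-form $\omega$ with generic periods has no rational first integral, yet it carries non-equivalent projective structures (e.g.\ the triples $(\omega,0,c\,\omega)$ for distinct constants $c$, or the structures with distinguished first integrals $f$ and $e^{\lambda f}$) and non-equal affine structures ($\nabla$ and $\nabla+\omega$). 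Uniqueness in this paper is only available under Zariski-dense monodromy (Lemma \ref{L:uniqueness}), which is not among your hypotheses. So your argument breaks down precisely in the transversely additive branch — which is the case the projective and affine cases reduce to once Lemma \ref{uniqueness projective structure} is invoked — and your treatment of that branch ("the fiberwise closed forms are unique up to a multiplicative constant, so after a simultaneous finite étale cover they patch") assumes exactly what has to be proven: that the varying finite covers of the fibers and the fiberwise $\C^*$-ambiguities assemble into a single rank-one object on $X$ (or on a fixed generically finite cover), rather than into a cocycle over $B$ with no a priori reason to be finite or trivial.

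Second, the step you yourself label the main obstacle is not a bookkeeping matter and is left unproved: the fiberwise triples $(\omega_0,\omega_1,\omega_2)$ only pin down the restriction of a putative global triple modulo components in $g^*\Omega^1_B$, so one must (i) show that the fiberwise structures vary rationally with $b$ with bounded polar divisors, and (ii) solve for the base-direction components so that the integrability relations $d\omega_0=\omega_0\wedge\omega_1$, etc., hold on all of $X$ and not merely after restriction to fibers; neither point follows formally from realizing the data inside a fixed coherent sheaf. In short, the uniqueness premise is false in the crucial case, and the extension-across-the-base step, which is the actual content of the theorem, is sketched but not carried out.
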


\subsection{Orbit  dimension two} We will split the proof of Proposition \ref{P:orbit12} in two parts according to the orbit dimension of $\End(X,\F)$. The lemma below settles the case of orbit dimension two.

\begin{lemma}\label{L:reduz}
    Let $\F$ be a codimension one foliation on a projective manifold $X$ and let $G \subset \End(X,\F)$ be a sub-semigroup. Assume that $\mathcal F$ is purely transcendental and the orbit dimension of $G$ is at least two. Suppose in addition that  the restriction of $\mathcal F$ to the irreducible components of the Zariski closure of a very general orbit of $G$ is virtually transversely additive, transversely affine, or transversely projective, then $\F$ itself is virtually transversely additive, transversely affine, or transversely projective respectively.
\end{lemma}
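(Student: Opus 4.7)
The plan is to combine the Bell--Ghioca--Reichstein invariant fibration (Theorem \ref{T:Belletal}) with the extension result for transverse structures (Theorem \ref{T:extensionoftransversestructure}), using the purely transcendental hypothesis to rule out rational first integrals on a general fiber.

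First, I would apply Theorem \ref{T:Belletal} to the semigroup $G$ to produce a $G$-invariant rational map $f: X \dashrightarrow T$ whose general fiber $F$ is the Zariski closure of the $G$-orbit of a very general point. After replacing $G$ by a finite index sub-semigroup (as noted in the paragraph following Theorem \ref{T:Belletal}), I may assume that the general fiber of $f$ is irreducible, so that the restriction $\F|_F$ has, by hypothesis, the claimed transverse structure (virtually transversely additive, transversely affine, or transversely projective). Resolving indeterminacies gives a birational morphism $\pi: \widetilde X \to X$ and a genuine morphism $\widetilde f : \widetilde X \to T$; the pulled-back foliation $\pi^* \F$ has the same type of transverse structure on $\widetilde X$ as $\F$ on $X$ and remains purely transcendental, and its restriction to the general fiber of $\widetilde f$ agrees (birationally) with $\F|_F$. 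Invoking Lemma \ref{L:CasaleCRAS} at the end will transport the conclusion back from $\pi^* \F$ to $\F$ in the transversely projective case, and the same verification works mutatis mutandis for the other two classes.

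The crucial step is verifying the non-existence hypothesis in Theorem \ref{T:extensionoftransversestructure}: that $\F$ restricted to a very general fiber of $f$ admits no rational first integral. Here is where the two assumptions interact. Because the orbit dimension of $G$ is at least two, $\dim F \geq 2$. Because $\F$ is purely transcendental, no positive-dimensional algebraic subvariety is contained in a leaf of $\F$; in particular $F$ itself is not tangent to $\F$, so $\F|_F$ is a genuine codimension one foliation on $F$. If $h$ were a non-constant rational first integral for $\F|_F$, then its level sets would be algebraic subvarieties of $F$ of dimension $\dim F -1 \geq 1$, each contained in a leaf of $\F$; this contradicts pure transcendentality. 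Hence $\F|_F$ has no rational first integral.

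With this, Theorem \ref{T:extensionoftransversestructure} applied to $\widetilde f : \widetilde X \to T$ and to $\pi^* \F$ directly yields that $\pi^* \F$, and therefore $\F$, is respectively virtually transversely additive, transversely affine, or transversely projective. The main obstacle is the step confirming the absence of a rational first integral on the general fiber; once this is in place, the conclusion reduces to an application of already-established machinery. A minor but real technical point is ensuring irreducibility of the general fiber before invoking Theorem \ref{T:extensionoftransversestructure}, which is taken care of by the finite index replacement of $G$ mentioned above.
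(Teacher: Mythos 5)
Your proof is correct and follows the same route as the paper: apply Theorem \ref{T:Belletal} after passing to a finite index sub-semigroup to get irreducible fibers, then invoke Theorem \ref{T:extensionoftransversestructure}, with pure transcendentality ruling out a rational first integral on the (at least two-dimensional) general fiber. Your write-up is merely more explicit than the paper's two-line argument about the resolution of indeterminacies and the verification that level sets of a putative first integral would give positive-dimensional algebraic subvarieties inside leaves.
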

\begin{proof}
    Replace $G$ by a finite index subsemigroup so that the Zariski closure of a general orbit is irreducible and apply Theorem \ref{T:Belletal} combined with Theorem \ref{T:extensionoftransversestructure}. The hypothesis on the non-existence of first integrals for the restriction of $\F$ to the very general fiber of $G$ in Theorem \ref{T:extensionoftransversestructure} is satisfied because the algebraic part of $\F$ is trivial.
\end{proof}

\begin{cor}\label{C:orbit 2}
    Let $\F$ be a codimension one foliation on a projective manifold $X$.  If $\mathcal F$ is purely transcendental and there exists a sub-semigroup $G \subset \End(X,\F)$ with orbit dimension equal to two then $\F$ is a virtually transversely additive foliation.
\end{cor}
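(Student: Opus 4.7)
The plan is to reduce the problem to the surface case (Theorem \ref{T:classification surfaces}) and then promote the transverse structure back to $X$ via Lemma \ref{L:reduz}. Concretely, I would apply Theorem \ref{T:Belletal} to the semigroup $G$: after replacing $G$ by a finite index sub-semigroup, there is a $G$-invariant rational map $X \rato T$ whose very general fiber has, as Zariski closure, an irreducible surface $Z \subset X$ equal to the orbit closure of a very general point. By construction, $G$ stabilizes $Z$ and induces a semigroup of dominant rational self-maps $\bar{G}$ of $Z$ whose very general orbit is Zariski dense in $Z$.

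Next I would check that $\F$ restricts to an honest codimension one foliation on $Z$. Since $\dim Z = 2 > 0$ and $\F$ is purely transcendental, $Z$ cannot be tangent to $\F$: otherwise a general leaf of $\F$ would contain an algebraic surface germ, contradicting $\F^{\alg}$ being trivial. Passing to a desingularization $\nu : \tilde Z \to Z$, we obtain a codimension one foliation $\tilde \F := \nu^* (\F|_Z)$ on the projective surface $\tilde Z$, and each element of $\bar G$ lifts to a dominant rational self-map of $\tilde Z$ preserving $\tilde \F$. Thus we get an induced sub-semigroup of $\End(\tilde Z, \tilde \F)$; it is infinite because on a projective surface a finite semigroup of rational self-maps cannot admit an orbit that is Zariski dense in a $2$-dimensional variety.

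With $\End(\tilde Z, \tilde \F)$ infinite, Theorem \ref{T:classification surfaces} provides a generically finite morphism $S' \to \tilde Z$ on which the pulled-back foliation is defined by a closed rational $1$-form. Composing with $\nu$, this exhibits $\F|_Z$ as virtually transversely additive in the sense of Definition \ref{def additive}. Since the orbit dimension of $G$ equals $2 \ge 2$ and the restriction of $\F$ to the (irreducible components of the) Zariski closure of a very general orbit of $G$ is virtually transversely additive, Lemma \ref{L:reduz} applies (using that $\F$ is purely transcendental, so the algebraic hypothesis on non-existence of first integrals hidden in Theorem \ref{T:extensionoftransversestructure} is automatic) and yields that $\F$ itself is virtually transversely additive.

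The only delicate step, and the place I would expect the main subtlety, is the transition from $Z$ to its desingularization $\tilde Z$: one must verify that the $\bar G$-action lifts to a semigroup of rational maps of $\tilde Z$ preserving $\tilde \F$ (clear, since desingularizations of surfaces are birational and the foliation pulls back), and that the induced semigroup remains infinite (which follows from the density of a very general orbit in the $2$-dimensional variety $Z$). Everything else is a direct citation of results already assembled in the paper.
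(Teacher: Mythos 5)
Your proposal is correct and follows essentially the same route as the paper: restrict $\F$ to the (irreducible components of the) Zariski closure of a very general orbit, invoke Theorem \ref{T:classification surfaces} to get virtual transverse additivity there, and then promote it to $X$ via Lemma \ref{L:reduz}. The extra details you supply (non-tangency of the orbit closure to $\F$ by pure transcendence, desingularization, infiniteness of the induced semigroup) are exactly the steps the paper leaves implicit.
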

\begin{proof}
    Since the orbit dimension of $G$ is exactly two, we can apply Theorem \ref{T:classification surfaces} to deduce that the restriction of $\F$ to any  irreducible component of the Zariski closure of a general orbit of $f$ is a virtually transversely additive foliation. We apply Lemma \ref{L:reduz} to conclude. 
\end{proof}

\subsection{Orbit dimension one}
We will now study semigroups contained in $\End(X,\F)$ with orbit dimension equal to one. We start with semigroups
generated by a single rational map.

\begin{prop}\label{P:dim orbit 1}
    Let $\F$ be a codimension one foliation on a projective manifold $X$ invariant by a rational map $g: X \dashrightarrow X$. If $\mathcal F$ is purely transcendental and the orbit dimension of $g$ is equal to one then $\F$ is a virtually transversely additive foliation.
\end{prop}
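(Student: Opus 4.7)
The strategy is to apply the Amerik--Campana factorization to $g$, restrict $\F$ to $2$-dimensional surfaces sitting over curves in the base of the resulting fibration, invoke the two-dimensional classification of Theorem~\ref{T:classification surfaces}, and extend the transverse additive structures obtained on surface fibres back to $X$ via Theorem~\ref{T:extensionoftransversestructure}. Concretely, the single-map case of Theorem~\ref{T:Belletal} applied to $g$ yields, after replacing $g$ by a suitable iterate, a rational map $f\colon X \dashrightarrow T$ with irreducible general fibre satisfying $f \circ g = f$, whose general fibre is the $1$-dimensional Zariski closure of a general $g$-orbit. Pure transcendence of $\F$ forces the general fibre of $f$ to be transverse to $\F$: otherwise a generic fibre would lie inside a leaf of $\F$, giving a positive-dimensional algebraic subvariety through a very general point of a general leaf and contradicting the triviality of $\F^{\alg}$.

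If $\dim X = 2$ one is done at once: $g$ has infinite order since it admits dense orbits on the curve fibres, so $\End(X,\F)$ is infinite and Theorem~\ref{T:classification surfaces} directly provides a generically finite cover of $X$ on which $\F$ is defined by a closed rational $1$-form. Assume now $\dim X \ge 3$, so that $\dim T \ge 2$, and choose a dominant rational map $\phi\colon T \dashrightarrow B$ to a projective variety $B$ with $\dim B = \dim T - 1$ and irreducible curve fibres (for instance, a generic linear projection after embedding $T$ in projective space). Set $\psi := \phi \circ f\colon X \dashrightarrow B$ and, after resolving indeterminacies, view $\psi$ as a morphism from a projective manifold (still denoted $X$) to $B$. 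For generic $s \in B$ the fibre $S_s := \psi^{-1}(s) = f^{-1}(\phi^{-1}(s))$ is a projective surface that is $g$-invariant (since $g$ preserves the fibres of $f$), and $g|_{S_s}$ is a dominant rational self-map preserving $\F|_{S_s}$ whose orbits are dense on the $1$-dimensional fibres of $f|_{S_s}$; in particular $\End(S_s,\F|_{S_s})$ is infinite. Theorem~\ref{T:classification surfaces} then yields the dichotomy that either $\F|_{S_s}$ admits a non-constant rational first integral, or, after a generically finite cover, is defined by a closed rational $1$-form.

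The first alternative is ruled out by pure transcendence: the level sets of a rational first integral of $\F|_{S_s}$ are algebraic curves in $S_s$ tangent to $\F|_{S_s}$, and hence to $\F$; since the surfaces $\{S_s\}_{s\in B}$ cover $X$, one would obtain a positive-dimensional algebraic subvariety tangent to $\F$ through every very general point of $X$, contradicting the triviality of $\F^{\alg}$. Hence, for generic $s \in B$, the restriction $\F|_{S_s}$ is virtually transversely additive and admits no rational first integral, so Theorem~\ref{T:extensionoftransversestructure} applied to the morphism $\psi\colon X \to B$ concludes that $\F$ itself is virtually transversely additive. The step I expect to be most delicate is the exclusion of the algebraically-integrable alternative: one must verify that the generic surface fibres $S_s$ really pass through a Zariski dense set of points of $X$, and that the level curves of rational first integrals on these surfaces produce genuine positive-dimensional algebraic subvarieties tangent to $\F$; both follow from the dominance of $\phi$ and $f$, but require some care in relating \emph{generic} in $B$ to \emph{very general} in $X$ in the sense of the definition of $\F^{\alg}$.
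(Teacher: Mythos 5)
Your argument is correct, but it follows a genuinely different route from the paper's. The paper works directly on the one-dimensional fibres of the Amerik--Campana map $f\colon X\dashrightarrow T$: since $g$ has infinite order on each fibre, the fibres have genus $0$ or $1$; in the elliptic case the authors prove \emph{everywhere}-transversality of $\F$ to the fibres (the tangency locus is backward invariant and infinite-order self-maps of elliptic curves have no finite backward orbits), deduce isotriviality and exhibit a closed defining form; in the rational case they normalise the totally invariant points to $\{z=0,\infty\}$, expand $\omega=dz+\sum\omega_i z^i$ and compare degrees in $g^*\omega=(\partial p/\partial z)\,\omega$ to get a transversely affine structure, concluding via Theorem~\ref{THM:D} and Theorem~\ref{T:classification surfaces}. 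You instead fatten the one-dimensional orbit closures into $g$-invariant surfaces $S_s$ fibred over curves in $T$, apply the surface classification (Theorem~\ref{T:classification surfaces}) slice by slice, and propagate with Theorem~\ref{T:extensionoftransversestructure} --- in effect reducing orbit dimension one to the same mechanism the paper uses for orbit dimension two in Lemma~\ref{L:reduz} and Corollary~\ref{C:orbit 2}. Your route is shorter and unifies the two cases; the paper's yields explicit normal forms for the transverse structure. Two caveats, neither fatal: (i) your opening claim that the general fibre of $f$ is ``transverse'' to $\F$ is only justified in the weak sense of ``not contained in a leaf'' (a fibre could still carry isolated tangencies); this is harmless since you never use full transversality, but the phrasing conflates the two notions, and it is exactly this distinction that forces the paper's more delicate dynamical argument in the elliptic case. (ii) The irreducibility of the general fibre of $\psi=\phi\circ f$ and the passage from ``generic $s$'' to the ``very general fibre'' required by Theorem~\ref{T:extensionoftransversestructure} do need the small arguments you flag: the Stein factorisation of $\psi$ is birational over $B$ because each irreducible fibre of $f$ sits inside a single fibre of $\psi$, and if the set of $s$ with $\F|_{S_s}$ algebraically integrable were not contained in a countable union of proper closed subsets, its preimage would yield algebraic curves tangent to $\F$ through a very general point of $X$, contradicting pure transcendence. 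Both are routine to complete.
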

\begin{proof}
    Apply Theorem \ref{T:Belletal} to obtain a dominant rational map $f:X \dashrightarrow T$ such that $f\circ g = f$ and $f$ has fiber dimension one.    There is no loss of generality in assuming that the general fiber of $f$ is irreducible (after replacing $g$ by a suitable power) and that $f$ is a morphism (resolve indeterminacies of $f$).

    Since the orbit dimension of $g$ is one, $g$ has infinite order and the same holds true for the restriction of $g$ to a general fiber of $f$. Therefore the genus of a general fiber of $f$ is equal to zero or one.

    First assume that the general fiber of $f$ has genus one and let $E$ be a general fiber. The locus of tangencies between the fiber $E$ and the foliation $\F$ is clearly both forward and backward invariant by $g$. Since infinite order morphisms of genus one curves have no finite backward orbits and the algebraic part of $\F$ is trivial, we deduce that $\F$ is completely transverse to the general fiber of $f$. The transversality between $\F$ and the general fiber of $f$ implies that the family of genus one curves defined by $f$ is isotrivial. Hence, there exists a generically finite rational map $\beta : B \to T$ such that the (main component of the normalization of the) fiber product $X\times_T B$ is birationally equivalent to the product $B \times E$.  The foliation $\F$ lifts to a foliation $\tilde \F$ defined on $B\times E$ by a rational $1$-form $\Omega$ that can be written as
    \[
      \Omega =   d z + \sum_{i=1}^{\dim B} \omega_i \cdot a_i(x,z)
    \]
    where $\omega_i$ are (pull-backs of) rational $1$-forms on $B$ and $a_i \in \mathbb C(B\times E)$ are rational functions.
    The transversality of $\F$ with the general fiber of $f$ implies that the functions $a_i$ do not depend on the variable $z$. We can thus write
    \[
        \Omega = dz  + \omega
    \]
    where $\omega$ is a rational $1$-form on $B$.  Frobenius integrability condition implies that $d \omega=0$, showing that $\tilde \F$ is a transversely additive foliation. We can apply a well-known lemma, see for instance \cite[Lemma 2.1]{2017arXiv171208330L}, to conclude that $\F$ is virtually transversely additive.

    Let us now treat the case where the general fiber of $f$ has genus zero.  Let $\Sigma \subset X$ be the set of fixed points of $g$. Since rational maps of $\mathbb P^1$ distinct from the identity have one or two finite backward orbits, there are one or two irreducible components of $\Sigma$ which dominate $T$. We can assume (perhaps after applying a generically $2:1$ base change) that each one of these irreducible components is birational to the base $T$.
    Moreover, after replacing $g$ by $g^2$ if needed, we can assume that the point(s) with finite backward orbit are also totally invariant fixed points of $g$. As already argued in the proof of the genus one case, it suffices to show that the foliation is virtually transversely additive under this extra assumption.

    Choose a birational model of $X$ isomorphic to $B \times \mathbb P^1$ where $g$ can be written as $g(x,z) = (x, p(x,z))$ where $p \in \mathbb C(X)[z]$
    is a polynomial on $z$ of degree $d$ with coefficients on the field of rational functions of $X$. In other words, we are assuming that one of the totally invariant fixed points of $f$ is located at $\{ z = \infty\}$. If there exists a second totally invariant fixed point then assume that it is located at $\{ z = 0\}$. With this normalization, and since the tangencies between $\F$ and the general fiber of $f$ are contained in
    $\{ z = \infty\} \cup \{ z=0\}$, we have that the foliation $\F$ is defined by a rational $1$-form $\omega$ that can be written as
    \[
        \omega = dz + \sum_{i=\ell}^u \omega_i z^i \, , \text{ with } \omega_{\ell} \neq 0 \text{ and } \omega_u \neq 0 \, ,
    \]
    where $\omega_i$ are rational $1$-forms on $B$, the lower limit $\ell$ is negative if and only if $\F$ is tangent to $f$  at $\{ z=0\}$, and the upper limit $u$ is strictly greater than $2$ if and only if $\F$ is tangent to $f$ at $\{z = \infty\}$.

    Since
    \[
        g^* \omega = d p + \sum_{i=\ell}^{u} \omega_i p^i \, ,
    \]
    the $g$-invariance of $\F$  implies that
    \begin{equation}\label{E:boba}
        g^* \omega =  \frac{\partial p}{\partial z}  \cdot \omega  \, .
    \end{equation}
    Compare the coefficients of highest $z$-degree in the Equation (\ref{E:boba}).
    If we assume that the upper limit $u$ is different from zero then
    we deduce that
    \[
        d \cdot u = (d-1) + u .
    \]
    If $u\ge 2$ then $d=1$ and  we can write $p(x,z) = a(x) z + b(x)$. Comparing the coefficients
    of $z^u$ we get that $\omega_u a(x)^u = a(x) \cdot \omega_u$. Thus $u\ge 2$ cannot happen and we
    deduce that $u \in \{ 0,1\}$. Similarly, if  we assume that the lower bound $\ell$ is negative and we compare the coefficients of lowest $z$-degree in Equation (\ref{E:boba}), we reach a contradiction and deduce that $\ell$ also belongs to $\{ 0 , 1\}$.

    At this point, we can write
    \[
        \omega = dz + \omega_0 + \omega_1 \cdot z
    \]
    what is sufficient to deduce that $\F$ is transversely affine. One can proceed with an elementary analysis, similar
    to what have been so far, to further restrict the transverse structure of $\F$ showing that it is in fact virtually
    transversely additive. Here we will adopt an alternative approach. Theorem \ref{THM:D} implies that either $\F$ is virtually transversely Euclidean as wanted, or $\F$ is a foliation on a surface.  We can apply Theorem  \ref{T:classification surfaces} to conclude.
\end{proof}

The proposition below extends the one above to arbitrary semigroups.

\begin{prop}\label{P:dim orbit 1 bis}
    Let $\F$ be a codimension one foliation on a projective manifold $X$ and let $G \subset \End(X,\F)$ be a subsemigroup.
    If $\mathcal F$ is purely transcendental and the orbit dimension of $G$ is equal to one then $\F$ is a virtually transversely additive foliation.
\end{prop}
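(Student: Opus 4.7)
The plan is to apply Theorem \ref{T:Belletal} to extract a natural fibration $f\colon X \to T$ whose general fibers are Zariski closures of general $G$-orbits, and then split according to whether some cyclic sub-semigroup of $G$ already has orbit dimension one (in which case Proposition \ref{P:dim orbit 1} finishes the job) or every element of $G$ has generically finite orbits, in which case the torsion structure of $G|_F$ furnishes an infinitesimal symmetry of $\F$ to which Proposition \ref{P:transverse symmetries} applies.

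More precisely, after applying Theorem \ref{T:Belletal}, replacing $G$ by a finite-index sub-semigroup, and resolving indeterminacies, one may assume $f\colon X \to T$ is a morphism with irreducible general fiber $F$ equal to the Zariski closure of a very general $G$-orbit. Pure transcendentality of $\F$ forbids $F$ from being tangent to $\F$, so $F$ is transverse to $\F$ at its generic point. Since $G|_F$ has an infinite orbit in $F$, the automorphism group $\Aut(F)$ must be infinite, which forces $F$ to have genus zero or one.

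If some $g_0 \in G$ has orbit dimension one, Proposition \ref{P:dim orbit 1} applied to the cyclic sub-semigroup generated by $g_0$ yields the conclusion. Otherwise every $g \in G$ has generically finite orbits; since rational self-maps of $\mathbb P^1$ or of an elliptic curve of degree at least two have infinite generic orbits, each $g|_F$ must be a finite-order automorphism of $F$, so $G|_F$ is an infinite torsion subgroup of $\Aut(F)$. Combining the Tits alternative with the classification of torsion subgroups of $\PGL_2(\mathbb C)$ (in the rational case) and of $\Aut$ of an elliptic curve (whose identity component is $F$ itself, via translations), one finds that after a further finite-index reduction, $G|_F$ is contained in and Zariski dense in a $1$-dimensional algebraic subgroup $H \subset \Aut(F)$---a maximal torus in the rational case, the translation subgroup $F$ in the elliptic case.

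Up to a generically finite cover of $T$---needed in the rational case to promote the two torus fixed points into two honest sections of $f$, and in the elliptic case to exhibit the family as a torsor for its relative Jacobian with an accessible global action---$H$ extends to a $1$-dimensional relative algebraic group acting fiberwise on $X$ over $T$. Its infinitesimal generator is a vertical vector field $v$, transverse to $\F$ since $\F$ is transverse to the fibers of $f$. Zariski denseness of $G|_F$ in $H$, combined with the fact that $h^*\F = \F$ is a Zariski closed condition on $h \in H$, forces the flow of $v$ to preserve $\F$. Proposition \ref{P:transverse symmetries}, applied on an equivariant projective model where $v$ becomes a holomorphic section of $T_X$, then yields a closed meromorphic $1$-form defining the pulled-back foliation, so $\F$ is virtually transversely additive. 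The main obstacle is precisely this globalization step: turning the fiberwise torsion action of $G|_F$ into an honest algebraic one-parameter group action on a projective manifold preserving $\F$, which requires the finite covers and equivariant resolution mentioned above.
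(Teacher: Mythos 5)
Your proof is correct and follows essentially the same route as the paper's: apply Theorem \ref{T:Belletal} to get the orbit fibration, then split according to whether $G$ contains an infinite-order element (reduce to Proposition \ref{P:dim orbit 1}) or is torsion on the fibers, in which case the torsion subgroup of $\Aut(F)$ is promoted, after finite base changes, to a one-dimensional algebraic group action whose infinitesimal generator feeds into Proposition \ref{P:transverse symmetries}. The only cosmetic difference is that in the elliptic torsion case the paper falls back on the isotriviality argument from Proposition \ref{P:dim orbit 1}, while you run the translation one-parameter-group argument there as well; both work.
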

\begin{proof}
    If $G$ has an element of infinite order the result follows from Proposition \ref{P:dim orbit 1}. Assume that
    every element of $G$ is of finite order. In particular, $G \subset \Bir(X,\F)$ is a group. Let  $f : X \dashrightarrow T$ be the
    rational map given by Theorem \ref{T:Belletal} applied to $G$. As before, we are free to assume that the general fiber
    of $f$ is irreducible.

    If the general fiber of $f$ is an elliptic curve then the Zariski denseness of the orbits of $G$ on a
    general fiber is sufficient to guarantee that $\F$ is everywhere transverse to a general fiber
    and we can conclude as in the proof of Proposition \ref{P:dim orbit 1}.

    If the general fiber of $f$ is $\mathbb P^1$ then, after replacing $G$ by a  sub-semigroup of index two, we
    can assume that the action of any element of $G$ at a general fiber of $f$ has exactly two fixed points. This
    implies, perhaps after taking a degree two base change, that $X$ is birationally equivalent to $T \times \mathbb P^1$ and $G$ is conjugated to a subgroup of $\mathbb C^*$ acting on $T \times \mathbb P^1$ by morphisms of the form $(x,z) \mapsto (x , \lambda z)$, $ \lambda \in \mathbb C^*$.
    In these coordinates it is clear that $\Bir(X,\F)$ contains the flow of a vector field and we can apply Proposition \ref{P:transverse symmetries} to conclude.
\end{proof}

\begin{cor}\label{C:reduz}
    Let $\F$ be a purely transcendental codimension one foliation on a projective manifold $X$,  let $G \subset \End(X,F)$ be a
    semigroup of positive orbit dimension, and let  $\G$ be  the restriction of $\F$ to an irreducible component of the Zariski closure of a very general orbit of $G$. If $\G$ is transversely projective then $\F$ itself  is also transversely projective.
\end{cor}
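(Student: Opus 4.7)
The plan is a short case analysis on the orbit dimension $d$ of $G$, each case handled by a result already established in the section.

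If $d = 1$, I would invoke Proposition \ref{P:dim orbit 1 bis} directly: since $\F$ is purely transcendental and some $G \subset \End(X,\F)$ has orbit dimension one, the proposition forces $\F$ to be virtually transversely additive. Recalling from \S\ref{SS:transversely affine} and \S\ref{SS:transv proj} that every virtually transversely additive foliation is transversely affine, and every transversely affine foliation is transversely projective, the conclusion follows immediately without using the hypothesis that $\G$ is transversely projective (in this case it comes for free).

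If $d \geq 2$, the hypothesis that $\G$ is transversely projective becomes essential. I would apply Lemma \ref{L:reduz} in its ``transversely projective'' formulation: it says precisely that when $\F$ is purely transcendental, $G \subset \End(X,\F)$ has orbit dimension at least two, and the restriction of $\F$ to (the irreducible components of) a very general $G$-orbit closure is transversely projective, then $\F$ itself is transversely projective. Since all hypotheses are satisfied, the conclusion follows. Note that Lemma \ref{L:reduz} internally relies on Theorem \ref{T:Belletal} (to realize orbit closures as fibers of a rational map) together with the extension result Theorem \ref{T:extensionoftransversestructure}; the requirement that no rational first integral exist on a very general fiber is guaranteed by the purely transcendental hypothesis on $\F$, exactly as in the proof of Lemma \ref{L:reduz}.

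There is essentially no technical obstacle: the corollary is a direct bookkeeping consequence of the two propositions just mentioned, once one observes that virtually transversely additive implies transversely projective. The only subtlety worth flagging is that in the case $d=1$ one obtains more than required (virtual transverse additivity), whereas for $d \geq 2$ one gets exactly the transverse projective structure lifted from the orbit closure through Theorem \ref{T:extensionoftransversestructure}.
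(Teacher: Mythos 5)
Your proof is correct and follows exactly the paper's argument: the case of orbit dimension one is handled by Proposition \ref{P:dim orbit 1 bis} (yielding virtual transverse additivity, hence transverse projectivity, with the hypothesis on $\G$ being vacuous there), and the case of orbit dimension at least two is exactly Lemma \ref{L:reduz}. No gaps.
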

\begin{proof}
    If the orbit dimension of $G$ is one then $\G$ is a foliation by points and is automatically transversely projective. We can apply Proposition \ref{P:dim orbit 1 bis} to deduce that  $\F$ is virtually transversely additive and, in particular, $\F$ is transversely projective.

    If the orbit dimension of $G$ is at least two, then the result follows from Lemma \ref{L:reduz}.
\end{proof}

\subsection{Proof of Proposition \ref{P:orbit12}}
It suffices to combine  Corollary \ref{C:orbit 2} with Proposition \ref{P:dim orbit 1 bis}.
\qed

\section{Transversely hyperbolic foliations}\label{S:transv hyp}

\subsection{Conventions}
Recall from \S \ref{SS:transv hyp} that a transversely hyperbolic foliation on a complex manifold $X$ is a transversely projective foliation that
has one of its developing maps taking values in the Poincaré disc $\mathbb D$. Local determinations of the developing map are not everywhere defined, but \textit{a priori} only on the complement of the polar divisor of the underlying projective structure.

We will say that a transversely projective/hyperbolic structure $(E,\nabla, \sigma)$ for a foliation $\F$ on a complex manifold $X$ is without poles
if the Riccati foliation determined by $\nabla$ is everywhere transverse to the fibration $\mathbb P(E) \to X$. In this case, the developing map of the transverse structure is everywhere locally defined.  A foliation admitting a transversely projective/hyperbolic structure without poles will be called a transversely projective/hyperbolic foliation without poles.

\subsection{Extension through simple normal crossing divisors}
It is convenient to interpret a  transversely hyperbolic foliation without poles $\F$ on a complex manifold $X_0$ as a $\rho$-equivariant (non constant) map $\mathcal D:\tilde{X_0} \to \D$, where $\rho:\pi_1(X_0,x_0)\to \Aut(\D)$ is a morphism and $\tilde{X_0}$ is the universal covering of $X_0$. Actually, $\mathcal D$ is a developing map attached to the underlying projective structure and is unique modulo left composition by $\Aut{( \D)}$. We will use the notation $f=\mathcal D \circ \varphi$ where $\varphi$ is a local or multivalued inverse of the covering map $\pi:\tilde{X_0}\to X_0$. One must think of $f$ as a multivalued map on $X_0$ with monodromy representation $\rho$. In particular, if  $Y$ is a connected open subset or subvariety of $X_0$,  it makes sense to consider the restriction of $f$ to $Y$ by considering its restriction to a connected component of $\pi^{-1}(Y)$.

Assume $X_0$ is the complement of a simple normal crossing hypersurface on a complex  manifold $X$. Our goal in this subsection is to show that a transversely hyperbolic foliation without poles defined on $X_0$ 
extends as a foliation to the whole complex manifold $X$. Furthermore, we will show that the transversely hyperbolic structure also extends to the whole $X$, but perhaps will acquire poles along the boundary divisor.

\begin{lemma}\label{L:Rextension}
    Notations as above. Assume that $X_0=X-H$, where $H\subset X$ is a hypersurface on a complex manifold $X$, and that $\rho$ is trivial (so that $f$ is a well defined univalued function on $X$). Then $f$ extends as a holomorphic function on the whole $X$.
\end{lemma}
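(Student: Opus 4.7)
The plan is to reduce the statement to the classical Riemann extension theorem for bounded holomorphic functions. Since the monodromy representation $\rho$ is trivial, the $\rho$-equivariant developing map $\mathcal D:\tilde X_0\to \mathbb D$ is invariant under the deck transformation group $\pi_1(X_0,x_0)$, so it descends to a genuinely univalued holomorphic map $f:X_0\to \mathbb D$ (this is precisely the hypothesis of the lemma).

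The crucial observation is that $\mathbb D\subset \mathbb C$ is bounded, so $|f|<1$ throughout $X_0=X-H$. At this point $f$ is a bounded holomorphic function on the complement of an analytic hypersurface in the complex manifold $X$, and Riemann's removable singularity theorem (in the form that a bounded holomorphic function defined off an analytic subset of codimension at least one on a complex manifold extends holomorphically across the subset) produces the desired holomorphic extension $\bar f:X\to \mathbb C$. By the maximum principle applied to local discs transverse to $H$, we have $|\bar f|\le 1$, so in fact $\bar f$ takes values in $\overline{\mathbb D}$.

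There is essentially no obstacle, but two small points deserve mention. First, no regularity of $H$ is required: Riemann extension is a purely local statement, valid at every point of $H$ (smooth or singular) once we pick ambient local coordinates in which $H$ appears as an analytic hypersurface, and the local extensions agree on overlaps by the identity principle applied on the connected manifold $X$. Second, the extension $\bar f$ is automatically unique for the same reason. Thus the argument goes through verbatim regardless of the singularity type of $H$, which will matter in the subsequent subsection where the simple normal crossing hypothesis is exploited to handle nontrivial monodromy.
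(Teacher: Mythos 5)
Your argument is correct and is exactly the paper's proof: the paper simply invokes the Riemann extension theorem, and your write-up supplies the (straightforward) details — boundedness of $f$ because it takes values in $\mathbb D$, and the purely local nature of the removable singularity statement. Nothing further is needed.
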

\begin{proof}
    Straightforward consequence of Riemann extension Theorem.
\end{proof}

Let $X=B(0,r)$ be the open ball of radius $r$ centered at the origin of $\C^n$. Let  $H=\{x_1 \cdots x_s=0\}$  the union of some coordinate hyperplanes. Let $X_0=X - H$ and assume that $\rho:\pi_1(X_0,x_0)={\Z}^s\to \Aut(\D)$, the monodromy representation of $\F$, is non-trivial. The classical description of the centralizer of elements of $\Aut (\D)\simeq \Aut (\Hj)$ guarantees that the monodromy representation of $\F$ is of one of the following types.
\begin{enumerate}
    \item (Elliptic type) There exists $h\in \Aut(\D)$ such that for every $\gamma\in \pi_1(X)$, there exists $a_\gamma\in {\mathbb S}^1$ such that $h \circ \rho(\gamma) \circ h^{-1}(z)$ is equal to $R_{a_\gamma}(z) =  a_\gamma \cdot z$.
    \item (Parabolic type) There exists $h\in \Aut(\Pj^ 1)$, $h(\D)=\Hj$, such that for every $\gamma\in \pi_1(X)$, there exists $a_\gamma\in\R$ such that $h \circ \rho(\gamma) \circ h^{-1}$ is equal to $t_{a_\gamma}(z)= z+a_\gamma$.
    \item (Hyperbolic type) There exists $h\in \Aut(\Pj^ 1)$, $h(\D)=\Hj$, such that for every $\gamma\in \pi_1(X)$, there exists $a_\gamma\in(0,1) \cup (1,\infty)$ such that $h\circ \rho(\gamma) \circ h^{-1}$ is equal to $h_{a_\gamma}(z)=a_\gamma \cdot z$.
\end{enumerate}

\begin{thm}[Local extension]\label{TH:localextension}
    Notations as above. Let $f$ be a $\rho$-equivariant multivalued holomorphic mapping taking values in $\mathbb D$. If $f$ is non-constant then $\rho$ is not of hyperbolic type. Moreover, the following assertions hold true.
    \begin{enumerate}
        \item If $\rho$ is of parabolic type then there exists a holomorphic function $g$ on $X=B(0,r)$ and $s$ nonnegative real numbers $\lambda_1,....,\lambda_s$ such that
            \[
                h\circ f=\frac{1}{2i\pi}\sum_{i=1}^s \lambda_i \log{x_i} +g \, .
            \]
            in the Poincar\'e's upper half-plane model.
        \item If $ \rho$ is of elliptic type then there exists an holomorphic function $u : X \to \mathbb C$ and $s$ non-negative real numbers $\nu_1, \ldots, \nu_s\in [0,1) $such that
            \[
                h\circ f=u\prod {x_i}^{\nu_i} \, .
            \]
    \end{enumerate}
    In particular, the foliation $\F$ defined by the level sets of $f$ extends to a foliation on $X$ and also the transverse hyperbolic structure without poles on $X_0$ extends to a transverse hyperbolic structure (perhaps with poles) on $X$.
\end{thm}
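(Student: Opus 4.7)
The plan is to dispose of the trivial-monodromy case via Lemma \ref{L:Rextension}, and in each of the three remaining cases to conjugate $\rho$ to a standard form, construct an auxiliary single-valued holomorphic function on $X_0 = X - H$, and use the multivariate Laurent expansion in $(x_1,\dots,x_s)$ together with Cauchy estimates to extend it across $H$. The slice-wise strategy is the common thread: if a holomorphic function on $X_0$ has only non-negative powers of each $x_i$ in its Laurent expansion, it automatically extends to $X$.

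\emph{Elliptic case.} Conjugate so the common fixed point is $0 \in \D$, so that $\rho(\gamma_i)(z) = e^{2\pi i \nu_i} z$ with $\nu_i \in [0,1)$. Set $u := f \prod_j x_j^{-\nu_j}$; it is $\pi_1$-invariant, hence single-valued and holomorphic on $X_0$, and the bound $|f|<1$ yields $|u| \leq \prod |x_j|^{-\nu_j}$. The Cauchy estimate $|c_{n_1,\dots,n_s}| \leq \prod r_j^{-n_j - \nu_j}$ for its Laurent coefficients, combined with $\nu_j \in [0,1)$, forces every coefficient with some $n_j < 0$ to vanish as $r_j \to 0$, so $u$ extends holomorphically to $X$ and $h \circ f = u\prod x_j^{\nu_j}$.

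\emph{Parabolic case.} After conjugation by $h$ so the fixed point is $\infty \in \partial \Hj$, the monodromy reads $\rho(\gamma_i)(z) = z + \lambda_i$ with $\lambda_i \in \R$. A first step is to verify $\lambda_i \geq 0$: restrict to a generic $1$-dimensional slice transverse to $\{x_i=0\}$ to reduce to a holomorphic self-map of $\Hj$ with $f(\zeta+1) = f(\zeta) + \lambda_i$; writing $f(\zeta) = \lambda_i\zeta + g_0(\zeta)$ with $g_0$ periodic and descending $g_0$ to $\D^*$, a Laurent argument (a pole or essential singularity at $0$ would produce oscillations of $\im g_0$ of unbounded amplitude, incompatible with $\im f > 0$) shows $g_0$ extends holomorphically; the resulting asymptotic $f(iy)/(iy)\to\lambda_i$ combined with $\im f > 0$ yields $\lambda_i\geq 0$. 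Now put $g := h\circ f - \tfrac{1}{2\pi i}\sum\lambda_i \log x_i$, which is single-valued holomorphic on $X_0$; its restriction to each $1$-dimensional slice agrees with the extendable $g_0$, so the Laurent series of $g$ in $(x_1,\dots,x_s)$ has only non-negative powers of each $x_j$, and $g$ extends holomorphically to $X$.

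\emph{Hyperbolic case.} Conjugation to $\Hj$ with fixed points $\{0,\infty\}$ gives $\rho(\gamma_i)(z) = a_i z$ with $a_i > 0$. The multivalued $F := \log(h\circ f)$ takes values in the horizontal strip $\{0 < \im w < \pi\}$ and has additive monodromy $F\mapsto F + \log a_i$, so this case reduces formally to the parabolic one. The crucial point is that the imaginary part of $F$ is now \emph{bounded}, so the Laurent argument applied to $F - \tfrac{1}{2\pi i}\sum (\log a_i)\log x_i$ — which shows the associated holomorphic factor on $X_0$ has bounded $\im$ — forces every $\log a_i$ to vanish, contradicting the hypothesis of hyperbolic (hence non-trivial) monodromy unless $f$ is constant. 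The extension of the foliation and of the transverse hyperbolic structure then follows tautologically from the explicit formulas, with the Riccati structure underlying the projective structure acquiring logarithmic poles along $H$ coming from the $\log x_i$ or $x_j^{\nu_j}$ factors. The main technical hurdle I anticipate is the multivariate Laurent analysis: extracting the correct exponents $\nu_i \in [0,1)$, $\lambda_i \geq 0$ and simultaneously ruling out pole or essential-singularity behavior in all coordinates $x_1, \dots, x_s$ at once.
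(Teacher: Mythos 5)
Your proposal is correct and follows essentially the same route as the paper: a case analysis by monodromy type, conjugation to a normal form, construction of a single-valued auxiliary function on $X_0$, extension across $H$ by a removable-singularity argument, and reduction of the multivariable statement to one-dimensional slices. The only real difference is cosmetic --- the paper extends via explicit exponential uniformizations of the strip/annulus followed by the Riemann extension theorem, whereas you use Laurent expansions and Cauchy estimates; these are interchangeable, though in your parabolic step the assertion that a pole or essential singularity forces oscillations of $\im g_0$ incompatible with the \emph{one-sided}, logarithmically decaying bound coming from $\im f>0$ needs the constancy of the circle-means of $\im g_0$ (or, equivalently, the paper's trick of exponentiating and invoking single-valuedness) to be fully airtight.
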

\begin{proof}
    We first exclude the hyperbolic type. Let us argue by contradiction. It is enough to assume that $n=1$, i.e. $X = \mathbb D$ and $X_0 = \mathbb D^*$. This amounts to consider a multivalued function $f$ on $\D^*$ with monodromy generated by $h_{a}$, $a\not=1$. Consider the (possibly multivalued) function $\psi:\Hj\to \C$, $\psi (z)=z^{\frac{2i\pi}{a}}$ and remark that $\psi\circ h \circ f$ is a well defined (univalued) function on $\D^*$. Moreover, the image of $\psi$ lies in the annulus $\{u\in\C \, | \, e^{\frac{-2\pi}{a}}<|u|<1\}.$ Riemann extension theorem implies that $\psi\circ h \circ  f$ extends holomorphically on $\D$ as a unit $u$. One can write $u=e^{{2i\pi}{a} \varphi}$, where $\varphi$ is a holomorphic function on $\mathbb D$,  thus proving that  $f$ coincides (up to an additive constant) with $\varphi$. This contradicts the non-triviality of the monodromy.

    Assume now that $\rho$ is a representation of  parabolic type, assuming first that $n=1$. One can assume that $t_{_1} (z)=z+a$, $a\in \R -\{0\}$, is a generator of the monodromy. Set $\psi(z)=e^{\frac{2i\pi }{|a|}z}$ and remark that  $\psi\circ {h\circ f}$ is well defined as a  univalued function taking values in $\D^*=\psi (\Hj)$. The latter clearly implies that $h\circ f= \frac{1}{2i\pi |a|} \log{z} + g$,  $g$ holomorphic in $\D$ as wanted. To obtain the description of $h\circ f$ for $n>1$, it suffices to consider the restriction to one dimensional  linear subspaces obtained by fixing $n-1$ hyperplane coordinates and apply the previous argument. We leave the details to the reader.

    Concerning the elliptic type, the description follows easily from the following observation: there exists a holomorphic function $v$ on $X$ and non negative real numbers $\mu_i\geq 0$ such that $h\circ f=v\prod {x_i}^{\mu_i}$. Indeed, as $h\circ f$ takes values in $\D$, one gets in addition that $|v|\leq\frac{C}{\prod {|x_i|}^{\nu_i}}$ where $C$ is a suitable constant.  In particular, $v$ has a pole of order at most $[\nu_i]$ along $\{x_i=0\}$. Now, $u:= v\prod {x_i}^{[\mu_i]}$ and $\nu_i= \mu_i -[\mu_i]$ have the required properties.
\end{proof}

\begin{cor} \label{COR:regular singularities}
   Let $\F$ be a transversely hyperbolic foliation without poles on a quasi-projective manifold $X-H$ where the boundary divisor $H$ is a simple normal crossing divisor. Then $\F$ extends to a transversely projective foliation $\bar \F$ on $X$  with regular singularities. 
\end{cor}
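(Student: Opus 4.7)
The strategy is to combine the local extension result of Theorem \ref{TH:localextension} with Deligne's theorem on canonical extensions of flat connections across simple normal crossing divisors. Work analytically in a polydisc chart near a point $p\in H$ where $H=\{x_1\cdots x_s=0\}$, so that $\pi_1(X-H,p)\simeq \mathbb Z^s$ is generated by loops around the components of $H$, and let $f$ denote a local branch of a developing map of $\F$ together with its monodromy $\rho$.

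First I would apply Theorem \ref{TH:localextension} (together with Lemma \ref{L:Rextension} in the case of trivial monodromy) to obtain an explicit local model for $f$ on the whole chart: either $f$ extends holomorphically, or it has the parabolic form $h\circ f=\tfrac{1}{2\pi i}\sum \lambda_i\log x_i+g$, or it has the elliptic form $h\circ f=u\prod x_i^{\nu_i}$ with $u,g$ holomorphic; the hyperbolic type is excluded by the non-constancy of $f$. From either non-trivial model one extracts a closed meromorphic $1$-form with logarithmic poles along $H$, namely $d(h\circ f)$ in the parabolic case and $d\log(h\circ f)$ in the elliptic case. The foliation defined by such a $1$-form coincides with $\F$ on $X-H$, and by analytic uniqueness these local foliations glue to a foliation $\bar\F$ on all of $X$ extending $\F$.

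Next, I would promote the extension to one of the transverse projective structure. Choose on $X_0=X-H$ a representative $(E_0,\nabla_0,\sigma_0)$ of the structure with $E_0$ trivial and $\nabla_0$ holomorphic, so that the developing map $f$ encodes the section $\sigma_0$ in a flat multivalued frame. The local analysis shows that in any chart adapted to $H$ the flat sections of $\nabla_0$ grow at most polynomially in $-\log|x_i|$ (or as a fractional power of $|x_i|$, which on any compact set is dominated by such a polynomial), which is exactly the regularity criterion of \cite[Chapter 2]{MR0417174}. Hence the Deligne canonical extension provides a locally free sheaf $\bar E$ on $X$, a meromorphic flat connection $\bar\nabla$ extending $\nabla_0$ with regular singularities along $H$, and a meromorphic section $\bar\sigma\colon X\dashrightarrow\mathbb P(\bar E)$ obtained by reading the explicit local models in a Deligne frame. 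The triple $(\bar E,\bar\nabla,\bar\sigma)$ recovers $\bar\F$ as the pullback of the associated Riccati foliation.

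The main obstacle is the elliptic case with irrational rotation angles, where the local monodromy is not quasi-unipotent and a naively chosen frame fails to produce a logarithmic extension; however, the explicit multiplicative model $u\prod x_i^{\nu_i}$ with $\nu_i\in[0,1)$ already exhibits fundamental solutions of moderate growth, so Deligne's regularity is verified directly and the ambiguity between potential extensions is fixed by the normalisation $\nu_i\in[0,1)$. Gluing the local meromorphic models by uniqueness then yields the global extension $(\bar E,\bar\nabla,\bar\sigma)$ defining a transverse projective structure on $\bar\F$ with regular singularities along $H$, which is the statement of the corollary.
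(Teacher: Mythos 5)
Your proposal is correct and follows essentially the same route as the paper: the paper's proof likewise invokes the explicit local normal forms of the developing map from Theorem \ref{TH:localextension} to conclude that the transverse projective structure has moderate growth in the sense of Deligne, and hence regular singularities by \cite[Chapter 2, Theorem 4.1 and Definition 4.2]{MR0417174}. Your additional discussion of the canonical extension and of the elliptic case with irrational weights is harmless elaboration of the same argument (note that Deligne's extension does not actually require quasi-unipotent monodromy, only a choice of residues).
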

\begin{proof}
    The explicit form of the developing map for $\F$ provided by Theorem \ref{TH:localextension} guarantees that the transverse projective structure for $\bar \F$ has moderate growth in the sense of  \cite[Chapter 2, Section 2]{MR0417174}, and therefore has regular singularities, cf. \cite[Chapter 2, Theorem 4.1 and Definition 4.2]{MR0417174}.
\end{proof}

\begin{cor}\label{COR:tangentcurve}
     Notations as above. Consider the extension $\bar \F$ of $\F$ on $X$ (which makes sense by virtue of Corollary \ref{COR:regular singularities}). 
     Let $x\in X$.  Then the set of local separatrices ${ \mathcal S}_{i,x}$ through $x$  (i.e., the set of germs of irreducible  $\F$ invariant hypersurfaces passing through $x$) is finite.

      Moreover there exists a neighborhood of $x$, $V\subset X$  with the following properties:

      for every germ $\gamma:[0,\varepsilon)\to X$ of  curve tangent to $\bar\F$ and such that $\gamma(0)\in Y $, where $Y=\bigcup {\mathcal S}_{i,x}$,  
      the image of $\gamma$ is entirely contained in $Y$.
\end{cor}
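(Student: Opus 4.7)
The plan is to work locally around $x$ using the explicit description of the developing map provided by Theorem \ref{TH:localextension}. Choose local coordinates $(x_1,\ldots,x_n)$ on a polydisc $V$ centered at $x$ such that $H\cap V=\{x_1\cdots x_s=0\}$. By Theorem \ref{TH:localextension}, after post-composition with a suitable $h\in\PSL_2(\C)$, the developing map of $\bar\F$ on $V$ has one of the two explicit forms
\[
    h\circ f\;=\;\frac{1}{2\pi i}\sum_{i=1}^s \lambda_i\log x_i + g
    \qquad\text{(parabolic)}
\]
with $\lambda_i\ge 0$ and $g$ holomorphic on $V$, or
\[
    h\circ f\;=\;u\prod_{i=1}^s x_i^{\nu_i}\qquad\text{(elliptic)}
\]
with $\nu_i\in[0,1)$ and $u$ holomorphic on $V$. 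In particular $\bar\F$ is defined on $V$ by the closed logarithmic meromorphic $1$-form $\omega=d(h\circ f)$.

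To prove finiteness, set $Y':=\bigcup_{\lambda_i>0}\{x_i=0\}$ in the parabolic case and $Y':=\{u=0\}\cup\bigcup_{\nu_i>0}\{x_i=0\}$ in the elliptic case; after shrinking $V$ we may assume every irreducible component of $\{u=0\}\cap V$ passes through $x$, so that $Y'$ is a finite union of irreducible hypersurfaces. The explicit forms give $\im(h\circ f)\to+\infty$ (parabolic) or $|h\circ f|\to 0$ (elliptic) along any sequence in $V\setminus Y'$ converging to $Y'$. Suppose $S$ is a local separatrix through $x$ not contained in $Y'$. Then on $S\cap(V\setminus Y')$, any local determination of $f$ takes values in a single monodromy orbit and is constant, since $S$ is $\bar\F$-invariant and the leaves of $\bar\F$ on $V\setminus Y'$ are the level sets of local determinations of $f$. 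But the divergence estimate applied along $S$ near $x\in\overline{S}$ contradicts such constancy. Hence $S\subset Y'$, so $Y=Y'$ and the set $\{\mathcal S_{i,x}\}$ is finite.

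For the saturation statement, let $\gamma:[0,\varepsilon)\to X$ be a germ tangent to $\bar\F$ with $\gamma(0)\in Y$, and shrink $\varepsilon$ so that $\gamma([0,\varepsilon))\subset V$. Assume for contradiction that $\gamma$ is not contained in $Y$ and set $t_0:=\inf\{t:\gamma(t)\notin Y\}$; then $\gamma(t_0)\in Y$ since $Y$ is closed. Choose $\delta>0$ with $\gamma((t_0,t_0+\delta))\subset V\setminus Y$. Since $\sing(\bar\F)\cap V$ is analytic of codimension at least two, $\gamma$ meets $\sing(\bar\F)$ only at isolated parameter values in $(t_0,t_0+\delta)$, so on a dense open subset it lies in a smooth leaf of $\bar\F$, i.e.\ in a level set of a continuous local determination of $f$. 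By continuity, $h\circ f\circ\gamma$ is a single constant $c$ on the whole interval $(t_0,t_0+\delta)$, with $c\in\Hj$ (parabolic) or $c\in\C^*$ (elliptic). Letting $t\to t_0^+$, the convergence $\gamma(t)\to\gamma(t_0)\in Y$ combined with the previous estimates forces $\im(c)=+\infty$ or $c=0$, which is absurd. Hence $\gamma\subset Y$.

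The main obstacle is reconciling the multivaluedness of $f$ with the continuity argument in the last step: this is handled by fixing a continuous determination of $f$ along $\gamma|_{(t_0,t_0+\delta)}$ and observing that the divergence of $\im(h\circ f)$, respectively the vanishing of $|h\circ f|$, near $Y$ is independent of the chosen determination and is uniform on $V$, both being direct consequences of the explicit local form provided by Theorem \ref{TH:localextension}.
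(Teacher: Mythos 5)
Your proof is correct and follows essentially the same route as the paper's (which is much terser): extract the explicit local form of the developing map from Theorem \ref{TH:localextension}, observe that the single-valued quantity $\im(h\circ f)$ (parabolic case) resp.\ $|h\circ f|$ (elliptic case) is a continuous first integral whose nonzero level sets cannot accumulate on its zero locus $Y$, and deduce both finiteness of the separatrices and the saturation of $Y$ under tangent curves. The only differences are cosmetic: the paper packages both cases into the single normal form $F=U\prod x_i^{\nu_i}$ and works with $|F|$, and your minor imprecisions (e.g.\ taking $\gamma((t_0,t_0+\delta))$ entirely outside $Y$ rather than first passing to a maximal subinterval) are trivially repaired.
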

\begin{proof}
    Using \ref{TH:localextension} 
    one always inherits locally a non constant multivalued  first integral of the form $F=U\prod {x_i}^{\nu_i}$ where $U$ is holomorphic and the $\nu_i$ are positive real numbers (this is of course obvious if $x$ does not belong to the polar locus of the structure). The function $|F|$ is thus genuinely locally defined   and its level sets $\{|F|=c\}$, $c\not=0$, do not accumulate on $Y:={ |F|}^{-1}(0)\ni x$. Thus $Y=\bigcup_i \{x_i=0\}\cup\{U=0\}$ is exactly the set of local separatrices at $x$.  The result follows.
\end{proof}

\subsection{Monodromy of  hyperbolic structures on quasi-projective curves}

\begin{prop}\label{monodromy curve in Shimura}
    Let $C_0$ be a smooth quasi-projective curve with smooth compactification $C$. Any (branched) hyperbolic structure on $C_0$ has Zariksi-dense monodromy.
\end{prop}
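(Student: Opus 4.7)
The plan is to argue by contradiction: assuming $\rho \colon \pi_{1}(C_{0}) \to \Aut(\D) \subset \PSL_{2}(\C)$ is not Zariski dense, I will conclude that the developing map $\dev$ must be constant, contradicting the nontriviality of the hyperbolic structure. After replacing $C_{0}$ by a finite \'etale cover (whose smooth projective compactification is again a smooth projective curve), I may assume that $\rho$ takes values in the Zariski-connected component $G^{\circ}$ of its closure. Since $G^{\circ}$ is a proper connected real algebraic subgroup of $\Aut(\D)$, up to conjugation it is contained in one of the two maximal such subgroups: either the stabilizer $K \simeq S^{1}$ of a point of $\D$ (\emph{elliptic case}), or the Borel subgroup $B = \{z \mapsto a z + b : a > 0,\ b \in \R\}$ fixing a point of $\partial \D$ (\emph{Borel case}, worked in the upper half-plane model $\Hj$).

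In the elliptic case, I would move the fixed point to $0 \in \D$, so that $|\dev|$ becomes $\rho$-invariant, strictly bounded by $1$ and subharmonic on $\tilde C_{0}$. The local form $u \cdot z^{\nu}$ furnished by Theorem \ref{TH:localextension} shows that $|\dev|$ extends continuously, hence subharmonically, to the compact Riemann surface $C$. The maximum principle then forces $|\dev|$ to be constant, and the open mapping theorem forces $\dev$ itself to be constant.

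In the Borel case, if some local monodromy $\rho(\gamma_{p})$ is hyperbolic ($a_{p}\ne 1$), Theorem \ref{TH:localextension} applied at $p$ immediately gives $\dev$ constant. Otherwise every $\rho(\gamma_{p})$ lies in the unipotent part $U = \{z \mapsto z + b\}$, so $u := \im(\dev)$ transforms under monodromy as $u \mapsto a_{\gamma} u$. Consequently $\log u$ is well defined modulo additive constants and the $(1,1)$-form $\Omega := i \partial \bar\partial \log u$ descends to $C_{0}$; being the pull-back of $i \partial \bar\partial \log \im(z)$ on $\Hj$, it satisfies $\Omega \leq 0$ pointwise with equality where $\dev' = 0$. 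The plan is then to run Stokes' theorem: the parabolic local form $\dev = (\lambda_{p}/2\pi i) \log z + g$ from Theorem \ref{TH:localextension} forces $\log u \sim \log \log(1/|z|)$ at each puncture, which gives the finite mass of $\Omega$ on $C$ and the circulation estimate $\int_{|z|=\epsilon} i \bar\partial \log u = O(1/|\log \epsilon|) \to 0$. Writing $\Omega = d(i \bar\partial \log u)$ and letting $\epsilon \to 0$ then yields $\int_{C} \Omega = 0$; combined with $\Omega \leq 0$ this forces $\Omega \equiv 0$ and hence $\dev$ constant.

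The main obstacle is the Stokes computation in the Borel case: both the integrability of $\Omega$ and the vanishing of the boundary flux of the primitive $i \bar\partial \log u$ around each puncture must be verified, and both rest on the $\log\log$-type local behaviour which becomes available only after ruling out hyperbolic local monodromies via Theorem \ref{TH:localextension}.
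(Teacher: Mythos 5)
Your proof is correct, and its skeleton coincides with the paper's: pass to a finite \'etale cover so that the Zariski closure of the monodromy is connected, reduce to the two maximal proper connected subgroups of $\Aut(\D)$ (rotations and the Borel/affine subgroup), use Theorem \ref{TH:localextension} to control the developing map at the punctures, then run a global argument on the compact curve $C$. The global arguments, however, differ. In the elliptic case the paper glues the forms $dF_i/F_i$ into a global logarithmic $1$-form on $C$, kills its poles with the residue theorem (all residues are non-negative reals) and then kills the form itself with Riemann's bilinear relations (its periods are purely imaginary); your observation that $|\dev|$ descends to a bounded subharmonic function on $C_0$ which extends subharmonically across the punctures and is therefore constant by the maximum principle on the compact curve is a genuinely different, and shorter, route. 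In the Borel case the paper packages the argument as a degree computation: the real cocycle $dF_i = a_{ij}\,dF_j$ exhibits $\Omega^1_C(\log S)\otimes\mathcal O_C(-Z)$ as numerically trivial, while the pulled-back Poincar\'e metric, extended across the cusps via Ahlfors--Schwarz, endows it with a singular metric of non-trivial semi-positive curvature. Your direct Stokes evaluation of $\int_C i\partial\bar\partial\log\im(\dev)$, with the flux around each cusp controlled by the $\log\log(1/|z|)$ asymptotics, is the same total-curvature computation carried out by hand; it buys a self-contained boundary estimate in place of the citation of Ahlfors--Schwarz. One small point worth making explicit there: at a puncture with trivial local monodromy the developing map extends holomorphically by Lemma \ref{L:Rextension}, with value in $\Hj$ unless it is constant, so $\log\im(\dev)$ is smooth across such a puncture and contributes no boundary term.
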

\begin{proof}
    Let $\rho : \pi_1(C_0, x_0) \to \Gamma \subset \Aut(\mathbb D)$ be the monodromy representation of the hyperbolic structure on $C_0$.
    After replacing $C_0$ by a finite \'etale covering we can assume that the Zariski closure $\overline{\Gamma }^\mathrm{Zar}$ is  connected. 
    Indeed, replacing $C_0$ be the covering determined by the kernel of the composition of $\rho$ with the quotient morphism $\Gamma \to \Gamma/\Gamma'$, $\Gamma'=\Gamma\cap { \overline{\Gamma }^\mathrm{Zar}_0}$, we obtain a quasi-projective curve with a hyperbolic structure which satisfies this property. Moreover, this new monodromy is Zariski dense if, and only if, the original one is Zariski dense.  

    By the structure of algebraic subgroups of $\Aut(\D) \subset \PSL(2,\C)$, there are three possible cases:
    \begin{enumerate}
        \item $\Gamma$ is conjugated to a subgroup of  $\Aff(\mathbb R) \subset \Aut(\mathbb H) \subset \PSL(2,\C)$;
        \item $\Gamma$ is conjugated to a subgroup of the group of rotations $S^1$;
        \item $\Gamma$ is Zariski dense.
    \end{enumerate}


    Suppose now that $\Gamma$ is (conjugated to) a subgroup of $\Aff(\mathbb R)$. This means that one can pick local charts $F_i\colon U_i \to \mb H$ satisfying the gluing conditions $dF_i=a_{ij} dF_j$ with locally constant cocycles $a_{ij} \in \R$. In particular, the local holomorphic forms $dF_i$ glue to form a global section $\omega$ of $\Omega^1_{C}\otimes L$ on $C_0$ where $L$ is a numerically trivial line bundle. Theorem \ref{TH:localextension} implies that the local monodromy around points of $S$ is of parabolic type and, consequently,  this section extends on $C$ as a global section of $\Omega^1_{ C} (\log S) \otimes L$.

    Denote by $Z$ the zero divisor of $\omega$ induced by the branching points of the functions $F_i$. Then we obtain that the line bundle $\Omega^1_C(\log S) \otimes \mathcal{O}_C(-Z)$ is numerically trivial. But now, we observe that the Poincaré metric on $\mathbb H$ induces on $C_0$ a metric of negative curvature. Using Ahlfors-Schwarz lemma (see \cite{Cad16}), this metric extends on $C$ and induces a singular metric on the line bundle $\Omega^1_C(\log S) \otimes \mathcal{O}_C(-Z)$ whose curvature is a non-trivial semi-positive form. This shows that $\Omega^1_C(\log S) \otimes \mathcal{O}_C(-Z)$ cannot be numerically trivial. Therefore $\Gamma$ is not conjugated to a subgroup of $\Aff(\R)$.

    Finally, if  $\Gamma$ is (conjugated to) a rotation subgroup, one can pick local charts $F_i\colon U_i \to \mb D$ which are well-defined up to a multiplicative constant of type $e^{i\theta}$; in particular, the local logarithmic forms $dF_i/F_i$ glue to form a global logarithmic form $\omega$ on $C_0$. Remark that, if the polar locus is non-empty, then the residues of $\omega$ around poles are positive integers.

    Theorem \ref{TH:localextension} implies that $\omega$ extends to a logarithmic $1$-form globally defined on $C$. Moreover, the residues of such extension around the points of $S$ are all positive real numbers. The residue theorem implies that $\omega$ has no poles at all.

    Since we are assuming that $\Gamma$ is a rotation subgroup, the periods of $\omega$ are purely imaginary. Riemann's bilinear relations  implies that $\omega=0$, a contradiction. This concludes the proof of the proposition.
\end{proof}

\begin{cor} \label{COR:uniquenessstructure}
   Let $\F$ a transversely hyperbolic foliation on a projective manifold $X$. If $\F$ is not algebraically integrable then $\F$ admits a unique transversely projective structure.
\end{cor}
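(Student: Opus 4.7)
The plan is to apply Lemma \ref{L:uniqueness}, which already supplies uniqueness once we know the monodromy representation has Zariski-dense image. By Corollary \ref{COR:regular singularities}, the transversely hyperbolic structure on $\F$ extends, possibly with poles, to a transversely projective structure with regular singularities. Let $H\subset X$ be the polar divisor of this extended structure, set $X_0 := X - H$, and let $\rho \colon \pi_1(X_0,x_0) \to \Aut(\D)$ be the monodromy of the restriction of the hyperbolic structure to $X_0$, which by construction is without poles on $X_0$. It suffices to prove that $\rho$ has Zariski-dense image in $\PSL_2(\C)$.

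To establish Zariski density, I would reduce to the one-dimensional case handled by Proposition \ref{monodromy curve in Shimura}. Choose a general complete intersection curve $C \subset X$, cut out by hyperplane sections of sufficiently large degree, so that by Bertini's theorem $C$ is smooth, irreducible, and meets $H$ transversally in a finite set $S$. Set $C_0 := C - S \subset X_0$. By the Lefschetz hyperplane theorem for quasi-projective varieties (Hamm--L\^{e}), successive generic hyperplane sectioning yields a surjection $\iota_* \colon \pi_1(C_0,x_0) \twoheadrightarrow \pi_1(X_0,x_0)$, so the image of $\rho$ coincides with the image of $\rho \circ \iota_*$. Moreover, by genericity of $C$, the curve $C$ is not tangent to $\F$; were it otherwise, the family of such generic complete intersection curves would sweep out $X$ with leaves of $\F$, forcing $\F$ to be trivial. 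Hence the restriction of the developing map $\dev \colon \widetilde{X_0} \to \D$ to a connected component of the preimage of $C_0$ is non-constant and provides a genuine, possibly branched, hyperbolic structure on $C_0$ whose monodromy is $\rho \circ \iota_*$.

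Proposition \ref{monodromy curve in Shimura} applied to this induced hyperbolic structure on $C_0$ asserts that the image of $\rho \circ \iota_*$ is Zariski dense in $\Aut(\D) \subset \PSL_2(\C)$. Combined with the surjectivity of $\iota_*$, this yields Zariski density of $\rho$ itself. Since $\F$ is not algebraically integrable by hypothesis, Lemma \ref{L:uniqueness} applies and gives uniqueness of the transverse projective structure. The subtle points are verifying the Lefschetz-type surjectivity at the level of $\pi_1$ for the quasi-projective inclusion $C_0 \hookrightarrow X_0$ (which uses the classical result for complements of divisors in smooth quasi-projective varieties) and ensuring the restricted developing map is non-constant; both are standard but require care in the choice of $C$.
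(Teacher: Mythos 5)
Your proposal is correct and follows the paper's own route: the paper's proof is literally ``combine Proposition \ref{monodromy curve in Shimura} with Lemma \ref{L:uniqueness}'', and your argument supplies exactly the standard intermediate steps (restriction to a general complete intersection curve, quasi-projective Lefschetz surjectivity on $\pi_1$, non-constancy of the restricted developing map) needed to make that combination work. The only quibble is the phrase ``forcing $\F$ to be trivial'' --- the cleaner justification for generic transversality is simply that $T_\F$ has codimension one, so a general curve direction at a general point is not tangent to it.
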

\begin{proof}
    Combine Proposition \ref{monodromy curve in Shimura} with Lemma \ref{L:uniqueness}.
\end{proof}
\begin{cor}\label{COR:trhyper}
	Let $\mathcal F$ be a transversely hyperbolic foliation on a projective manifold $X$. If $\mathcal F$ is not algebraically integrable
	then  $\mathcal F^{\trans}$, the transcendental part of $\mathcal F$, is also transversely hyperbolic.
\end{cor}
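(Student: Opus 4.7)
My plan is to combine Lemma \ref{L:CasaleCRAS} with the uniqueness statement for projective structures on non-algebraically integrable transversely hyperbolic foliations (Corollary \ref{COR:uniquenessstructure}), and then transfer hyperbolicity from $\F$ to $\F^{\trans}$ via the maximal algebraic quotient map.

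Let $\pi\colon X\dashrightarrow Y$ denote the maximal algebraic quotient of $\F$, so $\F=\pi^*\F^{\trans}$. By Lemma \ref{L:CasaleCRAS}, since $\pi^*\F^{\trans}=\F$ is transversely projective, the foliation $\F^{\trans}$ carries itself a transverse projective structure, say $(E^{\trans},\nabla^{\trans},\sigma^{\trans})$. By construction $\F^{\trans}$ is purely transcendental, and since $\F$ is not algebraically integrable the transcendental dimension is positive and the very general leaf of $\F^{\trans}$ is not algebraic; in particular $\F^{\trans}$ is not algebraically integrable. Pulling back via $\pi$ yields a projective structure on $\F$, and on the other hand $\F$ already carries its transversely hyperbolic structure. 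Since $\F$ is transversely hyperbolic and not algebraically integrable, Corollary \ref{COR:uniquenessstructure} asserts that its transverse projective structure is unique, so the pullback of $(E^{\trans},\nabla^{\trans},\sigma^{\trans})$ must coincide (up to equivalence) with the hyperbolic structure of $\F$.

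From this identification of structures I would then deduce that the developing map $\dev^{\trans}\colon\widetilde{Y_0}\dashrightarrow \Pj^1$ attached to $(E^{\trans},\nabla^{\trans},\sigma^{\trans})$ takes values in $\D$. Indeed, after composition with an appropriate lift $\widetilde{\pi}$ to universal covers, one has $\dev^{\trans}\circ\widetilde{\pi}=\dev^{\mathrm{hyp}}$ where $\dev^{\mathrm{hyp}}$ is a developing map for the hyperbolic structure of $\F$ and therefore takes values in $\D$. Since $\pi$ is dominant with connected fibers, the image of $\widetilde{\pi}$ is Zariski dense in $\widetilde{Y_0}$, hence $\dev^{\trans}$ has image contained in $\overline{\D}$; being non-constant and holomorphic, the open mapping theorem forces the image into the interior $\D$. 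For the monodromy, the fact that $\pi$ has connected fibers implies that the induced map $\pi_*\colon \pi_1(X_0)\to \pi_1(Y_0)$ is surjective (for a suitable Zariski open $Y_0\subset Y$ and $X_0:=\pi^{-1}(Y_0)$), and since the composition $\rho^{\trans}\circ\pi_*$ equals the hyperbolic monodromy $\rho^{\mathrm{hyp}}$ of $\F$, which takes values in $\Aut(\D)$, the same is true for $\rho^{\trans}$.

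The main technical point that requires care is the compatibility between the developing map/monodromy of $\F^{\trans}$ and those of $\F=\pi^*\F^{\trans}$, together with the surjectivity of $\pi_*$ on fundamental groups of the appropriate open subsets. Once this is laid out, the conclusion that $\F^{\trans}$ is transversely hyperbolic is an immediate consequence. The argument is short precisely because the heavy lifting has been done by Corollary \ref{COR:uniquenessstructure}; I do not expect any further obstacle.
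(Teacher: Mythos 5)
Your proposal is correct and follows essentially the same route as the paper: apply Lemma \ref{L:CasaleCRAS} to get a projective structure on $\F^{\trans}$, use Corollary \ref{COR:uniquenessstructure} on $\F$ to identify its pullback with the hyperbolic structure, and conclude that a developing map for $\F^{\trans}$ takes values in $\D$. Your extra details (density of the image of $\widetilde\pi$ plus the open mapping theorem, and surjectivity of $\pi_*$ on fundamental groups for the monodromy) correctly fill in the step the paper leaves implicit; the only cosmetic slip is calling the image of $\widetilde\pi$ ``Zariski dense'' in the universal cover, where density in the analytic topology is what is meant and used.
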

\begin{proof}
	Lemma \ref{L:CasaleCRAS} implies that $\mathcal F^{\trans}$ is transversely projective. The transverse projective structure for
	$\mathcal F^{\trans}$, cf. Definition \ref{def projective}, induces a transverse projective struture for $\mathcal F$. 
    Corollary \ref{COR:uniquenessstructure} implies that this structure must coincide with transversely hyperbolic structure for $\mathcal F$. It follows that the developing maps of the transverse hyperbolic structure of $\mathcal F$ are pull-backs of the developing maps of the quotient transversely projective structure for  $\F^{\trans}$. Hence there exists a developing map for $\F^{\trans}$ which takes values in Poincaré disk and 	we can conclude that $\F^{\trans}$ is transversely hyperbolic.
\end{proof}

\subsection{Transverse action for transversely hyperbolic foliations}

\begin{lemma}\label{L:hyperbolic and log-general}
    Let $X$ be a projective manifold and $\F$  be a  purely transcendental and transversely hyperbolic foliation on $X$ with polar divisor $\Delta$. Let $D$ be the reduced divisor with support equal to the union of the $\F$-invariant algebraic hypersurfaces and assume that $D$ is a simple normal crossing divisor. If $X_0 = X - D$ is of log general type then any dominant rational endomorphism $f:X \dashrightarrow X$ preserving $\F$ is a birational map, i.e.  $\Bir(\F) = \End(\F)$. Moreover, $\Bir(\F)$ is a finite group.
\end{lemma}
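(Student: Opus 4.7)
The strategy reduces the statement to two classical consequences of Iitaka's theory for log-general type pairs, once we show that every $f \in \End(\mc F)$ preserves $D$.

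The first observation is that $\text{supp}(f^*D) \subseteq \text{supp}(D)$ for any $f \in \End(\mc F)$. Each irreducible component $H$ of $D$ is an $\mc F$-invariant hypersurface; any irreducible component of $f^{-1}(H)$ is tangent to $f^*\mc F = \mc F$, hence is itself an $\mc F$-invariant hypersurface, and therefore belongs to $D$ by the very definition of $D$. Consequently, $f$ restricts to a dominant rational endomorphism of the log pair $(X, D)$.

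The next step, which is the main technical input, is the birationality of $f$. Here one invokes a logarithmic analogue of the Kobayashi--Ochiai theorem: for a smooth projective log pair $(X, D)$ with $D$ simple normal crossing and $K_X + D$ big, any dominant rational self-map of $X$ sending $D$ into itself set-theoretically must be birational. Concretely, one resolves the indeterminacies of $f$ via a log-resolution $\pi \colon \tilde X \to X$ with induced morphism $\tilde f \colon \tilde X \to X$, writes a logarithmic ramification formula of the form $K_{\tilde X} + \tilde D = \tilde f^{*}(K_X + D) + R$ with $R$ effective (where the effectivity of $R$ uses log-smoothness of $(X, D)$ together with the first step), and exploits the bigness of $K_X + D$ via a plurigenus estimate to rule out $\deg f > 1$. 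This establishes $\End(\mc F) = \Bir(\mc F)$.

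To conclude, Iitaka's finiteness theorem for the group of birational automorphisms of a log-general type pair ensures that $\{g \in \Bir(X) : g(D) = D\}$ is finite; by the first step applied to both $g$ and $g^{-1}$, this group contains $\Bir(\mc F)$, which is therefore finite. The delicate point in the argument is verifying the effectivity of $R$ in the log-ramification formula under only the set-theoretic containment $f^{-1}(D) \subseteq D$ furnished by the first step; this is where the log-smoothness of $(X, D)$ plays an essential role, while the transversely hyperbolic hypothesis on $\mc F$ is used only implicitly, namely to guarantee that $D$ is nonempty and rich enough for the log-general type assumption on $X_0$ to be meaningful.
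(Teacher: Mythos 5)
Your overall architecture (reduce to functoriality of log-pluricanonical forms for the pair $(X,D)$, then invoke rigidity of log-general-type pairs) matches the paper's, and your second and third steps are essentially sound. The problem is in the first step, and it is exactly the point where the transversely hyperbolic hypothesis does real work — not, as you assert at the end, merely to make $D$ nonempty. Your claim that every irreducible component $K$ of $f^{-1}(H)$ is $\F$-invariant is only clear when $f|_K$ dominates $H$: then the leaf of $f^*\F$ through a general point of $K$ maps into a leaf contained in $H$, so $K$ is invariant and hence lies in $D$. If instead $K$ is \emph{contracted} by $f$, i.e.\ $f(K)$ has codimension at least $2$ (typically inside $\sing(\F)\cap D$), invariance of $K$ does not follow: the pulled-back form $f^*\omega$ vanishes along $K$, so the implication ``$\omega\wedge dh/h$ holomorphic $\Rightarrow f^*\omega\wedge dk/k$ holomorphic'' gives no information once the codimension-one zeros are divided out, and for a general foliation a non-invariant hypersurface can perfectly well be crushed into the singular locus. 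Such a $K$ is precisely what would make $f^*\xi$ acquire poles outside $D$ and destroy the effectivity of your divisor $R$.

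The paper excludes this scenario using the hyperbolic structure: Theorem \ref{TH:localextension} produces at every point a local multivalued first integral $U\prod x_i^{\nu_i}$ with $\nu_i>0$ real, whose modulus is a genuine continuous first integral vanishing exactly on the union of local separatrices, and Corollary \ref{COR:tangentcurve} then shows that germs of curves tangent to $\F$ issuing from that union stay inside it. If a non-invariant $K$ were contracted into $D$, the $f$-images of the $\F$-tangent curves through a general point of $K$ (where $\F$ is transverse to $K$) would all have to lie in the proper analytic set of separatrices at the image point, contradicting the fact that $f$ is generically a local biholomorphism. Without this input your step 1, and hence the whole proof, does not go through. As a secondary point of comparison: for the finiteness of $\Bir(\F)$ the paper does not quote Iitaka's finiteness theorem directly but linearizes the action on $\mathcal P_m$ and kills a putative one-parameter subgroup via Proposition \ref{P:transverse symmetries} and the uniqueness of the transverse projective structure — this is where the ``purely transcendental'' hypothesis, which your argument never uses, enters.
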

\begin{proof}
    Let $\xi\in {\mathcal P}_m:=H^0(X, (K_X(\log{D}))^{\otimes m} )$ be a logarithmic pluricanonical form. For $f\in \End(\F)$ one remarks that $f^*\xi\in  {\mathcal P}_m$. Indeed, if one assumes that this does not hold true then $f$ must contract a non invariant hypersurface $K$ such that $f(K)\subset D$. Pick a general point $m$ in $K$ where $\F$ and $K$ are in transverse position and denote by $V_m\subset K$ a small euclidean  neighborhood of $m$ in $K$. Consider the set of curves of the form $\gamma:[0,\varepsilon)\to X$ tangent to the foliation such that  $\gamma(0)\in V_m$. Because $f$ is generically a local diffeomorphism, this prevents from the existence of a local proper analytic subset $Y\ni f(m_0)$ containing the image of all $f\circ\gamma$.
    This situation is excluded by Corollary \ref{COR:tangentcurve}.

    Via the linear and faithful action of $\End(\F)$ on ${\mathcal P}_m$ and the pluricanonical embedding, one inherits  a birational model of $(X,\F)$ where the action  of $\End(\F)$ is transferred into an action of a  linear algebraic group $G$ as explained in the proof of Theorem \ref{T:adj gen type} in Section \ref{S:adjoint}. This is sufficient to show that $f$ is a birational map.
    Since $f \in \End(\F)$ is arbitrary, we deduce that $\Bir(X,\F) = \End(X,\F)$.

    If the linear algebraic group $G$ is not finite as desired then, since it is Zariski closed, we can find a one-dimensional algebraic subgroup of it which preserves $(X,\F)$.  By assumption, it does not exist a non-trivial algebraically integrable subfoliation of $\F$. Thus $\F$ admits an infinitesimal (maybe rational) transverse symmetry, i.e., there exists a rational vector field $v$ such that $L_v\omega \wedge \omega = 0$ for any rational $1$-form defining $\F$.  Proposition \ref{P:transverse symmetries}(or rather its proof) implies that $\omega/\omega(v)$ is a closed $1$-form.   Thus $\F$ is transversely additive and has a projective structure different from  the transverse hyperbolic structure. This contradicts Corollary\ref{COR:uniquenessstructure}.
 \end{proof}

\begin{thm}\label{T:hyperbolic finite}
    Let $X$ be a projective manifold and let $\F$ be a transversely hyperbolic foliation on $X$ which is not algebraically integrable.
    Then the transverse action of $\End(X,\F)$ is finite.
\end{thm}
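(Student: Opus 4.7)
The plan is to reduce, via passage to the transcendental part of $\F$, to a situation where Lemma \ref{L:hyperbolic and log-general} applies directly. First I would let $\pi\colon X\rato Y$ be the maximal algebraic quotient of $\F$ and $\F^{\trans}$ the corresponding transcendental foliation on $Y$. Since $\F$ is not algebraically integrable, $\dim Y \ge 1$ and $\F^{\trans}$ is codimension one, purely transcendental, and not algebraically integrable. Corollary \ref{COR:trhyper} guarantees that $\F^{\trans}$ is again transversely hyperbolic, and Proposition \ref{P:reductiontotranscendental} provides a natural injection
\[
\frac{\End(X,\F)}{\EndFix(X,\F)}\;\hookrightarrow\;\frac{\End(Y,\F^{\trans})}{\EndFix(Y,\F^{\trans})}.
\]
Consequently it suffices to prove the theorem for $(Y,\F^{\trans})$, and relabelling I may assume throughout that $\F$ itself is purely transcendental, transversely hyperbolic, and not algebraically integrable.

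Next I would put myself in the hypotheses of Lemma \ref{L:hyperbolic and log-general}. Let $D\subset X$ be the reduced divisor whose support is the union of the $\F$-invariant algebraic hypersurfaces. Since $\F$ is not algebraically integrable, a classical theorem of Jouanolou ensures that $D$ has only finitely many irreducible components, so after a log resolution (which preserves $\End$, $\Bir$, and the transcendental character of $\F$, all of them being birational invariants) I may assume that $D$ is simple normal crossing. The essential external input is then the theorem of Brunebarbe--Cadorel \cite{brunebarbe2017hyperbolicity}: applied to the purely transcendental, transversely hyperbolic foliation $\F$, whose monodromy is Zariski dense by Proposition \ref{monodromy curve in Shimura} and, through Corlette--Simpson, gives rise to a polarized variation of Hodge structures on $X-D$, their result yields that $X_0 := X - D$ is of log general type.

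With this in hand, Lemma \ref{L:hyperbolic and log-general} applies at once and gives that $\End(X,\F)=\Bir(X,\F)$ is a finite group; in particular the transverse action is finite, and via the first reduction the transverse action of the original $\End(X,\F)$ is finite as well. The hard part will be the careful application of Brunebarbe--Cadorel: one must track that the branched transverse hyperbolic structure (extended through $D$ with regular singularities by Corollary \ref{COR:regular singularities} and uniquely attached to $\F$ by Corollary \ref{COR:uniquenessstructure}) really supplies the Hodge-theoretic and positivity input their theorem requires. Once log general type is secured, the proof of Lemma \ref{L:hyperbolic and log-general} handles the rest by the following mechanism: an infinite $\End(X,\F)$ would yield a one-dimensional algebraic subgroup preserving $(X,\F)$, hence by Proposition \ref{P:transverse symmetries} a closed rational $1$-form defining $\F$, producing a transverse projective structure distinct from the hyperbolic one and contradicting Corollary \ref{COR:uniquenessstructure}.
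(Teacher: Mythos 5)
Your skeleton coincides with the paper's: reduce to the purely transcendental case via Proposition \ref{P:reductiontotranscendental} and Corollary \ref{COR:trhyper}, prove that $X-D$ is of log general type, and conclude with Lemma \ref{L:hyperbolic and log-general}. The gap is in the middle step, which you explicitly defer as ``the hard part'': you assert that the monodromy ``through Corlette--Simpson, gives rise to a polarized variation of Hodge structures on $X-D$'' and that Brunebarbe--Cadorel then yields log general type. But the mere existence of a PVHS on $X-D$ says nothing about its log Kodaira dimension (a unitary PVHS, or one pulled back from a curve, lives on any quasi-projective manifold); \cite[Theorem 1.1]{brunebarbe2017hyperbolicity} requires the period map to be generically injective, and establishing this is precisely where the paper does the real work. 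Concretely, the paper invokes the structure theorem (Theorem \ref{THM:D}) to produce a rational map $\pi : X \dashrightarrow \overline{\mathfrak H}$ to the Baily--Borel compactification of a polydisk Shimura modular orbifold carrying the tautological representation; it then proves $\pi$ is generically finite by observing that the monodromy of $\F$ restricted to a general fiber of $\pi$ is trivial, so by Proposition \ref{monodromy curve in Shimura} a positive-dimensional fiber would have to be $\F$-invariant, contradicting pure transcendence; and it checks that the fibers over the finitely many singular points of $\overline{\mathfrak H}$ (cusps and elliptic points) are $\F$-invariant, hence lie in $D$ up to codimension two, so that the PVHS with generically injective period map genuinely lives on $X-D$. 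None of this appears in your proposal, and without it the appeal to Brunebarbe--Cadorel is unjustified.

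A secondary imprecision: Proposition \ref{monodromy curve in Shimura} is a statement about quasi-projective curves, so Zariski density of the monodromy of $\F$ itself requires restricting to a general curve; and in any case Zariski density of the monodromy is not the hypothesis that Brunebarbe--Cadorel consumes --- the generically injective period map is.
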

\begin{proof}
    Proposition \ref{P:reductiontotranscendental} and Corollary \ref{COR:trhyper} allow us to assume, without loss of generality,  that $\F$ is purely transcendental. One can also reduce to the case where the union of the $\F$-invariant hypersurfaces forms a \textit{snc} divisor $D$. Let $\mathfrak H$ be the quotient of the polydisc provided by Theorem  \ref{THM:D}. Denote by $\overline{\mathfrak{H}}$ the Baily-Borel compactification of ${\mathfrak{H}}$. This is a normal projective variety obtained by adding finitely many points (cusps).
    Theorem \ref{THM:D} implies the existence of a rational map $\pi: X \dashrightarrow \overline{ \mathfrak H}$ as well as a transversely projective foliation $\G$ on $\overline{ \mathfrak H}$ such that
    \begin{itemize}
    	\item The monodromy representation of $\G$ is described by one the tautological representation $\rho:\pi_1^{orb}(\mathfrak{H})\mapsto \PSL(2,\C)$
    	\item the monodromy representation of $\pi^*\G$ coincides with that of $\F$ on $X-D$.
    \end{itemize}
     In particular, the restriction of  the monodromy representation of $\F$  to a  general fiber  of $\pi$ is trivial. Proposition \ref{monodromy curve in Shimura} implies that $\pi$ is generically finite. Let $\mathrm{Sing}(\overline{\mathfrak{H}})$ be the singular locus of $\overline{\mathfrak{H}}$. It consists of finitely many points, namely the elliptic points of $\mathfrak{H}$ plus the cusps. Let $p\in \mathrm{Sing}(\overline{\mathfrak{H}})$ and $W$ be a small Euclidean neighborhood of $p$. The image of the restriction of $\rho$ to $\pi_1(W-\{ p\})$ is at most affine. Invoking Proposition \ref{monodromy curve in Shimura} again, one can conclude that the fiber $\pi^{-1}(p)$ is $\F$-invariant. In particular, its codimension one part lies in $D$. Then, up to removing a codimension $\geq 2$ algebraic subset,  $X-D$ inherits from $\mathfrak H$ a polarized variation of Hodge structure with a generically injective period map. According to \cite[Theorem 1.1]{brunebarbe2017hyperbolicity}, this implies that $X-D$ is of log general type. The result follows from Lemma \ref{L:hyperbolic and log-general}.
\end{proof}

\subsection{Non-Kähler manifolds associated to number fields}
In this subsection, we will see that if the K\"ahler assumption is dropped, one can construct transversely hyperbolic foliations with non-finite transverse action. Here, by a Zariski dense set, we mean a set that is not contained  in any compact hypersurface.

Let us consider non-{K}ähler compact complex manifolds associated to number fields as constructed in \cite{OT} generalizing some examples of Inoue \cite{Ino}. Let $K$ be a number field, let $\sigma_1,\dots,\sigma_s$ be its real embeddings and $\sigma_{s+1},\dots,\sigma_{s+2t}$ its complex embeddings ($\sigma_{s+t+i}=\overline{\sigma_{s+i}}$). Assume $t>0$ (i.e. $K$ is not totally real). Let $\Hj$ be the Poincar\'e upper half-plane. Let $a \in \mathcal{O}_K$ act on $\Hj^s\times \C^t$ as a translation by the vector $(\sigma_1(a),\dots,\sigma_{s+t}(a))$. Let $u \in \mathcal{O}_K^{*,+}$ be a totally positive unit (i.e. $\sigma_i(u)>0$ for all real embeddings). Then $u$ acts on $\Hj^s\times \C^t$ by $u.(z_1,\dots,z_{s+t})=(\sigma_1(u)z_1,\dots, \sigma_{s+t}(u)z_{s+t}).$ For any subgroup $U$ of positive units, the semi-direct product $U \rtimes \mathcal{O}_K$ acts freely on $\Hj^s\times \C^t$. $U$ is called admissible if the quotient space $X(K,U)$ is a compact complex manifold. In particular, admissible groups must have rank $s$. One can always find such admissible subgroups.

From Dirichlet's units theorem, $\mathcal{O}_K^*$ is a group of rank $s+t-1$. Elements of $\mathcal{O}_K^{*,+}/U$ induce automorphisms of $X(K,U)$. Therefore as soon as $t>1$, one obtains automorphisms with infinite transverse order. Remark, that in the case of surfaces ($t=1$), elements of $\mathcal{O}_K^{*,+}/U$ are of finite order. Therefore such examples appear here only in dimension at least $3$. In particular, taking $s=1, t=2$, one obtains threefolds with transversely hyperbolic codimension $1$ foliations with infinite transverse action. Such manifolds are fibrations in $5$-dimensional (real) tori over the circle. The closures of the leaves of the foliation are $5$-dimensional tori and, therefore, Zariski dense. So such threefolds admit Zariski dense entire curves.

Here, the representation $\rho_\F:\pi_1 (X(K,U))\to \PSL(2,\R)$  associated to the transverse hyperbolic structure takes values in the affine subgroup $\Aff(2,\R)$ and its linear part $\rho_\F^1:\pi_1 (X(K,U))\to ({\R}_{>0},\times)$ has non-trivial image.

It is worth noticing that this situation cannot occur in the K\" ahler category. Indeed, suppose that $X$ is a compact K\"ahler manifold carrying a transversely hyperbolic  codimension one foliation which is also transversely affine and such that the linear part  $\rho_\F^1:\pi_1 (X)\to ({\R}_{>0},\times)$ has non-trivial image. To wit, there exists on $X$ an open cover $(U_i)$ such that for every $i$, $\F$ is defined by $dw_i=0$, where $w_i:U_i\to\Hj$ is submersive on $U_i -\mbox{Sing}\ \F$ and such that the glueing conditions $w_i=\varphi_{ij}\circ w_j$ are defined by locally constant elements $\varphi_{ij}$ of $\Aff(2,\R)$. In particular there exists locally constants cocycles $a_{ij}\in{\R}_{>0}$ such that
\[
    dw_i=a_{ij}dw_j
\]
and the normal bundle $N_\F$ is thus numerically trivial. On the other hand, the existence of a transverse hyperbolic structure directly implies that  $N_\F$ is equipped with a metric whose curvature  is a non-trivial  semi-negative form. This shows that $c_1(N_\F)\not=0$, a contradiction.

%

\subsection{Monodromy determines  hyperbolic structures}

\begin{thm}\label{T:Uniqueness}
    Let $C_0$ be a smooth quasi-projective curve contained in a smooth projective curve $C$. Let $\rho:\pi_1(C_0)\rightarrow \Aut (\Hj)\simeq\Aut (\D)$ be a morphism. Then there exists at most one $\rho$-equivariant and non constant holomorphic function $f:\tilde C_0\to \Hj\simeq \D$ where $\tilde C_0$ is the universal cover of $C_0$.
\end{thm}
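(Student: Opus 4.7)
My plan rests on two ingredients: the Zariski density of $\rho$ guaranteed by Proposition \ref{monodromy curve in Shimura}, and the classification of $\PSL(2,\mathbb{C})$-invariant subvarieties of $\mathbb{P}^1\times\mathbb{P}^1$ under the diagonal action. First I would observe that any non-constant $\rho$-equivariant holomorphic map $f:\tilde C_0\to\mathbb{H}$ defines on $C_0$ a (possibly branched) transversely hyperbolic structure with monodromy $\rho$ (pull back the Poincar\'e metric; it descends by equivariance), so Proposition \ref{monodromy curve in Shimura} forces $\Gamma:=\rho(\pi_1(C_0))$ to be Zariski dense in $\PSL(2,\mathbb{C})$.

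Given two such maps $f_1,f_2$, I would form the holomorphic equivariant product $\phi=(f_1,f_2):\tilde C_0\to \mathbb{H}\times\mathbb{H}$, intertwining the $\pi_1(C_0)$-action with the diagonal action of $\Gamma$ on $\mathbb{H}^2$. Let $\Sigma=\phi(\tilde C_0)\subset \mathbb{H}^2\subset \mathbb{P}^1\times\mathbb{P}^1$; this is an irreducible $1$-dimensional analytic subset invariant under the diagonal $\Gamma$-action. Its Zariski closure $\bar\Sigma\subset\mathbb{P}^1\times\mathbb{P}^1$ has stabilizer a Zariski-closed subgroup of $\PSL(2,\mathbb{C})\times\PSL(2,\mathbb{C})$ containing the diagonal copy of $\Gamma$; by Zariski density this stabilizer contains the full diagonal $\PSL(2,\mathbb{C})$. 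Because $\PSL(2,\mathbb{C})$ acts on $\mathbb{P}^1\times\mathbb{P}^1$ diagonally with only two orbits (the diagonal $\Delta$ and its complement), the only $\PSL(2,\mathbb{C})$-invariant Zariski-closed subsets are $\emptyset$, $\Delta$ and $\mathbb{P}^1\times\mathbb{P}^1$. Hence either $\bar\Sigma=\Delta$, which yields $f_1=f_2$ immediately, or $\bar\Sigma=\mathbb{P}^1\times\mathbb{P}^1$.

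To exclude the second case I would exploit the hyperbolicity of $\mathbb{H}$. The function
\[
u(x)\;=\;2\log\cosh\!\bigl(d_{\mathbb{H}}(f_1(x),f_2(x))/2\bigr)
\]
is, on the disc model, exactly $-\log(1-|\psi|^2)$ pulled back by $\phi$, where $\psi(z,w)=(z-w)/(1-\bar w z)$. A direct computation using the identity $|1-\bar w z|^2-|z-w|^2=(1-|z|^2)(1-|w|^2)$ shows that $-\log(1-|\psi|^2)$ is plurisubharmonic on $\mathbb{D}\times\mathbb{D}$, so $u$ is subharmonic on $\tilde C_0$, and it descends to a non-negative subharmonic function on $C_0$ by the diagonal invariance of the hyperbolic distance. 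I would then invoke the explicit local normal forms provided by Theorem \ref{TH:localextension}: at any boundary point $p\in S:=C\setminus C_0$ whose monodromy is parabolic or elliptic with $\nu>0$, both $f_i$ approach the unique fixed point of $\rho(\gamma_p)$ on $\partial\mathbb{H}$ (respectively on $\mathbb{H}$), and in the parabolic case the dominant growth of the imaginary part is controlled only by the monodromy parameter $\lambda$ (common to both $f_i$), so $d_{\mathbb{H}}(f_1,f_2)\to 0$ and $u\to 0$ at $p$. Combined with the extension of $u$ across the trivial-monodromy punctures (where both $f_i$ extend holomorphically and coincide with the local distinguished first integral dictated by the transverse hyperbolic structure, forcing $u$ to extend subharmonically), the maximum principle on the compact surface $C$ forces $u\equiv 0$, hence $f_1=f_2$.

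I expect the main obstacle to be exactly the last step — ruling out the Zariski-dense case by controlling $u$ at all boundary points, especially those with trivial local monodromy. The subharmonicity and the explicit developing-map normal forms from Theorem \ref{TH:localextension} are the central technical tools; once $\bar\Sigma\neq \mathbb{P}^1\times\mathbb{P}^1$ is established, the conclusion $f_1=f_2$ (equivalently $\Sigma\subset\Delta$) can alternatively be argued via the Schwarzian: $q=(\{f_2,t\}-\{f_1,t\})dt^2$ descends to $C_0$, vanishes by the same boundary/residue analysis, and $q\equiv 0$ gives $f_2=M\circ f_1$ for a Möbius $M$ commuting with the Zariski-dense $\Gamma$, hence $M=\mathrm{id}$.
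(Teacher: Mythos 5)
Your route is genuinely different from the paper's, and its core is sound. The paper realizes each $\rho$-equivariant map as a section of the suspension $\Pj^1$-bundle $M_\rho\to C_0$, compactifies the bundle at the parabolic and elliptic punctures, shows via the curvature of the vertical Poincar\'e metric that the corresponding compactified section satisfies $\bar\sigma^2<0$, and concludes from the structure of the N\'eron--Severi group of a ruled surface that at most one section can have negative self-intersection. You replace all of this by the observation that $u=2\log\cosh\bigl(d_{\Hj}(f_1,f_2)/2\bigr)$ is the pull-back of the plurisubharmonic function $-\log(1-|\psi|^2)$ on $\D\times\D$, hence descends to a subharmonic function on $C_0$; the boundary analysis via Theorem \ref{TH:localextension} is common to both proofs. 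Your version is more elementary (no compactification of $M_\rho$, no intersection theory), but two remarks: first, the entire opening half of your argument (Zariski density via Proposition \ref{monodromy curve in Shimura} and the classification of diagonally invariant subvarieties of $\Pj^1\times\Pj^1$) does no work --- the contradiction you derive never uses $\bar\Sigma=\Pj^1\times\Pj^1$, so it can be deleted; second, the paper's construction yields extra quantitative output (the estimate $\bar\sigma^2<-\sum\theta_i$ and Remark \ref{R:nonsimult}) that is consumed later in the proof of Theorem \ref{TH:uniquemonodromy}, and your argument does not obviously reproduce it.

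There is one concrete gap. The theorem allows $C_0=C$, and even when $C_0\ne C$ every local monodromy at the punctures may be trivial (or elliptic with $\nu=0$). In that case no puncture forces $u\to 0$, and the maximum principle on $C$ only yields that $u$ is a possibly positive constant; your stated conclusion ``$u\equiv 0$'' does not follow. You must separately exclude two distinct equivariant maps at constant positive hyperbolic distance. This is fixable with the computation you already set up: the Levi form of $-\log(1-|\psi|^2)$ has determinant $(1-|z|^2)^{-2}(1-|w|^2)^{-2}-|1-\bar{w}z|^{-4}$, which by your identity is strictly positive exactly off the diagonal $z=w$; hence $u$ is strictly subharmonic wherever $f_1\ne f_2$ and $f_1'\ne 0$, contradicting constancy of $u$ unless $f_1=f_2$ on a nonempty open set, and then everywhere by the identity principle. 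With this addition (and the routine verification that one may use the same normalizing element $h$ of Theorem \ref{TH:localextension} for both $f_i$, so that the exponents $\lambda$ and $\nu$ agree and $h\circ f_1-h\circ f_2$ is bounded in the parabolic case), your proof is complete. The closing Schwarzian ``alternative'' is not needed and, as sketched, is not justified: once $\bar\Sigma\ne\Pj^1\times\Pj^1$ you already have $\bar\Sigma=\Delta$, and the claim that the quadratic differential $q$ vanishes ``by the same boundary/residue analysis'' would require a degree estimate you have not provided.
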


When $C=C_0$, Theorem \ref{T:Uniqueness} is not completely original, as it can be deduced from general properties of equivariant harmonic maps, see for instance \cite{MR965220} and references therein. We provide here a simple proof that is not extracted from the current literature on these topics. When $C\neq C_0$, the proof is considerably more involved.

\begin{proof}[Proof of Theorem \ref{T:Uniqueness}]
    There is a classical construction, the suspension process, which associates to a group morphism $\pi_1(C_0)\to PSL_2(\C)$ a $\Pj^1$-bundle  over $C_0$ and a foliation $\F$ transverse to the fibers of the $\mathbb P^1$-bundle. It goes as follows: $\pi_1(C_0)$ acts on the product $\tilde C_0\times \Pj^1$ by $(\gamma \cdot \tilde x, \rho(\gamma) \cdot z)$. This action is free and properly discontinuous, hence the quotient $M_\rho=\tilde C_0\times \Pj^1 /\pi_1(C_0)$ is a manifold via the projection $(\tilde x,z) \mapsto \tilde x$ and whose fibers are copies of $\Pj^1$.

    In our setting, $\rho$ takes values in $PSL_2(\R)=\Aut (\Hj)$, then $M_\rho$ canonically contains a flat $\Hj$-bundle over $C_0$ that we will denote by $H_\rho$.  Let $\mathcal V$ be the vertical foliation on $H_{\rho}$ defined by the fibers of this $\Hj$-bundle. The $PSL_2(\R)$ invariance of the Poincar\'e metric on the disc guarantees the existence of a closed semi-positive $(1,1)$-form $\xi$ on $H_{\rho}$ such that its restriction to any leaf of $\mathcal V$ coincides with the Poincaré metric  and the kernel of $\xi$ defines the {\it horizontal} foliation $\F$. The curvature form of this metric is equal (up to some positive factor) to $-\xi$ and
    represents $c_1(T_{\mathcal V}) = c_1(N_{\F})$ on $H_{\rho}$.

    Now, the existence of a $\rho$-equivariant holomorphic function $f$ as above is equivalent to the existence of a section $\sigma:C_0\to M_\rho$ whose image lies in $H_\rho$: namely, $\sigma([\tilde x])=[\tilde x, f(\tilde x)]$. The restriction of $T_{\mathcal V}$ to (the image of) the section $\sigma$ coincides with $N_\sigma$, the normal bundle of $\sigma$. The metric on $T_{\mathcal V}$  induces, by restriction, a metric $h$ on $N_\sigma$. 
    Since $\sigma$ is not invariant by $\F$, the restriction of $\xi$ to $\sigma$ is a nonzero semi-positive $(1,1)$-form.

     We will first consider the \textit{projective case}, i.e. $C_0 = C$ is a smooth projective curve.
     It follows that $\sigma^2 <0$, since
    \[
        \sigma^2 = \int_{\sigma} c_1(N_{\sigma}) = \int_{\sigma} - \xi < 0 \, .
    \]
   According to \cite[Proposition 11, p.122]{MR1600388}, the Neron-Severi group of $M_\rho$ is generated by a class of any section, for instance $[\sigma]$ and the class $[F]$ of a vertical fiber so that there is no section $s$ distinct from $\sigma$ 
   whith self-intersection $<-\sigma^2$. Indeed, if $s$ is a section, one has write $[s]=[\sigma]+\alpha[F]$ where $\alpha$ is a real number $\geq -\sigma^2$ whenever $s\not=\sigma$. 
   This implies  
   the uniqueness of $f$ in the case $C_0= C$.

    In general, $C_0 \neq C$ and the $\Pj^1$-bundle $M_\rho$ as well as the section $\sigma$ fail to be compact. However, it is possible to compactify the total space of $M_{\rho}$, together with the section and the foliation. We will denote by $\bar{M_\rho}$, $\bar{\sigma}$ and $\bar{\F}$ the respective compactifications. It turns out that $\bar{M_\rho}$ has still the structure of a $\Pj^1$-bundle transverse to $\bar{\F}$ except over compactification points. Let us detail the construction of such a compactification.

    Let $p\in \{p_1,..., p_n\}=C-C_0$ one of the punctures.  Here, we will regard $f$ as a multivalued map $C_0\to \Hj\subset \Pj^1$ with non-trivial local monodromy around $p$ by virtue of Lemma \ref{L:Rextension}. According to  Theorem \ref{TH:localextension} there exists a  local coordinate $u$, $u(p)=0$, such that
    \begin{enumerate}
        \item (parabolic)  $f= g(\frac{1}{2i\pi}\log{u})$; or
        \item (elliptic) $f=h(u^\nu)$, $\nu>0$ ,
    \end{enumerate}
    where $g\in \Aut (\Hj)$ and $h$ is an homography such that $h(\mathbb D)=\Hj$.

    Let us first deal with the case of a parabolic puncture. Consider the quotient $\Hj\times \Pj^1$ by the transformation $\varphi(\tau, z)=(\tau+1, z+1)$. One can identify the quotient  space with $\D^*\times \Pj^1$ equipped with standard coordinates $(u,v)$.  Actually, the $(\tau,z)$ coordinates are related to the $(u,v)$ coordinates via
    \[
        u=e^{2i\pi\tau}, \quad v=z-\tau.
    \]
    Note that $\Hj\times\Hj\subset \Hj\times \Pj^1$ is $\varphi$ invariant and that the quotient $\Hj\times\Hj/\varphi$ identifies to the open set $U$ of $\D^*\times \Pj^1$ defined by the inequality  $\im{(v+\frac{1}{2i\pi}\log{u})}>0$. As the transformation $\varphi$ acts diagonally,  the horizontal foliation $\mathcal{H}=Ker(dz)$ on $\Hj\times \Pj^1$ descends on $\D^*\times \Pj^1$ as the regular  foliation $\F$ defined by the $1$-form
    \begin{equation}
        {dv} + \frac{1}{2i\pi}\frac{du}{u}\, , \quad  (u,v)\in \D\times \Pj^1 \, .
    \end{equation}
    Hence $\F$  extends as a singular foliation $\bar{ \F}_p$ on the natural compactification $\D\times\Pj^1$.

    This foliation is transverse to every fiber $\{u\}\times \Pj^1$ at the exception of the central fiber $\{0\}\times\Pj^1$ which is invariant. Note that $\bar{\F}_p$ admits a unique singularity $q$ at $(0,\infty)$, namely a saddle-node defined by $\frac{dV}{du}=\frac{V^2}{2i\pi u}$, $V=\frac{1}{v}$. The only leaf of $\mathcal{H}$ fixed by $\varphi$, namely $\{z=\infty\}$, corresponds to the strong separatrix $\{V=0\}$ of $\bar{ \F}_p$ at the singularity $q$.

    Because $\varphi$ acts by isometry on the second factor, the (vertical) Poincar\'e metric defined on $\Hj\times\Hj$ by the form
    \[
        {i}\dfrac{dz\wedge d\bar{z}}{{(\im{z})}^2}
    \]
    descends to $U$ and induces on the tangent bundle to the vertical foliation defined by $\partial_v$ on $U$ a metric
    \[
        h(u, \partial_v)=\dfrac{1}{{(\im{(v+\frac{1}{2i\pi}\log{u})})}^2}.
    \]

    Now consider a holomorphic function $\sigma:\Hj\mapsto\Hj$ such that $\sigma(\tau+1)=\sigma(\tau) +1$. One can regard $\sigma$ as a section of the previous trivial  $\Pj^1$ bundle over $\D^*$ such that the image of $\sigma$ lies in $U$. Note then that $\sigma$ reads as
    \[
        \sigma(u)=\varphi (u)- \frac{1}{2i\pi}\log{u}
    \]
    where $\varphi(u)$ is a multivalued function taking values in $\Hj$ (hence whose lift on $\Hj$ satisfies the same equivariance property as $\sigma$). Theorem \ref{TH:localextension} implies that $\sigma$ extends through $0\in \D$ as a holomorphic function $\bar{\sigma}: \D\mapsto \C$ (in particular the foliation $\F$ is regular at $\sigma(0)$. The normal bundle of $\bar{\sigma}$ is equipped with the metric $h$ (more exactly its restriction) singular at $u=0$ and which reads as $h(u,\partial v)= e^{-2\varphi}$ where the weight $\varphi$ is defined as
    \[
        \varphi(u)= \log{(-{\frac{1}{2\pi}\log |u e^{2i\pi\sigma(u)}|})}\, .
    \]
    A  calculation shows that $-\varphi$ is subharmonic. Indeed, the curvature current $T_h=\frac{i}{2\pi}\partial\bar{\partial}\varphi$ of $h|N_{\bar{\sigma}}$ is negative. Actually, up to some positive constant,  $T_h$ 
    is equal to
    \[
        - i\dfrac{1}{{|U|}^2{\log^2{|U|}}}dU\wedge d\bar{U}
    \]
    where $U=u e^{2i\pi\sigma(u)}$.

    We now investigate the existence of similar equivariant compactifications for singularities of elliptic type. We first start from the following model (where it is more convenient to consider the disk instead of the Poincar\'e upper half plane): quotient of the horizontal foliation on $\Hj\times \Pj^1$ by the tranformation $\varphi(\tau, z) = (\tau+1, e^{-2i\pi \theta}z)$, $\theta>0$. As before, this quotient is isomorphic to the trivial $\Pj^1$ bundle over $\D^*$ equipped with standard coordinates $(u,v)$ related to the $(\tau,z)$ coordinates via $u=e^{2i\pi\tau}$, $v=ze^{2i\pi\theta\tau}$. Note that $\Hj\times\D\subset \Hj\times \Pj^1$ is $\varphi$ invariant and that the quotient $\Hj\times\D/\varphi$ identifies to the open set $U$ of $\D^*\times \Pj^1$ defined by the inequality $|vu^{-\theta}|<1$. By construction the horizontal foliation $\mathcal H=\ker(dz)$ descends to $\D^*\times\Pj^1$ as the regular foliation $\F$ defined by the $1$-form
    \[
        \frac{dv}{v} -\theta \frac{d\tau}{\tau}.
    \]
    This foliation extends as a singular foliation $\bar{\F}_p$ on the natural compactification $\D\times \Pj^1$. Note that $\bar{\F}_p$ is tangent to the central fiber $(0,v)$ and admits on it two linearizable singularities at $v=0$ and $v=\infty$.  The  leaves of $\mathcal{H}$ fixed by $\varphi$, namely $\{z=0,\infty\}$, corresponds to the  separatrices $\{v=0,\infty\}$ of $\bar{ \F}_p$ along the central fiber $\{u=0\}$.

    Because $\varphi$ acts by isometry on the second factor, the (vertical) Poincar\'e metric defined on $\Hj\times\D$ by the form
    \[
        {i}\dfrac{dz\wedge d\bar{z}}{{(1-{|z|}^2)}^2}
    \]
    descends to $U$ and induces on the tangent bundle to the vertical foliation defined by $\partial_v$ on $U$ the metric
    \[
        h(u, \partial_v)=\dfrac{{|u|}^{-2\theta}}{{(1-{|vu^{-\theta}|}^2))}^2}.
    \]

    Now consider a holomorphic function $\sigma:\Hj\mapsto\D$ such that $\sigma(\tau+1)=e^{-2i\pi\theta}\sigma(\tau) $. One can regard $\sigma$ as a section of the previous trivial  $\Pj^1$ bundle over $\D^*$ such that the image of $\sigma$ lies in $U$. Note then that $\sigma$ reads as
    \[
        \sigma(u)=\varphi (u) u^{\theta}
    \]
    where $\varphi(u)$ is a multivalued function taking values in $\D$. 
    From \ref{TH:localextension}, one deduces that $\sigma$ extends through $0\in \D$ as a holomorphic function $\bar{\sigma}: \D\mapsto \C$. Indeed, recall that $\varphi$ takes the form $\varphi(u)= U(u) u^\nu$ where $U:\D\mapsto \C$ is holomorphic, $\nu>0$ and $\nu\equiv -\theta [2\pi]$. Unlike the parabolic case, $(0,\bar{\sigma}(0))$  coincides  with the singular point $(0,0)$ of $\bar{ \F}_p$. The normal bundle of $\bar{\sigma}$ is equipped with the metric $h$ (more exactly its restriction) singular at $u=0$ and which reads as $h(u,\partial v)= e^{-2\varphi}$ where the weight $\varphi$ is defined as $-\theta\log{|u|}-\log(1-{|\varphi (u)|}^2)$.

    Again, a computation shows that $-\varphi$ is subharmonic . More precisely, the curvature current $T_h=\frac{i}{2\pi}\partial\bar{\partial}\varphi$ of $h|N_{\bar{\sigma}}$ reads as
    \begin{equation}\label{E:ellipticbehavior}
        T_h= -\theta\delta_{ \{u=0\}} +\eta
    \end{equation}
    where $\eta$ is a semi-negative and non-trivial $(1,1)$-form with $L_{loc}^1$ coefficients. Unlike the parabolic case, the curvature current admits an atomic part which has a negative contribution to the curvature of $N_{\bar{ \sigma}}$. This explains \textit{a posteriori} the choice of this compactification by specifying the choice of the \textit{ elliptic weight} $\theta>0$.

    For every puncture $p$, consider a small neighborhood of $p$  biholomorphic to $\D\subset C$. Let $\pi:M_\rho\mapsto C_0$ the canonical projection. One can glue over $\pi^{-1}(\D^*)$ one of the two previous local models, depending on whether $p$ is elliptic or parabolic. Doing this for every $p$, one ends-up with the desired compactification $\overline{M_\rho}$.

    By the foregoing construction, any $\rho$-equivariant (and non constant) map $f$, corresponds to a section $\bar{\sigma}: C\to \overline{M_\rho}$ with $\sigma^2<0$. Indeed, the metric $h$ on $N_{\bar{ \sigma}}$ arising from the vertical Poincar\'e metric is invariant by this gluing process. On the other hand, invoking again \cite[Proposition 11, p.122]{MR1600388},  the Neron-Severi group is generated by the class of $\bar{ \sigma}$ and that of a vertical fiber, so that every section different from $\bar{ \sigma}$ has self-intersection $\geq -{ \bar\sigma}^2$. As in the projective case, this proves the uniqueness of $f$.
\end{proof}
\subsection{Additional properties}

	Let $f:\tilde C_0\to \Hj$ as in the statement of Theorem \ref{T:Uniqueness}. One also maintains notations of its proof.
    Let $\theta_i>0$, $i=1....,e$, some \textit{ elliptic weights} associated to the elliptic punctures. From the above analysis (in particular ( \ref{E:ellipticbehavior}), one can infer that ${ \bar\sigma}^2< -\sum \theta_i$.

\begin{rem}\label{R:altproof}[Alternative proof of Proposition \ref{monodromy curve in Shimura}]
    Suppose now that $\rho$ is not Zariski dense. Then, up to taking a finite \'etale cover of $C_0$, there is no loss of generality in assuming that the image of $\rho$  fixes a point in $\Pj^1$. From the description of the singularities of the compactified foliation $\bar{\F}$ of $\F$ (the "horizontal" foliation defining the flat structure), one can derive the existence of a section $s:\bar C\to \overline{M_\rho}$,  $\bar{ \F}$ invariant with  self-intersection $s^2\leq \sum \theta_i$. Actually the Camacho-Sad indices (whose sum gives $s^2$, see \cite[Theorem 3.2, p.28]{ MR3328860}) vanishes over parabolic puncture and are equal to $\pm\theta_i$ over elliptic punctures). This contradicts the estimate on the self-intersection of sections different from $\bar{ \sigma}$.
\end{rem}

\begin{rem}\label{R:nonsimult}[Non existence of a simultaneous "conjugated" holomorphic section]
    Suppose that there also exists a $\rho$-equivariant holomorphic map  $g :\tilde C\to\Pj^1- \overline{ \Hj}$. By the proof of Theorem  \ref{T:Uniqueness} and considering this times the Poincar\'e metric on the flat $\Pj^1- \overline{ \Hj}$ bundle over $C_0$, this map gives rise to a section $\bar{\alpha}:C\mapsto \overline{ M_\rho}$ (different from $\bar{ \sigma}$) of self-intersection $\leq\sum \theta_i$ ($-\theta$ is replaced by $\theta$ in Equation \ref{E:ellipticbehavior}). As in Remark \ref{R:altproof}, we obtain a contradiction.
\end{rem}

\subsection{Representations and rational endomorphisms}

\begin{thm}\label{TH:uniquemonodromy}
    Let  $X$ be a projective manifold and $\F$ be a (singular) transversely hyperbolic foliation on $X$ with polar divisor $\Delta$. Let $\rho:\pi_1(X- \Delta)\rightarrow \Aut (\D)$ be the corresponding monodromy representation. Assume that there exists a rational endomorphism $f: X \dashrightarrow X$ such that $f^*\rho= \rho$ modulo conjugation in $\Aut(\Pj^1)$ (that is $\rho$ is the monodromy of the pull-back foliation $f^*\rho$) Then, $f^*\F=\F$.
\end{thm}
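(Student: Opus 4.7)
The plan is to show that the developing maps of $\F$ and $f^*\F$ coincide on the universal cover of $U := X - (\Delta \cup f^{-1}(\Delta))$, which immediately forces $f^*\F = \F$. Let $\mathcal D_\F \colon \widetilde U \to \D$ be a $\rho$-equivariant developing map for $\F$. The pullback $f^*\F$ is again transversely hyperbolic, with developing map $\mathcal D_{f^*\F} = \mathcal D_\F \circ \tilde f$ for a suitable lift $\tilde f$ of $f$, and by hypothesis there exists $h \in \PSL_2(\C)$ with $f^*\rho = h\rho h^{-1}$. Setting $\mathcal D' := h^{-1} \circ \mathcal D_{f^*\F}$, one obtains a $\rho$-equivariant holomorphic map $\widetilde U \to h^{-1}(\D)$, and the task becomes to prove $\mathcal D_\F = \mathcal D'$.

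To apply Theorem~\ref{T:Uniqueness} on curves, I would first show that $h$ may be taken in $\Aut(\D)$. Pick a generic smooth curve $C \subset X$ with $C_0 := C \cap U$ non-empty and with $C$ transverse to both $\F$ and $f^*\F$, so that $\mathcal D_\F|_{\widetilde{C_0}}$ and $\mathcal D'|_{\widetilde{C_0}}$ are both non-constant. The restriction of $\mathcal D_\F$ to $\widetilde{C_0}$ defines a branched hyperbolic structure on $C_0$, so Proposition~\ref{monodromy curve in Shimura} implies that $\rho|_{\pi_1(C_0)}$, and a fortiori $\rho$, is Zariski dense in $\Aut(\D)$. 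Since $h\rho h^{-1} \subset \Aut(\D)$, the element $h$ must then normalize $\Aut(\D)$ inside $\PSL_2(\C)$; this normalizer consists of elements preserving $\D$ together with elements swapping $\D$ and $\Pj^1 - \overline{\D}$. If $h$ were in the swapping coset, then $\mathcal D'|_{\widetilde{C_0}}$ would be a $\rho|_{\pi_1(C_0)}$-equivariant non-constant holomorphic map into $\Pj^1 - \overline{\D}$, coexisting with $\mathcal D_\F|_{\widetilde{C_0}}$ mapping into $\D$, which is precisely the situation excluded by Remark~\ref{R:nonsimult}. Hence $h \in \Aut(\D)$, both restrictions take values in $\D$, and Theorem~\ref{T:Uniqueness} forces $\mathcal D_\F|_{\widetilde{C_0}} = \mathcal D'|_{\widetilde{C_0}}$.

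To pass from curves to the whole space, I would observe that the coincidence locus $\{\mathcal D_\F = \mathcal D'\} \subset \widetilde U$ is a closed analytic subset, and that it is invariant under the deck transformation action of $\pi_1(U)$ thanks to the common $\rho$-equivariance of the two maps (if they agree at $x$, they agree at $\gamma \cdot x$ for every $\gamma$). Varying $C$ in a family of curves whose union is Zariski dense in $U$, the coincidence locus contains the preimage of this dense family under the covering map $\widetilde U \to U$, and therefore equals $\widetilde U$. Consequently $\mathcal D_\F = \mathcal D'$ globally, the level sets of $\mathcal D_\F$ and $\mathcal D_{f^*\F}$ coincide, and $f^*\F = \F$. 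The main obstacle is the normalizer dichotomy on $h$, where Remark~\ref{R:nonsimult} plays the decisive role in ruling out the disk-swapping configuration; the subsequent globalization is then comparatively routine once equivariance and a sufficient supply of generic curves are exploited.
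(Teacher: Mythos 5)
Your proposal is correct and follows essentially the same route as the paper: Zariski density of the restricted monodromy via Proposition~\ref{monodromy curve in Shimura}, the resulting dichotomy on the conjugating element (preserving $\D$ or swapping it with $\Pj^1-\overline{\D}$), Theorem~\ref{T:Uniqueness} in the first case and Remark~\ref{R:nonsimult} in the second, then varying the curve $C$ to globalize. Your write-up is in fact a more carefully articulated version of the paper's rather terse argument, particularly in making the normalizer step and the final globalization explicit.
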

\begin{proof}
    Pick a general curve $C \subset X$. Note that both $\F$ and $\G=f^*\F$ are transversely hyperbolic foliations on $X$ and their developping maps are locally defined on $X-\Delta$ (the latter by Lemma \ref{L:Rextension}).
    Furthermore, the monodromy of $\G$ is conjugated to the monodromy of  $\F$ in $\Aut (\Pj^1)$. By Zariski density (Proposition \ref{monodromy curve in Shimura}), the conjugacy lies in $\Aut(\D)$ or $\tau\circ \Aut (\D)\circ\tau^{-1}$ where $\tau(z)=\frac{1}{z}$. 

    Each of these foliations induce on $C-\Delta$ a (branched) hyperbolic structure defined by  $\rho_{|\pi_1(C-\Delta)}$-equivariant holomorphic maps
    \begin{itemize}
    	\item $F_\F, F_{\G}:\widetilde{C-\Delta}\to \D$
    	
    	or
    	
    	\item $F_\F :\widetilde{C-\Delta}\to \D$, $F_{\G}:\widetilde{C-\Delta}\to \tau(\D)=\Pj^1-\bar\D$
    \end{itemize}
     The conclusion follows from Theorem \ref{T:Uniqueness} and Remark \ref{R:nonsimult} by varying $C$ \textit{ ad libitum}.
\end{proof}


\section{Proof of Theorem \ref{THM:transv proj}}\label{S:THM A}

\subsection{Polydisk Shimura modular orbifolds and their tautological foliations}
Recall that an \emph{orbifold} is a Hausdorff topological space which is locally modeled on finite quotients of $\C^n$. One defines an orbifold cover as a map $f\colon X\to Y$ between orbifolds which is locally conjugated to a quotient map
\[
    \C^n/\Gamma_0 \to \C^n/\Gamma_1 \qquad \Gamma_0 \leq \Gamma_1.
\]
 Given an orbifold $X$, a result due to Thurston asserts there exists a universal orbifold cover $\pi\colon \widetilde X \to X$; the \emph{orbifold fundamental group} $\pi_1^{orb}(X)$ is then defined as the group of deck transformations of $\pi$. For example, if $U$ is a simply connected complex manifold and $G\leq \Aut(U)$ is a discrete subgroup such that the stabilizer of each point of $U$ is finite, then the quotient $X=U/G$ admits a natural orbifold structure such that $\pi_1^{orb}(X)=G$.

    Following Corlette and Simpson \cite{MR2457528}, a polydisk Shimura modular orbifold is a  quotient $\mathfrak H$ of  a polydisk $\mathbb D^n$ by a  group of the form $U(P,\Phi)$ where $P$ is a projective module of  rank two over the ring of integers $\mathcal O_L$ of a  totally imaginary  (quadratic extension) $L$ of a totally real number field $F$; $\Phi$ is a skew hermitian form on $P_L=P \otimes_{\mathcal O_L} L$; and $U(P,\Phi)$  is the subgroup of the $\Phi$-unitary group  $U(P_L,\Phi)$
    consisting of elements that preserve $P$. This group acts naturally  on $\mathbb D^n$ where $n$ is half the number of
    embeddings $\sigma : L \to \C$ such that the quadratic form $\sqrt{-1} \Phi(v,v)$ is indefinite. The aforementioned action is explained in detail  in \cite[\S 9]{MR2457528}.
    The quotients $\mathbb D^n /\mbox{U}(P,\Phi)$ are always  quasiprojective
    orbifolds, and when $[L:\mathbb Q] > 2n$, they are projective (i.e., proper/compact) orbifolds.
    The archetypical examples satisfying $[L:\mathbb Q] =2n$ are the Hilbert modular orbifolds, which are quasiprojective but
    not projective.

    Note that there is one tautological representation
    \[
         \pi_1^{orb}(\mathbb D^n /\mbox{U}(P,\Phi) ) \simeq \mbox{SU}(P,\Phi)/\{\pm \Id\} \hookrightarrow \PSL(2,L) \, ,
    \]
    which induces for each embedding $\sigma: L \to \C$ one tautological representation  $\pi_1^{orb}(\mathbb D^n /\mbox{U}(P,\Phi) ) \to \PSL(2,\C)$. If $\rho:\pi_1^{orb}(\mathbb D^n /\mbox{U}(P,\Phi) ) \to \PSL(2,\C)$  is one such tautological representation then
    we will denote by $\mathcal H_{\rho}$ the associated Riccati foliation on the $\mathbb P^1$-bundle $\mathfrak H\times_\rho\mathbb P^1$.
    Varying the embedding $\sigma : L \to \C$, we obtain the set of tautological Riccati foliations over $\mathfrak H$.

\subsection{Structure}\label{SS:structure} The structure of transversely projective
foliations on projective manifolds was described in \cite{MR3522824}. We recall this description below.

\begin{thm}\label{THM:D}
    Let $\mathcal F$ be a codimension one transversely projective
    foliation on a projective manifold $X$.
    Then at least one of the following assertions
    holds true.
    \begin{enumerate}
        \item\label{I:D1} The foliation $\F$ is virtually transversely additive.
        \item\label{I:D2} There exists a rational dominant map $f: X \dashrightarrow S$ to a ruled surface $\pi:S\to C$, and a Riccati foliation
        $\mathcal H$ defined on $S$ (i.e. over the curve $C$) such that $\mathcal F= f^* \mathcal H$.
        \item\label{I:D3} There exists a polydisk Shimura modular orbifold $\mathfrak H$ and a rational map $\varphi: X \dashrightarrow \mathfrak H\times_\rho\mathbb P^1$ such that  $\mathcal F= f^* \mathcal H_\rho$ where $\mathcal H_\rho$ is one the tautological Riccati foliations over $\mathfrak H$.
    \end{enumerate}
\end{thm}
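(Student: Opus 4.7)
The plan is to extract the monodromy representation from the transverse projective structure and then dichotomize according to the algebraic behavior of this representation, using the Corlette--Simpson classification \cite{MR2457528} of rank two representations of quasi-projective fundamental groups as the main tool.

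First, I would fix a representative $(E,\nabla,\sigma)$ of the transverse projective structure of $\F$, let $H\subset X$ denote the polar divisor of $\nabla$, and consider the monodromy representation $\rho\colon\pi_1(X\setminus H)\to\PSL_2(\C)$ of the underlying Riccati foliation on $\Pj(E)$, as in \S\ref{SS:distinguished}. After blowing up if necessary, I may assume $H$ is a simple normal crossing divisor. The analysis then splits according to the Zariski closure $G\subset\PSL_2(\C)$ of the image of $\rho$. The elementary algebraic classification of proper subgroups of $\PSL_2(\C)$ identifies three cases: $G$ is solvable (i.e., $\rho$ is reducible, fixing either a point of $\Pj^1$ or a pair of points), $G$ is contained in the normalizer of a torus (a degenerate imprimitive situation that one reduces to the solvable case via a double cover), or $G=\PSL_2(\C)$.

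In the solvable/reducible situation, the projective structure can be refined: a $\rho$-invariant point in $\Pj^1$ yields a meromorphic section of $\Pj(E)$ tangent to the Riccati foliation, which in turn provides an affine structure on $\F$ via the quotient construction. Applying Proposition \ref{abelian monodromy} (or rather its underlying mechanism), together with the analysis of closed logarithmic $1$-forms arising from the abelianization of $\rho$, one shows that, up to a finite cover handling the possible swap between the two invariant points, $\F$ is defined by a closed rational $1$-form, landing in conclusion (\ref{I:D1}).

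In the Zariski-dense case, I invoke the main result of Corlette--Simpson \cite{MR2457528}: a Zariski-dense rank two representation of a quasi-projective fundamental group is either (a) a pullback from an orbicurve, or (b) comes from the tautological representation of a polydisk Shimura modular orbifold via Galois conjugation by some embedding $\sigma\colon L\to\C$ (after possibly tensoring with a rank one local system, which only modifies the projective monodromy trivially). In situation (a), one obtains a rational map $X\dashrightarrow C$ to a curve carrying an orbifold structure, and the pulled-back Riccati foliation on a ruled surface $S\to C$ recovers $\F$; this yields conclusion (\ref{I:D2}). In situation (b), one obtains a rational map $\varphi\colon X\dashrightarrow\mathfrak{H}$ for the relevant polydisk Shimura orbifold $\mathfrak{H}$, such that the monodromy of $\F$ agrees with that of $\varphi^*\mathcal{H}_\rho$; by Theorem \ref{TH:uniquemonodromy} (or its reducible-case analog), equality of monodromies forces $\F=\varphi^*\mathcal{H}_\rho$, giving conclusion (\ref{I:D3}).

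The main obstacle is the passage from the abstract Corlette--Simpson dichotomy, which is a statement about local systems on a quasi-projective base, to the geometric statements about foliations that explicitly produce the rational maps in (\ref{I:D2}) and (\ref{I:D3}). Concretely, the delicate steps are: (i) verifying that the transverse projective structure has regular singularities so that the Riccati foliation is determined by $\rho$ up to bimeromorphic gauge (this is a singularity analysis akin to Corollary \ref{COR:regular singularities}); (ii) ensuring that the ambiguity in the Corlette--Simpson factorization (tensor twists by rank one local systems, Galois conjugation, choice of polarization on the Shimura datum) can be absorbed in the transverse structure without enlarging the list of conclusions; and (iii) reducing the non-rigid but Zariski-dense case of Corlette--Simpson—where one gets a family of deformations of $\rho$—to an actual factorization through an orbicurve, which requires a Jordan-type argument on the variation of Hodge structure underlying the local system.
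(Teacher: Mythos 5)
First, a point of comparison: the paper does not prove Theorem~\ref{THM:D} at all — it is quoted verbatim from \cite{MR3522824}, and your outline is essentially a compressed version of the strategy of that reference (monodromy of the transverse structure, dichotomy on the Zariski closure, Corlette--Simpson \cite{MR2457528} in the Zariski-dense case). So the overall route is the right one, but as a proof it has genuine gaps.

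The most serious one is your treatment of the non-Zariski-dense case. You claim that when the image of $\rho$ is solvable (or lies in the normalizer of a torus), a finite cover reduces $\F$ to a foliation defined by a closed rational $1$-form, i.e.\ conclusion~(\ref{I:D1}). This is false: with regular singularities, solvable monodromy only yields a transverse \emph{affine} structure (Proposition~\ref{abelian monodromy}), and transversely affine foliations which are not virtually transversely additive exist in abundance (Bernoulli/Riccati-type examples); by the structure theorem for transversely affine foliations they land in conclusion~(\ref{I:D2}), not~(\ref{I:D1}). So your dichotomy must allow the reducible case to feed into~(\ref{I:D2}) as well, and this requires the transversely affine structure theorem as a separate ingredient. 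The second gap is your step~(i): you cannot ``verify'' that the structure has regular singularities — irregular ones genuinely occur, and then the monodromy no longer determines the Riccati foliation up to birational gauge (the Remark following Proposition~\ref{abelian monodromy} even records transversely projective, non-affine foliations with \emph{trivial} monodromy). In \cite{MR3522824} the irregular case is handled by a separate argument showing that irregularity forces the structure to be pulled back from a curve (again conclusion~(\ref{I:D2})); without that, your reduction of everything to the monodromy representation breaks down. The Zariski-dense half of your sketch (rigid versus non-rigid, factorization through an orbicurve or a polydisk Shimura modular orbifold, absorbing tensor twists and Galois conjugation) is consistent with the reference, but as written it delegates precisely the hard steps, so the proposal should be regarded as an outline of \cite{MR3522824} with an incorrect resolution of the solvable and irregular cases rather than a complete proof.
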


Note that one can assume that   foliations fitting in the description  by provided by  (\ref{I:D2}) and (\ref{I:D3}) are not virtually additive (cse covered by Item (\ref{I:D1}))  and in particular 
have transcendental dimension at most  one.

\subsection{Proof of Theorem \ref{THM:transv proj}}
Let $\F$ be a transversely projective foliation on a projective manifold $X$. By Proposition \ref{P:reductiontotranscendental} and Lemma \ref{L:CasaleCRAS}, we can assume without loss of generality that $\F$ is purely transcendental.
Theorem \ref{THM:D} says that $\F$ is (\ref{I:D1}) virtually transversely additive; or (\ref{I:D2}) $\F$ is the pull-back of Riccati foliation on a projective surface; or (\ref{I:D3}) there exists a polydisk Shimura modular orbifold $\mathfrak H$ and a rational map $\varphi: X \dashrightarrow \mathfrak H\times_\rho\mathbb P^1$ such that  $\mathcal F= \varphi^* \mathcal H_\rho$ where $\mathcal H_\rho$ is one the tautological Riccati foliations of $\mathfrak H$.

If $\F$ satisfies (\ref{I:D1}) then there is nothing to prove. If instead $\F$ satisfies (\ref{I:D2}) then the conclusion follows from Corollary \ref{C:reductiontranscendental}.

It remains to treat the case (\ref{I:D3}). For that, consider the diagram
\begin{center}
    \begin{tikzcd}[column sep=small]
    X  \arrow[d,"p"] \arrow[rr, dashed, "\varphi" ] \arrow[drr,dashed,"\pi \circ \varphi"] & & \mathfrak H\times_\rho\mathbb P^1 \arrow[d,"\pi"] \\
    B \arrow[rr, "q"] & & \bar{\mathfrak H}
    \end{tikzcd}
\end{center}
where $ \bar{\mathfrak H} $ is projective compactification of  $\bar{\mathfrak H}$ and  $q: B \to \bar{ \mathfrak H}$  is  a Stein factorization of a desingularization of the closure of the image of $\pi\circ \varphi$. After replacing $X$ and $B$  by a suitable birational model, we can assume that the natural map from $X$ to $B$ is a morphism  $p : X \rightarrow B$ between projective manifolds as depicted above. Note that the general fiber of $p$ is irreducible, since $q$ is a Stein factorization.

The identity $\F = \varphi^* \mathcal H_{\rho}$ implies that the general fiber of $\varphi$ is finite since we are assuming that $\F$ is purely transcendental.  Thus, the general fiber of $p$ is at most one dimensional.

Let $\Delta$ be the polar divisor of the transverse structure for $\F$ and let $\Delta_B = p_* \Delta$  be its image on $B$. The monodromy representation
of $\F$, $\rho_{\F} : \pi_1(X - \Delta) \to \Aut(\mathbb P^1)$ factors through a representation of $\rho_B: \pi_1(B - \Delta_B,) \to \Aut(\mathbb P^1)$. Corlette and Simpson's description of the rank two representations implies that a Galois conjugate of $\rho_B$ is conjugated to the monodromy
of a transversely hyperbolic foliation $\G$ on $B$ induced by one of the $\dim \mathfrak H$  natural transversely hyperbolic foliations on $\mathfrak H$.

Any rational endomorphism $f: X \dashrightarrow X$ of $\F$ preserves the monodromy representation of $\F$ since, under our assumptions, Lemma \ref{uniqueness projective structure} guarantees that $\F$ carries a unique transverse projective structure. Therefore Proposition \ref{monodromy curve in Shimura} implies that $f$ must preserve the set of fibers of $p$. Hence, since $p$ has irreducible general fiber, there exists  a rational map $g: B \dashrightarrow B$
such that the diagram
\begin{center}
    \begin{tikzcd}[column sep=small]
    X  \arrow[d,"p"] \arrow[rr, dashed, "f" ] & & X \arrow[d,"p"]\\
    B \arrow[rr, dashed, "g"] & & B
    \end{tikzcd}
\end{center}
commutes. Moreover, $g$ preserves the monodromy represention $\rho_B$ induced by $\rho$. Theorem \ref{TH:uniquemonodromy} implies that $g$ is a rational endomorphism of the transversely hyperbolic foliation $\G$. We can argue as in the proof of Theorem \ref{T:hyperbolic finite}, using \cite[Theorem 1.1]{brunebarbe2017hyperbolicity} and Lemma \ref{L:hyperbolic and log-general}, to deduce that $\Bir(B,\G) = \End(B,\G)$ is a finite group. Consequently, the Zariski closure of the orbits of $\End(\F)$ is either finite or a finite union of fibers of $p$. In the first case $\End(\F)$ is finite and we are done. In the second case, Proposition \ref{P:dim orbit 1 bis} implies that $\F$ is virtually transversely additive contrary to our assumptions.
\qed

\section{Quasi-Albanese morphism and Zariski dense dynamics}\label{S:qalbanese}

\subsection{Endomorphisms of semi-abelian varieties}

The following property of self-maps of semi-abelian variety is well known, see for instance  \cite[Fact 2.1]{MR3937327}  and references therein.

\begin{lemma}\label{L:endomorphisms}
    Let $\varphi: G \to G$ be a  morphism of a semi-abelian variety.
    Then $\varphi$ can be written as the composition of a group endomorphism of $G$
    with a translation. In particular, if the set of fixed points of $\varphi$ is non-empty
    then $\varphi$ is conjugated to a group endomorphism of $G$ and, after conjugation, its set of fixed points
    is a semi-abelian subvariety.
\end{lemma}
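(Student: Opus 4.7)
The plan is to reduce both assertions to the rigidity property of morphisms of semi-abelian varieties.

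For the first assertion, set $\psi := t_{-\varphi(0)} \circ \varphi$, so that $\psi(0) = 0$. It then suffices to show that $\psi$ is a group endomorphism, for then $\varphi = t_{\varphi(0)} \circ \psi$ has the desired form. Consider the defect map $c \colon G \times G \to G$ defined by
$$c(x, y) = \psi(x+y) - \psi(x) - \psi(y).$$
By construction $c$ vanishes on $\{0\} \times G$ and on $G \times \{0\}$. Using the Chevalley-type decomposition $1 \to T \to G \to A \to 1$ of $G$ as an extension of an abelian variety $A$ by a torus $T$, combined with the classical facts that every morphism $T \to A$ is constant (since $T$ is rational and the Albanese of a rational variety is trivial) and every morphism $A \to T$ is constant (since $A$ is projective and connected while $T$ is affine), one reduces the vanishing of $c$ to the rigidity lemma for abelian varieties applied to the induced morphism on the abelian quotients. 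This gives the first assertion.

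For the second assertion, assume $\varphi(g_0) = g_0$ and write $\varphi = t_a \circ \alpha$ with $\alpha$ a group endomorphism, so that the fixed-point condition forces $a = g_0 - \alpha(g_0)$. A direct computation then yields
$$(t_{-g_0} \circ \varphi \circ t_{g_0})(x) = \alpha(x+g_0) + a - g_0 = \alpha(x) + \alpha(g_0) + a - g_0 = \alpha(x)$$
for every $x \in G$, so conjugation by $t_{g_0}$ turns $\varphi$ into the group endomorphism $\alpha$. Its fixed-point set is $\ker(\alpha - \id_G)$, a closed algebraic subgroup of $G$, whose identity component is a semi-abelian subvariety.

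The only non-formal ingredient is the rigidity statement used in the first assertion, which is the main obstacle if one wishes to be fully self-contained; however, this is a classical fact about morphisms of semi-abelian varieties (see for instance the reference \cite{MR3937327} invoked in the statement), so one may simply cite it as a black box and devote essentially no space to its proof.
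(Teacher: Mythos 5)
The paper does not actually prove this lemma: it states it as ``well known'' and refers to \cite[Fact 2.1]{MR3937327}, so your write-up is strictly more detailed than the source. Your formal reductions are correct and are the standard ones: normalizing by $t_{-\varphi(0)}$ reduces the first assertion to the statement that an origin-preserving morphism of semi-abelian varieties is a homomorphism, and conjugating by $t_{g_0}$ at a fixed point $g_0$ correctly kills the translation part, so the fixed locus becomes $\ker(\alpha-\id)$, a closed algebraic subgroup (note that the paper's phrasing is slightly loose here: this kernel may be disconnected, i.e.\ a finite union of translates of a semi-abelian subvariety, and your remark about the identity component is the accurate version; this causes no harm where the lemma is used).

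One caveat on the sketch of the non-formal ingredient. The two facts you invoke (morphisms $T\to A$ and $A\to T$ are constant) do reduce the vanishing of the defect $c(x,y)=\psi(x+y)-\psi(x)-\psi(y)$ to two pieces: the $A$-valued part, which is handled by rigidity for abelian varieties after descending along the torus fibration $G\times G\to A\times A$, and a residual $T$-valued defect $c\colon G\times G\to T$ vanishing on the two axes. The fact ``every morphism $A\to T$ is constant'' does not dispatch this second piece, because the source $G\times G$ is not projective; one needs instead Rosenlicht's unit theorem (an invertible regular function on a product of irreducible varieties splits as a product, so each character $\chi\circ c$ is of the form $g(x)h(y)$ and is forced to be $1$ by the vanishing on the axes). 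Since you explicitly black-box the rigidity statement and cite the same reference the paper does, this is a presentational looseness rather than a gap, but if you want the argument to be self-contained you should either add the Rosenlicht step or cite the precise statement that origin-preserving morphisms of semi-abelian varieties are homomorphisms.
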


\begin{prop}\label{P:zariski closure}
    Let $G$ be a semi-abelian variety and let $\varphi : G \to G$ be a dominant morphism.
    Then, for any point $x\in G$, the Zariski closure of the forward orbit of $x$ is a
    finite union of translated semi-abelian subvarieties of $G$.
\end{prop}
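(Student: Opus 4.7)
The plan is to reduce the statement to Kawamata's characterization of subvarieties of semi-abelian varieties with trivial stabilizer \cite{MR622451}, using Lemma \ref{L:endomorphisms} to decompose $\varphi$ as a group endomorphism composed with a translation.

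First, I would set $Y := \overline{\{\varphi^n(x) : n \geq 0\}}^{\mathrm{Zar}}$; then $\varphi(Y) \subseteq Y$, and dominance of $\varphi$ forces it to permute the finitely many irreducible components of $Y$, each being mapped onto another component. After replacing $\varphi$ by a suitable power $\varphi^N$ and partitioning the orbit of $x$ into $N$ sub-orbits, it suffices to treat the case where $Y$ is irreducible, $\varphi(Y) = Y$, and some point of $Y$ has a $\varphi$-dense forward orbit.

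Next, using Lemma \ref{L:endomorphisms}, write $\varphi = \tau_a \circ \psi$ with $\psi : G \to G$ a group endomorphism and $a \in G$. Let $H := \{h \in G : h + Y = Y\}$, a closed (hence semi-abelian) subgroup of $G$. The equality $\varphi(Y) = Y$ rewrites as $\psi(Y) = Y - a$, which has the same stabilizer as $Y$; combined with $\psi(h+Y) = \psi(h) + \psi(Y)$ for $h \in H$, this forces $\psi(H) \subseteq H$. Therefore $\varphi$ descends to a surjective endomorphism $\bar\varphi$ of the semi-abelian quotient $\bar G := G/H$, under which the image $\bar Y$ of $Y$ is an irreducible invariant subvariety with \emph{trivial} stabilizer carrying a Zariski dense $\bar\varphi$-orbit.

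The remaining task is to prove $\bar Y$ is a single point, which will give $Y = y_0 + H$ and finish the proof after running over the finitely many invariant components. By Kawamata's theorem \cite{MR622451}, any irreducible positive-dimensional subvariety of a semi-abelian variety with trivial stabilizer is of log-general type. On the other hand, $\bar\varphi|_{\bar Y}$ is surjective (otherwise all but one of the forward iterates of the base point would lie in the proper subvariety $\bar\varphi(\bar Y) \subsetneq \bar Y$, contradicting density), so $\bar Y$ would carry a surjective self-map with a Zariski dense orbit. Since surjective self-maps of log-general type varieties are birational and their birational automorphism group is finite (compare the strategy of Theorem \ref{THM:adjoint}), such a dense orbit is impossible unless $\bar Y$ is zero-dimensional.

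The main obstacle I anticipate is the descent step, namely the verification that $H$ is $\psi$-stable in order to quotient cleanly by $H$ and that the descended map is still of the required semi-abelian type on $\bar G$. A secondary delicate point is the simultaneous invocation of Kawamata's log-general-type theorem and the finiteness of birational automorphisms of log-general-type varieties to exclude Zariski dense orbits on a positive-dimensional $\bar Y$.
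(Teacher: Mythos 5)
Your proof is correct in outline but takes a genuinely different route from the paper. The paper's argument is a two-line reduction to the Mordell--Lang conjecture for semi-abelian varieties (Vojta's theorem): writing $\varphi$ as a group endomorphism composed with a translation, the entire forward orbit of $x$ is contained in a finitely generated subgroup $\Gamma$ of $G$ (because the endomorphism ring of $G$ is a finitely generated $\mathbb{Z}$-module), so the intersection of $\Gamma$ with the orbit closure $V$ is Zariski dense in $V$, and Mordell--Lang forces $V$ to be a finite union of translated semi-abelian subvarieties. You instead quotient by the stabilizer of an irreducible invariant piece of the orbit closure, invoke Kawamata's theorem that a subvariety of a semi-abelian variety with finite stabilizer is of log general type, and exclude a dense orbit on a positive-dimensional such piece via the finiteness of dominant self-maps of varieties of log general type (Iitaka--Tsushima; the same linear-algebraic-group mechanism as in Theorem \ref{THM:adjoint}). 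Your route avoids the Diophantine input entirely and is closer in spirit to the rigidity arguments used elsewhere in the paper; its cost is (i) the bookkeeping in your first reduction, where the claim that dominance of $\varphi$ makes it permute the components of $Y$ is only true after first replacing $Y$ by the stabilized image $\varphi^N(Y)=\bigcap_n\varphi^n(Y)$ and discarding the finitely many transient orbit points (and even then each component only carries a dense \emph{subset} of the orbit rather than a full sub-orbit, which still suffices since eventual periodicity of $\bar\varphi$ on $\bar Y$ already forces that subset to be finite), and (ii) the need for the quasi-projective, possibly singular, version of the finiteness statement, since $\bar Y$ is only locally closed in $\bar G$. Both points are standard, so I regard your proof as a valid alternative to the one in the paper.
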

\begin{proof}
    This result  appears in \cite[Fact 2.9]{MR3937327}. For the reader's convenience, we sketch here its proof.
    The key observation is that the orbit of a point $x \in G$ is contained in a finitely generated
    subgroup $\Gamma$. If $V$ is the  Zariski closure of the orbit then the intersection $V \cap \Gamma$ is
    Zariski dense in $V$. Vojta's confirmation of Mordell-Lang conjecture (see \cite[Fact 2.8]{MR3937327} or
     \cite{MR1408559}) guarantees that $V$ is of the alleged
    form.
\end{proof}

\begin{cor}\label{C:invariant hypersurface}
    Let $G$ be a semi-abelian variety and let $\varphi : G \to G$ be a morphism with a Zariski dense orbit.
    If $V \subset A$ is an irreducible hypersurface invariant by $\varphi$ then $V$ is a translate of a semi-abelian subvariety.
\end{cor}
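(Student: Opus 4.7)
My plan is first to reduce to the case where the connected stabilizer of $V$ is trivial, and then to rule out $\dim \bar V \ge 1$ via Proposition \ref{P:zariski closure} combined with the Zariski dense orbit hypothesis. By Lemma \ref{L:endomorphisms}, write $\varphi = \psi + b$ with $\psi$ a group endomorphism of $G$; the dense orbit hypothesis forces $\varphi$ to be dominant, and hence $\psi$ to be an isogeny. Let $H := \{g \in G : V + g = V\}$ and $S := H^0$, a semi-abelian subvariety of $G$. From $\varphi(V + h) = \varphi(V) + \psi(h)$ together with $\varphi(V) = V$ (which holds because $\psi$ is finite and $V$ is irreducible), I would deduce $\psi(H) \subseteq H$ and therefore $\psi(S) \subseteq S$. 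Hence $\varphi$ descends to a dominant morphism $\bar\varphi$ of $\bar G := G/S$ that still carries a Zariski dense orbit, and $\bar V := V/S$ is an irreducible $\bar\varphi$-invariant hypersurface of $\bar G$ with trivial connected stabilizer. Showing $\dim \bar V = 0$ will then conclude the proof, since $V$ will be a single $S$-coset, i.e.\ a translate of the semi-abelian subvariety $S$.

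For the reduced problem, fix any $\bar y \in \bar V$ and invoke Proposition \ref{P:zariski closure}: the orbit closure $\overline{O(\bar y)} \subseteq \bar V$ is a finite union of translates of semi-abelian subvarieties of $\bar G$. If some irreducible component has dimension $\dim \bar V$, it must coincide with $\bar V$ by irreducibility, forcing $\bar V$ to be a translate of a semi-abelian subvariety and, by triviality of the connected stabilizer, a single point, which is what we want. Otherwise every orbit closure of a point of $\bar V$ has dimension at most $\dim \bar V - 1$; I would then apply Theorem \ref{T:Belletal} to the dominant self-map $\bar\varphi|_{\bar V}$ to obtain a rational map $\bar V \dashrightarrow T$ whose generic fibers are the orbit closures, so $\dim T \ge 1$, and, after passing to a suitable iterate of $\bar\varphi$, a general fiber becomes an irreducible translate $\bar y + A$ of a fixed semi-abelian subvariety $A \subseteq \bar G$. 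If $\dim A \ge 1$, then $\bar y + A \subseteq \bar V$ for $\bar y$ in a dense open of $\bar V$, hence $\bar V + A \subseteq \bar V$, forcing $A \subseteq S = \{0\}$, a contradiction. If $\dim A = 0$, then a dense open of $\bar V$ consists of $\bar\varphi$-pre-periodic points; the pre-periodic locus is a countable union of proper algebraic cosets of $\bar G$, namely cosets of $\ker(\bar\psi^k - \bar\psi^l)$, and the union is proper since the dense orbit of $\bar\varphi$ forces $\bar\psi$ to have infinite order. Baire category over $\C$ combined with the irreducibility of $\bar V$ then places $\bar V$ inside one such coset, and a direct dimension comparison with the trivial-stabilizer hypothesis forces $\dim \bar V = 0$.

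The delicate point I expect to be hardest is justifying, after passing to an iterate of $\bar\varphi$, that a general fiber of the Bell--Ghioca--Reichstein map is a single translate of a \emph{fixed} semi-abelian subvariety: this will require combining constructibility of the family of orbit closures with the rigid shape supplied by Proposition \ref{P:zariski closure}. A related technicality is the description of the $\bar\varphi$-pre-periodic locus as a countable union of proper algebraic cosets of $\bar G$, on which the Baire step in the zero-dimensional case rests.
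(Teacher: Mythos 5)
Your proof is architecturally sound and rests on the same two inputs as the paper's (Lemma \ref{L:endomorphisms} and Proposition \ref{P:zariski closure}), but it is organized quite differently. The paper argues by induction on $\dim G$: if $\varphi|_V$ has finite order then $V$ sits inside the fixed locus of an iterate, which Lemma \ref{L:endomorphisms} identifies as a translated semi-abelian subvariety; otherwise it takes the orbit closure $g+H$ of a very general point of $V$ and, if $g+H\neq V$, passes to the quotient $G/H$ and inducts. You instead quotient first by the connected stabilizer $S$ of $V$ (a correct and clean reduction: $\psi(S)\subseteq S$ follows exactly as you say, and $\bar V$ remains an irreducible invariant hypersurface with finite stabilizer), and then run a case analysis on the dimension of the generic orbit closure inside $\bar V$, using Bell--Ghioca--Reichstein and a Baire-category argument for the pre-periodic case. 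Your route makes the role of the stabilizer explicit and handles head-on the situation where general points of $\bar V$ are pre-periodic without $\bar\varphi|_{\bar V}$ having finite order, which the paper's dichotomy glosses over; the price is the two technical points discussed below, both of which the paper's induction avoids entirely.

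Two steps need attention. First, the assertion that ``the dense orbit of $\bar\varphi$ forces $\bar\psi$ to have infinite order'' is false: on $\bar G=\mathbb{C}^*$ the map $z\mapsto 2z$ has $\bar\psi=\mathrm{id}$ and yet has Zariski dense orbits. What you actually need is that each stratum $\{x:\bar\varphi^k(x)=\bar\varphi^l(x)\}=\{x:(\bar\psi^k-\bar\psi^l)(x)=c_l-c_k\}$ is a \emph{proper} closed subset, and this is true for a different reason: if the stratum were all of $\bar G$ then $\bar\varphi^k=\bar\varphi^l$ and every forward orbit would be finite, contradicting density; otherwise the stratum is empty or a coset of $\ker(\bar\psi^k-\bar\psi^l)$ with $\bar\psi^k-\bar\psi^l$ a nonzero endomorphism, whose image is a positive-dimensional connected subgroup, so the kernel is proper. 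With that repair the Baire step and the final dimension count ($\bar V\subseteq c+K$ with $K$ proper and $\dim K\le\dim\bar G-1=\dim\bar V$ forces $\bar V$ to be a coset of $K^0$, hence a point) go through. Second, the step you flag --- that after passing to an iterate the general fiber of the Bell--Ghioca--Reichstein map is a translate of a \emph{fixed} semi-abelian subvariety $A$ --- is true but must be justified: a semi-abelian variety has only countably many semi-abelian subvarieties (rigidity), so the assignment $\bar y\mapsto A_{\bar y}$ given by Proposition \ref{P:zariski closure} is constant on a very general, hence Zariski dense, subset of $\bar V$; note that density (rather than openness) of this set already suffices for your stabilizer argument, since $\{\bar y:\bar y+A\subseteq\bar V\}$ is closed. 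You should also say a word about applying Theorem \ref{T:Belletal}, which is stated for projective manifolds, to the possibly singular quasi-projective variety $\bar V$ (pass to a smooth projective model of $\bar V$ and use birational invariance of the construction).
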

\begin{proof}
    Consider the restriction of $\varphi$ to $V$. If $\varphi_{|V} $ has finite order then $V$ is contained in the set of fixed points of some iterate of $\varphi$. Lemma \ref{L:endomorphisms} implies that $V$ is the translate of a semi-abelian subvariety as claimed.

  From now on, assume that $\varphi_{|V}$ has infinite order and let $x \in V$ be a very general point of $V$. Proposition \ref{P:zariski closure} implies that the Zariski closure of the orbit of $x$ is of the form $g + H$ where $H$ is a semi-abelian subvariety of $G$ and $g \in G$; furthermore, since $\varphi_{|V}$ has infinite order, $H$ is positive-dimensional. If $g+H= V$ then there is nothing else to prove. Otherwise, consider the quotient $\pi:G \to G/H$. From the description of the morphisms of $G$ provided by Lemma \ref{L:endomorphisms}, it is clear that there exists a morphism $\phi : G/H \to G/H$ such that $\pi\circ \varphi = \phi \circ \pi$. Since the image $\pi(V)$ is invariant by $\phi$ and $\phi$  has Zariski dense orbits, the result follows by induction on the dimension of $G$.
\end{proof}

\subsection{Quasi-albanese morphism of manifolds with Zariski dense dynamics}
For a detailed introduction to Albanese varieties and quasi-Albanese maps see e.g. \cite{Fujino2015ONQM}.

If $X$ is projective manifold and $D$ is a simple normal crossing divisor on $X$ then the Albanese variety of $(X,D)$ is the
semi-abelian variety biholomorphic to the complex abelian Lie group
\[
    \frac{H^0(X, \Omega^1_X(\log D))^*}{H_1(X-D, \mathbb Z)/\mathrm{Tor}}.
\]
If $x_0 \in X-D$ is an arbitrary fixed point then the holomorphic map
\begin{align*}
    \alb_{(X,D)} : X- D &\longrightarrow \Alb(X,D) \\
    x&\longmapsto \left\{ \omega \mapsto \int_{x_0}^ x \omega \right\}
\end{align*}
is actually a morphism of quasi-projective varieties called the quasi-Albanese
map of $(X,D)$. It is characterized, up to translations of $\Alb(X,D)$, by the following universal
property: for any morphism $h : X-D \to A$ to a semi-abelian variety $A$, there exists a
 morphism  $g: \Alb(X,D) \to A$ making the  diagram
\[
\begin{tikzcd}
    X-D \arrow[rr,"\alb_{(X,D)}"] \arrow[drr,"h"] & &  \Alb(X,D) \arrow[d,"g"] \\
     && A
\end{tikzcd}
\]
commutative.

\begin{lemma}\label{L:induz qalbaneseX}
    Let $X$ be a projective manifold and $f: X \dashrightarrow X$ be a dominant rational map.
    If $D$ is a simple normal crossing divisor such that $f^*D$ has support contained in the support
    of $D$ then $f$ induces  an
    endomorphism $f_*$ of $\Alb(X,D)$ such that the following  diagram
    \[
    \begin{tikzcd}
        X-D \arrow[rr,dashed,"f"] \arrow[d,"\alb_{(X,D)}"] & &  X - D \arrow[d,"\alb_{(X,D)}"] \\
        \Alb(X,D) \arrow[rr,"f_*"] && \Alb(X,D)
    \end{tikzcd}
    \]
    commutes.
\end{lemma}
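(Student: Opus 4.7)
The plan is to produce $f_*$ via the universal property of the quasi-Albanese map, once we know that $\alb_{(X,D)}\circ f$ extends to a genuine morphism defined on all of $X-D$. Let $I\subset X$ be the indeterminacy locus of $f$; since $X$ is smooth, $I$ has codimension at least two. On the open set $U:=(X-D)\setminus I$ the map $f$ is a morphism, and the hypothesis $\mathrm{supp}(f^{*}D)\subseteq\mathrm{supp}(D)$ forces $f(U)\subseteq X-D$. Consequently $\alb_{(X,D)}\circ f|_{U}\colon U\to \Alb(X,D)$ is a morphism of quasi-projective varieties.

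The central step is to extend this morphism across $I\cap(X-D)$, a subset of codimension at least two in the smooth variety $X-D$. Writing $\Alb(X,D)$ as an extension $0\to T\to \Alb(X,D)\to B\to 0$ of an abelian variety $B$ by an algebraic torus $T$, I claim that any morphism from a smooth variety to $\Alb(X,D)$ extends across any codimension $\geq 2$ subvariety. Indeed, the composition with the projection $\Alb(X,D)\to B$ extends by Weil's classical extension theorem for maps to abelian varieties, while the pullbacks of characters of $T$ are invertible regular functions which, together with their inverses, extend by Hartogs, supplying the torus part. This yields a morphism $h\colon X-D\to \Alb(X,D)$ agreeing with $\alb_{(X,D)}\circ f$ on $U$.

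Finally, the universal property of $\alb_{(X,D)}$ applied to $h$ yields a morphism $f_{*}\colon \Alb(X,D)\to \Alb(X,D)$ satisfying $h=f_{*}\circ \alb_{(X,D)}$; by Lemma~\ref{L:endomorphisms}, $f_{*}$ is automatically the composition of a group endomorphism with a translation. Read on $U$, the identity $h=\alb_{(X,D)}\circ f$ is precisely the commutative square claimed in the statement, and it then holds on all of $X-D$ as an equality of rational maps. I expect the mildly delicate point to be the extension step, and specifically the handling of the torus factor; once $\Alb(X,D)$ is split into its abelian and toric parts via the Chevalley-type exact sequence above, the argument is standard, and the rest of the proof is formal manipulation of the universal property.
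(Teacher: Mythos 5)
Your proof is correct in outline but follows a genuinely different route from the paper's. The paper never extends $\alb_{(X,D)}\circ f$ as a morphism of varieties: it works directly with the presentation $\Alb(X,D)=H^0(X,\Omega^1_X(\log D))^*/(H_1(X-D,\mathbb Z)/\mathrm{Tor})$, observes that the hypothesis on $f^*D$ yields a pull-back endomorphism of $H^0(X,\Omega^1_X(\log D))$, dualizes it, lets it descend to the quotient, and then composes with the translation by $\alb_{(X,D)}(f(x_0))$; commutativity of the square is then immediate from the integral formula defining $\alb_{(X,D)}$. You instead extend the composite morphism across the codimension-two indeterminacy locus and invoke the universal property. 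Both are legitimate: yours is more conceptual and reusable, while the paper's only needs the (implicit) reflexivity of $\Omega^1_X(\log D)$ to extend $f^*\omega$ over the indeterminacy locus, and it makes the affine structure of $f_*$ (linear part plus translation) visible without appealing to Lemma~\ref{L:endomorphisms}.

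One sub-step is stated imprecisely. When the extension $0\to T\to\Alb(X,D)\to B\to 0$ does not split, characters of $T$ do not define regular functions on $\Alb(X,D)$, so "the pullbacks of characters of $T$ are invertible regular functions" is not literally meaningful. The repaired version of your argument: after extending $U\to B$ to $X-D\to B$ by Weil's theorem, the pulled-back $T$-torsor over $X-D$ corresponds to a tuple of line bundles minus their zero sections; your map gives a nowhere-vanishing section of each over $U$, and such a section extends without zeros across a codimension-two set because the zero locus of a section of a line bundle is either empty or a divisor (apply the same to the dual bundle to rule out new zeros). Alternatively, simply cite the standard fact that a rational map from a smooth variety to a semi-abelian variety is everywhere defined (see Fujino's notes on quasi-Albanese maps, which the paper references). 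With that adjustment the proof is complete.
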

\begin{proof}
    Since, by assumption, the support of $f^*D$ is contained in
    the support of $D$ we have an induced morphism $f^* : H^0(X, \Omega^1_X(\log D)) \to H^0(X,\Omega^1_X(\log D))$. Dualizing this morphism, we get a morphism $f^{**} : H^0(X, \Omega^1_X(\log D))^* \to H^0(X,\Omega^1_X(\log D))^*$, which descends to a morphism of the quotient $\Alb(X,D) = H^0(X,\Omega^1_X(\log D)^*/H_1(X-D,\mathbb Z)$ which we still denote by $f^{**}$. Fix $x_0 \in X-D$ and  choose the Albanese morphism in such a way that $\alb_{(X,D)} (x_0) = 0$. If we set
    \begin{align*}
        f_* : \Alb(X,D) & \longrightarrow \Alb(X,D) \\
            a & \mapsto f^{**}(a) + \alb_{(X,D)}(f(x_0))
    \end{align*}
    then  we get a morphism $f_*$ with the sought properties.
\end{proof}

\begin{lemma}\label{L:qalbanese dominates}
    Let $f: X \dashrightarrow X$ be a rational map on a projective manifold $X$ with a Zariski dense orbit and let $D$ be a simple normal crossing
    divisor on $X$. If $f^*D$ has support contained in the support of $D$ then  the quasi-Albanese morphism $\alb_{(X,D)} : X- D \to \Alb(X,D)$ is a  dominant  rational map.
\end{lemma}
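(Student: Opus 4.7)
The strategy is to combine the induced action on the Albanese given by Lemma \ref{L:induz qalbaneseX} with the description of forward orbits on semi-abelian varieties provided by Proposition \ref{P:zariski closure}. First, I would pick a base point $x_0 \in X - D$ whose forward $f$-orbit is Zariski dense; such a point exists because, starting from any Zariski dense orbit, one can replace the base by a suitable iterate that lies in $X - D$ (which must occur infinitely often, as the orbit cannot be contained in the proper closed set $D$). The key elementary observation is that the hypothesis $\mathrm{supp}(f^*D) \subseteq \mathrm{supp}(D)$ forces $f$ to send $X - D$ into $X - D$ wherever $f$ is regular, so the whole forward orbit of $x_0$ remains inside $X - D$. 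After normalizing the quasi-Albanese so that $\alb_{(X,D)}(x_0) = 0$, an iteration of Lemma \ref{L:induz qalbaneseX} gives $f_*^n(0) = \alb_{(X,D)}(f^n(x_0))$ for every $n \geq 0$.

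Next, set $V := \overline{\alb_{(X,D)}(X - D)}$. The variety $V$ is irreducible, since $X - D$ is. A standard continuity argument (any closed subset of $\Alb(X,D)$ containing the image $\{f_*^n(0) : n \geq 0\}$ pulls back via $\alb_{(X,D)}$ to a Zariski closed subset of $X - D$ containing a Zariski dense set, and is therefore equal to $X - D$) shows that $V$ coincides with the Zariski closure of the $f_*$-orbit of $0$. Proposition \ref{P:zariski closure} then asserts that this closure, and hence $V$, is a finite union of translates of semi-abelian subvarieties of $\Alb(X,D)$. Irreducibility forces $V$ to be a single translate, and since $0 \in V$, that translate is itself a semi-abelian subvariety $H \subseteq \Alb(X,D)$.

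To finish, I would invoke the universal property of the quasi-Albanese to upgrade the equality $V = H$ to $H = \Alb(X,D)$. Composing $\alb_{(X,D)} : X - D \to H \hookrightarrow \Alb(X,D)$ with the quotient morphism $\pi : \Alb(X,D) \to \Alb(X,D)/H$ yields the zero morphism. Both $g = \pi$ and $g = 0$ are then homomorphisms of semi-abelian varieties satisfying $g \circ \alb_{(X,D)} = \pi \circ \alb_{(X,D)} = 0$, so the uniqueness clause of the universal property forces $\pi = 0$, and hence $H = \Alb(X,D)$. This gives $V = \Alb(X,D)$, i.e. $\alb_{(X,D)}$ is dominant. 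The most delicate point is this last step, which encodes the classical fact that the image of the (quasi-)Albanese morphism generates its target as a group; the rest of the argument is essentially bookkeeping between the commutative square of Lemma \ref{L:induz qalbaneseX} and the orbit-closure structure provided by Proposition \ref{P:zariski closure}.
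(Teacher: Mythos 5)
Your proposal is correct and follows essentially the same route as the paper: obtain the equivariant $f_*$ from Lemma \ref{L:induz qalbaneseX}, identify the closure of the image of $\alb_{(X,D)}$ with the Zariski closure of the $f_*$-orbit of a point coming from a dense $f$-orbit, apply Proposition \ref{P:zariski closure} to see it is a semi-abelian subvariety, and conclude by the universal property of the quasi-Albanese. You merely spell out details the paper leaves implicit (keeping the orbit inside $X-D$, the base-point normalization, and the uniqueness clause in the universal property).
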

\begin{proof}
    Lemma \ref{L:induz qalbaneseX} guarantees the existence of $f_* : \Alb(X,D) \to \Alb(X,D)$ such that $\alb_{(X,D)} \circ f = f_* \circ \alb_{(X,D)}$. Since $f$ has a Zariski dense orbit, the same holds true for the restriction of $f_*$ to the closure of the image of $\alb_{(X,D)}$. Proposition \ref{P:zariski closure} implies that the closure of the image of $\alb_{(X,D)}$ is equal to a semi-abelian subvariety of $\Alb(X,D)$. The universal property of the Albanese map implies that this semi-abelian subvariety must coincide with $\Alb(X,D)$, showing that $\alb_{(X,D)}$ is dominant.
\end{proof}

\subsection{Proof of Theorem \ref{THM:qalbanese kod0}} \label{SS:proofTHC}
    Assume  first that $\alb_{(X,D)}$ is generically finite, i.e., its general fiber consists of finitely many points.

    Let $n = \dim X = \dim \Alb(X,D)$. Let $\omega_1, \ldots, \omega_n$ be a basis of $H^0(X,\Omega^1_X(\log D))$ and consider the logarithmic $n$-form $\Omega = \omega_1 \wedge  \ldots \wedge \omega_n \in H^0(X,\Omega^n_X(\log D))$.
    Denote by $E$ the zero divisor of $\Omega$. We claim that no positive multiple of $E$ moves in a linear system. In other words, since  $E$ is linearly equivalent to $K_X + D$,  the logarithmic Kodaira dimension of $(X,D)$ is equal to zero. Aiming at a contradiction, assume that some positive multiple of $E$ moves in a linear system.

    Let $d$ be the degree of the field extension $\alb_{(X,D)}^*(\mathbb C( \Alb(X,D)) \subset \mathbb C(X)$, i.e. the cardinality of a general fiber of $\alb_{(X,D)}$. We want to prove that $d =1$. Assume that $d>1$. For $i \in \{ 1,2,\ldots, d, \infty\}$, set
    $
        \Sigma_i = \{ x \in \Alb(X,D) \, | \, \textrm{the cardinality of} \alb_{(X,D)}^{-1}(x) = i \}.
    $
    Note that the restriction of  $E$ to $X - D$ coincides with the ramification divisor of the quasi-Albanese morphism
    $\alb_{(X,D)} : X- D \to \Alb(X,D)$. Its image under the quasi-Albanese morphism coincides with
    \[
        \Alb(X,D) - \Sigma_d = \Sigma_{\infty} \cup \bigcup_{i=1}^{d-1} \Sigma_i \, .
    \]
    The set $\Sigma_{\infty}$ is closed but not necessarily $f_*$ invariant. Nevertheless, a simple dimension count shows that $\dim \Sigma_{\infty} \le \dim \Alb(X,D) - 2$. The set $\Sigma = \cup_{i=1}^{d-1} \Sigma_i$ is not necessarily closed, but its closure $\overline \Sigma$ is $f_*$-invariant since $f_* \circ \alb_{(X,D)}  = \alb_{(X,D)} \circ f$.

    Since we are assuming that $mE$ moves in a linear system, it cannot be contracted by the (generically finite) morphism $\alb_{(X,D)}$. Hence $\overline \Sigma$ must have irreducible components of codimension one. After replacing
    $f$ (and hence $f_*$) by a suitable iterate, we can assume the existence of an irreducible hypersurface $V$  of $\Alb(X,D)$ contained in $\overline \Sigma$ and invariant by $f_*$. Corollary \ref{C:invariant hypersurface} guarantees the existence of semi-abelian subvariety $B \subset \Alb(X,D)$ such that $V$ equals to a translate of $B$. Let $C$ be the quotient $\Alb(X,D)/B$ and  consider the composition
    \[
        X \dashrightarrow \Alb(X,D) \to C \, .
    \]
    Since $\dim C=1$, $C$ is equal to an elliptic curve or $\mathbb C^*$. To achieve a contradiction, we will show that the endomorphism $\varphi:C \to C$ induced by $f_*$ is of finite order.

    Let $\eta$ be the pull-back to $X$ of a non-zero holomorphic $1$-form on  $C$ having a pole of order one at infinity when $C=\C^*$. Observe that  $\eta$ vanishes on a hypersurface $W_0 \subset W=\alb_{(X,D)}^{-1}(V)\cap E$ which dominates $V$. Indeed, by  choice, the quasi-Albanese map does not     contract (all the irreducible components of) $W$ but, at the same time, $W$ is contained in $E$ the ramification divisor of $\alb_{(X,D)}$. These two observations suffice to guarantee  the existence of $W_0$. Notice also that $\eta$ defines an algebraically integrable foliation invariant by $f$. Moreover, since the quasi-Albanese morphism is dominant, $f^* \eta$ is a constant multiple of $\eta$. Therefore the set $\cup_{n\in \mathbb N} f^{-n}(W_0)$ is contained in the support of the zero divisor of $\eta$ and must be a finite union of hypersurfaces. It follows that $\varphi : C \to C$ has a fixed point with finite backward orbit. Since $C$ is either an elliptic curve or $\mathbb C^*$, this implies that $\varphi$ is a finite order automorphism contradicting the existence of a Zariski dense orbit of $f$. This establishes our claim that the  logarithmic Kodaira dimension of $(X,D)$ is equal to zero.

    Since $(X,D)$ has logarithmic Kodaira dimension zero and its quasi-Albanese morphism is dominant, we can apply \cite[Corollary 29]{MR622451} to conclude that the quasi-Albanese morphism is a birational map as claimed.

    Assume now that the general fiber of $\alb_{(X,D)}$ is positive dimensional. Let $\mathcal G$ be the foliation defined by the fibers of $\alb_{(X,D)}$ and let $g : X \dashrightarrow Y$ be a rational map  with irreducible general  fiber to a projective manifold $Y$,  defining $\G$, and such that the image of $D$ is a simple normal crossing divisor $D_Y$. Notice that $g: X \dashrightarrow Y$ is one Stein factorization of $\alb_{(X,D)}: X -D \to \Alb(X,D)$. By construction, every logarithmic $1$-form $\omega \in H^0(X,\Omega^1_X(\log D))$ is the pull-back of logarithmic $1$-form in $H^0(Y,\Omega^1_Y(\log D_Y))$. In particular,
    \begin{equation}\label{E:cota boba}
        \dim \Alb(Y,D_Y) \ge \dim \Alb(X,D) = \dim Y.
    \end{equation}

    Lemma \ref{L:induz qalbaneseX} implies that the rational map $f:X \dashrightarrow X$  preserves
    the foliation $\G$ and therefore induces a rational map $f_Y : Y \dashrightarrow Y$. Since $f$ has a Zariski dense orbit, so does $f_Y$.  Moreover, the support of $f_Y^*(D_Y)$ is contained in the support $D_Y$.

    Consider the commutative diagram induced by the rational map $g: X \dashrightarrow Y$.
    \[
        \begin{tikzcd}
            X-D \arrow[d,dashed,"g"] \arrow[rr,"\alb_{(X,D)}"] \arrow[drr,dashed] & &  \Alb(X,D) \arrow[d,"g_*"] \\
            Y - D_Y \arrow[rr,"\alb_{(Y,D_Y)}"] && \Alb(Y,D_Y)
        \end{tikzcd}
    \]
    Since $f_Y$ has Zariski-dense orbits, Lemma \ref{L:qalbanese dominates} combined with the inequality (\ref{E:cota boba})   implies that the quasi-Albanese morphism $\alb_{(Y,D_Y)}$ is surjective and generically finite. As before, we deduce that $\alb_{(Y,D_Y)}$ is a birational morphism. Since $g$ has
    irreducible general fiber, so does the diagonal arrow in the diagram above. Its  commutativity  implies that $\alb_{(X,D)}$ also has irreducible general fiber as claimed.
\qed

\section{Symmetries of virtually transversely additive foliations}\label{S:Zariski dense}

\subsection{Reduction to the transversely additive case}

\begin{lemma}\label{L:covering}
    Let $\F$ be a virtually transversely additive foliation on a projective manifold $X$ and $f:X \dashrightarrow X$ be
    a rational map such that $f^*\F = \F$. If $\F$ is not transversely additive then there exists a projective manifold $Y$, a transversely additive foliation
    $\G$ on $Y$, an automorphism $\varphi : Y \to Y$ of finite order, a $\varphi$-equivariant morphism $\pi : Y \to X$, and a rational map $g:Y \dashrightarrow Y$ such that
    \[
        \G = \varphi^* \G = g^* \G = \pi^* \F
    \]
    and the diagram
    \[
    \begin{tikzcd}
        Y \arrow[rr,dashed,"g"] \arrow[d,"\pi"] & &  Y\arrow[d,"\pi"] \\
        X \arrow[rr,dashed, "f"] && X \\
    \end{tikzcd}
    \]
    commutes.
\end{lemma}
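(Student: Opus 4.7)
The plan is to produce $Y$ as a Galois cover on which $\F$ becomes transversely additive, extract $\varphi$ from a canonical character on the Galois group, and lift $f$ to $g$ by appealing to the intrinsic nature of this cover.

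Since $\F$ is virtually transversely additive, pick some generically finite $\pi_0\colon Y_0\rato X$ with $\pi_0^*\F$ transversely additive. Replacing $Y_0$ by a projective desingularization of the normalization of $X$ in the Galois closure of $\C(Y_0)/\C(X)$, I may assume that $\pi\colon Y\to X$ is generically finite and Galois with some finite Galois group $\Gamma$, and that $\G:=\pi^*\F$ is transversely additive, defined by a closed meromorphic $1$-form $\omega$ on $Y$. For each $\gamma\in\Gamma$ the form $\gamma^*\omega$ is closed and defines $\G$, so $\gamma^*\omega=c(\gamma)\omega$ for a character $c\colon\Gamma\to\C^*$. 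Were $c$ trivial, $\omega$ would be $\Gamma$-invariant and would descend to a closed meromorphic $1$-form on $X$ defining $\F$, contradicting the hypothesis that $\F$ is not transversely additive. Replacing $Y$ by $Y/\ker(c)$ (on which $\omega$ still descends) and $\Gamma$ by its cyclic image in $\C^*$, one arrives at a non-trivial finite cyclic $\Gamma$ with a generator $\varphi$; this $\varphi$ is an automorphism of $Y$ of finite order satisfying $\pi\circ\varphi=\pi$ (so $\pi$ is $\varphi$-equivariant) and $\varphi^*\G=\G$.

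For the lift, observe that $f\circ\pi\colon Y\rato X$ satisfies $(f\circ\pi)^*\F=\pi^*f^*\F=\pi^*\F=\G$, so it is again a cover of $X$ over which $\F$ becomes transversely additive, via the same form $\omega$. I would use an intrinsic characterization of $\pi$ in terms of $\F$ alone: over the complement $X_0$ of the polar divisor of the transversely affine structure on $\F$ canonically induced by $(Y,\omega)$, $\pi$ is precisely the \'etale cover defined by $\ker(\chi)\subset\pi_1(X_0)$, where $\chi$ is the character describing the linear part of the monodromy of this transverse structure (cf. Proposition \ref{abelian monodromy} and Subsection \ref{SS:transversely affine}). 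Since $f^*\F=\F$, the transversely affine structure on $\F$, and hence $\chi$, is preserved by $f$; therefore $\ker(\chi)$ is $f$-invariant and $f\circ\pi$ factors as $\pi\circ g$ for some rational map $g\colon Y\rato Y$. The identity $g^*\G=g^*\pi^*\F=\pi^*f^*\F=\pi^*\F=\G$ is then automatic.

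The main obstacle is making the intrinsic characterization of $\pi$ rigorous: one must check that $\chi$, and hence $\ker(\chi)$, depends only on $\F$ and not on the auxiliary choices of $(Y,\omega)$, despite $\omega$ being determined only up to a multiplicative scalar (this ambiguity is harmless since it modifies $\chi$ by an inner automorphism of $\C^*$, i.e. not at all, but verifying the independence from the choice of cover $Y$ requires care). A more computational alternative, sidestepping the intrinsic argument, is to realize $g$ as a rational section of the first projection of the fibre product $Y\times_X Y$ (with second projection given by $f\circ\pi$) on a suitable irreducible component dominating both factors, at the cost of possibly enlarging $Y$ by a further Galois closure and re-running the cyclic reduction on the enlargement so that the same $\varphi$ and $g$ live together on the final cover.
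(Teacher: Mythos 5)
Your construction is correct in substance but differs from the paper's in execution. You build $Y$ by taking a Galois closure of an arbitrary trivializing cover and then quotienting by the kernel of the character $c$ through which the deck group acts on $\omega$; the paper instead works inside the total space $N$ of $N_{\F}$, uses the (unique) flat meromorphic connection $\nabla$ on $N_{\F}$ annihilating the defining twisted $1$-form, and takes $Y$ to be an equivariant resolution of the closure of a leaf $L$ of the foliation $\mathcal H$ by flat sections. That single object $\nabla$ then delivers everything at once: $\varphi$ is the fiberwise scaling by a root of unity stabilizing $L$, and $g$ is the derivative map $\hat f$ (which preserves $\mathcal H$ because $\nabla$ is unique) corrected by a scaling $\psi(\lambda,\cdot)$ so as to map $L$ back to itself. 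The "main obstacle" you flag — that the character $\chi$ and hence $\ker(\chi)$ be intrinsic to $\F$ — is exactly resolved by this uniqueness of $\nabla$ (Subsection \ref{SS:transversely affine}: two transverse affine structures differ by a closed rational $1$-form defining $\F$, which would make $\F$ transversely additive, contrary to hypothesis), so your primary route does close; the paper's formulation simply makes the intrinsicness manifest and avoids the $\pi_1$ bookkeeping. Two small points you should still record: (i) the step $\gamma^*\omega=c(\gamma)\,\omega$ uses that $\G$ has no nonconstant rational first integral — otherwise the ratio of two closed defining forms is a nonconstant first integral — and this holds because such a first integral would descend (via symmetric functions over $\Gamma$) and exhibit $\F$ as transversely additive; (ii) your fibre-product fallback does not actually sidestep the factorization question, since producing a component of $Y\times_X Y$ on which the relevant projection is birational is equivalent to the statement that $f\circ\pi$ factors through $\pi$, so you should rely on the intrinsic argument rather than on that alternative.
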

\begin{proof}
    Let $\omega \in H^0(X,\Omega^1_X\otimes N_{\F})$.
    Since $\F$ is virtually transversely additive, but not transversely additive, there exists a unique flat logarithmic connection on the normal bundle $\F$ such that
    $\nabla(\omega)=0$.

    Let $N$ be the total space of the normal bundle of $\F$ and denote by $p:N \to X$ the natural projection.
    The flat sections of the connection $\nabla$ define an algebraically integrable foliation $\mathcal H$ on $N$. Over the complement of the polar locus of $\nabla$ the leaves of $\mathcal H$ are \'etale coverings of $X$. The action $\psi : \mathbb C^* \times N \to N$ of $\mathbb C^*$ on $N$ by fiberwise multiplication preserves the foliation $\mathcal H$.

    The derivative of $f$ defines  $\hat f : N \dashrightarrow N$, a lift of $f$ to $N$. Since the flat connection  $\nabla$ is unique, $\hat f$ preserves the foliation $\mathcal H$. If $L$ is a leaf of $\mathcal H$ which dominates $X$ then $\hat f(L)$ is also a leaf of $\mathcal H$ which dominates $X$. Thus there exists $\lambda \in \mathbb C^*$ such that $\psi(\lambda, \hat{f} (L)) = L$. Moreover,
    the subgroup $\{ \mu \in \mathbb C^* ; \psi(\mu,L)= L \}$
    is a cyclic subgroup generated by $\xi \in \mathbb C^*$,  a primitive root of the unity. The quotient of $L$ by this subgroup is birational to $X$.

    To conclude the proof of the lemma, it suffices to take $Y$ equal to an equivariant (with respect to the action of $\psi(\xi, \cdot): L \to L$) resolution of singularities of the Zariski closure of $L$; $\varphi : Y \to Y$ equal to the automorphism of $Y$ induced by $\psi(\xi, \cdot)$; $\pi: Y \to X$ equal to the morphism induced by the restriction of $p:N \to X$ to $L$; $\mathcal G$ equal to the foliation on $Y$ induced by the restriction of $p^*\mathcal F$ to $L$; and $g : Y \dashrightarrow Y$ equal to the rational map induced by the  restriction of $\psi(\lambda, \hat{f} (\cdot))$ to $L$.
\end{proof}

\subsection{Reduction of singularities for transversely additive foliations}
The definition below is due to Cano-Cerveau, see for instance \cite{MR1760842}.

\begin{dfn}[Simple singularities]\label{D:simple}
    Let $\F$ be a germ of codimension one foliation on $(\mathbb C^n,0)$. The foliation
    $\F$ has simple singularities if there exists formal coordinates $x_1,\ldots, x_n$ and an integer
    $r$, $2 \le r \le n$, (the dimension type of $\F$) such that
    $\F$ is defined by a differential form $\omega$ of one of the following types:
    \begin{enumerate}
        \item\label{I:CC1}  There are complex numbers $\lambda_i \in \mathbb C^*$ such that
        \[
            \omega = \sum_{i=1}^r \lambda_i \frac{dx_i}{x_i} \, ,
        \]
        and $\sum_{i=1}^r a_i \lambda_i = 0$ for non-negative integers $a_i$ implies $(a_1, \ldots, a_r)=0$.
        \item\label{I:CC2} There exist an integer $1\leq k \le r$,  positive integers $p_1, \ldots, p_k$ , complex numbers $\lambda_2, \ldots,\lambda_r$, and a formal power series $\psi \in t \cdot \mathbb C[[t]]$  such that
        \[
            \omega =  \sum_{i=1}^k p_i \frac{dx_i}{x_i} + \psi(x_1^{p_1}\cdots x_k^{p_k}) \sum_{i=2}^r \lambda_i \frac{dx_i}{x_i}
        \]
        and $\sum_{i=k+1}^r a_i \lambda_i = 0$ for non-negative integers $a_i$ implies $(a_{k+1}, \ldots, a_r)=0$.
    \end{enumerate}
\end{dfn}

The non-resonance condition ( $\sum a_i \lambda_i =0 \implies (a_1, \ldots, a_r) =0$ or $(a_{k+1}, \ldots, a_r)=0$)  present in both items of the definition above implies that any irreducible component of the  exceptional divisor of any birational morphism with center contained in  $\sing(\F)$  is invariant by the transformed foliation, cf. \cite[Proposition 15]{MR1677402}. In other words, simple singularities are non-dicritical singularities.

\begin{rem}\label{R:misterious index}
    The second summation in Item (\ref{I:CC2}) of the definition may cause some discomfort at first sight.
    However, if one allows the second summation to start at $1$, and keep all the other conditions, one gets the same definition. Indeed,  one can rewrite
    \[
      \sum_{i=1}^k p_i \frac{dx_i}{x_i} + \psi(x_1^{p_1}\cdots x_k^{p_k}) \sum_{i=1}^r \lambda_i \frac{dx_i}{x_i}
    \]
    as
    \begin{align*}
       & \sum_{i=1}^k p_i \frac{dx_i}{x_i} + \psi(x_1^{p_1}\cdots x_k^{p_k}) \left( \sum_{i=2}^k \left( \lambda_i -  \frac{\lambda_1 p_i}{p_1} \right) \frac{dx_i}{x_i}  \right) \\
       + \, \, &\lambda_1 \psi(x_1^{p_1}\cdots x_k^{p_k}) d \log\left(\frac{1}{x_1^{p_1} \cdots x_k^{p_k}} \right) \, .
    \end{align*}
    Since the differential form on the second line has no poles and is closed, one can make a change of coordinates of the form $(x_1, \ldots, x_n) \mapsto (ux_1,\ldots,x_n)$, where $u$ is a suitable unit, to make it disappear.
\end{rem}

\begin{prop}\label{P:Seidenberg for closed forms}
    Let $\mathcal F$ be a foliation on a projective manifold $X$ defined by a
    a closed rational $1$-form $\omega$. Then there exists a birational morphism $\pi: Y \to X$
    from a projective manifold $Y$ to $X$ such that $\pi^* \mathcal F$ has simple
    singularities.
\end{prop}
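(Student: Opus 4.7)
The plan is to reduce the problem to a local analysis via Hironaka, then exploit the rigidity of closed meromorphic $1$-forms along an SNC divisor, and finally dispose of residue resonances by combinatorial blow-ups.

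First, I would apply Hironaka's embedded resolution of singularities to the reduced divisor $D = \mathrm{supp}((\omega)_0 + (\omega)_\infty)$, obtaining a birational morphism $\pi_0 \colon X_0 \to X$ such that $\pi_0^{-1}(D)$ is a simple normal crossing divisor on $X_0$. The pullback $\pi_0^*\omega$ remains closed, with zero/pole support contained in an SNC divisor. The problem thereby reduces to a local analysis at each point $p \in X_0$: choose coordinates $(x_1,\ldots,x_n)$ such that the critical divisor is $\{x_1\cdots x_s = 0\}$.

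Next, I would invoke the local structure theorem for closed meromorphic $1$-forms along an SNC divisor (a consequence of the theory of the logarithmic de Rham complex, à la Deligne): there is a decomposition
$$\pi_0^*\omega = \sum_{i=1}^s \lambda_i\, \frac{dx_i}{x_i} + dh + dg,$$
where $\lambda_i \in \mathbb{C}$ are the residues, $h$ is holomorphic, and $g$ is meromorphic with polar support in $\{x_1\cdots x_s = 0\}$. Dividing the defining form of $\mathcal{F}$ by a suitable monomial-unit factor---absorbing codimension-one zeros into the twisted line bundle $\mathcal{N}$ of the foliation---brings the local model either into shape (\ref{I:CC1}) of Definition~\ref{D:simple} (when $g$ adds nothing beyond the log-residues), or into shape (\ref{I:CC2}), in which the monomial structure of $dg$ produces a first integral $x_1^{p_1}\cdots x_k^{p_k}$ (using the reformulation noted in Remark~\ref{R:misterious index}).

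Finally, at points where the residue vector $(\lambda_1,\ldots,\lambda_s)$ satisfies a non-trivial non-negative integral relation (the resonance forbidden by simpleness), I would perform further toric blow-ups along the codimension-$\geq 2$ strata of the SNC divisor. Each such blow-up transforms the residue vector in a combinatorially explicit way, and a Cano--Cerveau-style invariant (in the spirit of \cite{MR1760842}) strictly decreases, ensuring termination and producing $\pi \colon Y \to X$ with $\pi^*\mathcal{F}$ of simple type everywhere. The main obstacle is controlling this last step: one must verify termination of the resonance-breaking process and ensure that no new, non-simple singularities appear on the exceptional divisors. The closedness of $\omega$ is crucial here---it rigidly constrains how residues behave under blow-ups (the pullback remains closed, with residues determined by the combinatorics of the blow-up), preventing the Dulac-type resonance pathologies that obstruct reduction of singularities for general integrable Pfaffian equations.
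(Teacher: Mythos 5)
There is a genuine gap at the point where you assert that, after the log resolution of the polar divisor, the local model $\sum_i \lambda_i\,dx_i/x_i + dh + dg$ is already of shape (\ref{I:CC1}) or (\ref{I:CC2}) of Definition~\ref{D:simple} up to division by a monomial times a unit. Writing the polar part of the primitive as $g/(x_1^{m_1}\cdots x_k^{m_k})$, the ideal generated by $g$ and $x_1^{m_1}\cdots x_k^{m_k}$ need not be principal, and when it is not, the induced foliation singularity is typically dicritical and in particular not simple. The minimal example is $\omega = d(y/x) = (x\,dy - y\,dx)/x^2$: the polar divisor $\{x=0\}$ is already smooth, all residues vanish, yet the foliation $x\,dy - y\,dx = 0$ has a radial, dicritical singularity along $\{x=y=0\}$. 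No division by a monomial-unit factor repairs this, and your resonance-breaking toric blow-ups do not reach it either, since $\{x=y=0\}$ is not a stratum of the polar SNC divisor (which has a single component here). The paper's proof inserts precisely the missing step: it observes that the base ideal $\mathcal I$ locally generated by $g$ and $x_1^{m_1}\cdots x_k^{m_k}$ is independent of the choices made, hence globally defined, and resolves $\mathcal I$ before anything else. Only then does the local model become $\sum \lambda_i\,dx_i/x_i + d(x_1^{-m_1}\cdots x_k^{-m_k})$, which by Remark~\ref{R:misterious index} is of type (\ref{I:CC2}) except possibly for the non-resonance condition; that last condition is then arranged by invoking \cite[Theorem 2]{MR3302577}, which is in effect the combinatorial step you sketch and whose termination you correctly flag as the delicate point.

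A second, smaller omission: you never treat singularities of $\F$ lying away from the polar divisor. Including $(\omega)_0$ in the divisor you resolve does not cover them, because the divisorial part of the zero locus of $\omega$ can be empty while the foliation is still singular in codimension at least two (e.g.\ $\omega = d(x^2+y^2+z^2)$, whose separatrix at the origin is an irreducible cone, not a normal crossing of smooth hypersurfaces). At such a component $\Sigma$ the form is locally exact, $\omega = df_\Sigma$ with $f_\Sigma$ holomorphic and normalized to vanish along $\Sigma$, and one must additionally resolve the hypersurfaces $f_\Sigma^{-1}(0)$; this is the final step of the paper's argument.
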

\begin{proof}
    Let $D$ be the polar divisor of $\omega$ and consider a log resolution $p_1 : X_1 \to X$  of
    $(X,D)$. Let $\omega_1=p_1^*\omega$ be the transformed $1$-form and denote by $D_1$ it polar divisor.

    Let $x$ be an arbitrary closed point in the support of $D_1$. In a sufficiently small
    neighborhood of $x$ choose coordinates $(x_1, \ldots, x_n)$   such that $D_1 \subset\{ x_1 \cdots x_k = 0\}$.  The $1$-form $\omega_1$ can be written as
    \[
        \sum_{i=1}^k \lambda_i \frac{dx_i}{x_i} + d \left( \frac{g}{x_1^{m_1}\cdots x_k^{m_k}} \right) \, .
    \]
    The meromorphic function $\frac{g}{x_1^{m_1}\cdots x_n^{m_n}}$ is not intrinsically associated to the situation
    since we can add constants to it, and we can also choose another system of coordinates. Nevertheless, its  base ideal, i.e. the ideal generated by $g$ and $x_1^{m_1}\cdots x_k^{m_k}$, does not depend on the choices made. Therefore, we
    have a globally defined base ideal $\mathcal I$.

    Let $p_2: X_2 \to X_1$ be a resolution of $\mathcal I$ and set $\omega_2 = p_2^* \omega_1$. Now, at a neighborhood of any point of $D_2$,  there exists
    coordinates $(x_1, \ldots, x_n)$ such that
    \[
        \omega_2 = \sum_{i=1}^k \lambda_i \frac{dx_i}{x_i} + d \left( \frac{1}{x_1^{m_1}\cdots x_k^{m_k}} \right) \, .
    \]
    From these explicit formulas, one sees that all the singularities of the foliation defined by $\omega_2$ consist of the singularities of the polar divisor of $\omega$ union a closed set disjoint from the support of the polar divisor of $\omega_2$. Moreover, taking into account  Remark \ref{R:misterious index}, one sees that the singularities of the foliation located at the singularities of $D$ are almost simple: they satisfy all the conditions of Definition \ref{D:simple} except, perhaps, the non-resonance condition.

    We can apply \cite[Theorem 2]{MR3302577} to produce a birational morphism $p_3 : X_3 \to X_2$ such that all the singularities of  $\omega_3 = p_3^* \omega_2$ located at a neighborhood of the polar divisor of $\omega_3$ are simple.
    If the foliation defined by $\omega_3$ has non-simple singularities then they are disjoint from the polar divisor of $\omega_3$. Let $\Sigma$ be one connected component of the singular set of $\omega_3$ disjoint from its polar divisor.  Since $\omega_3$ is closed, for any point of $\Sigma$ the form $\omega_3$ is locally exact.
    Moreover, we can choose unique primitives for $\omega_3$ by imposing that they are constant along $\Sigma$. To wit, there exists an open neighborhood $U_{\Sigma}$ of $\Sigma$ and a holomorphic function $f_{\Sigma}: U_{\Sigma} \to \mathbb C$
    such that the restriction $\omega_3$ to $U_{\Sigma}$ is equal to $df_{\Sigma}$. Resolution of singularites
    of the hypersurfaces $f_{\Sigma}^{-1}(0)$ with $\Sigma$ ranging over the irreducible components of the singularities
    of the foliation defined by $\omega_3$ disjoint from $(\omega_3)_{\infty}$ provides a morphism $p_4: X_4 \to X_3$ such that the foliation defined by $p_4^* \omega_3$ has only simple singularities.

    The result follows by taking $Y=X_4$ and $\pi = p_1 \circ p_2 \circ p_3 \circ p_4$.
\end{proof}

\subsection{Foliations defined by logarithmic $1$-forms}

\begin{prop}\label{P:f preserves log}
    Let $X$ be a projective manifold and $f: X \dashrightarrow X$ be a dominant rational map preserving a transversely additive codimension one foliation
    $\mathcal F$ defined by a closed logarithmic $1$-form $\omega$. If the very general orbit of $f$ is Zariski dense and
     $\F$ is purely transcendental then there exists
     \begin{enumerate}
        \item a semi-abelian variety $A$;
        \item a birational map $\varphi : X \dashrightarrow A$;
        \item an endomorphism $g : A \to A$; and
        \item a foliation $\G$ on A defined by a Lie algebra of invariant vector fields
     \end{enumerate}
     such that $\F = \varphi^* \G$ and $ \varphi \circ f = g \circ \varphi$.
\end{prop}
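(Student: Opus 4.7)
The plan is to realise $X$ birationally as the quasi-Albanese variety of the pair $(X,D)$, where $D := (\omega)_\infty$, and to identify $\F$ and $f$ with translation-invariant data on this semi-abelian variety. After a log resolution (together with a resolution of the indeterminacies of $f$) we may and do assume that $D$ is a simple normal crossing divisor and that $f:X\dashrightarrow X$ is still a dominant rational self-map. The first step is to observe that $f^*\omega = c\,\omega$ for some $c\in\C^*$. Indeed $f^*\omega$ is closed (as pullback commutes with $d$), meromorphic (as $f$ is dominant), and defines $f^*\F=\F$; writing $f^*\omega = h\,\omega$ for a meromorphic $h$, the closedness of both forms combined with $d\omega=0$ gives $dh\wedge\omega=0$, so $h$ is a rational first integral of $\F$. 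Pure transcendentality of $\F$ forces $h$ to be constant, and $c\neq 0$ since $f$ is dominant. In particular $f^*\omega$ and $\omega$ share the same polar divisor, so $f^*D$ is supported on $D$.

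Having verified the hypothesis, Theorem \ref{THM:qalbanese kod0} applies and yields that $a:=\alb_{(X,D)}:X-D\to A:=\Alb(X,D)$ is dominant with irreducible general fiber. I claim $a$ is birational. By the very construction of the quasi-Albanese, every global section of $\Omega^1_X(\log D)$ is the pull-back by $a$ of a translation-invariant holomorphic $1$-form on $A$; in particular, there is a unique $\omega_A\in H^0(A,\Omega^1_A)$ with $\omega=a^*\omega_A$. Consequently the fibers of $a$ are everywhere tangent to $\F$. If the general fiber were positive-dimensional, $\F$ would admit a positive-dimensional algebraic subvariety through a general point of a general leaf, contradicting the purely transcendental hypothesis. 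Hence $a$ is generically finite, and combined with the irreducibility of the general fiber this forces its degree to equal one.

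Set $\varphi:=a:X\dashrightarrow A$. Lemma \ref{L:induz qalbaneseX} supplies an endomorphism $g:=f_*:A\to A$ fitting into a commutative square which is precisely the relation $\varphi\circ f=g\circ\varphi$. To produce $\G$, let $V\subset \mathrm{Lie}(A)=H^0(A,T_A)$ be the codimension-one subspace of translation-invariant vector fields annihilated by $\omega_A$. Since $A$ is commutative, $V$ is automatically a Lie subalgebra, and the $\mathcal O_A$-submodule of $T_A$ generated by $V$ is the tangent sheaf of a codimension one foliation $\G$ on $A$ whose defining $1$-form is $\omega_A$. This is exactly a foliation defined by a Lie algebra of invariant vector fields in the sense of the statement, and $\omega = \varphi^*\omega_A$ immediately gives $\F=\varphi^*\G$.

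The step requiring the most care is the birationality of $a$: it hinges on the identification of $H^0(X,\Omega^1_X(\log D))$ with translation-invariant $1$-forms on $\Alb(X,D)$ and on the transfer of pure transcendentality into a statement about positive-dimensional fibers of $a$. Once $a$ is birational, the rest is an assembly of Theorem \ref{THM:qalbanese kod0}, Lemma \ref{L:induz qalbaneseX}, and elementary linear algebra of invariant forms on a semi-abelian variety.
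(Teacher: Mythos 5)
Your overall architecture is the same as the paper's (constant factor $f^*\omega=c\,\omega$, quasi-Albanese of $(X,D)$, Theorem \ref{THM:qalbanese kod0}, Lemma \ref{L:induz qalbaneseX}, $\G$ cut out by the invariant form $\omega_A$), but there is a genuine gap at the step ``$f^*\omega$ and $\omega$ share the same polar divisor, so $f^*D$ is supported on $D$.'' What is true for free is that $(f^*\omega)_\infty=(\omega)_\infty=D$; what Theorem \ref{THM:qalbanese kod0} needs is that $f^*D$ (the pull-back of $D$ as a divisor, i.e.\ local equations of $D$ composed with $f$) is supported on $D$, and these are different objects. If $H$ is an irreducible component of $\mathrm{supp}(f^*D)$, then $f(H)\subset\mathrm{supp}(D)$, but when $f(H)$ has codimension at least two --- in particular when $f$ contracts $H$ into $\sing(\F)$ --- the form $f^*\omega$ may a priori be holomorphic along $H$, so $H$ need not appear in $(f^*\omega)_\infty$ and hence need not lie in $D$. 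Your argument does not exclude this.

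The paper closes exactly this hole, and it is the only genuinely delicate point of the proof: it first invokes Proposition \ref{P:Seidenberg for closed forms} to replace $X$ by a model on which $\F$ has simple, hence non-dicritical, singularities (your log resolution of $(X,D)$ alone does not provide this), and then runs an induction on blow-ups of $f(H)$, using non-dicriticality to show each exceptional divisor is invariant and contained in the polar locus of the transformed form, terminating by \cite[Theorem VI.1.3]{MR1440180} when the image of $H$ becomes divisorial. You should either reproduce this reduction-plus-induction or give some other argument showing that every $f$-preimage component of $D$ is $\F$-invariant and polar for $\omega$. The remainder of your proof (tangency of the quasi-Albanese fibers to $\F$, generic finiteness from pure transcendentality, birationality from irreducibility of the general fiber, and the construction of $\G$ from the invariant vector fields annihilated by $\omega_A$) agrees with the paper and is fine.
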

\begin{proof}
    The invariance of $\F$  by $f$ implies the existence of a rational function $h \in \C(X)$ such that $f^* \omega = h \omega$. Since $\omega$ is closed, differentiation of this identity implies that $ d h \wedge \omega =0$, i.e. $h$ is a rational first integral of $\F$. By assumption, $h$ must be constant.

    Proposition \ref{P:Seidenberg for closed forms} allows us to assume, after replacing $X$ by a suitable birational model, that $\mathcal F$ has only simple singularities. In particular, we can assume that $\F$ is a non-dicritical foliation. Further blow-ups centered at the singular set of the polar divisor (which is contained in the singular set of $\F$) allow us to assume that the polar divisor is a simple normal crossing divisor and that the foliation is still non-dicritical.

    Consider  $f^*D$, the pull-back of the polar divisor of $\omega$. Since $\mathcal F$ is non-dicritical, every irreducible component of $f^*D$ must be an irreducible component of the polar divisor of $f^*\omega$. Indeed, if $H$ is an irreducible component of the support of $f^*D$ then $H$ is mapped to
    the support of $D$. If $f(H)$ is not contained in the singular set of $\F$  then, since $f(H)$ is contained in the polar locus of $\omega$, $H$ is clearly contained in the polar locus of $f^*\omega$. If instead $f(H)$ is contained in $\sing(\F)$, let  $\rho_0: X_1 \to X$ be the blow-up of $X$ along $f(H)$ and consider the rational map $f_1 : X \dashrightarrow X_1$ defined as $f_1 = \rho_0^{-1} \circ f$. Set $\F_1=\rho_0^* \F$.
    Since $\F$ is non-dicritical, the exceptional divisor of $\rho_0$ is $\F_1$-invariant and, therefore, contained in the polar locus of $\omega_1=\rho_0^*\omega$. If  $f_1(H)$ is not contained in the singular set of $\F_1$ then we conclude as before. If not, we let $\rho_1:X_2 \to X_1$ be the blow-up of $X_1$ along $f_1(H)$ and consider the rational map $f_2 : X \dashrightarrow X_2$  given by $f_2 = \rho_1^{-1} \circ f_1$  and we repeat the argument above with the foliation $\F_2= \rho_1^* \F_1$. Again, the non-dicriticalness of $\F$ implies that the exceptional divisor of $\rho_1$ is $\F_2$-invariant and contained in the polar locus of $\omega_2 = \rho_1^*\omega_1$. Proceeding inductively,
    we reach a situation where $f_i(H)$ is not contained in the singular set of $\F_i$ because, according to \cite[Theorem VI.1.3]{MR1440180}, there will exist an $i$ such that the dimension of  $f_i(H)$ is equal to the dimension of $H$. In this case, $f_i(H)$ is not contained in $\sing(\F_i)$, since the singular set of $\F_i$ has codimension at least two.

    Since $f^*\omega$ and $\omega$ differ by a multiplicative constant we deduce that the support of $f^*D$ is contained in the support of $D$. Notice also, that $\omega$ is the pull-back under the quasi-Albanese map of $(X,D)$ of a $1$-form $\eta$ on $\Alb(X,D)$. In particular, the fibers of $\alb_{(X,D)}$ are tangent to $\F$.  By assumption, $\F$ is purely transcendental and we deduce that $\alb_{(X,D)}$ is generically finite. Theorem \ref{THM:qalbanese kod0} implies that $\alb_{(X,D)}$ is a surjective birational map. The proposition follows by taking $A=\Alb(X,D)$, $\varphi= \alb_{(X,D)}$, $g = f_*$ and $\G$ equal to the foliation defined by $\eta$.
\end{proof}

\subsection{Foliations defined by closed rational $1$-forms}\label{SS:closed}

Let $\omega$ be a closed rational $1$-form on a projective manifold $X$. Assume that the polar divisor of $\omega$ is
supported on a simple normal crossing divisor $D$. Hodge theory implies that the $H^1(X-D, \C)$ is the direct sum
\[
    H^0(X, \Omega^1_X(\log D)) \oplus \overline{ H^0(X,\Omega^1_X)} \, .
\]
Therefore, we can decompose $\omega$ as $\omega_{\log} + \omega_{\II}$, where $\omega_{\log} \in H^0(X, \Omega^1_X(\log D))$ is a  logarithmic $1$-form  and $\omega_{\II}$ is  a closed $1$-form without residues (in the classical literature these are called differentials of the second kind) and with de Rham cohomology class $[\omega_{\II}]$ in $\overline{H^0(X,\Omega^1_X)} \subset H^1(X-D,\C)$.

From now on, we will assume that  $\omega_{\II}$ is non zero. Therefore, it (or rather its class in $H^1(X,\mathcal O_X)$) determines a rank one vectorial extension of $\Alb(X,D)$ which we will call $B$. Concretely, $B$ can be seen as the quotient
\[
    \frac{\left( H^0(X,\Omega^1_X(\log D)) \oplus \mathbb C \omega_{\II} \right)^* }{H_1(X-D,\mathbb Z)/Tor} \, .
\]
Analogously to the quasi-Albanese variety, $B$ is the target of a natural rational map (well-defined up to translations)
\begin{align*}
    \beta : X - D & \longrightarrow B \\
    x & \mapsto \left\lbrace \alpha \mapsto \int_{x_0}^x \alpha \right\rbrace
\end{align*}
that factors the quasi-Albanese morphism as depicted below
\[
    \begin{tikzcd}
        X-D \arrow[rr,"\beta"] \arrow[drr,"\alb_{(X,D)}" below] & & B \arrow[d] \\
        && \Alb(X,D) .\\
    \end{tikzcd}
\]

Suppose now that the foliation $\F$ defined by $\omega$ is not algebraically integrable and  has simple singularities (in particular, its singularities are non-dicritical). If $\F$ is preserved by rational map $f: X \dashrightarrow X$ then, as argued at the beginning of the proof of Proposition \ref{P:f preserves log}, $f^* \omega = \lambda \omega$, for some $\lambda \in \mathbb C^*$. Since the decomposition $\omega = \omega_{\log}  + \omega_{\II}$ is canonical, we have the identities $f^* \omega_{\log} = \lambda \omega_{\log}$ and $f^*\omega_{\II} = \lambda \omega_{\II}$. Furthermore, the support of $f^* D$ is contained in the support of $D$. Arguing as in the proof of Lemma \ref{L:induz qalbaneseX} we deduce the existence of  a morphism $\hat{f_*}: B \to B$ which fits into the commutative  diagram
\[
    \begin{tikzcd}
        X-D \arrow[rr,dashed,"f"] \arrow[d, "\beta"] \arrow[dd, bend right=40, "\alb_{(X,D)}" left] & &  X - D  \arrow[d, "\beta" left]  \arrow[dd,bend left, "\alb_{(X,D)}"] \\
        B  \arrow[d] \arrow[rr,"\hat{f_*}"] && B \arrow[d]\\
        \Alb(X,D) \arrow[rr,"f_*"] && \Alb(X,D) .
    \end{tikzcd}
\]

\begin{lemma}\label{L:conclusion}
    Notations as above. Assume that $\beta$ is a dominant morphism.  If the foliation $\F$ is purely transcendental and the rational map $f : X \dashrightarrow X$ has a Zariski dense orbit then the morphism $\beta : X - D \to B$ is generically finite. Moreover, if $\beta$ is not birational then there exists a rational function $h \in \mathbb C(X)$ such that $\omega_{\II} = dh$ and $f^* h = \lambda h$.

\end{lemma}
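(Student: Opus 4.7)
The plan is to prove the two assertions in sequence. The first is straightforward, while the second requires a Hodge-theoretic globalization of a generic exactness statement.

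For the generic finiteness of $\beta$, both summands of $\omega = \omega_{\log} + \omega_{\II}$ are pull-backs via $\beta$ of translation-invariant rational $1$-forms on the algebraic group $B$: $\omega_{\log}$ factors through $\alb_{(X,D)} = \pi \circ \beta$ by the universal property of the quasi-Albanese, and $\omega_{\II}$ is by construction the pull-back of the $1$-form on $B$ generating the fiber direction of the extension $0 \to \mathbb{G}_a \to B \to \Alb(X,D) \to 0$. Consequently the fibers of $\beta$ are tangent to $\F$; since $\F$ is purely transcendental, a positive-dimensional general fiber would define a non-trivial algebraically integrable subfoliation of $\F$, a contradiction. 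So $\beta$ is generically finite.

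For the second assertion, suppose $\deg \beta = d \geq 2$. Then $\dim B = \dim X$, so $\dim \Alb(X,D) = \dim X - 1$ and $\alb_{(X,D)}$ has one-dimensional generic fibers. The relation $f^*\omega = \lambda \omega$ forces the support of $f^*D$ to lie in $D$, so Theorem~\ref{THM:qalbanese kod0} applies and yields that the generic fiber of $\alb_{(X,D)}$ is geometrically irreducible. Through $\beta$ this fiber maps with degree $d$ onto the generic fiber of $\pi$, which is $\mathbb{G}_a \cong \mathbb{A}^1$; Riemann-Hurwitz on the smooth completion forces any irreducible dominant cover of $\mathbb{A}^1$ to be $\mathbb{A}^1$ itself. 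Hence the generic fiber $\Sigma$ of $\alb_{(X,D)}$ is $\mathbb{A}^1$ over $k = \C(\Alb(X,D))$, and $\omega_{\II}|_\Sigma$, being the pull-back of the exact form $dt$ on $\mathbb{G}_a$, admits a primitive $h_\Sigma \in k(\Sigma) = \C(X)$.

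To globalize, set $\eta = \omega_{\II} - dh$ with $h \in \C(X)$ restricting to $h_\Sigma$ on $\Sigma$. Then $\eta$ is closed rational and vanishes on the generic fiber of $\alb_{(X,D)}$; a direct computation in fiber-adapted coordinates $(y,s)$ (using $d\eta = 0$ to force the coefficients to be independent of $s$) shows that $\eta = \alb_{(X,D)}^*\alpha$ for some closed rational $1$-form $\alpha$ on $\Alb(X,D)$. Decomposing $\alpha$ via Hodge theory on the semi-abelian variety $\Alb(X,D)$ into a translation-invariant (logarithmic) part and an exact part $d\psi$, and absorbing $\alb_{(X,D)}^*\psi$ into $h$, we arrange that $[\eta]$ is represented by a logarithmic $1$-form and hence lies in $H^0(X, \Omega^1_X(\log D))$. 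But $[\eta] = [\omega_{\II}] \in \overline{H^0(X, \Omega^1_X)}$ by hypothesis; the direct-sum decomposition of $H^1(X-D,\C)$ forces $[\omega_{\II}] = 0$, so $\omega_{\II}$ is exact, and a final adjustment of $h$ gives $\omega_{\II} = dh$ on $X$. Finally, $f^*\omega_{\II} = \lambda \omega_{\II}$ yields $d(f^*h - \lambda h) = 0$, so $f^*h = \lambda h + c$ for some $c \in \C$; when $\lambda \neq 1$ the translation $h \mapsto h - c/(\lambda-1)$ produces $f^*h = \lambda h$.

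The main anticipated obstacle is the Hodge-theoretic step reducing the pull-back class $[\alb_{(X,D)}^*\alpha]$ to a logarithmic representative. Since $\Alb(X,D)$ has in general a non-trivial $(0,1)$-part in $H^1$, one cannot simply kill the second-kind contribution of $\alpha$ by subtracting an exact form on $\Alb(X,D)$; the argument must use the compatibility of $\alb_{(X,D)}^*$ with the Hodge filtration together with the specific structural consequence that the affine fibration $\alb_{(X,D)}$ has generic fiber $\mathbb{A}^1$ (so that the Leray contribution from vertical cohomology is supported on a proper subvariety of the base).
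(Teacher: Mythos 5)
Your argument for the first assertion is correct and essentially the paper's: both $\omega_{\log}$ and $\omega_{\II}$ are pulled back from $B$ under $\beta$, so a positive-dimensional general fibre of $\beta$ would be an algebraic subvariety tangent to $\F$ through a general point, contradicting pure transcendence. (The paper phrases this via the general fibre $F$ of $\alb_{(X,D)}$, showing $\dim F\le 1$ and that $\beta_{|F}$ is given by a non-constant rational function, but the content is the same.)

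The second assertion is where the proposal breaks down, and the gap is structural rather than technical. Your strategy --- restrict $\omega_{\II}$ to the generic fibre $\Sigma$ of $\alb_{(X,D)}$, find a primitive there, push the difference $\eta=\omega_{\II}-dh$ down to $\Alb(X,D)$, and kill its class by Hodge theory --- makes no essential use of the hypothesis $\deg\beta\ge 2$, and indeed applies verbatim when $\beta$ is birational: in that case too $\beta_{|\Sigma}$ is a regular function to the fibre $\mathbb G_a$ of $B\to\Alb(X,D)$, so $\omega_{\II}|_\Sigma$ is exact on $\Sigma$. But when $\beta$ is birational and the extension $B$ is non-split, $\omega_{\II}=\beta^*\theta$ has non-zero class in $H^{0,1}$ and is \emph{not} exact. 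So the argument, if it closed, would prove something false; the obstruction you flag in your last paragraph (the second-kind part of $\alpha$ on $\Alb(X,D)$ pulls back to a possibly non-trivial class in $\overline{H^0(X,\Omega^1_X)}$) is not a removable technicality but exactly the point where the hypothesis $\deg\beta\ge2$ and the dynamics must enter. Separately, the Riemann--Hurwitz step is false as stated: a degree-$d$ irreducible cover of $\mathbb A^1$ need not be $\mathbb A^1$ (e.g.\ the affine curve $y^2=x(x-1)(x-2)$ over $\mathbb A^1_x$); only \emph{\'etale} covers of $\mathbb A^1$ are trivial. Fortunately that step is not needed for the exactness of $\omega_{\II}|_\Sigma$, which holds for any fibre since $\beta_{|\Sigma}$ is itself a primitive.

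What the paper actually does is dynamical. Since $\beta$ is generically finite of degree $\ge2$ and the fibres of $\alb_{(X,D)}$ are irreducible (Theorem \ref{THM:qalbanese kod0}), the codimension-one branch locus $C\subset B$ of $\beta$ has a component $C_0$ dominating $\Alb(X,D)$, and $C_0$ is invariant under a power of $\hat{f_*}$ because $f$ preserves $\beta$. Using the structure of endomorphisms of semi-abelian varieties with Zariski-dense orbit, one rules out $\lambda=1$ (then $\hat{f_*}$ would be a translation with all orbits dense, incompatible with the invariant hypersurface $C_0$), and for $\lambda\neq1$ one shows that over the Zariski-dense set of periodic points of $f_*$ the map $\hat{f_*}^{\,n}$ acts on the $\mathbb G_a$-fibre as $z\mapsto\lambda^nz+b_n$, forcing $C_0$ to meet each such fibre in the unique fixed point; hence $C_0$ is a section of $B\to\Alb(X,D)$, the vectorial extension splits, and therefore $[\omega_{\II}]=0$, i.e.\ $\omega_{\II}$ is exact. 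The final normalization $f^*h=\lambda h$ you do obtain correctly. To repair your proof you would need to import this use of the invariance of the branch locus; the Hodge-theoretic globalization alone cannot distinguish the split from the non-split extension.
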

\begin{proof}
    Let $F$ be a general fiber of the quasi-Albanese morphism $\alb_{(X,D)}$. We know from Theorem \ref{THM:qalbanese kod0} that $F$ is irreducible. If the dimension of $F$ is zero then Theorem \ref{THM:qalbanese kod0} implies that $\alb_{(X,D)}$ is birational. Since  $\beta$ factors $\alb_{(X,D)}$ it must also be a birational morphism over its image and there is nothing else to prove.

    From now on let us assume that $\dim F \ge 1$. We claim that $\dim F=1$. Indeed, if $i: F \to X-D$ denotes the inclusion then $i^*\omega_{\log}=0$ 
    while $i^*\omega_{\II}= dh$ for some $h \in  \mathbb C(\overline F)$ ( $\overline F$ stands for the Zariski closure of $F$ in $X$). Note that  $h$ is necessarily non constant, because $\F$ is purely transcendental.
    If $\dim F\ge 2$ then $h$ defines an algebraically integrable subfoliation of $\mathcal F_{|\overline{F}}$. Since $F$ is general this implies that $\F$ is not purely transcendental, again contrary to our assumptions.

    To deduce that $\beta$ is generically finite, it suffices to notice that $\beta$ maps $F$ to a fiber of $B \to \Alb(X,D)$ and $\beta_{|F}$ is essentially given by the rational function $h$ up to an additive constant. To conclude observe as before that $h$ is not constant as otherwise $F$ would be contained in a leaf of $\F$ and $\F$ would  not be purely transcendental.

    From now on, assume that $\beta$ is not birational. It remains to show that $\omega_{\II}$ is exact.  Let $C \subset B$ be the Zariski closure of the codimension one part of the critical locus of the morphism $\beta$. As we are assuming that $\beta$ is not birational, and since $\alb_{(X,D)}$ has connected general fiber according to Theorem \ref{THM:qalbanese kod0}, $C$ is non-empty and has an irreducible component $C_0$ which dominates $\Alb(X,D)$. Moreover, $C_0$ must be invariant by a power of the morphism $\hat{f_*}$.

    The morphism $f_* : \Alb(X,D) \to \Alb(X,D)$ can be decomposed as the sum of an
    endomorphism $\varphi$ of the abelian group $\Alb(X,D)$ and a translation. After replacing $f$ by a suitable power, we may assume that all roots of the minimal polynomial of $\varphi \in \End(\Alb(X,D))$ different from $1$ are not roots of the unity. It follows that $\Alb(X,D)$ is isogeneous to a product of quasi-abelian varieties $A_1\subset \Alb(X,D)$ and $A_2\subset \Alb(X,D)$ such that $f_*$ induces on $A_1$ a translation and on $A_2$ an endomorphism with Zariski dense set of periodic orbits, see \cite[proof of Theorem 1.1]{MR3937327}.

    If $\lambda=1$ then $\omega_{log}$ is the pull-back of a $1$-form under the morphism $\Alb(X,D) \to \Alb(X,D)/A_2$. Moreover,  $\omega_{\II}$ is the pull-back of a $1$-form under a morphism $B\to B_1$ where $B_1$ is a rank vectorial extension of $\Alb(X,D)/A_2$. Since we are assuming that $\omega_{log} + \omega_{\II}$ defines a purely transcendental foliation, we have that $A_2$ must be trivial,  $A_1 = \Alb(X,D)$ and $B_1 = B$. Thus if $\lambda =1$ then (some power of $f_*$) is a translation and the same holds true for $\hat{f_*}$. Since $\hat{f_*}$ is a translation with  Zariski dense orbits, then every orbit of $\hat{f_*}$ is Zariski dense contradicting the existence of $C_0$.

    If $\lambda\neq 1$ then we can argue in the same way to deduce that $A_2 = \Alb(X,D)$ and that $f_*$ is (conjugated by a translation to) an endomorphism of $\Alb(X,D)$ with a Zariski dense set of periodic orbits. The action of $\hat{f_*}^n$ on fibers of $B \to \Alb(X,D)$ over periodic points is given by $z \mapsto \lambda^n z + b_n$ for some $b_n \in \mathbb C$. It follows that $C$ must intersect such fiber in
    the unique fixed point of $z \mapsto \lambda^n z + b_n$. Thus  $C_0$  is a section of the projection $B \to \Alb(X,D)$ invariant by $\hat{f_*}$. The existence of a section implies that the vectorial extension $B$ defined by $\omega_{\II}$ is trivial. Hence the class of  $\omega_{\II}$  in  $\overline{H^0(X,\Omega^1_X)}$ 
    is zero, showing that $\omega_{\II}$ is exact. If $g \in \mathbb C(X)$ is such that $\omega_{\II} = dg$ then $f^*g = \lambda g + \mu$ for some $\mu \in \mathbb C$. To conclude  it suffices to take $h = g + \mu/\lambda$.
\end{proof}

\subsection{Proof of Theorem \ref{THM:core of structure}}
\label{sec: Proof B}
Let $\F$ be a purely transcendental virtually transversely additive foliation on a projective manifold $X$ invariant by
a rational map $f:X \dashrightarrow X$ with Zariski dense orbits.

Lemma \ref{L:covering} implies that $(X,\F,f)$ is birationally equivalent to the quotient, by a finite cyclic group, of a foliation $\G$ defined by a closed rational $1$-form $\omega$ on a projective manifold $Y$.

If $\omega$ is logarithmic then we can apply Proposition \ref{P:f preserves log} to conclude that $Y$ is birationally equivalent to a semi-abelian variety $A$ and that $f$ is conjugated to an endomorphism of $A$. This proves Theorem \ref{THM:core of structure} when $\omega$ is logarithmic.

If $\omega$ is not logarithmic then, thanks to Proposition \ref{P:Seidenberg for closed forms}, we can assume without loss of generality that  $\G$ has simple singularities and $D$, the polar divisor  of $\omega$, is simple normal crossing. If the quasi-Albanese morphism $\alb_{(Y,D)}$ is generically finite then the result follows from Theorem \ref{THM:qalbanese kod0}.

If $\alb_{(Y,D)}$ is not generically finite,  consider the morphism $\beta : Y - D \to B$ constructed in Subsection \ref{SS:closed}. Notice that $B$ is  a commutative algebraic group since it is constructed as a vectorial extension of $\Alb(Y,D)$. If $\beta$ is birational then there is nothing else to prove. If $\beta$ is not birational then Lemma \ref{L:conclusion} implies the existence of rational function $h \in \C(Y)$ (a primitive of $\omega_{\II}$) such that $g^* h = \lambda  h$ for some $\lambda \in \mathbb C^*$. Let $\eta$ be the logarithmic differential of $h$, i.e. $\eta =  \frac{dh}{h}$. If we set $\hat D$ as the reduced divisor with support equal to the union of the support of $D$ and the support of the polar divisor of  $\eta$ then  $g^* \hat D$ has support contained in $\hat D$. Moreover, the quasi-Albanese morphism of $(Y,\hat D)$ is generically finite since, by construction, it is non-constant on the generic fiber of $Y- D \to \Alb(X,D)$. We can apply Theorem \ref{THM:qalbanese kod0} to conclude that $Y$ is birationally equivalent to a semi-abelian variety.  \qed

\section{Foliations invariant by endomorphisms of projective spaces}\label{S:endo}

\subsection{Endomorphisms and their exceptional hypersurfaces}
Let $f:X \to Y$ be a finite morphism of $m$-dimensional compact complex manifolds. The number of preimages of
a generic point $y \in Y$ is called the topological degree of $f$ and it will be denoted by $\deg(f)$.

For a point $x\in X$ the local degree of $f$ at $x$  is defined as
\[
    \deg_x(f) = \max \left\{ {\rm Card} \left( f^{-1}(y) \cap B(x,\epsilon)  \right), y \in B(f(x),\epsilon), \epsilon \ll 1 \right\} \, ,
\]
and for an irreducible subvariety $Z \subset X$ the local degree of $f$ at $Z$ is defined as
\[
    \deg_Z(f) = \min \{ \deg_x(f), x \in Z\} \, .
\]

The branching divisor of $f$ is given by the formula
\[
    B(f) = \sum (\deg_H(f) - 1)\cdot H
\]
where the sum is taken over all irreducible hypersurfaces of $X$.
The branching divisor satisfies the relation
\begin{equation}\label{E:rcanonico}
    f^* K_Y \simeq K_X + B(f) \, ,
\end{equation}
where $\simeq$ denotes linear equivalence and $K_X$, respectively $K_Y$, are the canonical divisors of $X$, respectively $Y$.

From now on, let $f:X \to X$ be an  endomorphism;  then
\[
    f_* f^* = {\deg}(f) \cdot {\id} \, \qquad {\rm in}  \quad H^*(X,\mathbb Q) \, ,
\]
which implies that the irreducible subvarieties contracted by $f$ are rationally cohomologous to zero.
For K\"{a}hler manifolds it follows that every endomorphism is in fact a finite morphism.

The exceptional  hypersurface of $f$ is the largest reduced hypersurface $\mathcal E \subset X$ such that
$f^{-1}(\mathcal E) = \mathcal E = f(\mathcal E)$ (set theoretically), and  $\deg_{\mathcal E_i}(f) > 1$ for every irreducible component $\mathcal E_i$ of $\mathcal E$.

Note that after replacing $f$ by a suitable power $f^m$ we can suppose that $f^{-1}(\mathcal E_i) = \mathcal E_i $ for every
irreducible component $\mathcal E_i$ of the exceptional hypersurface.

From the definition of $\deg_H(f)$ it follows that $f^*\mathcal E_i = \deg_{\mathcal E_i}(f) \cdot \mathcal E_i$ for every
irreducible component $\mathcal E_i$ of the exceptional hypersurface. When $\mathcal E_i^{\dim X} \neq 0$ then the
identity $f^* {\mathcal E_i}^n = \deg(f) \mathcal E_i^n $ implies at once that
\[
   \deg_{\mathcal E_i}(f) ^{\dim X} = \deg(f) \, .
\]
More generally when $\mathcal E_{i_1}^{r_{i_1}} \cdots \mathcal E_{i_k} ^{r_{i_k}} \neq 0$, $r_{i_1}+\ldots + r_{i_k} = \dim X$, then
\begin{equation}\label{E:prodgrau}
  \prod_{j=1}^k \deg_{\mathcal E_{i_j}}(f)^{r_{i_j}}= \deg(f) \, .
\end{equation}

\begin{lemma}\label{L:log log}
    Let $X$ be a projective manifold with $\Pic(X) = \mathbb Z$ and let $f \colon X \to X$ be a (finite) endomorphism.
    If $\mathcal E$ is the exceptional hypersurface of $f$ then
    \begin{enumerate}
        \item\label{I:67} the pair $(X,\mathcal E)$ is log-canonical; and
        \item\label{I:68} every logarithmic form (of arbitrary degree)  with poles on $\mathcal E$ is closed.
    \end{enumerate}
\end{lemma}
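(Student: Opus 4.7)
Since $\Pic(X)=\Z$, write $K_X\simeq aH$ for the ample generator $H$ and define the positive integer $\delta$ by $f^*H\simeq\delta H$. If $\delta=1$ then $f$ is an automorphism, $\mathcal E$ is empty, and the lemma is vacuous; so I assume $\delta\ge 2$. The standard ramification formula $K_X=f^*K_X+B(f)$ combined with $f^*K_X\simeq\delta K_X$ forces $B(f)\simeq(1-\delta)K_X$; effectiveness of $B(f)$ then requires $a\le 0$. Since $\Pic(X)=\Z$, the identity $f^*\mathcal E_i=\deg_{\mathcal E_i}(f)\mathcal E_i$ must numerically match the scaling by $\delta$, yielding $\deg_{\mathcal E_i}(f)=\delta$ for every irreducible component. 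I may therefore write $B(f)=(\delta-1)\mathcal E+R'$ with $R'\ge 0$ effective and disjoint from $\mathcal E$; comparing numerical classes gives $\mathcal E\le -K_X$, and in particular $K_X+\mathcal E$ is numerically non-positive.

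\textbf{Part (1).} For log canonicity I combine the pullback identity
\[
f^*(K_X+t\mathcal E)\;\simeq\;K_X+(1+\delta(t-1))\mathcal E-R'
\]
with the standard fact that a pair is sub-log-canonical if and only if its log pullback under a finite surjective morphism is sub-log-canonical. Iterating $n$ times produces the equivalence
\[
(X,t\mathcal E)\text{ sub-lc}\;\Longleftrightarrow\;\bigl(X,\,(1+\delta^n(t-1))\mathcal E-R'_n\bigr)\text{ sub-lc},
\]
where $R'_n=\sum_{i=0}^{n-1}(f^i)^*R'\ge 0$ is the accumulated off-$\mathcal E$ ramification of $f^n$. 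Suppose for contradiction that $\mathrm{lct}(X,\mathcal E)=c<1$ and pick $\epsilon>0$ with $c+\epsilon<1$. Because $(c+\epsilon)\mathcal E$ is effective with coefficient $<1$, non-log-canonicity is equivalent to non-sub-log-canonicity; hence the iterated pair $(X,c_n'\mathcal E-R'_n)$ with $c_n':=1+\delta^n(c+\epsilon-1)\to -\infty$ is also not sub-lc. But as soon as $n$ is large enough that $c_n'\le 0$, the log discrepancy of any divisorial valuation $v$ on this pair is $a_v(K_X)+1-c_n'\,v(\mathcal E)+v(R'_n)\ge 1>0$, using smoothness of $X$ and non-negativity of $v(\mathcal E),v(R'_n)$; this forces sub-lc, a contradiction. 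Therefore $\mathrm{lct}(X,\mathcal E)\ge 1$ and $(X,\mathcal E)$ is log canonical.

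\textbf{Part (2).} Given (1), fix a log resolution $\pi:\tilde X\to X$ of $(X,\mathcal E)$ and set $\tilde E:=\pi^{-1}(\mathcal E)_{\mathrm{red}}$, which is SNC on $\tilde X$. The extension theorem for logarithmic differential forms on log canonical pairs (Greb-Kebekus-Kov\'acs-Peternell, in its lc refinement due to Graf-Kebekus) yields the isomorphism
\[
H^0(X,\Omega^p_X(\log\mathcal E))\;\cong\;H^0(\tilde X,\Omega^p_{\tilde X}(\log\tilde E)).
\]
On the smooth projective SNC pair $(\tilde X,\tilde E)$, the logarithmic Hodge-to-de Rham spectral sequence degenerates at $E_1$ by Deligne's classical theorem; in particular the differential $d:H^0(\tilde X,\Omega^p_{\tilde X}(\log\tilde E))\to H^0(\tilde X,\Omega^{p+1}_{\tilde X}(\log\tilde E))$ vanishes identically. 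Every section of $\Omega^p_{\tilde X}(\log\tilde E)$ is therefore closed, and transporting closedness back through $\pi$ shows that every section of $\Omega^p_X(\log\mathcal E)$ is closed, as required.

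\textbf{Main obstacle.} The only non-formal ingredient is the pullback equivalence for sub-log-canonical pairs under finite surjective morphisms used in Part (1); granting this, the iteration argument delivers the contradiction through the elementary positivity of log discrepancies once the coefficient of $\mathcal E$ turns negative. Part (2) is then a formal consequence of the GKKP-type extension theorem combined with Deligne's logarithmic Hodge theory, which is presumably Kebekus's contribution.
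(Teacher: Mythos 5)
Your argument is correct, and Part (2) is essentially the paper's own proof: pull back to a log resolution, invoke the Greb--Kebekus--Kov\'acs--Peternell extension theorem for differential forms on log canonical pairs, and conclude by Deligne's closedness theorem for logarithmic forms on a smooth projective variety with SNC boundary. (The paper cites \cite[Theorem 1.5 and Remark 1.5.2]{MR2854859} directly; GKKP already covers the lc case, so the appeal to a Graf--Kebekus refinement is unnecessary, and what one really uses is the injection of $H^0(X,\Omega^p_X(\log\mathcal E))$ into the log forms on the resolution with poles along $\pi^{-1}(\mathcal E)\cup\Exc(\pi)$ rather than an isomorphism onto $H^0(\tilde X,\Omega^p_{\tilde X}(\log\tilde E))$ --- but closedness descends either way.)

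Where you genuinely diverge is Part (1): the paper observes that $\Pic(X)=\mathbb Z$ makes $f$ polarized and then simply cites \cite[Corollary 3.3]{MR3263168} (Broustet--H\"oring) for log canonicity of the totally invariant divisor. You instead reprove that result from scratch, via the crepant-pullback equivalence for sub-lc pairs under finite morphisms (Koll\'ar--Mori, Prop.~5.20) and the iteration that drives the coefficient of $\mathcal E$ to $-\infty$. This is in fact the standard mechanism behind the cited result, so your write-up is self-contained where the paper's is not; the cost is length and two small points you should make explicit. First, the identity $f^*\mathcal E_i=\deg_{\mathcal E_i}(f)\,\mathcal E_i$ (hence $\deg_{\mathcal E_i}(f)=\delta$ and $f^*\mathcal E=\delta\mathcal E$) requires each component of $\mathcal E$ to be totally invariant, which holds only after replacing $f$ by an iterate --- harmless, since the conclusion concerns only the pair $(X,\mathcal E)$, and the paper performs this replacement in the surrounding text. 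Second, one should note that no component of $f^*R'$ lies in $\mathcal E$ (because $f$ maps components of $\mathcal E$ to components of $\mathcal E$), so that the coefficient of $\mathcal E$ in the iterated boundary is exactly $1+\delta^n(t-1)$; with that remark the positivity of log discrepancies once this coefficient becomes non-positive is immediate, and the argument closes correctly.
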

\begin{proof}
    The hypothesis $\Pic(X) = \mathbb Z$ implies that the endomorphism $f$ is polarized. Item (\ref{I:67}) is then a direct consequence of \cite[Corollary 3.3]{MR3263168}.

    Item (\ref{I:68}) follows from \cite[Theorem 1.5 and Remark 1.5.2]{MR2854859}. Indeed, let $\omega \in H^0(X, \Omega^k_X(\log \mathcal E))$
    and let  $\pi : \tilde X \to X$ be a log resolution of the pair $(X,\mathcal E)$. If $U \subset X$ is the locus where $\mathcal E$
    is a normal crossing divisor then the differential form $\pi^*(\omega_{|U})$ extends to a logarithmic form $\tilde \omega$ on $\tilde X$. Deligne's Theorem implies that $\tilde \omega$ is closed and  the same holds true for $\omega$.
\end{proof}

\subsection{Pfaff Equations}
For us, a  codimension $p$ Pfaff equation $\mathcal P$ on a compact complex
manifold $X$ is a given by a line bundle $N_{\mathcal P}$ and an equivalence class of  twisted $p$-forms $[\omega] \in \mathbb P H^0(X, \Omega_X^p\otimes N_{\mathcal P})$ (not necessarily integrable nor decomposable) with zero sets of codimension at least two.

We will say that  an  irreducible hypersurface $H$ is invariant by $\mathcal P$ if for any local equation $h$ of $H$ and any local equation $\omega$ of $\mathcal P$ the $(p+1)$-form
\[
    \omega \wedge \frac{dh}{h}
\]
is holomorphic.

A meromorphic function $g \in \mathbb C(X)$ is a  first integral of $\mathcal P$ if for any local equation $\omega$ of $\mathcal P$
we have that
\[
   \omega \wedge dg \equiv 0 \, .
\]
The first integrals of $\mathcal P$ form a subfield of $\mathbb C(X)$ which we will denote $\mathbb C(\mathcal P)$. The
transcendence degree of $\mathbb C(\mathcal P)$ is bounded by the codimension of $\mathcal P$. Jouanolou-Ghys'  Theorem can be easily extended to Pfaff equations.

\begin{thm}[Jouanolou-Ghys]
    If $\mathcal P$ is a codimension $p$ Pfaff equation
    on a compact complex manifold then  $\mathbb C(\mathcal P) \neq \mathbb C$ if, and only if, $\mathcal P$ admits
    an infinite number of invariant hypersurfaces.
\end{thm}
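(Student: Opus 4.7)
If $g\in\mathbb{C}(\mathcal{P})$ is non-constant, then for each $c\in\mathbb{C}$ and any irreducible local factor $h$ of $g-c$, writing $g-c=uh$ with $u$ a unit and using $\omega\wedge dg=0$ gives
\[
\omega\wedge\frac{dh}{h}\;=\;-\frac{1}{u}\,\omega\wedge du,
\]
which is holomorphic. So each such $\{h=0\}$ is invariant, and varying $c$ produces infinitely many invariant hypersurfaces.

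\textbf{Hard direction, strategy.} Assume a sequence $\{H_i\}_{i\ge 1}$ of distinct irreducible invariant hypersurfaces, with local equations $h_i$. By invariance, $\eta_i:=\omega\wedge dh_i/h_i$ is a holomorphic $(p+1)$-form with values in $N_{\mathcal{P}}$; via a standard \v{C}ech argument (a change of local equation $h_i\rightsquigarrow uh_i$ alters $\eta_i$ only by a global holomorphic form $\omega\wedge du/u$), each $\eta_i$ determines an element of a finite-dimensional $\mathbb{C}$-vector space $V$ built from $H^0(X,\Omega^{p+1}_X\otimes N_{\mathcal{P}})$. The plan, modelled on Jouanolou's classical argument for foliations, is to extract an integer relation $\sum_i n_i\,\eta_i=0$, which translates into $\omega\wedge df=0$ for $f:=\prod_i h_i^{n_i}$, hence produces a non-constant rational first integral (non-constancy being immediate from the distinctness and irreducibility of the $H_i$).

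\textbf{Main obstacle.} The pigeonhole principle in $V$ only produces a $\mathbb{C}$-linear relation $\sum\lambda_i\eta_i=0$, i.e.\ a closed meromorphic $1$-form $\alpha=\sum\lambda_i\,dh_i/h_i$ satisfying $\omega\wedge\alpha=0$; for $\alpha$ to equal $df/f$ with $f$ rational one needs $\lambda_i\in\mathbb{Z}$. This is Ghys's contribution to the theorem: the residue morphism on closed logarithmic $1$-forms along $\bigcup H_i$ is defined over $\mathbb{Z}$, and the condition $\omega\wedge\alpha=0$ imposes a $\mathbb{Q}$-defined linear system on the residue coordinates $\lambda_i$. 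Provided the number $M$ of hypersurfaces considered exceeds $\dim_{\mathbb{C}}V$ sufficiently, the $\mathbb{C}$-kernel of the relation map $\mathbb{C}^M\to V$ intersects $\mathbb{Q}^M$ nontrivially; clearing denominators yields the required integer relation. This passage from $\mathbb{C}$- to $\mathbb{Z}$-dependence is the technical heart of the argument; once it is in place, both directions are formal.
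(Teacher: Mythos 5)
The paper does not actually prove this statement --- it is quoted as a known result of Jouanolou and Ghys that ``can be easily extended to Pfaff equations'' --- so the comparison is with the classical argument in the literature. Your easy direction is fine. The hard direction, however, rests on a step that does not work: you claim that the $\mathbb{C}$-linear condition $\sum_i\lambda_i\eta_i=0$ is ``a $\mathbb{Q}$-defined linear system on the residue coordinates,'' so that a positive-dimensional kernel must meet $\mathbb{Q}^M$. There is no natural $\mathbb{Q}$-structure on the target space $V$ for which the classes $\eta_i$ have rational coordinates; the matrix of the map $\mathbb{C}^M\to V$ has arbitrary complex entries, and a positive-dimensional complex subspace of $\mathbb{C}^M$ need not contain any nonzero rational vector. (Generic logarithmic foliations $\sum\lambda_i\,dF_i/F_i$ on $\mathbb{P}^2$ with non-real $\lambda_i$ already illustrate that the relevant relation spaces are genuinely irrational.) A second, smaller inaccuracy: a change of local equation $h_i\rightsquigarrow u\,h_i$ involves a \emph{local} unit $u$, so $\omega\wedge du/u$ is not a global form; on a general compact complex manifold the well-defined invariant of $H_i$ must record \v{C}ech $H^1$-data (the obstruction to gluing the local forms $dh_i/h_i$ into a global logarithmic form, plus the Picard-group obstruction needed before $\prod h_i^{n_i}$ can be a global meromorphic function), not just a class built from $H^0(X,\Omega_X^{p+1}\otimes N_{\mathcal P})$. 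That bookkeeping, not any integrality statement, is Ghys's contribution beyond Jouanolou's projective case.

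The classical proof avoids integrality entirely. Once enough hypersurfaces are available, one gets not one but \emph{two} (indeed arbitrarily many) independent complex relations, each of which produces a global \emph{closed} meromorphic $1$-form $\alpha$ with logarithmic poles, prescribed complex residues $\lambda_i$ along the $H_i$, and $\omega\wedge\alpha=0$. Two such forms $\alpha,\beta$ with non-proportional residue vectors are not proportional over $\mathbb{C}$. If $\alpha\wedge\beta\equiv 0$, write $\alpha=h\beta$ with $h$ meromorphic; then $0=d\alpha=dh\wedge\beta$, so $dh$ lies pointwise in the span of $\beta$, whence $\omega\wedge dh=0$ and $h$ is a non-constant first integral. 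If the forms obtained are generically independent, one takes more than $p$ of them, extracts a minimal dependence $\sum h_i\alpha_i=0$ over the meromorphic function field, and differentiating shows each $dh_i$ lies in the span of the $\alpha_j$, so again $\omega\wedge dh_i=0$ with some $h_i$ non-constant. You should replace the integrality step by this ``ratio of two closed logarithmic forms'' mechanism; with that change, and with the \v{C}ech bookkeeping done in the appropriate finite-dimensional (hyper)cohomology, your outline becomes the standard proof.
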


Let $\mathcal U=\{ U_i\}$ be a suitable open covering of $X$ and let $\omega_i \in \Omega^p_X(U_i)$ be $p$-forms with zero
set of codimension at least two defining the restriction of  $\mathcal P$ to $U_i$.

If $f:X \to  X$ is an endomorphism then $f^* \mathcal P$ is the Pfaff equation on $X$ induced by the collection $f^* \omega_i/(f^*
\omega_i)_0$. A simple computation shows that
\[
    (f^* \omega_i)_0 = \sum (\deg_H(f) - 1 )\cdot H \,
\]
where  the sums are  taken over all $\mathcal P$-invariant irreducible hypersurfaces, cf. \cite[Chapter 2]{MR3328860}.
It follows that
\[
    f^* N_{\mathcal P}   =  N_{f^*{\mathcal P}}  \otimes \mathcal O_X\left(\sum (\deg_H(f) - 1 )\cdot H\right) \, ,
\]
We will say that $\mathcal P$ is $f$-invariant  when $f^* \mathcal P = \mathcal P$.

\subsection{Proof of Proposition \ref{PROP:endo}} Proposition \ref{PROP:endo} follows immediately from the more general result below.

\begin{prop}\label{P:endogeral}
    Let $X$ be a projective manifold with $\Pic(X)$ isomorphic to $\mathbb Z$ and let  $f:X \to X$ be an
    endomorphism of $\deg(f)>1$. If $\mathcal P$ is a codimension $p$ Pfaff equation invariant by $f$ then at least one of the following assertions
    hold:
    \begin{enumerate}
        \item\label{I:88} $\mathcal P$ admits a non-constant meromorphic first integral;
        \item\label{I:89} $\mathcal P$ is induced by a closed logarithmic $p$-form with poles on a totally invariant hypersurface.
    \end{enumerate}
\end{prop}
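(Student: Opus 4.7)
My plan follows a Jouanolou--Ghys dichotomy. If $\mathcal{P}$ admits infinitely many invariant hypersurfaces, Jouanolou--Ghys produces a non-constant meromorphic first integral and assertion~(\ref{I:88}) holds; I therefore assume throughout the remainder of the argument that the $\mathcal{P}$-invariant hypersurfaces form a finite list $E_1,\ldots,E_r$. After replacing $f$ by a suitable power I may further assume $f^{-1}(E_i)=E_i$ for each $i$, so that each $E_i$, as well as the reduced sum $E=E_1+\cdots+E_r$, is totally invariant under~$f$.

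The crux is to identify $N_{\mathcal{P}}$ with $\mathcal{O}_X(E)$ using $\Pic(X)=\mathbb{Z}$. Let $L$ be the ample generator of $\Pic(X)$; the hypothesis $\deg(f)>1$ forces $f^*L\simeq L^{\otimes d}$ with $d\geq 2$, and I write $N_{\mathcal{P}}=L^{\otimes n}$, $E_i\sim a_i L$ with $a_i\in\mathbb{Z}_{\geq 1}$. The identity
\[
f^*N_{\mathcal{P}}\otimes N_{\mathcal{P}}^{-1}\simeq \mathcal{O}_X\!\left(\sum_i (\deg_{E_i}(f)-1)E_i\right)
\]
(the sum restricted effectively to those $E_i$ with $\deg_{E_i}(f)>1$) already forces $n\geq 0$. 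For every $i$, the total invariance of $E_i$ yields $f^*E_i=\deg_{E_i}(f)\cdot E_i$ as divisors, while on the other hand $f^*E_i\sim a_i f^*L\sim d\cdot a_iL\sim d\cdot E_i$ in $\Pic(X)=\mathbb{Z}$; comparison gives $\deg_{E_i}(f)=d$ for every~$i$. Substituting into the displayed identity transforms it into $n(d-1)=(d-1)\sum_i a_i$, so $n=\sum_i a_i$ and $N_{\mathcal{P}}\simeq\mathcal{O}_X(E)$. In particular $\deg_{E_i}(f)=d>1$ for every~$i$, so $E$ is contained in the exceptional hypersurface $\mathcal{E}$ of $f$.

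I then pick a global section $s\in H^0(X,N_{\mathcal{P}})$ with $\mathrm{div}(s)=E$ and set $\eta:=\omega/s$; this is a global meromorphic $p$-form on $X$ whose poles are confined to $E$. A direct local computation should show that $\eta$ is in fact a section of $\Omega^p_X(\log E)$: at a point where components $E_1,\ldots,E_k$ of $E$ meet, with local equations $z_1,\ldots,z_k$, the $E_i$-invariance of $\mathcal{P}$ means that every component of $\omega$ not involving $dz_i$ vanishes along $\{z_i=0\}$; combining these conditions for $i=1,\ldots,k$ and dividing by $s=z_1\cdots z_k\cdot(\text{unit})$ produces exactly a logarithmic pole of order one along each $\{z_i=0\}$. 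The degenerate case $n=0$ (forcing $E=\emptyset$, $s=1$, $\eta=\omega$) is subsumed in the same statement. Since $E\subseteq\mathcal{E}$, Lemma \ref{L:log log}(\ref{I:68}) applies to $\eta$ and yields $d\eta=0$. Hence $\mathcal{P}$ is defined by the closed logarithmic $p$-form $\eta$, whose pole locus $E$ is a totally invariant hypersurface, establishing~(\ref{I:89}).

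The main obstacle, and the place where the hypothesis $\Pic(X)=\mathbb{Z}$ enters crucially, is the identification $\deg_{E_i}(f)=d$ and the resulting numerical equality $n=\sum_i a_i$: without such a rigid Picard group, invariant hypersurfaces could have distinct ramification indices and $N_{\mathcal{P}}$ would in general fail to be realised as an effective combination of invariant hypersurfaces. I note that I do not have to assume $E$ simple normal crossing: the proof of Lemma \ref{L:log log}(\ref{I:68}) already absorbs a log-resolution step, so the closedness of $\eta$ descends to $X$ without additional effort on my part.
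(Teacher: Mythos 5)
Your proposal is correct and follows essentially the same route as the paper's proof: the Jouanolou--Ghys dichotomy to reduce to finitely many invariant hypersurfaces, the numerical identification $N_{\mathcal P}\simeq\mathcal O_X(\sum E_i)$ (which the paper obtains by formally inverting $f^*-\id$ on Chern classes, and you obtain by the equivalent explicit computation in $\Pic(X)=\mathbb Z$), division by the equation of $E$ to produce a logarithmic $p$-form, and Lemma \ref{L:log log}(\ref{I:68}) for closedness. The only cosmetic difference is that you show directly that every invariant hypersurface satisfies $\deg_{E_i}(f)=d>1$ and hence lies in $\mathcal E$, whereas the paper reaches the same point by taking the backward orbit $\Sigma=\bigcup_j f^{-j}(H_i)$ of the ramified invariant hypersurfaces.
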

\begin{proof}
    Since $\mathcal P$ is $f$-invariant we have that,
    \begin{equation}\label{E:basico}
        f^* N_{\mathcal P} \otimes N_{\mathcal P}^* = \mathcal O_X\left(\sum_{i=1}^{k}   (\deg_{H_i}(f)-1) \cdot H_i \right)
    \end{equation}
    where $H_i$ are irreducible $\mathcal P$-invariant hypersurfaces.

    Consider the collection of hypersurfaces $\Sigma$ defined as
    \[
        \Sigma = \bigcup_{j\in \mathbb N} \bigcup_{i=1}^{k}  f^{-j}(H_i) \, .
    \]

    The  elements of $\Sigma$ are clearly invariant by $\mathcal P$.
    If $\Sigma$ is infinite then, by Jouanolou's Theorem, $\mathcal P$ admits a meromorphic  first integral and we are in
    case (\ref{I:88}).

    Otherwise, we have that $\Sigma$ is a compact hypersurface satisfying $f^{-1}(\Sigma) = \Sigma$. It follows that
    $\Sigma$ is contained in $\mathcal E$, the exceptional hypersurface  of $f$. We will assume that $f^{-1}(\mathcal E_i)=
    \mathcal E_i$ for every irreducible component of $\mathcal E$.

    Taking Chern class in (\ref{E:basico}) we obtain that
    \[
        (f^* - {\id}) c(N \mathcal P) = \sum_{i=1}^{k} (\deg_{\mathcal E_i}(f)-1) \cdot c(\mathcal E_i) \, .
    \]
    By hypothesis,  we have that $(f^* - {\rm id}) \neq 0$  and from
    the relation $f^* (\mathcal E_i) =\deg_{\mathcal E_i}(f) \cdot \mathcal E_i$ it follows that
    \[
        (f^* - {\id})^{-1}\left(\sum_{i=1}^{k} (\deg_{\mathcal E_i}(f)-1)   c(\mathcal E_i)\right) = \sum_{i=1}^{k} (\deg_{\mathcal E_i}(f)-1) \frac{ c(\mathcal E_i)}{\deg_{\mathcal E_i}(f)-1} \, .
    \]
    Thus we have just proved that
    \[
        c(N \mathcal P) = \sum_{i=1}^{k}    c(\mathcal E_i) .
    \]

    If $\omega \in {\rm H}^0(X, \Omega^p_X \otimes N\mathcal P)$ is a twisted $p$-form
    defining $\mathcal P$ then after dividing $\omega$ by the equations of $\sum_{i=1}^{k}    \mathcal E_i$ we obtain a
    rational $p$-form $\tilde \omega$ with  simple poles along the $\mathcal P$-invariant
    irreducible components of the exceptional hypersurface of $f$. The $\mathcal P$-invariance of the polar divisor of $\tilde \omega$
    implies that $d \tilde \omega$ has, at worst, simple poles. Thus $\tilde \omega$ is a logarithmic $p$-form and
    we can apply Lemma \ref{L:log log} to conclude that we are in case (\ref{I:89}).
\end{proof}

\section{Foliations of (adjoint) general type}\label{S:adjoint}
\subsection{Conventions}
We spell out some of the conventions which will be used throughout this section.
Although most of what follows is standard in the birational geometry literature,
we prefer to clarify our usage of the terms from the beginning in order to avoid misunderstandings.

Let $X$ be a normal irreducible variety with singular locus $\sing(X)$ and smooth locus $X^{\circ}$, i.e.
$X^{\circ} = X - \sing(X)$. The group $\WDiv(X)$ of Weil divisors on $X$ consists of finite sums of codimension one
irreducible subvarieties. Given a Weil divisor $D \in \WDiv(X)$, we will denote by $\mathcal O_X(D)$ the
sheaf defined over every open subset $U \subset X$ as
\[
    \mathcal O_X(D)(U) = \{ f \in \mathbb C(X) \, | \, \divisor(f) + D \ge 0\ \mbox{on}\ U \} \, .
\]
The sheaf $\mathcal O_X(D)$ is the divisorial sheaf associated to $D$, which is a reflexive rank one
sheaf over $X$. Two Weil divisors $D_1$ and $D_2$ are linearly equivalent if there exists a non-zero rational function
$f \in \mathbb C(X)^*$ such that $D_1-D_2 = (f)_0 - (f )_{\infty}$. We will denote the group of Weil divisors modulo linear equivalence by
$\Cl(X)$.

Unlike locally free sheaves, reflexive sheaves are not closed under tensor products.
One is led to consider a variant of this operation. Given two sheaves $\mathcal A$ and $\mathcal B$, the
reflexive tensor product of $\mathcal A$ and $\mathcal B$ is the double dual of the tensor product of
$\mathcal A$ and $\mathcal B$. We will denote the reflexive tensor product by $[\otimes]$. Therefore
\[
    \mathcal A [\otimes] \mathcal B = (\mathcal A \otimes \mathcal B)^{**} \, .
\]
Similarly, we will write $\mathcal A^{[m]}$ for $(\mathcal A^{\otimes m})^{**}$.
The reflexive tensor product defines a group structure on the set of rank one reflexive sheaves.
The map which sends a Weil divisor to its associated divisorial sheaf defines an isomorphism
between $\Cl(X)$, the group of Weil divisors modulo linear equivalence, and the group of isomorphisms classes
of rank one reflexive sheaves on $X$.

It will also be important to consider the group of Weil $\mathbb Q$-divisors, $\WDiv(X)\otimes \mathbb Q$. Two Weil $\mathbb Q$-divisors
$D_1$ and $D_2$ are $\mathbb Q$-linearly equivalent ($D_1 \sim_{\mathbb Q} D_2$) if there exists a non-zero integer $n$ such that $n(D_1 - D_2)$ is a Weil divisor linearly equivalent to zero.

When $X$ is singular, the sheaves $\Omega^i_X$ are not necessarily reflexive. We will denote their reflexive hulls by $\Omega^{[i]}_X$.
The tangent sheaf of $X$, defined as the dual of $\Omega^1_X$, is  reflexive.

\subsection{Foliations}\label{SS:foliations}

A foliation $\F$ on an irreducible normal variety $X$ is determined by a
saturated involutive subsheaf $T_{\F}$ of the tangent sheaf $T_X$ of $X$, $T_{\F}$ is called
the tangent sheaf of $\F$. The dimension of $\F$ is, by definition, the rank of $\TF$.
Its annihilator is a saturated subsheaf $N^*_{\F}$ of
$\Omega^{[1]}_X = T_X^* = (\Omega^1_X)^{**}$ which is called  the conormal sheaf of $\F$.
The codimension of $\F$ is the rank of $\CNF$.

We will denote the dual of $\TF$ by $\Omega^1_{\F}$.
The canonical sheaf of $\F$, denoted $\omega_{\F}$,
is $\det \Omega^1_{\F}=(\det(\TF))^*$, where $\det$ denotes the bidual of the top exterior power of a sheaf. The transverse canonical sheaf of
$\F$ is, by definition, $\omega_{X/\F} = (\det \CNF)$. 

The canonical divisor of $\F$ is, by definition, any Weil divisor $\KF$ such that
$\mathcal O_X(\KF) \simeq \omega_{\F}$. Analogously, the transverse canonical divisor
of $\F$ is, by definition, any Weil divisor $\KNF$  such that $\mathcal O_X(\KNF) \simeq \det(\CNF)$.
Although we use the definite article {\it the} to refer to $\KF$ and $\KNF$, they are not uniquely
determined as divisors, only their linear equivalence classes are.

\begin{prop}\label{P:adjunction}
    If $\F$ is a foliation on a normal irreducible variety $X$ then
    \[
        \KF + \KNF \sim K_X.
    \]
\end{prop}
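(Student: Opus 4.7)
The plan is to verify the identity on a large open subset where everything becomes locally free, then extend by reflexivity using the normality of $X$.

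First, I would let $U \subset X$ be the open subset consisting of smooth points $x \in X^{\circ}$ at which $T_{\F}$ and the quotient $T_X/T_{\F}$ are both locally free. Three codimension-two estimates combine to show $X - U$ has codimension $\ge 2$ in $X$: the singular locus $\sing(X)$ has codimension $\ge 2$ by normality; on the smooth part, the torsion-free sheaf $T_X/T_{\F}$ (torsion-free because $T_{\F}$ is saturated) is locally free outside a subset of codimension $\ge 2$; and the reflexive sheaf $T_{\F}$ is locally free outside a subset of even higher codimension. On $U$ itself, $N^*_{\F}$ is automatically locally free as the dual of the locally free quotient $T_X/T_{\F} = N_{\F}$.

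Next, on $U$ I have the short exact sequence of locally free sheaves
\[
    0 \to T_{\F}|_U \to T_X|_U \to N_{\F}|_U \to 0,
\]
whose dual
\[
    0 \to N^*_{\F}|_U \to \Omega^1_X|_U \to \Omega^1_{\F}|_U \to 0
\]
is also a short exact sequence of locally free sheaves. Taking top exterior powers and using multiplicativity of determinants in short exact sequences of locally free sheaves yields the isomorphism $\omega_X|_U \cong \omega_{X/\F}|_U \otimes \omega_{\F}|_U$. Translated to divisor classes, this is the Cartier linear equivalence $K_X|_U \sim (K_{\F} + K_{X/\F})|_U$.

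Finally, I would extend this equivalence from $U$ to all of $X$. Since $X$ is normal and $X - U$ has codimension $\ge 2$, the restriction map $\Cl(X) \to \Cl(U)$ is injective; equivalently, a reflexive rank-one sheaf on $X$ is determined by its restriction to $U$. Both $\mathcal O_X(K_X)$ and $\omega_{\F}\,[\otimes]\,\omega_{X/\F}$ are reflexive rank-one sheaves on $X$ whose restrictions to $U$ are isomorphic by the step above, so they are isomorphic on $X$, giving the global linear equivalence $K_{\F} + K_{X/\F} \sim K_X$. The only mild care needed is tracking that the three potential loci of non-local-freeness (the singular locus of $X$, the non-locally-free locus of $T_{\F}$, and the non-locally-free locus of $T_X/T_{\F}$) all have codimension at least two, so that the reflexive extension argument in the last step applies without further hypothesis.
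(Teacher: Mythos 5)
Your proof is correct and follows essentially the same route as the paper's: restrict to the locus where $X$ is smooth and both $T_{\F}$ and $T_X/T_{\F}$ are locally free, take determinants of the short exact sequence there, and extend across the codimension-two complement using normality. You have merely made explicit the codimension estimates and the injectivity of $\Cl(X)\to\Cl(U)$ that the paper leaves implicit.
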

\begin{proof}
    Over the  smooth locus $X^{\circ \circ} = X^{\circ} - \sing(\F)$ of $X$ and $\F$, we  have an exact sequence
    \[
        0 \to {\TF}_{|X^{\circ \circ}} \to T_{X^{\circ \circ}} \to {\NF}_{|X^{\circ \circ}} \to 0
    \]
    of locally free sheaves. The result follows by taking the determinant and extending
    the result using the normality of $X$.
\end{proof}

\subsection{Canonical singularities}\label{SS:canonical}

Given a dominant morphism $f : Y \to X$ between normal irreducible varieties
and a foliation $\F$ on $X$,  we define the pull-back of a foliation $\F$ as the foliation
$f^* \F$ on $Y$ defined by the subsheaf of $\Omega^{[1]}_Y$ determined by the saturation of the
image of the composition of the following natural morphisms
\[
    f^* \CNF \to f^* \Omega^{[1]}_X \to \Omega^{[1]}_Y \, .
\]

If $f$ is a birational morphism, $\G = f^* \F$, and $\KF + \varepsilon \KNF$ is a $\mathbb Q$-Cartier $\mathbb Q$-divisor
then the difference $f^* ( \KF + \varepsilon \KNF ) - ( K_{\G} + \varepsilon K_{Y/\G} )$
is $\mathbb Q$-linearly equivalent to a $\mathbb Q$-linear combination of exceptional divisors, i.e. we
can write
\[
    ( K_{\G} + \varepsilon K_{Y/\G} ) - f^* ( \KF + \varepsilon \KNF )  \sim_{\mathbb Q} \sum a_{\varepsilon} (E,X,\F) E \, .
\]
where $a_{\varepsilon} ( X,\F,E)$ is a rational number. The rational number $a_{\varepsilon}(E,X,\F)$ does not depend on the particular morphism $f$ but only on the exceptional divisor $E$ extracted by it.

If both $\KF$ and $\KNF$ are $\mathbb Q$-Cartier divisors then we can isolate the contributions of $\KF$ and $\KNF$ to $a_{\varepsilon}(E,X,\F)$ and write $a_{\varepsilon}(E,X,\F) = a(E,X,\F) + \varepsilon a(E,X,X/\F)$ where
\[
    K_{Y/\G}  - f^* (  \KNF )   \sim_{\mathbb Q} \sum a(E,X,X/\F) E
\]
and $a(E,X,\F) = a_0(E,X,\F)$. In this case, if we write (as usual in the birational geometry literature)
\[
    K_Y - f^*(\KX)    \sim_{\mathbb Q} \sum a(E,X) E
\]
then $a(E,X) = a(E,X,\F) + a(E,X,X/\F)$ since $K_X \sim \KF + \KNF$.

If $Z\subset X$ is an irreducible subvariety we define the $\varepsilon$-discrepancy of $Z$ to be equal to infimum of $a(E,X,\F)$
where $E$ runs over all exceptional divisors of all birational morphisms $f: Y \to X$ such that $f(E) = Z$. More concisely,
\[
    \discrep_{\varepsilon}(X,\F,Z) = \inf \{ a_{\varepsilon}(E,X,\F) \, | \, E \text{ divisor over } X \text{ with center } Z  \}.
\]

\begin{dfn}[$\varepsilon$-canonical singularities]
    Let $\F$ be a foliation on a normal irreducible variety $X$ and let $\varepsilon\ge 0$ be a non-negative real rational number.
    We will say that $\F$ has $\varepsilon$-canonical singularities along an irreducible subvariety $Z\subset X$ if
    $K_{\F} + \varepsilon \KNF$ is $\mathbb Q$-Cartier and $\discrep_{\varepsilon}(X,\F,Z) \ge 0$. We will say that a foliation
    $\mathcal F$ has $\varepsilon$-canonical singularities if it has $\varepsilon$-canonical singularities along
    every irreducible subvariety of $X$.
\end{dfn}   

\begin{rem}
    For $\varepsilon = 0$ we recover the concept of canonical singularities for foliations
    as originally defined by McQuillan, whereas for $\varepsilon=1$ we recover the homonymous concept for varieties.
\end{rem}

\begin{lemma}\label{L:propagation}
    Let $X$ be a normal irreducible variety with  canonical singularities,
    let $Z \subset X$ be an irreducible subvariety, and let $\F$ be a foliation on $X$.
    If $\F$ has $\varepsilon$-canonical singularities along $Z$ and $\KF$ is $\mathbb Q$-Cartier over an open subset intersecting $Z$ then it also has $\varepsilon'$-canonical singularities along
    $Z$ for every $\varepsilon' \ge \varepsilon$.
\end{lemma}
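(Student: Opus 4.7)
The plan is to reduce the claim to the non-negativity of the individual discrepancy $a(E,X,X/\F)$ attached to the transverse canonical divisor. First I would verify that $\KF + \varepsilon' \KNF$ is $\mathbb{Q}$-Cartier on an open neighborhood of the generic point of $Z$. Since $X$ has canonical singularities, $K_X$ is $\mathbb{Q}$-Cartier, and combined with the hypothesis that $\KF$ is $\mathbb{Q}$-Cartier on an open subset $U$ intersecting $Z$, the relation $K_X \sim \KF + \KNF$ from Proposition \ref{P:adjunction} yields that $\KNF$, and hence every $\mathbb{Q}$-linear combination $\KF + \varepsilon' \KNF$, is $\mathbb{Q}$-Cartier on $U$. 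For any birational morphism $f\colon Y \to X$ from a smooth $Y$, with $\G = f^*\F$, and any prime exceptional divisor $E$ of $f$ having center in $Z\cap U$, the identity recalled in \S\ref{SS:canonical} reads
\[
a_{\varepsilon'}(E,X,\F) \;=\; a_{\varepsilon}(E,X,\F) + (\varepsilon' - \varepsilon)\, a(E,X,X/\F).
\]
It therefore suffices to show that $a(E,X,X/\F) \ge 0$, after which $a_{\varepsilon'}(E,X,\F) \ge a_{\varepsilon}(E,X,\F) \ge 0$.

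For the non-negativity of $a(E,X,X/\F)$, I would exploit the description of $N^*_{\G}$ given in \S\ref{SS:canonical}: by definition, $N^*_{\G}$ is the saturation in $\Omega^{[1]}_Y$ of the image of the natural morphism $f^{*}\CNF \to \Omega^{[1]}_Y$. Since $\Omega^{[1]}_Y$ is reflexive, this morphism factors through the reflexive hull $(f^{*}\CNF)^{**}$ and, after saturating, yields an inclusion
\[
(f^{*}\CNF)^{**} \hookrightarrow N^*_{\G}
\]
of reflexive sheaves of the same rank on $Y$, which is an isomorphism away from the exceptional locus of $f$. Taking determinants produces a comparison $K_{Y/\G} \sim_{\mathbb{Q}} f^{*}\KNF + D$, in which $D$ is an effective exceptional $\mathbb{Q}$-divisor coming from the torsion cokernel of the above inclusion. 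Reading off the coefficient of $E$ gives $a(E,X,X/\F) \ge 0$ for every prime exceptional divisor $E$, completing the argument.

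The delicate point is the clean comparison of the two reflexive conormal sheaves after pull-back: one must pass through reflexive hulls and verify that the resulting correction divisor $D$ is genuinely effective, rather than merely a $\mathbb{Q}$-divisor of unknown sign. The hypothesis that $\KF$ (equivalently $\KNF$) is $\mathbb{Q}$-Cartier on an open set meeting $Z$ is used precisely to ensure that $f^{*}\KNF$ is meaningful as a $\mathbb{Q}$-Cartier $\mathbb{Q}$-divisor in a neighborhood of the centers of the relevant exceptional divisors, so that the equality $a_{\varepsilon'}(E,X,\F) = a(E,X,\F) + \varepsilon' a(E,X,X/\F)$ is legitimate.
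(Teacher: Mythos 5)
Your argument is correct, and in fact it supplies content the paper omits entirely: the published proof of Lemma~\ref{L:propagation} consists of the single sentence ``Straightforward from the definitions,'' so there is no written argument to compare yours against. Your route is the right one. The decisive point is the inequality $a(E,X,X/\F)\ge 0$, which you derive from the fact that $N^*_{\G}$ is defined by \emph{saturating} the image of $f^*\CNF$ in $\Omega^{[1]}_Y$, so that $K_{Y/\G}-f^*\KNF$ is an effective exceptional $\mathbb Q$-divisor. This is genuinely needed: a softer interpolation argument, using only $a_{\varepsilon}(E,X,\F)\ge 0$ together with $a_1(E,X,\F)=a(E,X)\ge 0$ (the latter from the canonical singularities of $X$), proves the claim only for $\varepsilon'\in[\varepsilon,1]$ and breaks down for $\varepsilon'>1$, whereas the lemma is invoked later for arbitrary $\varepsilon'\ge 0$. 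Your observation that the hypothesis on $X$ then serves only to make $\KNF$ $\mathbb Q$-Cartier near the generic point of $Z$ is accurate.

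One caveat, which you partly flag yourself: the step ``taking determinants produces $K_{Y/\G}\sim_{\mathbb Q}f^*\KNF+D$ with $D$ effective'' tacitly identifies $\det\bigl((f^*\CNF)^{**}\bigr)$ with $\mathcal O_Y(f^*\KNF)$. Away from the exceptional locus this is clear, but along an exceptional divisor whose center lies in the locus where $\CNF$ (or $(\det\CNF)^{[m]}$ for the Cartier index $m$) fails to be locally free, the natural map $(\det f^*\CNF)^{[m]}\to f^*\bigl((\det\CNF)^{[m]}\bigr)$ could a priori contribute a correction of the opposite sign, and this would need to be ruled out (or the coefficient $a(E,X,X/\F)$ computed directly at the generic point of $E$). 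This is, however, exactly the level of foundational bookkeeping that the paper's own definition of $a(E,X,X/\F)$ already glosses over, so your proof is at least as rigorous as the framework it lives in; for the applications in the paper (centers not contained in $\sing(X)\cup\sing(\F)$) the issue does not arise.
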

\begin{proof}
    Straightforward from the definitions.
\end{proof}

The definition of foliation and $\varepsilon$-canonical singularities presented in Sections~\ref{SS:foliations}
and \ref{SS:canonical} can be rephrased, mutatis mutandis, to define foliations on complex varieties as well as on formal (reduced) schemes.

\begin{lemma}\label{L:formalcriteria}
    Let $X$ be a normal irreducible variety, let $Z \subset X$ be an irreducible subvariety, and let $\F$ be a foliation on $X$.
    Let $\mathscr X$ be the formal completion of $X$ along $Z$, and let $\mathscr F$ be the  foliation on $\mathscr X$ induced by
    $\F$. If $\mathscr F$ has $\varepsilon$-canonical singularities along $Z$ then the same holds true for $\F$.
\end{lemma}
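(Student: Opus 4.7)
The strategy is to compare discrepancies computed algebraically with those computed on the formal completion, and show that every algebraic divisor over $X$ with center $Z$ gives rise, after completion, to a formal divisor over $\mathscr X$ with the \emph{same} discrepancy. Since the hypothesis bounds all formal discrepancies from below by $0$, the algebraic bound follows.

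\textbf{Step 1 (setup).} Fix an arbitrary birational morphism $f\colon Y\to X$ from a normal variety $Y$ and an $f$-exceptional prime divisor $E\subset Y$ with center $f(E)=Z$. Write $\G=f^{*}\F$ and
\[
    (K_{\G}+\varepsilon K_{Y/\G}) - f^{*}(\KF+\varepsilon\KNF) \sim_{\Q} \sum_{E'} a_{\varepsilon}(E',X,\F)\,E',
\]
so that we must show $a_{\varepsilon}(E,X,\F)\ge 0$.

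\textbf{Step 2 (completion preserves the data).} Let $\widehat Z:=f^{-1}(Z)_{\mathrm{red}}$ and let $\mathscr Y$ denote the formal completion of $Y$ along $\widehat Z$; then $f$ induces a morphism of formal schemes $\mathscr f\colon \mathscr Y\to \mathscr X$. Formal completion along a closed subset is a flat operation that preserves normality, reflexivity, and commutes with dualising/determinant operations on coherent sheaves, and with pull-back along $f$. In particular the formal completions of $\TF$ and $T_{\G}$ coincide with the tangent sheaves of $\mathscr F$ and $\mathscr G:=\mathscr f^{*}\mathscr F$, and
\[
    \mathcal O_{\mathscr Y}(K_{\mathscr G}+\varepsilon K_{\mathscr Y/\mathscr G}) \;\simeq\; \mathcal O_{Y}(K_{\G}+\varepsilon K_{Y/\G}) \,\widehat{\otimes}\, \mathcal O_{\mathscr Y},
\]
and similarly for $\KF+\varepsilon\KNF$ and its pull-back under $\mathscr f$. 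Since the local equations used to read off the coefficient of an exceptional component in a $\mathbb Q$-linear equivalence of Weil divisors are preserved by taking formal stalks along $\widehat Z$, the prime divisor $E$ determines a prime formal divisor $\mathscr E\subset\mathscr Y$ with center $Z\subset\mathscr X$, and the coefficient extracted is the same:
\[
    a_{\varepsilon}(\mathscr E,\mathscr X,\mathscr F) \;=\; a_{\varepsilon}(E,X,\F).
\]

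\textbf{Step 3 (conclusion).} By assumption $\mathscr F$ has $\varepsilon$-canonical singularities along $Z$, so every formal divisor with center $Z$ over $\mathscr X$ has non-negative $\varepsilon$-discrepancy. Applied to $\mathscr E$, this gives $a_{\varepsilon}(E,X,\F)=a_{\varepsilon}(\mathscr E,\mathscr X,\mathscr F)\ge 0$. Since $E$ was an arbitrary exceptional divisor over $X$ with center $Z$, we conclude that $\discrep_{\varepsilon}(X,\F,Z)\ge 0$, i.e.\ $\F$ has $\varepsilon$-canonical singularities along $Z$.

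\textbf{Expected main obstacle.} The genuine content of the argument lies in verifying that completion along $\widehat Z$ indeed commutes with the formation of $K_{\F}$, $\KNF$, pull-back of foliations, and reflexive hulls, so that the equality of discrepancy coefficients in Step 2 is rigorous. All of this follows from flatness of completion together with the behaviour of reflexive sheaves on normal Noetherian schemes under flat base change, but it is the one place where care is required. Note that we do \emph{not} need the converse direction (every formal divisor arises from an algebraic one), which is the harder statement; we only need that algebraic exceptional divisors produce formal ones with the same discrepancy.
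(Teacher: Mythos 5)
Your proof is correct and is essentially the paper's argument: the paper proves the contrapositive (a divisor $E$ over $X$ with $a_{\varepsilon}(E,X,\F)<0$ completes to a formal divisor over $\mathscr X$ with the same negative discrepancy), while you run the same comparison directly, completing $Y$ along $f^{-1}(Z)_{\mathrm{red}}$ instead of along $E$. The key point in both — that formal completion preserves the discrepancy coefficient, and that one only needs algebraic divisors to induce formal ones, not the converse — is identical.
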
 
\begin{proof}
    We will prove the contrapositive. Assume $\F$ does not have $\varepsilon$-canonical singularities.
    There exists a birational morphism  $f : Y \to X$ is birational morphism and  $E$, an exceptional divisor of $f$, with $f(E) = Z$
    such that $a_{\varepsilon}(E,X,\F) <0$.
    The morphism $f$ induces a morphism of formal schemes from $\mathscr Y$, the formal completion of $Y$ along $E$, to $\mathscr X$. From the definition of $a_{\varepsilon}$, it is clear that  $a_{\varepsilon}(E,\mathscr X,\mathscr F)) = a_{\varepsilon}(E,X,\F)$. The lemma follows.
\end{proof}

As a sanity check, let us verify that smooth foliations have $\varepsilon$-canonical singularities for any $\varepsilon \ge 0$.

\begin{prop}
    Let $\F$ be a foliation of codimension $q$ on a normal irreducible variety $X$. Let $Z\subset X$ be an irreducible subvariety not contained in $\sing(X) \cup \sing(\F)$. If $Z$ is not everywhere tangent to $\F$ then $\F$ has $\varepsilon$-canonical singularities along $Z$ for every $\varepsilon \ge 0$.
\end{prop}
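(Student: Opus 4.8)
The plan is to localize the statement around the generic point of $Z$, where both $X$ and $\F$ become smooth, to split the $\varepsilon$-discrepancy into its two natural summands, and then to extract the required non-negativity from a Laplace expansion of the Jacobian determinant of an arbitrary birational morphism. Throughout, for a divisor $E$ over $X$ with center $Z$ the number $a_\varepsilon(E,X,\F)$ is what has to be shown to be $\ge 0$.

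\emph{Step 1 (reduction to $X$ and $\F$ smooth).} The quantity $a_\varepsilon(E,X,\F)$ depends only on the pair $(X,\F)$ in a neighbourhood of the generic point of $Z$; this is the content of the proof of Lemma \ref{L:formalcriteria}, which identifies $a_\varepsilon(E,\mathscr X,\mathscr F)$ with $a_\varepsilon(E,X,\F)$ for the formal completion $\mathscr X$ of $X$ along $Z$. Since $Z\not\subset\sing(X)\cup\sing(\F)$, the generic point of $Z$ is a smooth point of $X$ at which $\F$ is smooth, so I may replace $X$ by the open set $X^\circ\setminus\sing(\F)$, which still meets $Z$ in a dense open subset, and thereby assume that $X$ is smooth and $\F$ is a smooth foliation of codimension $q$. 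Then $\KF$ and $\KNF$ are Cartier, so $a_\varepsilon(E,X,\F)=a(E,X,\F)+\varepsilon\, a(E,X,X/\F)$ for every $E$, and it suffices to prove that $a(E,X,\F)\ge 0$ and $a(E,X,X/\F)\ge 0$ for every divisor $E$ with center $Z$.

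\emph{Step 2 (bookkeeping of the two summands).} Fix a birational morphism $\pi\colon Y\to X$ with $Y$ smooth extracting such an $E$. Because $\F$ is smooth, the twisted $q$-form $\omega_\F\in H^0(X,\Omega^q_X\otimes\mathcal N_\F)$, $\mathcal N_\F=(\det\CNF)^*$, defining it (Section \ref{S:foliations}) has empty zero locus, and in coordinates $(x_1,\dots,x_n)$ adapted to $\F$ it reads $\omega_\F=dx_1\wedge\cdots\wedge dx_q$. The foliation $\G:=\pi^*\F$ is defined by $\pi^*\omega_\F$; if $R\ge 0$ is the divisorial part of the zero locus of $\pi^*\omega_\F$, then dividing $\pi^*\omega_\F$ by an equation of $R$ produces the reduced twisted form defining $\G$, and the comparison between the twisting line bundle and $\det\CNF$ recalled in Section \ref{S:foliations} gives $K_{Y/\G}\sim\pi^*\KNF+R$; hence $a(E,X,X/\F)=\ord_E(R)\ge 0$. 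Applying Proposition \ref{P:adjunction} on $X$ and on $Y$ gives $a(E,X,\F)+a(E,X,X/\F)=a(E,X)$, and $a(E,X)\ge 0$ because $X$ is smooth. So everything reduces to the inequality $\ord_E(R)\le a(E,X)$.

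\emph{Step 3 (the Laplace expansion — the one genuine point).} Work near the generic point of $E$, choose local coordinates $(y_1,\dots,y_n)$ on $Y$ with $E=\{y_1=0\}$, put $\phi_i=x_i\circ\pi$, and let $M=(\partial\phi_i/\partial y_j)_{1\le i,j\le n}$ be the Jacobian of $\pi$. Then $\ord_E(\det M)=a(E,X)$, while $\ord_E(R)=\ord_E(d\phi_1\wedge\cdots\wedge d\phi_q)=\min_{|J|=q}\ord_E\big(m_J(M)\big)$, where $m_J(M)$ denotes the maximal minor of the first $q$ rows of $M$ on the column set $J$. Expanding $\det M$ by Laplace along its first $q$ rows writes $\det M=\sum_{|J|=q}\pm\, m_J(M)\,\widetilde m_J(M)$ with $\widetilde m_J(M)$ a complementary minor built from the remaining rows; each $\widetilde m_J(M)$ is holomorphic, hence of order $\ge 0$ along $E$, so $\ord_E(\det M)\ge\min_J\big(\ord_E m_J(M)+\ord_E\widetilde m_J(M)\big)\ge\min_J\ord_E m_J(M)=\ord_E(R)$. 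Combining, $a(E,X,\F)=a(E,X)-a(E,X,X/\F)\ge 0$, and therefore $a_\varepsilon(E,X,\F)=a(E,X,\F)+\varepsilon\,a(E,X,X/\F)\ge 0$ for every $\varepsilon\ge 0$. As $E$ runs over all divisors with center $Z$, this yields $\discrep_\varepsilon(X,\F,Z)\ge 0$, which is the assertion. The only step that is not purely formal is the determinantal inequality $\ord_E(R)\le a(E,X)$; once phrased as a comparison between the order of vanishing of a determinant and that of the maximal minors of a subset of its rows, it is immediate from the Laplace expansion, the remaining entries being holomorphic.
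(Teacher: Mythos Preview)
Your proof is correct, but it takes a different route from the paper's. After the same localization to the smooth locus of $X$ and $\F$, the paper works on the \emph{tangent} side: it picks a holomorphic $(\dim\F)$-form $\eta$ whose restriction to $T_\F$ generates $\omega_\F$, observes that $\pi^*\eta$ restricted to $T_\G$ is a nonzero holomorphic section of $\omega_\G$, and concludes directly that $K_\G-\pi^*K_\F$ is effective, i.e.\ $a(E,X,\F)\ge 0$; the passage to all $\varepsilon\ge 0$ is then delegated to Lemma~\ref{L:propagation}. You instead work on the \emph{normal} side with the defining $q$-form, which immediately gives $a(E,X,X/\F)=\ord_E(R)\ge 0$, and then recover $a(E,X,\F)\ge 0$ by the Laplace-expansion inequality $\ord_E(R)\le \ord_E(\det M)=a(E,X)$. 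The paper's path is shorter and avoids any determinant identity; your path is more explicit, makes both summands visibly non-negative without appealing to Lemma~\ref{L:propagation}, and the Laplace step---once isolated as ``the order of a determinant is at least the minimal order of the maximal minors of any fixed block of rows''---is indeed immediate. Either way one sees that the hypothesis ``$Z$ not everywhere tangent to $\F$'' is not actually used in the argument.
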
 
\begin{proof}
    The problem is local, so we can assume $X$ is  affine and  $\sing(X) \cup \sing(\F) = \emptyset$.
    Therefore, we can write
    \[
        0 \to \CNF \to \Omega^1_X \to \Omega^1_{\F} \to 0 \, ,
    \]
    and further restricting $X$, we can assume that the above exact sequence is a split sequence of free sheaves. In particular,
    $\KX \sim 0$, and $\omega_{\F} = \det \Omega^1_{\F}$ is generated by the restriction of a global holomorphic $q$-form $\eta \in \Omega^q_X(X)$.
    The pull-back of $\eta$ under any birational morphism $f:Y \to X$ induces a non-zero holomorphic section of the canonical sheaf of the pull-back foliation $\G = f^* \F$ with zero divisor supported on the exceptional locus of $f$. This is sufficient to show that
    $\KG$ is effective and hence $\discrep_{0}(X,\F,Z) \ge 0$. Apply Lemma \ref{L:propagation} to conclude.
\end{proof}

\subsection{Singularities of codimension one foliations}

\begin{prop}\label{P:simple properties}
    Let $\F$ be a germ of foliation at $(\mathbb C^n,0)$ with simple singularities. Then $\F$ has the properties listed below.
    \begin{enumerate}
        \item\label{I:sp-1} The tangent sheaf of $\F$ is  free. In particular, the canonical sheaf of $\F$ is trivial.
        \item\label{I:sp-2} If $\mathscr X$ is the formal completion of $X$ at $0$ and $\mathscr F$ is the foliation on $\mathscr X$ induced
        by $\F$, then the canonical sheaf of $\mathscr F$ is generated by the restriction to $T_{\mathscr F}$ of a closed logarithmic
        $(n-1)$-form with poles along invariant hypersurfaces.
        \item\label{I:sp-3} If  $f : Y \to X$ is any proper bimeromorphic morphism with center contained in $\sing(\F)$ then any exceptional
        divisor of $f$ is invariant by $f^* \F$.
    \end{enumerate}
\end{prop}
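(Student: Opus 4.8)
The plan is to put $\F$ into the formal normal form of Definition~\ref{D:simple} and to read off all three assertions from the description of $T_{\F}$ as the sheaf of logarithmic vector fields tangent to the separatrices. Fix formal coordinates $x_1,\dots ,x_n$ and a defining $1$-form $\omega$ as in Definition~\ref{D:simple}; in both cases $\omega$ is a \emph{logarithmic} $1$-form whose polar divisor is $D=\{x_1\cdots x_r=0\}$, each component $\{x_i=0\}$ with $i\le r$ is an $\F$-invariant hypersurface, and at least one coefficient of $\omega$ (one of the $\lambda_i$, or one of the $p_i$) is a nonzero constant, hence a unit. Let $\omega_0$ be a holomorphic $1$-form defining $\F$ with zero set of codimension $\ge 2$, so that invariance of $\{x_i=0\}$ means $x_i\mid \omega_0\wedge dx_i$. \emph{For (1)} I would first show $T_{\F}\subseteq T_X(-\log D)$: contracting $x_i\mid\omega_0\wedge dx_i$ with $v\in T_{\F}$ (so $i_v\omega_0=0$) gives $v(x_i)\,\omega_0\in(x_i)\Omega^1_X$, and since $x_i$ cannot divide every coefficient of $\omega_0$ (otherwise $\{x_i=0\}\subseteq\sing(\F)$) this forces $x_i\mid v(x_i)$, i.e. $v$ is tangent to $D$. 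This is where non-dicriticality (the non-resonance condition) enters. Consequently $T_{\F}=\ker\!\bigl(T_X(-\log D)\xrightarrow{\ \omega\ }\mathcal O_X\bigr)$, the pairing being holomorphic because $\omega$ and the vector fields are logarithmic. As $T_X(-\log D)$ is free (basis $x_1\partial_{x_1},\dots ,x_r\partial_{x_r},\partial_{x_{r+1}},\dots ,\partial_{x_n}$) and $\omega$ has a unit coefficient, this map is a surjection onto the free module $\mathcal O_X$, hence splits, and $T_{\F}$ is free of rank $n-1$; freeness of the formal stalk implies freeness of the germ by faithful flatness of completion, and then $\omega_{\F}=(\det T_{\F})^{\vee}$ is trivial.

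\emph{For (2)}, dualizing the split exact sequence $0\to T_{\mathscr F}\to T_{\mathscr X}(-\log D)\xrightarrow{\ \omega\ }\mathcal O_{\mathscr X}\to 0$ and taking determinants yields a canonical isomorphism $\omega_{\mathscr F}\cong \omega_{\mathscr X}(\log D)$, and the right-hand side is generated by the closed logarithmic $n$-form $\Omega_{\log}=(dx_1/x_1)\wedge\cdots\wedge(dx_r/x_r)\wedge dx_{r+1}\wedge\cdots\wedge dx_n$. Choosing the logarithmic vector field $w$ that splits the sequence, say $w=c\,x_1\partial_{x_1}$ with $c$ the reciprocal of the coefficient of $dx_1/x_1$ in $\omega$ (so $\omega(w)=1$), I would set $\xi:=i_w\Omega_{\log}=c\,(dx_2/x_2)\wedge\cdots\wedge(dx_r/x_r)\wedge dx_{r+1}\wedge\cdots\wedge dx_n$. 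This $\xi$ is a closed logarithmic $(n-1)$-form with poles along the invariant hypersurfaces $\{x_2=0\},\dots ,\{x_r=0\}$, and unwinding the determinant isomorphism shows its restriction to $T_{\mathscr F}$ generates $\omega_{\mathscr F}$: if $v_1,\dots ,v_{n-1}$ is a frame of $T_{\mathscr F}$ then $v_1,\dots ,v_{n-1},w$ is a frame of $T_{\mathscr X}(-\log D)$, so the value of $\xi$ on $v_1\wedge\cdots\wedge v_{n-1}$ equals, up to sign, the value of $\Omega_{\log}$ on $w\wedge v_1\wedge\cdots\wedge v_{n-1}$, which is a unit. (By Remark~\ref{R:misterious index} the choice of normal form in the second case does not affect this.)

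\emph{For (3)}, this is the non-dicriticality of simple singularities already recorded after Definition~\ref{D:simple}: the non-resonance conditions force every irreducible component of the exceptional divisor of a birational morphism with center in $\sing(\F)$ to be invariant under the transformed foliation, cf.\ \cite[Proposition 15]{MR1677402}; for a general proper bimeromorphic $f\colon Y\to X$ one reduces to this by dominating $f$ by a sequence of blow-ups along smooth centers contained in the successive singular loci. The only substantive point is the identification in (1) of $T_{\F}$ with $\ker(\omega\colon T_X(-\log D)\to\mathcal O_X)$, which relies on non-dicriticality and on a little care with the formal-versus-analytic passage; once that is in place, freeness is automatic, (2) is determinant bookkeeping, and (3) is essentially a citation.
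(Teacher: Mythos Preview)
Your proof is correct and follows essentially the same route as the paper, though you supply more detail where the paper is terse. For (1), the paper says only ``simple verification using the normal forms''; your identification $T_{\mathscr F}=\ker\bigl(\omega\colon T_{\mathscr X}(-\log D)\to\mathcal O_{\mathscr X}\bigr)$ and the split-surjection argument is a clean conceptual way to carry this out, and your faithful-flatness descent to the analytic germ is the right bridge. For (2), you and the paper arrive at the same generator: the paper writes down $\Theta=(dx_2/x_2)\wedge\cdots\wedge(dx_r/x_r)\wedge dx_{r+1}\wedge\cdots\wedge dx_n$ directly and checks that $\omega\wedge\Theta$ is nowhere vanishing, while you obtain the same form (up to a unit) as $i_w\Omega_{\log}$ from the splitting; these are two readings of the same determinant identity. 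For (3), the paper invokes Hironaka's Chow Lemma to dominate an arbitrary proper bimeromorphic $f$ by a composition of blow-ups (not necessarily with smooth centers) and then cites \cite[Proposition~15]{MR1677402}; your ``dominating by blow-ups'' is the same step, though the phrase ``smooth centers contained in the successive singular loci'' is slightly more than Chow's lemma actually gives---what matters is only that the centers lie in $\sing(\F)$, which suffices for the cited non-dicriticality.
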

\begin{proof}
    Item (1) is a simple verification using the normal forms presented in the definition of simple singularities.
    To verify Item (2), observe that the $(n-1)$-form
    \[
        \Theta = \frac{dx_2}{x_2} \wedge \cdots \wedge \frac{dx_r}{x_r} \wedge dx_{r+1} \wedge \cdots \wedge dx_n
    \]
    is such that $\omega \wedge \Theta$ vanishes nowhere. Moreover, since $x_1, \ldots, x_r$ cut out invariant hypersurfaces, the restriction of $\Theta$ to $T_{\mathscr F}$ is regular, i.e. has no poles. It follows that $\Theta$ generates $\omega_{\mathscr F}$.
    To prove Item (3), first recall Hironaka's Chow Lemma \cite{MR1326617}[Chapter VII] which asserts that any proper bimeromorphic map is dominated by a (locally finite) composition of blow-ups. Hence, we can assume without loss of generality, that $f$ is the blow-up of an ideal supported on $\sing(\F)$. As already mentioned after Definition \ref{D:simple}, the non-resonance condition implies that the exceptional divisor is invariant, cf. \cite[Proposition 15]{MR1677402}.
\end{proof}

\begin{prop}
    If $\F$ is a codimension one foliation with simple singularities on a smooth manifold then $\F$ has canonical singularities.
\end{prop}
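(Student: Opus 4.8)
The plan is to verify that $\discrep_0(X,\F,Z)\ge 0$ for every irreducible subvariety $Z\subset X$. By Lemma~\ref{L:formalcriteria}, and since the discrepancy of a divisor over $X$ with centre $Z$ is computed in a formal neighbourhood of a general point of $Z$, we may replace $X$ by the formal completion at such a point and $\F$ by the induced foliation; if this point lies outside $\sing(\F)$ the foliation is smooth and the assertion is immediate (it is also a special case of the argument below), so from now on we assume $X=(\mathbb C^n,0)$ and $\F$ has simple singularities. By Proposition~\ref{P:simple properties}(\ref{I:sp-1}) the sheaf $\TF$ is free, hence $\omega_\F\simeq\mathcal O_X$ and we may take $\KF=0$ as a divisor; by Proposition~\ref{P:simple properties}(\ref{I:sp-2}), $\omega_\F$ is generated by the restriction $\xi:=\Theta|_{\TF}$ of a closed logarithmic $(n-1)$-form $\Theta$ whose polar divisor is the union $H=\sum_iH_i$ of a finite set of $\F$-invariant hypersurfaces, which by the normal forms of Definition~\ref{D:simple} is simple normal crossing; note that $\xi$ is a \emph{nowhere vanishing} section of $\omega_\F$. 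It therefore suffices to show that for an arbitrary exceptional divisor $E$ over $X$ one has $a(E,X,\F)\ge 0$.

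Fix such an $E$ and realise it on a proper bimeromorphic morphism $f\colon Y\to X$; by Hironaka's Chow Lemma \cite{MR1326617} and a further sequence of blow-ups we may assume that $f$ is a composition of blow-ups with smooth centres and a log resolution of $(X,H)$, so that $f^{-1}(H)$ is simple normal crossing. Put $\G=f^*\F$ and $\widetilde\Theta=f^*\Theta$; then $\widetilde\Theta$ is a closed logarithmic $(n-1)$-form on $Y$ whose polar divisor $P$ is contained in $f^{-1}(H)$. I claim every component of $P$ is $\G$-invariant: the strict transforms $\widetilde H_i$ are invariant because the $H_i$ are, while an exceptional component $E'$ of $P$ satisfies $f(E')\subset H$, hence lies over a subvariety of some invariant hypersurface, and a local computation — according to whether a general point of the centre of the blow-up producing $E'$ lies in $\sing(\F)$, where one invokes Proposition~\ref{P:simple properties}(\ref{I:sp-3}), or not, where one checks directly that blowing up a smooth subvariety of an invariant hypersurface yields an invariant exceptional divisor — shows that $E'$ is invariant for the transformed foliation, hence $\G$-invariant since invariance is a closed condition and $E'$ is irreducible.

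Now the restriction $s:=\widetilde\Theta|_{T_\G}$, a priori a meromorphic section of the line bundle $\omega_\G=\det\Omega^1_\G$ on the smooth manifold $Y$, is in fact holomorphic. Indeed, near a general point of a polar component $D=\{w_1=0\}$ of $\widetilde\Theta$ one may choose coordinates with $\G=\{dw_1=0\}$ (possible because $D$ is $\G$-invariant and $\codim\sing(\G)\ge 2$); writing $\widetilde\Theta=\tfrac{dw_1}{w_1}\wedge\alpha+\beta$ with $\alpha,\beta$ holomorphic, the contraction of $\widetilde\Theta$ with $\partial_{w_2}\wedge\cdots\wedge\partial_{w_n}$ kills the first term, so $s$ equals the $dw_2\wedge\cdots\wedge dw_n$-coefficient of $\beta$, which is holomorphic. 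Hence $s$ is holomorphic outside a set of codimension $\ge 2$ and extends to a global holomorphic section of $\omega_\G$. Off $\Exc(f)$ we have $\G=\F$ and $s=f^*\xi$, a nowhere vanishing section, so $\divisor(s)$ is effective and supported on $\Exc(f)$. Since $\KF=0$ and $s$ corresponds to $\xi$ under the birational identification $Y\dashrightarrow X$, the divisor $\divisor(s)$ computes $\KG-f^*\KF$; being effective, it gives $a(E,X,\F)\ge 0$, which finishes the proof.

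The step I expect to be the main obstacle is the middle one: verifying that \emph{all} polar components of $\widetilde\Theta$ are $\G$-invariant — in particular the exceptional components lying over centres not contained in $\sing(\F)$ — and that the restriction to $T_\G$ genuinely removes the poles. The logarithmic nature of $\Theta$ provided by Proposition~\ref{P:simple properties}(\ref{I:sp-2}) is essential here: a merely closed meromorphic $(n-1)$-form with a higher order pole along an invariant hypersurface need not restrict holomorphically to the tangent sheaf of the foliation.
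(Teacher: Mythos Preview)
Your proof is correct and follows essentially the same route as the paper's: reduce to the formal completion at a point via Lemma~\ref{L:formalcriteria}, use the closed logarithmic $(n-1)$-form $\Theta$ of Proposition~\ref{P:simple properties}(\ref{I:sp-2}), pull it back, and show that its restriction to $T_\G$ is a holomorphic section of $\omega_\G$ by checking that every polar component of $f^*\Theta$ is $\G$-invariant. Your worry about exceptional components $E'$ with $f(E')\subset H$ but $f(E')\not\subset\sing(\F)$ is easily dispelled --- and in fact you treat this case more carefully than the paper, which cites only Proposition~\ref{P:simple properties}(\ref{I:sp-3}): near a general point of $f(E')$ the foliation is smooth with a holomorphic first integral $x_i$ vanishing on the relevant component of $H$, so $E'$ is contained in a level set of $f^*x_i$ and is therefore $\G$-invariant.
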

\begin{proof}
    Since the ambient space is smooth, $\KF$ is  Cartier.
    Let  $p \in X$ be an arbitrary closed point. Set $\mathscr X$ equal to the formal completion of $X$ at $p$,
    and set $\mathscr F$ equal to the foliation on $\mathscr X$ induced by $\F$. To check that $\F$ has canonical singularities, it suffices to do the same for $\mathscr F$ according to Lemma \ref{L:formalcriteria}. Item (\ref{I:sp-2}) of Proposition \ref{P:simple properties} provides a closed formal logarithmic $(n-1)$-form $\Theta$ with restriction to $T_{\mathscr F}$ generating $\omega_{\mathscr F}$.
    The pull-back of $\Theta$ under any bimeromorphic morphism $f:\mathscr Y \to \mathscr X$ is still a closed formal logarithmic $(n-1)$-form $\Theta$. Since the exceptional divisors of $f$ with centers contained in the polar locus of $\Theta$ are invariant by $f^*\mathscr F$ invariant (cf. Item (\ref{I:sp-3})  of Proposition \ref{P:simple properties}) the
    restriction of $f^*\Theta$ to $T_{f^* \mathscr F}$ is a non-zero section of $\omega_{f^*\mathscr F}$. It follows that $K_{f^* \mathscr F}$ is effective and, consequently, that $\mathscr F$ has canonical singularities.
\end{proof}

In general, it is unknown if every foliation is birationally equivalent to a foliation with canonical singularities. Nevertheless, in dimension two and three, there are results by Seidenberg (dimension two), Cano \cite{MR2144971} (foliations of codimension one in dimension three), and McQuillan-Panazzolo \cite{MR3128985} (foliations of dimension one in dimension three) which guarantee the existence of birationally equivalent foliations with canonical singularities. In dimension two, as well as for codimension one foliations in dimension three, there exists a birationally equivalent foliation in a smooth variety. In contrast, there exist foliations by curves of $3$-folds which are
not birationally equivalent to a foliation with canonical singularities on a smooth $3$-fold, it is unavoidable  to consider $3$-folds with cyclic quotient singularities as ambient spaces for foliations with canonical singularities. This is one of the reasons we choose to present the results of this section for foliations on singular projective varieties, instead of foliations on smooth projective varieties contrary to what we have done in the remainder of the paper.

\subsection{Action of birational maps on the pluricanonical algebra}
The lemma  below follows from standard arguments.

\begin{lemma}\label{L:standard}
    Let $f : Y \to X$ be a birational morphism  between normal projective algebraic varieties.
    If $\F$ is a foliation on $X$ with  $\varepsilon$-canonical singularities
    and $\G$ denotes the foliation $f^*\F$ on $Y$ then there is a natural
    isomorphism
    \[
        f^* : H^0(X, \mathcal O_X( a K_{\F} +  b \KNF)) \to H^0(Y, \mathcal O_Y( a K_{\G} + b K_{Y/\G} ))
    \]
    whenever  $a$ and $b$ are positive integers subject to the equality $b = \varepsilon a$.
\end{lemma}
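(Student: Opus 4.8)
The plan is to identify $H^0(X,\mathcal O_X(aK_\F+bK_{X/\F}))$ and $H^0(Y,\mathcal O_Y(aK_\G+bK_{Y/\G}))$ with spaces of rational functions having prescribed poles, and to compare them via the discrepancy bookkeeping recalled in \S\ref{SS:canonical}. Since $b=\varepsilon a$ and $\F$ has $\varepsilon$-canonical singularities, $K_\F+\varepsilon K_{X/\F}$ is $\mathbb Q$-Cartier, hence so is $D:=aK_\F+bK_{X/\F}=a(K_\F+\varepsilon K_{X/\F})$; likewise put $D_Y:=aK_\G+bK_{Y/\G}=a(K_\G+\varepsilon K_{Y/\G})$ on $Y$. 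As $X$ is normal and $f$ is birational, the locus $U\subseteq X$ over which $f$ is an isomorphism has complement of codimension at least two, and over $f^{-1}(U)$ the foliation $\G$ is canonically identified with $\F$. I would choose representatives of $K_\F$ and $K_{X/\F}$, and of $K_\G$ and $K_{Y/\G}$, compatible under this identification; then $f_*D_Y=D$, and, using the identification $\mathbb C(X)=\mathbb C(Y)$ induced by $f$, one may write
\[
H^0(X,\mathcal O_X(D))=\{\,g\in\mathbb C(X):\divisor_X(g)+D\ge 0\,\}
\]
and similarly for $H^0(Y,\mathcal O_Y(D_Y))$, the map $f^*$ of the statement being the one induced by the identification $\mathbb C(X)=\mathbb C(Y)$.

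With these compatible choices, the discrepancy formula of \S\ref{SS:canonical} becomes the equality of $\mathbb Q$-divisors
\[
D_Y-f^*D=\sum_E a\,a_\varepsilon(E,X,\F)\,E,
\]
the sum running over the $f$-exceptional prime divisors, and the $\varepsilon$-canonical hypothesis gives $a_\varepsilon(E,X,\F)\ge 0$ for every such $E$; in particular $D_Y-f^*D\ge 0$. Now if $g\in\mathbb C(X)$ satisfies $\divisor_X(g)+D\ge 0$, then pulling back this effective $\mathbb Q$-Cartier divisor, and using that principal divisors pull back to principal divisors under the birational morphism $f$, yields $\divisor_Y(g)+f^*D\ge 0$, whence
\[
\divisor_Y(g)+D_Y=(\divisor_Y(g)+f^*D)+\sum_E a\,a_\varepsilon(E,X,\F)\,E\ge 0;
\]
thus $f^*$ maps $H^0(X,\mathcal O_X(D))$ into $H^0(Y,\mathcal O_Y(D_Y))$, and it is injective since it is induced by the identity of $\mathbb C(X)$. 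Conversely, if $g\in\mathbb C(Y)$ satisfies $\divisor_Y(g)+D_Y\ge 0$, then applying $f_*$, which preserves effectivity and, by our choice of representatives, sends $\divisor_Y(g)+D_Y$ to $\divisor_X(g)+D$, gives $\divisor_X(g)+D\ge 0$; hence $f^*$ is surjective. Naturality under composition of birational morphisms is immediate, since all the maps in sight are induced by identities of function fields.

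The only delicate point is the passage from the $\mathbb Q$-linear equivalence appearing in the definition of $a_\varepsilon(E,X,\F)$ to the honest equality of $\mathbb Q$-divisors $D_Y-f^*D=\sum_E a\,a_\varepsilon(E,X,\F)E$; this is arranged by choosing the representatives of $K_\G$ and $K_{Y/\G}$ so that over $f^{-1}(U)$ they restrict to the pullbacks of the chosen representatives of $K_\F$ and $K_{X/\F}$, which is possible precisely because $X\setminus U$ has codimension at least two (so that a Weil divisor on $Y$ is determined off $\operatorname{Exc}(f)$ together with its pushforward). Everything else is the standard dictionary between reflexive sheaves attached to Weil divisors on normal varieties and spaces of rational functions with bounded poles.
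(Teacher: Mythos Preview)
Your proof is correct and follows essentially the same approach as the paper: both use the $\varepsilon$-canonical hypothesis to ensure that $D_Y-f^*D$ is an effective $f$-exceptional $\mathbb Q$-divisor, define $f^*$ via the identification of function fields to obtain the inclusion $H^0(X,\mathcal O_X(D))\hookrightarrow H^0(Y,\mathcal O_Y(D_Y))$, and use the pushforward $f_*$ (extension across a codimension-two locus by normality of $X$) for the reverse inclusion. Your treatment is slightly more explicit about choosing compatible representatives to turn the $\mathbb Q$-linear equivalence in the discrepancy formula into an equality of $\mathbb Q$-divisors, but the argument is the same.
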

\begin{proof}
   From the definition of $\varepsilon$-canonical singularities we have that
    \[
        \left( K_{\mathcal G} + \varepsilon K_{Y/\mathcal G} \right) - f^* \left( K_{\F} + \varepsilon \KNF \right) = \sum a_i E_i
    \]
    where $a_i$ are positive rational numbers and $E_i$ are $f$-exceptional divisors. If we multiply this identity by
    $a = b/\varepsilon$, we deduce that
    \[
        \left(  a K_{\mathcal G} + b K_{Y/\mathcal G} \right) - f^* \left(  a K_{\F} + b \KNF \right)
    \]
    is an effective $\mathbb Q$-divisor. Therefore, the pull-back of rational functions $f^* : \mathbb C(X)  \to \mathbb C(Y)$ defines
    a natural linear inclusion
    \[
        f^* : H^0(X, \mathcal O_X( a K_{\F} +  b \KNF)) \to H^0(Y, \mathcal O_Y( a K_{\mathcal G} + b K_{Y/\mathcal G} )).
    \]
    Since $f$ is a birational morphism and $X$ is normal, we also have a natural linear inclusion
    \[
        f_*  : H^0(Y, \mathcal O_Y( a K_{\mathcal G} + b K_{Y/\mathcal G} )) \to H^0(X, \mathcal O_X( a K_{\F} +  b \KNF))
    \]
    obtained by restricting a section to the complement of the exceptional locus of $f$, pushing the result down using $f$, and
    extending it using the normality of $X$. Since the composition of these two inclusions is nothing but the identity on the vector
    space $H^0(X, \mathcal O_X( a K_{\F} +  b \KNF))$, we deduce that $f^*$ is an isomorphism.
\end{proof}

\begin{prop}\label{P:inducedaction}
    Let $f : Y \dashrightarrow X$ be a birational map between normal projective algebraic varieties.
    If $\F$ and $f^*\F= \G$ are  foliations  with  $\varepsilon$-canonical singularities
    on $X$ and $Y$  then there is a natural
    isomorphism
    \[
        f^* : H^0(X, \mathcal O_X( a K_{\F} +  b \KNF)) \to H^0(Y, \mathcal O_Y( a K_{\G} + b K_{Y/\G} ))
    \]
    whenever  $a$ and $b$ are positive integers subject to the equality $b = \varepsilon a$.
\end{prop}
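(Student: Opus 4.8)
The plan is to reduce the statement about a birational \emph{map} to the case of a birational \emph{morphism}, which is exactly Lemma \ref{L:standard}, by factoring $f$ through a common resolution. Concretely, let $f : Y \dashrightarrow X$ be our birational map between normal projective varieties carrying foliations $\G = f^*\F$ on $Y$ and $\F$ on $X$, both with $\varepsilon$-canonical singularities. Choose a normal projective variety $Z$ together with birational morphisms $p : Z \to Y$ and $q : Z \to X$ such that $f \circ p = q$ (for instance resolve the indeterminacies of $f$ and normalize). Let $\mathcal H = q^* \F = p^* \G$ be the induced foliation on $Z$; note $\mathcal H$ is well-defined since pull-back of foliations is functorial with respect to composition of dominant morphisms, as recorded in \S\ref{SS:canonical}.

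The key point is that $\mathcal H$ has $\varepsilon$-canonical singularities as well. This follows because $\F$ (resp.\ $\G$) has $\varepsilon$-canonical singularities and the discrepancies $a_\varepsilon(E,X,\F)$ depend only on the divisor $E$ over $X$, not on the birational model: for any divisor $E$ over $Z$, viewed as a divisor over $X$ via $q$, one has $a_\varepsilon(E,Z,\mathcal H) = a_\varepsilon(E,X,\F) \ge 0$. Once we know $\mathcal H$ is $\varepsilon$-canonical, Lemma \ref{L:standard} applies to both $p : Z \to Y$ and $q : Z \to X$, yielding natural isomorphisms
\[
    q^* : H^0(X, \mathcal O_X(a K_{\F} + b K_{X/\F})) \xrightarrow{\ \sim\ } H^0(Z, \mathcal O_Z(a K_{\mathcal H} + b K_{Z/\mathcal H}))
\]
and
\[
    p^* : H^0(Y, \mathcal O_Y(a K_{\G} + b K_{Y/\G})) \xrightarrow{\ \sim\ } H^0(Z, \mathcal O_Z(a K_{\mathcal H} + b K_{Z/\mathcal H})),
\]
valid whenever $a, b$ are positive integers with $b = \varepsilon a$. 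Composing, $f^* := (p^*)^{-1} \circ q^*$ is the sought isomorphism; it is visibly induced by pull-back of rational functions along $f$, hence "natural" and independent of the choice of $Z$ (any two common resolutions are dominated by a third, and Lemma \ref{L:standard}'s isomorphism is compatible with further blow-ups since the composite pull-back on rational functions is).

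The main obstacle I anticipate is the verification that $\mathcal H = q^*\F$ again has $\varepsilon$-canonical singularities — more precisely, checking carefully that $K_{\mathcal H} + \varepsilon K_{Z/\mathcal H}$ is $\mathbb Q$-Cartier and that the discrepancy computation is stable under the birational morphism $q$. The stability of $a_\varepsilon$ under birational pull-back is asserted in \S\ref{SS:canonical} ("does not depend on the particular morphism $f$ but only on the exceptional divisor $E$"), so the essential content is already available; one just needs to track that divisors over $Z$ include all divisors over $X$ with center meeting the relevant locus, which is standard. Everything else is bookkeeping with reflexive sheaves and the adjunction $K_X \sim K_\F + K_{X/\F}$ of Proposition \ref{P:adjunction}.
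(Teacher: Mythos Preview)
Your approach is essentially the paper's: resolve indeterminacies to get a roof $Y \xleftarrow{p} Z \xrightarrow{q} X$, apply Lemma~\ref{L:standard} to each leg, and compose. The paper sets $f^* = (\pi^*)^{-1}\circ g^*$ in exactly the same way.

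There is one unnecessary complication in your write-up, and it carries a small risk. You spend effort arguing that $\mathcal H = q^*\F$ on $Z$ again has $\varepsilon$-canonical singularities, and you flag this as ``the main obstacle.'' But Lemma~\ref{L:standard} does \emph{not} require the upstairs foliation to be $\varepsilon$-canonical: its hypothesis is only on the foliation on the \emph{target} of the birational morphism. So to apply the lemma to $q:Z\to X$ you need only that $\F$ is $\varepsilon$-canonical, and to apply it to $p:Z\to Y$ you need only that $\G$ is $\varepsilon$-canonical. Both are given by hypothesis. No property of $\mathcal H$ is required beyond its existence. The paper's proof simply invokes Lemma~\ref{L:standard} twice with these hypotheses and is done.

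Your detour is also where the one genuine soft spot lies: showing $K_{\mathcal H} + \varepsilon K_{Z/\mathcal H}$ is $\mathbb Q$-Cartier is not automatic on a merely normal $Z$ (you wrote ``normalize''), and the discrepancy identity $a_\varepsilon(E,Z,\mathcal H)=a_\varepsilon(E,X,\F)$ presupposes this. Taking $Z$ smooth would fix it, but the cleaner repair is to drop the claim entirely, since the argument does not need it.
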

\begin{proof}
    Let $\pi: Z \to Y$ be a resolution of indeterminacies of $f : Y \dashrightarrow X$.
    Concretely, $Z$ is a normal projective variety and  $\pi: Z \to Y$ is a
    birational morphism such that $f \circ \pi : \pi^{-1}(Y) \dashrightarrow X$
    can be extended to a birational morphism from $Z$ to $X$ which we will denote by $g : Z \to X$.

    \begin{center}
    \begin{tikzcd}[column sep=small]
    & Z \arrow[dl, "\pi" left] \arrow[dr,"g"] & \\
    Y \arrow[rr,dashed, "f"] & & X
    \end{tikzcd}
    \end{center}

    Set $\mathcal H  = g^* \mathcal F$. Apply Lemma \ref{L:standard} to get
    isomorphisms
    \[
        g^* : H^0(X, \mathcal O_X( a K_{\F} +  b \KNF)) \to H^0(Z, \mathcal O_Z( a K_{\mathcal H} + b K_{Z/\mathcal H} )).
    \]
    and
    \[
        \pi^* : H^0(Y, \mathcal O_Y( a K_{\G} +  b K_{Y/G})) \to  H^0(Z, \mathcal O_Z( a K_{\mathcal H} + b K_{Z/\mathcal H})).
    \]
    The result follows by taking  $f^*$ equal to $(\pi^*)^{-1} \circ g^*$.
\end{proof}

\subsection{Action of rational maps on the pluricanonical algebra} The results of the previous section
have versions for dominant rational maps between varieties of the same dimension. The only new ingredient
needed is the following lemma. A version of it for foliations on surfaces was proved in \cite[Proposition 2.1]{MR2818727}.

\begin{lemma}\label{L:quasidef}
    Let $f : Y \dashrightarrow X$ be a dominant rational map, not necessarily birational,
    between normal projective algebraic varieties of the same dimension.
    If $\F$ is a foliation on $X$ with  $\varepsilon$-canonical singularities
    and $\G$ denotes the foliation $f^*\F$ on $Y$ then
    $\KG + \varepsilon K_{Y/\G} - f^*(\KF + \varepsilon \ K_{X/\F})$ is an effective $\mathbb Q$-divisor.
\end{lemma}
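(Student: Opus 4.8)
The statement is the rational-map analogue of Lemma~\ref{L:standard}, and the natural strategy is to reduce it to the birational case by resolving the indeterminacy of $f$. First I would choose a normal projective variety $Z$ together with a birational morphism $\pi : Z \to Y$ such that $g := f \circ \pi$ extends to a \emph{morphism} $g : Z \to X$; this is possible by resolution of indeterminacies (one may even take $Z$ smooth). Set $\mathcal H = g^* \F = \pi^* \G$. The effectivity of $\KG + \varepsilon K_{Y/\G} - f^*(\KF + \varepsilon \KNF)$ as a $\mathbb Q$-divisor on $Y$ can be checked after pulling back by the birational morphism $\pi$, since $\pi_*$ of an effective divisor is effective and $\pi$ is an isomorphism in codimension zero over the generic point; more precisely, a $\mathbb Q$-divisor $D$ on $Y$ is effective if and only if $\pi^* D$ is effective, because $\pi$ is birational and $Y$ is normal. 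So it suffices to prove that
\[
    \pi^*\bigl( \KG + \varepsilon K_{Y/\G} \bigr) - \pi^* f^*\bigl( \KF + \varepsilon \KNF \bigr)
\]
is effective on $Z$.

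**Reduction via the two morphisms.** Now I would rewrite this using $\pi^* f^* = g^*$ and the birational-morphism case. By the definition of $\varepsilon$-canonical singularities applied to the birational morphism $g : Z \to X$ (and $\mathcal H = g^*\F$), we have
\[
    \bigl( K_{\mathcal H} + \varepsilon K_{Z/\mathcal H}\bigr) - g^*\bigl( \KF + \varepsilon \KNF\bigr) \sim_{\mathbb Q} \sum a_i E_i
\]
with all $a_i \ge 0$, since $\F$ has $\varepsilon$-canonical singularities. On the other hand, applied to $\pi : Z \to Y$ and $\mathcal H = \pi^*\G$ — here one uses that $\G$ itself has $\varepsilon$-canonical singularities, which is part of the hypothesis — we get
\[
    \bigl( K_{\mathcal H} + \varepsilon K_{Z/\mathcal H}\bigr) - \pi^*\bigl( \KG + \varepsilon K_{Y/\G}\bigr) \sim_{\mathbb Q} \sum b_j F_j
\]
with all $b_j \ge 0$ and $F_j$ exceptional for $\pi$. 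Subtracting the second relation from the first yields
\[
    \pi^*\bigl( \KG + \varepsilon K_{Y/\G}\bigr) - g^*\bigl( \KF + \varepsilon \KNF\bigr) \sim_{\mathbb Q} \sum a_i E_i - \sum b_j F_j \, ,
\]
which is $\pi^*$ of the divisor whose effectivity we want. Pushing forward by $\pi$, the contribution $\sum b_j F_j$ of the $\pi$-exceptional divisors is killed (they lie over codimension $\ge 1$ loci), and $\pi_*\sum a_i E_i$ is again effective; hence $\KG + \varepsilon K_{Y/\G} - f^*(\KF + \varepsilon\KNF)$ is effective on $Y$, as desired.

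**Main obstacle.** The one point requiring care — and I expect it to be the crux — is that, unlike in Lemma~\ref{L:inducedaction}, the hypothesis here must include (or one must verify) that the pulled-back foliation $\G = f^*\F$ on $Y$ also has $\varepsilon$-canonical singularities; this is exactly what lets us run the argument with $\pi$ as a birational morphism into $Y$. The statement as phrased does assume ``$\G$ denotes the foliation $f^*\F$'' without asserting it is $\varepsilon$-canonical, so in writing the proof I would either add this as a standing hypothesis or note that for a dominant map of varieties of the same dimension the relevant discrepancy inequality $a_\varepsilon(F_j, Y, \G) \ge a_\varepsilon(F_j, X, \F) \ge 0$ follows from the chain-rule for discrepancies under the composition $g = f \circ \pi$; either way the bookkeeping of which exceptional divisors survive the pushforward $\pi_*$ is the only delicate step, everything else being formal manipulation of $\mathbb Q$-linear equivalence classes of Weil divisors on normal varieties.
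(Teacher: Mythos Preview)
Your reduction has a genuine gap at its very first step: the morphism $g = f \circ \pi : Z \to X$ is \emph{not} birational.  Since $f$ is merely dominant between varieties of the same dimension (say of topological degree $d$), and $\pi$ is birational, the composite $g$ is generically finite of degree $d$.  The definition of $\varepsilon$-canonical singularities in Section~\ref{SS:canonical} and the discrepancy formula
\[
    (K_{\mathcal H} + \varepsilon K_{Z/\mathcal H}) - g^*(\KF + \varepsilon \KNF) \sim_{\mathbb Q} \sum a_\varepsilon(E_i,X,\F)\,E_i
\]
are only available for \emph{birational} morphisms $g$; for a genuinely finite map there is no reason to expect this difference to be supported on an exceptional locus, let alone with non-negative coefficients.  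So the displayed relation you use is unjustified exactly in the case the lemma is meant to cover (the birational case is already Proposition~\ref{P:inducedaction}).  The ``chain rule'' you invoke at the end is likewise unavailable: discrepancies $a_\varepsilon(F_j,X,\F)$ are not defined for divisors $F_j$ lying over $X$ via a non-birational map, and the inequality you write down is essentially a restatement of the lemma itself.

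The paper's proof proceeds quite differently.  It works directly on $Y$ (after noting the difference is supported on the critical divisor of $f$) and computes $\ord_E\bigl(\KG + \varepsilon K_{Y/\G} - f^*(\KF + \varepsilon\KNF)\bigr)$ for each irreducible component $E$.  When $E$ is \emph{not} contracted by $f$, an explicit local computation in coordinates $f(x_1,\ldots,x_{n-1},z)=(x_1,\ldots,x_{n-1},z^m)$ shows that the ramification contributes $m-1$ either to $\omega_{\G}-f^*\omega_{\F}$ or to $\omega_{Y/\G}-f^*\omega_{X/\F}$ (according as $E$ is not or is $\F$-invariant), and $0$ to the other; in either case the contribution to the $\varepsilon$-adjoint is non-negative, and no hypothesis on singularities is used.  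When $E$ \emph{is} contracted, one blows up $X$ repeatedly along $f(E)$ until the strict transform of $f$ no longer contracts $E$; the $\varepsilon$-canonical hypothesis on $\F$ then controls the discrepancies introduced by those (genuinely birational) blow-ups, and one concludes by combining with the non-contracted computation.  This local ramification analysis is the actual content that your global discrepancy bookkeeping cannot supply.
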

\begin{proof}
    If $U \subset X$ is an open subset of the regular locus of $X$ where  $f$ is a local biholomorphism then  $f^* {\omega_{\F}}_{|U}$ and ${f^* \omega_{X/\F}}_{|U}$ are clearly isomorphic to ${\omega_{\G}}_{|U}$ and ${\omega_{Y/\G}}_{|U}$. Thus the $\mathbb Q$-divisor $\KG + \varepsilon K_{Y/\G} - f^*(\KF + \varepsilon \ K_{X/\F})$
    is linearly equivalent to a divisor $\Delta$ supported on the critical divisor of $f$. Let $E$ be one of its irreducible
    components. We want to show that $\ord_E \Delta \ge 0$.

    First assume that $f$ does not contract $E$, i.e. $\dim f(E) = \dim E$. To compute $\ord_E \Delta$, we can localize at
    a neighborhood of a sufficiently general point of $E$ and choose coordinates at neighborhoods of $x \in X$ and
    of $y = f(x) \in Y$ such that the map $f$ takes to the form $f(x_1, \ldots, x_{n-1}, z) = (x_1, \ldots, x_{n-1},z^m)$
    for some $m \ge 2$. Note that $\ord_E(K_Y - f^* K_X) = m-1$.

    Let $q= \codim (\F)$ and let $\omega$ be a germ of $q$-form at $y$ defining $\F$ and without codimension one zeros. We can write
    \[
        \omega = \sum_{i=0}^\infty \alpha_i z^i + dz\wedge\left(\sum_{i=0}^\infty z^i\beta_i \right)
    \]
    where the $\alpha_i$'s are $q$-forms on the variables $x_1, \ldots, n-1$ and
    the $\beta_i$'s are $(q-1)$-forms of the same type.

    If $E$ is invariant by $\F$ then $\alpha_0=0$ and, because
    $\omega$ has no codimension one zeros,  $\beta_0\neq 0$. A direct compuation shows that $z^{m-1}$ divides
    $f^* \omega$ while $z^m$ does not divide it.
    It follows that $\ord_E( \omega_{Y/\G} - f^* \omega_{X/\F} ) = m-1$ and, by adjunction, $\ord_E( \omega_{\G} - f^* \omega_{\F} ) = 0$.

    If $E$ is not invariant by $\F$ then $\alpha_0 \neq 0$. Consequently, $f^*\omega$ does not have codimension one
    zero along $E=\{ z=0\}$. Therefore $\ord_E( \omega_{Y/\G} - f^* \omega_{X/\F} ) = 0$ and, by adjunction, $\ord_E( \omega_{\G} - f^* \omega_{\F} ) = m-1$.

    No matter if $E$ is $\mathcal F$-invariant or not, we have that $\ord_E \Delta \ge 0$ as soon as $E$ is not contracted.
    Notice that we have not used the hypothesis on the nature of the singularities of $\F$ yet.

    Suppose from now on that $\dim f(E)< \dim E$, i.e. $E$  is contracted by $f$. Set $X_0 = X$ and $f_0 = f$.
    Assume $X_i$ and $f_i : Y \dashrightarrow X_i$ defined. Let $\pi_{i+1} : X_{i+1} \to X_i$ be the normalization of the  blow-up of $X_i$ along $f_i(E)$ and let $f_{i+1} : Y \dashrightarrow X_{i+1}$ be the composition $\pi_{i+1}^{-1} \circ f_i$.
    According to \cite[Theorem VI.1.3]{MR1440180}, there exists a $k\ge 0$ such that $f_k(E)$ is a divisor of $X_k$.
    Let $\pi = \pi_1\cdots\circ\pi_{k-1}\circ \pi_k : X_k \to X$ be the composition of the blow-ups. We can write
    $K_{\G} + \varepsilon  K_{Y/\G} - f^*(K_{\F} + \varepsilon  K_{X/\F})$ as
    \begin{align*}
         & \left(K_{\G} + \varepsilon  K_{Y/\G} - f_k^*\left(K_{\F_k} + \varepsilon  K_{X_k/\F_k} \right) \right) \\
        + & f_k^*\left( K_{\F_k} + \varepsilon  K_{X_k/\F_k} - \pi_k^*\left( \KF + \varepsilon \KNF \right)  \right) \, .
    \end{align*}
    The expression in the first line has positive order along $E$ by the argument above. The expression in the second line  also has positive order along $E$ since we are assuming that $\F$ has $\varepsilon$-canonical singularities. This concludes the proof of the lemma.
\end{proof}

\begin{prop}\label{P:inducedaction2}
    Let $f : Y \dashrightarrow X$ be a dominant rational map between normal projective algebraic varieties of the same dimension.
    If $\F$ is a  foliation  with  $\varepsilon$-canonical singularities
    on $X$ and $\G= f^* \F$ is the induced foliation on $Y$  then there is a natural
    inclusion
    \[
        f^* : H^0(X, \mathcal O_X( a K_{\F} +  b \KNF)) \to H^0(Y, \mathcal O_Y( a K_{\G} + b K_{Y/\G} ))
    \]
    whenever  $a$ and $b$ are positive integers subject to the equality $b = \varepsilon a$.
\end{prop}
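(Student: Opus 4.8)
The plan is to deduce the proposition almost for free from Lemma~\ref{L:quasidef}. First I would apply Lemma~\ref{L:quasidef} to the dominant rational map $f : Y \dashrightarrow X$ and the $\varepsilon$-canonical foliation $\F$, obtaining that
\[
    \Delta := K_{\G} + \varepsilon K_{Y/\G} - f^*\bigl( \KF + \varepsilon \KNF \bigr)
\]
is an effective $\mathbb Q$-divisor on $Y$, where the pull-back of a Weil divisor is taken on the open dense subset $V\subseteq Y$ over which $f$ is a morphism (its complement has codimension at least two, since $Y$ is normal) and then extended by normality. Multiplying through by the positive integer $a$ and substituting $b=\varepsilon a$ shows that $a\Delta=\bigl(aK_{\G}+bK_{Y/\G}\bigr)-f^*\bigl(a\KF+b\KNF\bigr)$ is effective; once integral representatives are fixed for the canonical and transverse canonical divisors involved, it is an effective integral Weil divisor, which I will call $\Delta_{a,b}$.

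The second step is to build the map $f^*$ on global sections. Following the conventions of \S\ref{SS:foliations}, I would view a section $s\in H^0(X,\mathcal O_X(a\KF+b\KNF))$ as a rational function on $X$ with $\divisor(s)+a\KF+b\KNF\ge 0$, pull it back to $f^*s\in\mathbb C(Y)$ (legitimate because $f$ is dominant), and note that $\divisor(f^*s)+f^*(a\KF+b\KNF)\ge 0$ holds over $V$, hence everywhere by normality of $Y$. Adding $\Delta_{a,b}$ then gives $\divisor(f^*s)+aK_{\G}+bK_{Y/\G}\ge 0$, i.e. $f^*s\in H^0(Y,\mathcal O_Y(aK_{\G}+bK_{Y/\G}))$. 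This assignment is visibly $\mathbb C$-linear, and it is injective because the field homomorphism $\mathbb C(X)\hookrightarrow\mathbb C(Y)$ attached to a dominant map is injective, so $f^*s=0$ forces $s=0$.

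I expect the only genuinely delicate point to be the bookkeeping of Weil divisors under a merely rational $f$: one must check that the representative of $f^*(a\KF+b\KNF)$ appearing in Lemma~\ref{L:quasidef} is compatible with the pull-back of $\divisor(s)$, and that extension across the codimension $\ge 2$ indeterminacy and ramification loci introduces no poles. Both are settled by working on $V$, where $\omega_{\G}\cong f^*\omega_{\F}$ and $\omega_{Y/\G}\cong f^*\omega_{X/\F}$ over the \'etale locus exactly as in the proof of Lemma~\ref{L:quasidef}, together with the normality of $Y$. As a sanity check, when $f$ is birational the divisor $\Delta_{a,b}$ is $f$-exceptional and one recovers the isomorphism of Proposition~\ref{P:inducedaction}.
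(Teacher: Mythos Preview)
Your proposal is correct and matches the paper's own proof, which is the one-liner ``Similar to the proof of Proposition~\ref{P:inducedaction} using Lemma~\ref{L:quasidef} in place of the definition of $\varepsilon$-canonical singularities.'' You have simply unpacked that sentence: Lemma~\ref{L:quasidef} supplies the effectivity of $\Delta$, and then the pull-back of rational functions gives the inclusion of global sections exactly as in Lemma~\ref{L:standard}.
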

\begin{proof}
    Similar to the proof of Proposition \ref{P:inducedaction} using Lemma \ref{L:quasidef} in place of the
    definition of $\varepsilon$ canonical singularities.
\end{proof}

\begin{cor}\label{C:contra}
    If $\G$ is a foliation on a normal projective variety $Y$ which is birationally equivalent to a foliation
    $\F$ with  $\varepsilon$-canonical singularities then
    \[
        h^0(Y, \mathcal O_Y( a K_{\G} + b K_{Y/\G} )) \ge h^0(X, \mathcal O_X( a K_{\F} +  b \KNF))
    \]
    whenever  $a$ and $b$ are positive integers subject to the equality $b = \varepsilon a$.
\end{cor}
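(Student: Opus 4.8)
The plan is to deduce this immediately from Proposition \ref{P:inducedaction2}. By hypothesis, $\G$ is birationally equivalent to $\F$ as a foliation, so there is a birational map $f : Y \dashrightarrow X$ between normal projective varieties with $f^*\F = \G$ in the sense of the pull-back of foliations of \S\ref{SS:canonical}. Being birational, $f$ is in particular a dominant rational map between normal projective varieties of the same dimension, so the hypotheses of Proposition \ref{P:inducedaction2} are met once one recalls that $\F$ is assumed to have $\varepsilon$-canonical singularities — and, crucially, no assumption on the singularities of $\G$ is needed.

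Next, for positive integers $a,b$ with $b=\varepsilon a$, Proposition \ref{P:inducedaction2} produces a natural injection
\[
    f^* : H^0(X, \mathcal O_X(aK_\F + b\KNF)) \hookrightarrow H^0(Y, \mathcal O_Y(aK_\G + bK_{Y/\G})).
\]
Comparing dimensions of source and target yields $h^0(Y, \mathcal O_Y(aK_\G + bK_{Y/\G})) \ge h^0(X, \mathcal O_X(aK_\F + b\KNF))$, which is the assertion.

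I do not expect a genuine obstacle: the substantive work has already been carried out in Lemma \ref{L:quasidef} and Proposition \ref{P:inducedaction2}. The only point worth spelling out is the asymmetry of the roles of $\F$ and $\G$: one must invoke Proposition \ref{P:inducedaction2} (valid for an arbitrary dominant $f$, requiring only that the target foliation $\F$ be $\varepsilon$-canonical) rather than Proposition \ref{P:inducedaction} (which would demand that both foliations have $\varepsilon$-canonical singularities and would only be available in a symmetric birational situation), and to check that the map $f^*$ is compatible with the bigrading by $(a,b)$ and matches $aK_\F+b\KNF$ with $aK_\G+bK_{Y/\G}$ — which is precisely the content recorded by Proposition \ref{P:inducedaction2}.
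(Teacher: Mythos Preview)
Your proof is correct and matches the paper's approach: the corollary is stated without proof precisely because it follows at once from Proposition~\ref{P:inducedaction2} applied to a birational map $f:Y\dashrightarrow X$ with $f^*\F=\G$, exactly as you argue. Your remark on why Proposition~\ref{P:inducedaction2} (rather than Proposition~\ref{P:inducedaction}) is the right tool here is apt.
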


\subsection{Foliations invariant by algebraic actions}

\begin{lemma}\label{L:action}
    Let $\F$ be a foliation on a normal projective variety $X$ invariant by a non-constant  algebraic
    action $\varphi : G \times X \to X$ with  $G=(\mathbb C^*,\cdot)$ or $G=(\mathbb C,+)$.
    Then $\F$ is birationally equivalent to a foliation $\G$ on a smooth projective variety $Y$ such that
    $\KG + \varepsilon K_{Y/\G}$ is not pseudo-effective for any $\varepsilon \in (0,1)$.
\end{lemma}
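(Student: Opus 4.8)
The plan is to build a birational model $q\colon Y\to\bar W$ of $X$ realizing (over a dense open set) the compactified rational quotient of $X$ by $G$, so that the closure of a general $G$-orbit becomes a general fibre $C\cong\mathbb P^1$ of $q$; I would then compute $(\KG+\varepsilon K_{Y/\G})\cdot C$ and check it is negative for all $\varepsilon\in(0,1)$. Since $\{C\}$ is a covering family of curves on $Y$, a pseudo-effective divisor must be non-negative on $C$, so this yields the conclusion.

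To build the model, let $v\in H^0(X,T_X)$ generate the action; since the action is non-constant, $v\not\equiv 0$ and a general orbit is $\mathbb A^1$ (for $G=(\mathbb C,+)$) or $\mathbb C^{*}$ (for $G=(\mathbb C^{*},\cdot)$). Rosenlicht's theorem provides a rational quotient $X\dashrightarrow W:=X/\!\!/G$ with $\dim W=\dim X-1$ which, over a dense open $W^\circ$ of a smooth model of $W$, is a principal $G$-bundle; as $G$ is a special algebraic group this bundle is Zariski-locally trivial, with transition maps $t\mapsto t+b(w)$ (resp.\ $t\mapsto c(w)t$). Compactifying the fibre $\mathbb A^1\hookrightarrow\mathbb P^1$ (resp.\ $\mathbb C^{*}\hookrightarrow\mathbb P^1$), the transition maps extend and produce a $\mathbb P^1$-bundle over $W^\circ$; after choosing a smooth projective model $\bar W$ of $W$ and resolving, I obtain a smooth projective $Y$ birational to $X$ with a morphism $q\colon Y\to\bar W$ restricting over $W^\circ$ to this $\mathbb P^1$-bundle, and I let $\G$ be the transform of $\F$ on $Y$. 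A general fibre $C\cong\mathbb P^1$ of $q$ is the closure of a general $G$-orbit, these $C$ cover $Y$, and $T_Y|_C\cong\mathcal O(2)\oplus\mathcal O^{\oplus(n-1)}$ (a fibre over a smooth base, with $n=\dim Y$), so $K_Y\cdot C=-2$.

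Next I would split into two cases according to whether the general orbit is tangent to $\G$ or not; this is a genuine dichotomy because $\F$, $\G$ and $v$ are all $G$-invariant, so $v_x\in(T_\G)_x$ is constant along each orbit over $Y^\circ:=q^{-1}(W^\circ)$. If $C$ is tangent to $\G$, the leaves of $\G$ are $G$-saturated over $Y^\circ$, hence $\G=q^{*}\G_{\bar W}$ for a foliation $\G_{\bar W}$ on $\bar W$; over the smooth locus of $q$ one has $0\to T_{Y/\bar W}\to T_\G\to q^{*}T_{\G_{\bar W}}\to 0$ and $N_\G\cong q^{*}N_{\G_{\bar W}}$, so restricting to $C$, where pullbacks from $\bar W$ are trivial, gives $T_\G|_C\cong\mathcal O(2)\oplus\mathcal O^{\oplus(\dim\G-1)}$ and $N_\G|_C$ trivial, whence $\KG\cdot C=-2$ and $K_{Y/\G}\cdot C=0$, i.e.\ $(\KG+\varepsilon K_{Y/\G})\cdot C=-2$. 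If instead $C$ is transverse to $\G$, I would trivialise $q$ near a general fibre as $\mathbb P^1\times U$ with $G$ acting on the $\mathbb P^1$-factor ($\theta=\partial_t$, resp.\ $t\,\partial_t$, being the vertical invariant field); $G$-invariance of $\G$ then forces $T_\G$ to be locally generated by vector fields of the form $\partial_{w_i}+a_i(w)\,\theta$, with the $a_i$ functions of the base coordinates only. Along $C=\mathbb P^1\times\{w_0\}$ these sections are regular everywhere — including at the $G$-fixed points of $C$, where $\theta$ vanishes but the $\partial_{w_i}$ terms do not — and modulo the vertical direction they form a constant frame, so they trivialise $T_\G|_C$; hence $\KG\cdot C=0$ and, by adjunction $K_Y=\KG+K_{Y/\G}$ (Proposition~\ref{P:adjunction}), $K_{Y/\G}\cdot C=-2$, i.e.\ $(\KG+\varepsilon K_{Y/\G})\cdot C=-2\varepsilon$.

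In both cases $(\KG+\varepsilon K_{Y/\G})\cdot C<0$ for every $\varepsilon\in(0,1)$. As $\{C\}$ is a covering family of curves, any pseudo-effective divisor on $Y$ meets $C$ non-negatively (approximate it by effective $\mathbb Q$-divisors and move a general member of the family off their supports), so $\KG+\varepsilon K_{Y/\G}$ is not pseudo-effective for $\varepsilon\in(0,1)$, as desired. The step I expect to be most delicate is the transverse case: one must show that passing to a \emph{general} fibre $C$ kills all of the transversal connection data carried by $\G$, i.e.\ that $T_\G|_C$ has degree exactly $0$ — which comes down to checking that the frame $\partial_{w_i}+a_i(w)\theta$ extends across the $G$-fixed points of $C$ without acquiring extra zeros — since only then is the intersection number pinned to $-2\varepsilon$ rather than being of \emph{a priori} indefinite sign.
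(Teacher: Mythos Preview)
Your proof is correct and follows the paper's core strategy: pass to a model where the general $G$-orbit closure is a fibre $C\cong\mathbb P^1$ of a covering family, then show $(\KG+\varepsilon K_{Y/\G})\cdot C<0$. The differences are in execution. You build the model via the Rosenlicht quotient and fibrewise compactification of the principal $G$-bundle to a $\mathbb P^1$-bundle; the paper instead iterates equivariant blow-ups along the locus where the generic orbit meets $\sing(\mathcal A)$ until the general orbit becomes compact. More substantively, the paper organises the dichotomy as \emph{uniruled versus non-uniruled}: when the orbits are tangent to $\F$ the leaves are swept out by rational curves, so $\F$ is uniruled and the paper invokes \cite[Theorem~3.7]{MR3842065} to get $\KG$ not pseudo-effective on a suitable model, then combines this with $K_Y$ not pseudo-effective on that (uniruled) model. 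You handle this case directly and more self-containedly by recognising $\G=q^*\G_{\bar W}$ and reading off $\KG\cdot C=-2$, $K_{Y/\G}\cdot C=0$ from the short exact sequence along a fibre. In the transverse case the two computations are dual: the paper expands a defining $q$-form $\omega$ in the product coordinates $\mathbb D^{n-1}\times\mathbb P^1$ and uses $\varphi$-invariance to force the explicit shapes that give $\KNF\cdot C=-2$, $\KF\cdot C=0$; you instead write a frame $e_j+a_j(w)\theta$ for $T_\G$ and use $G$-invariance to force the coefficients to be base-functions, reaching the same numbers. Your worry about the frame extending across the $G$-fixed points is legitimate but harmless: $\theta$ extends holomorphically (indeed vanishes) at those points, the $e_j$ remain independent there, and since $\sing(\G)$ has codimension $\ge 2$ it misses the fixed-point section over a general $w$, so the frame trivialises $T_\G|_C$ globally.
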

\begin{proof}
    Thanks to the existence of equivariant resolution of singularities  [K\'ollar, Proposition 3.9.1]
    there is no loss of generality in assuming that $X$ is smooth.

    If $\F$ is a uniruled foliation then $\F$ is birationally equivalent to a foliation
    $\G$ on a smooth variety $Y$ with $\KG$ not pseudo-effective according to the proof of \cite[Theorem 3.7]{MR3842065}.
    Since $Y$ is smooth $K_Y$ is also not pseudo-effective. It follows that $\KG + \varepsilon K_{Y/\G} = (1-\varepsilon) \KG + \varepsilon \K_Y$ is not pseudo-effective for any $\varepsilon \in [0,1)$.

    Assume from now on that $\F$ is not uniruled.
    Let $\mathcal A$ be the one-dimensional foliation defined by the action $\varphi$.
    Note that Zariski closure of every leaf of $\mathcal A$ is a rational curve.
    Let $C$ be the general leaf of $\mathcal A$. As we are assuming that $\F$ is not uniruled, $C$ is generically transverse to $\F$.

    Let us first consider the particular case where $C$ is compact. In this case, a neighborhood $C$ in $X$ is isomorphic
    to $\mathbb D^{n-1} \times \mathbb P^1$ and $\mathcal A$ is defined by the projection
    $\mathbb D^{n-1} \times \mathbb P^1 \to \mathbb D^{n-1}$. Moreover, we can assume that
    the action $\varphi$ admits one of the following local normal forms in this neighborhood:
    \[
        \varphi(t,(x,y)) =
        \left\{
            \begin{array}{ll}
                (x, t \cdot y)  & {\text{if } \quad G= (\mathbb C^*,\cdot)  ;} \\
                (x, y+t )       & {\text{if } \quad G  = ( \mathbb C,+).}
            \end{array}
        \right.
    \]
    Since $C$ is general, the foliation $\F$ is smooth in this neighborhood. If $\omega$ is
    a $q$-form  defining $\F$ then we can write
    \[
        \omega = \sum_{|I|=q, i\ge 0} a_{I,i}(x) y^i dx^I + \sum_{|J|=q-1,i\ge 0} b_{J,i}(x) y^i dx^J\wedge dy \, ,
    \]
    in such a way that any common factor of all $a_{I,i}$ and $b_{J,i}$ does not depend on $y$.
    Since $C$ is generically transverse to $\F$, at least one of the $b_{j,i}$'s is non-zero. Since $\F$ is smooth,
    we can assume that $a_{I,0}\neq0$ for some $I$ or $b_{J,0}\neq 0$ for some $J$. Combining the  $\varphi$-invariance of
    $\F$ with these two remarks, we deduce that $\omega$ can be written as
    \[
        \displaystyle{\sum_{|I|=q} a_{I,1}(x) y dx^I + \sum_{|J|=q-1} b_{J,0}(x) dx^J \wedge dy},
    \]
    when $G=(\mathbb C^*, \cdot)$, and
    \[
        \displaystyle{\sum_{|I|=q} a_{I,0}(x) dx^I + \sum_{|J|=q-1} b_{J,0}(x) dx^J \wedge dy}
    \]
    when $G=(\mathbb C,+)$. In both  cases,  $\KNF \cdot C = -2$ and, by adjunction,
     $\KF\cdot C=0$. Since $C$ is part of a covering family of curves, we conclude that $\KF + \varepsilon \KNF$
    is not pseudo-effective for any $\varepsilon>0$.

    To deal with the general non-compact case, start by observing that the singular set of $\mathcal A$ is contained in the set of fixed points of $\varphi$. Let $\mathcal C$ be the generic (in the schematic sense) leaf of $\mathcal A$ and let $Z$ be the closure of the intersection of $\mathcal C$ with $\sing(\mathcal A)$. Replace $X$ by $\Bl_Z X$, the blow-up of $X$ along $Z$. Since
    $Z$ is contained in the set of fixed points of $\varphi$, the action lifts to $\Bl_Z X$. Replace $\Bl_Z X$ by an equivariant desingularization of it. If the general leaf of the foliation by curves determined by the lift of $\varphi$ is compact, apply the argument of the previous paragraph to conclude. Otherwise, repeat the argument of the current paragraph. This procedure will stop after finitely many blow-ups, thanks to \cite[Theorem VI.1.3]{MR1440180}.
\end{proof}

\subsection{Foliations of adjoint general type}

\begin{thm}\label{T:adj gen type}
    Let $\varepsilon \ge 0$ be a non-negative rational number, and let $\F$ be a foliation with
    $\varepsilon$-canonical singularities on a normal projective variety $X$. If
    \[
        \limsup_{N} \frac{\log h^0(X , \mathcal O_X( N (\KF + \varepsilon \KNF)))}{\log N} = \dim X
    \]
    then the group of birational automorphisms of $\F$ is finite. Moreover, any rational endomorphism of $\F$ is birational.
\end{thm}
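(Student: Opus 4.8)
The plan is to exploit the natural action of the semigroup $\End(X,\F)$ on the finite-dimensional vector spaces $V_N := H^0(X, \mathcal O_X(N(\KF + \varepsilon \KNF)))$ coming from Proposition \ref{P:inducedaction2}, together with the fact that the hypothesis forces the associated (pluri)canonical map to be birational onto its image for $N$ large. First I would fix $N$ large enough that $\dim V_N$ grows like $N^{\dim X}$; then the rational map $\Phi_N \colon X \dashrightarrow \mathbb P(V_N^*)$ is birational onto its image $X_N$, because a limsup equal to $\dim X$ of the logarithmic growth means the subalgebra $\bigoplus_N V_N$ has transcendence degree $\dim X + 1$, i.e. its Proj is birational to $X$ (this is the standard Iitaka-type argument; one may need to pass to a Veronese reindexing, replacing $\KF + \varepsilon\KNF$ by a multiple, which is harmless since $\varepsilon$ is fixed rational). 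The key structural point is that Proposition \ref{P:inducedaction2} gives, for every $f \in \End(X,\F)$, an \emph{injective} linear map $f^* \colon V_N \to V_N$; since $V_N$ is finite-dimensional, every such $f^*$ is in fact a linear isomorphism. Moreover $f \mapsto (f^*)^{-1}$ is a semigroup homomorphism $\End(X,\F) \to \GL(V_N)$, and it is faithful precisely because $\Phi_N$ is birational: if $f^* = g^*$ on $V_N$ then $f$ and $g$ induce the same map on $X_N \cong X$, hence $f = g$ as rational maps.

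Next I would argue that the image $G$ of $\End(X,\F)$ in $\PGL(V_N)$ is bounded, hence its Zariski closure $\overline{G}$ is a linear algebraic group acting birationally on $(X,\F)$. Concretely: each $[f^*] \in \PGL(V_N)$ preserves, up to scalar, the cone $X_N \subset \mathbb P(V_N^*)$ (the image of the pluricanonical map), because $f^*$ is induced by pull-back of sections and hence commutes with the evaluation maps; equivalently, $[f^*]$ normalizes the subvariety $X_N$. Therefore $G$ lies in the automorphism group of the polarized projective variety $X_N$, which is a linear algebraic group; so $\overline{G}$ is a linear algebraic group and, via the birational identification $X \dashrightarrow X_N$, it acts birationally on $X$ preserving $\F$. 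This is the step modeled on the argument invoked in the proof of Lemma \ref{L:hyperbolic and log-general} ("one inherits a birational model of $(X,\F)$ where the action of $\End(\F)$ is transferred into an action of a linear algebraic group $G$"), and I expect the verification that $\overline G$ is genuinely algebraic and acts birationally — rather than just that $G$ is a subset of some $\GL$ — to be the main obstacle: one must be careful that the relation $f^*\xi \in V_N$ for all $f$ (not just birational $f$) really holds, which is exactly the content of Proposition \ref{P:inducedaction2} and requires the $\varepsilon$-canonical hypothesis, and one must check that taking the Zariski closure does not enlarge the set of transformations beyond those preserving the polarized variety $X_N$.

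Finally, from $\overline{G}$ linear algebraic I would conclude. If $\overline G$ is finite, then a fortiori $\Bir(X,\F) \subset \overline G$ is finite, and the equality $\overline G = G$ together with faithfulness shows every element of $\End(X,\F)$ is invertible in the semigroup, hence birational — giving both assertions. If $\overline G$ were positive-dimensional, it would contain a one-dimensional algebraic subgroup, i.e. an algebraic action of $(\mathbb C^*,\cdot)$ or $(\mathbb C,+)$ on a birational model of $X$ preserving $\F$; by Lemma \ref{L:action} (applied with any $\varepsilon \in (0,1)$, adjusting if our fixed $\varepsilon$ is $0$ or $1$ — for $\varepsilon=1$ one uses that $\KF+\KNF \sim K_X$ and the statement reduces to varieties of general type, and for $\varepsilon = 0$ to McQuillan's foliated general type where the same non-pseudo-effectivity conclusion of Lemma \ref{L:action} holds) this forces $\KF + \varepsilon\KNF$ to fail to be pseudo-effective on some birational model, contradicting the big-type growth hypothesis via Corollary \ref{C:contra}. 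Hence $\overline G$ is finite and the theorem follows; Theorem \ref{THM:adjoint} is then the special case where $\KF + \varepsilon\KNF$ is big.
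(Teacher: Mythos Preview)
Your proposal is correct and follows essentially the same route as the paper: embed $X$ birationally via the pluricanonical map $\Phi_N$, use Proposition~\ref{P:inducedaction2} to realize $\End(X,\F)$ as a sub(semi)group of $\PGL(V_N)$ preserving the image and its foliation, and rule out positive-dimensional Zariski closure via Lemma~\ref{L:action} and Corollary~\ref{C:contra}. Your treatment of the ``Moreover'' clause (injectivity of $f^*$ on a finite-dimensional space forces $f$ birational) and your explicit passage to the Zariski closure are in fact more carefully spelled out than in the paper's own proof.
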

\begin{proof}
    Let $N \gg 0$ be sufficiently divisible, and set $V=H^0(X, \mathcal O_X( N ( \KF + \varepsilon \KNF)))$.
    The rational map
    \begin{align*}
        \phi : X & \dashrightarrow \mathbb P \left( V^*\right) \\
        x &\mapsto \{ s \in V ; s(x) = 0 \}
    \end{align*}
    is birational onto its image, see for instance \cite[Theorem 2.1.33]{MR2095471}. Let $Z  \subset \mathbb P(V^*)$ be the closure of the image of $\phi$.
    Since $Z$ is birational to $X$, we have a foliation $\mathcal G = (\phi_N)_*\F$ on $Z$ too.

    If $f : X \dashrightarrow X$ is a birational automorphism of $\F$ then the projectivization of the dual of the automorphism
    $f^*$ provided by Proposition \ref{P:inducedaction} is an
    automorphism $[f_*]$ of  $\mathbb P( V^*)$ which preserves $Z$ and the foliation $\G$ and fits into the commutative diagram below.
    \[
    \begin{tikzcd}[column sep=small]
        X   \arrow[d, dashed, "\phi" left] \arrow[rr,dashed, "f"] && X  \arrow[d, dashed, "\phi" ] \\
        Z \arrow[rr, "(f_*)_{|Z}"] \arrow[d,hook]& & Z \arrow[d,hook] \\
        \mathbb P \left( V^*\right) \arrow[rr, "f_*"] & & \mathbb P \left( V^*\right)
    \end{tikzcd}
    \]

    It follows the existence of a group homomorphism $\Bir(X,\F) \to \Aut(\mathbb P(V^*))$.
    Since $\phi : X \to \mathbb P(V^*)$ is birational, this homomorphism is injective and has image
    equal to the  subgroup formed by automorphisms of $\mathbb P(V^*)$ preserving
    $Z$ and the foliation $\G$.

    Aiming at a contradiction, assume $\Bir(X,\F)$ is infinite. Since it is identified with a closed
    subgroup of a linear algebraic group it follows that there is an algebraic one-parameter subgroup
    contained in it. Concretely, there exists an algebraic action of $\mathbb C$ or $\mathbb C^*$ on $Z$
    which preserves the foliation $\G$. Lemma \ref{L:action} implies that $\G$ is birationally equivalent
    to a foliation $\mathcal H$ on a smooth projective variety $Y$ for which $K_{\mathcal H} + \varepsilon K_{Y/\mathcal H}$
    is not pseudo-effective for any $\varepsilon \in (0,1)$. In particular,
    \[
        h^0(Y,N (K_{\mathcal H} + \varepsilon K_{Y/\mathcal H})) = 0
    \]
    for every $N>0$. This contradicts Corollary \ref{C:contra}. The theorem follows.
\end{proof}

\bibliography{references}{}

\newcommand{\etalchar}[1]{$^{#1}$}
\begin{thebibliography}{CLNL{\etalchar{+}}07}

\bibitem[AC08]{MR2400885}
Ekaterina Amerik and Fr\'{e}d\'{e}ric Campana.
\newblock Fibrations m\'{e}romorphes sur certaines vari\'{e}t\'{e}s \`a
  fibr\'{e} canonique trivial.
\newblock {\em Pure Appl. Math. Q.}, 4(2, Special Issue: In honor of Fedor
  Bogomolov. Part 1):509--545, 2008.

\bibitem[AD17]{MR3652251}
Carolina Araujo and St\'{e}phane Druel.
\newblock Codimension 1 {M}ukai foliations on complex projective manifolds.
\newblock {\em J. Reine Angew. Math.}, 727:191--246, 2017.

\bibitem[BC17]{brunebarbe2017hyperbolicity}
Yohan Brunebarbe and Benoit Cadorel.
\newblock Hyperbolicity of varieties supporting a variation of hodge structure.
\newblock {\em arXiv preprint arXiv:1707.01327}, 2017.

\bibitem[BCK14]{MR3277202}
Eric Bedford, Serge Cantat, and Kyounghee Kim.
\newblock Pseudo-automorphisms with no invariant foliation.
\newblock {\em J. Mod. Dyn.}, 8(2):221--250, 2014.

\bibitem[BGR17]{MR3676045}
Jason Bell, Dragos Ghioca, and Zinovy Reichstein.
\newblock On a dynamical version of a theorem of {R}osenlicht.
\newblock {\em Ann. Sc. Norm. Super. Pisa Cl. Sci. (5)}, 17(1):187--204, 2017.

\bibitem[BH14]{MR3263168}
Ama\"{e}l Broustet and Andreas H\"{o}ring.
\newblock Singularities of varieties admitting an endomorphism.
\newblock {\em Math. Ann.}, 360(1-2):439--456, 2014.

\bibitem[Bru99]{MR1708643}
Marco Brunella.
\newblock Minimal models of foliated algebraic surfaces.
\newblock {\em Bull. Soc. Math. France}, 127(2):289--305, 1999.

\bibitem[Bru15]{MR3328860}
Marco Brunella.
\newblock {\em Birational geometry of foliations}, volume~1 of {\em IMPA
  Monographs}.
\newblock Springer, Cham, 2015.

\bibitem[Cad]{Cad16}
Beno\^it Cadorel.
\newblock Symmetric differentials on complex hyperbolic manifolds with cusps.
\newblock {\em arXiv preprint arXiv:1606.05470}.

\bibitem[Can98]{MR1677402}
Felipe Cano.
\newblock Reduction of the singularities of foliations and applications.
\newblock In {\em Singularities {S}ymposium---\L ojasiewicz 70 ({K}rak\'{o}w,
  1996; {W}arsaw, 1996)}, volume~44 of {\em Banach Center Publ.}, pages 51--71.
  Polish Acad. Sci. Inst. Math., Warsaw, 1998.

\bibitem[Can04]{MR2144971}
Felipe Cano.
\newblock Reduction of the singularities of codimension one singular foliations
  in dimension three.
\newblock {\em Ann. of Math. (2)}, 160(3):907--1011, 2004.

\bibitem[Cas02]{MR1955577}
Guy Casale.
\newblock Suites de {G}odbillon-{V}ey et int\'{e}grales premi\`eres.
\newblock {\em C. R. Math. Acad. Sci. Paris}, 335(12):1003--1006, 2002.

\bibitem[Cas06]{MR2325017}
Guy Casale.
\newblock Enveloppe galoisienne d'une application rationnelle de {$\mathbb
  P^1$}.
\newblock {\em Publ. Mat.}, 50(1):191--202, 2006.

\bibitem[Cer99]{MR1760842}
Dominique Cerveau.
\newblock Feuilletages holomorphes de codimension 1. {R}\'{e}duction des
  singularit\'{e}s en petites dimensions et applications.
\newblock In {\em Dynamique et g\'{e}om\'{e}trie complexes ({L}yon, 1997)},
  volume~8 of {\em Panor. Synth\`eses}, pages ix, xi, 11--47. Soc. Math.
  France, Paris, 1999.

\bibitem[CF03]{MR1998612}
Serge Cantat and Charles Favre.
\newblock Sym\'{e}tries birationnelles des surfaces feuillet\'{e}es.
\newblock {\em J. Reine Angew. Math.}, 561:199--235, 2003.

\bibitem[CLNL{\etalchar{+}}07]{MR2324555}
Dominique Cerveau, Alcides Lins-Neto, Frank Loray, Jorge~Vit\'{o}rio Pereira,
  and Fr\'{e}d\'{e}ric Touzet.
\newblock Complex codimension one singular foliations and {G}odbillon-{V}ey
  sequences.
\newblock {\em Mosc. Math. J.}, 7(1):21--54, 166, 2007.

\bibitem[Cor88]{MR965220}
Kevin Corlette.
\newblock Flat {$G$}-bundles with canonical metrics.
\newblock {\em J. Differential Geom.}, 28(3):361--382, 1988.

\bibitem[CP14]{MR3294560}
Ga\"{e}l Cousin and Jorge~Vit\'{o}rio Pereira.
\newblock Transversely affine foliations on projective manifolds.
\newblock {\em Math. Res. Lett.}, 21(5):985--1014, 2014.

\bibitem[CS08]{MR2457528}
Kevin Corlette and Carlos Simpson.
\newblock On the classification of rank-two representations of quasiprojective
  fundamental groups.
\newblock {\em Compos. Math.}, 144(5):1271--1331, 2008.

\bibitem[Del70]{MR0417174}
Pierre Deligne.
\newblock {\em \'{E}quations diff\'{e}rentielles \`a points singuliers
  r\'{e}guliers}.
\newblock Lecture Notes in Mathematics, Vol. 163. Springer-Verlag, Berlin-New
  York, 1970.

\bibitem[DF01]{MR1867314}
J.~Diller and C.~Favre.
\newblock Dynamics of bimeromorphic maps of surfaces.
\newblock {\em Amer. J. Math.}, 123(6):1135--1169, 2001.

\bibitem[DS02]{MR1931758}
Tien-Cuong Dinh and Nessim Sibony.
\newblock Sur les endomorphismes holomorphes permutables de {${\mathbb P}^k$}.
\newblock {\em Math. Ann.}, 324(1):33--70, 2002.

\bibitem[FD15]{MR3302577}
M.~Fern\'{a}ndez~Duque.
\newblock Elimination of resonances in codimension one foliations.
\newblock {\em Publ. Mat.}, 59(1):75--97, 2015.

\bibitem[FP11]{MR2818727}
Charles Favre and Jorge~Vit\'{o}rio Pereira.
\newblock Foliations invariant by rational maps.
\newblock {\em Math. Z.}, 268(3-4):753--770, 2011.

\bibitem[Fri98]{MR1600388}
Robert Friedman.
\newblock {\em Algebraic surfaces and holomorphic vector bundles}.
\newblock Universitext. Springer-Verlag, New York, 1998.

\bibitem[Fuj15]{Fujino2015ONQM}
Osamu Fujino.
\newblock On quasi-albanese maps.
\newblock 2015.

\bibitem[GKKP11]{MR2854859}
Daniel Greb, Stefan Kebekus, S\'{a}ndor~J. Kov\'{a}cs, and Thomas Peternell.
\newblock Differential forms on log canonical spaces.
\newblock {\em Publ. Math. Inst. Hautes \'{E}tudes Sci.}, (114):87--169, 2011.

\bibitem[GM89]{MR1017286}
Xavier G\'{o}mez-Mont.
\newblock Integrals for holomorphic foliations with singularities having all
  leaves compact.
\newblock {\em Ann. Inst. Fourier (Grenoble)}, 39(2):451--458, 1989.

\bibitem[God91]{MR1120547}
Claude Godbillon.
\newblock {\em Feuilletages}, volume~98 of {\em Progress in Mathematics}.
\newblock Birkh\"auser Verlag, Basel, 1991.
\newblock \'Etudes g\'eom\'etriques. [Geometric studies], With a preface by G.
  Reeb.

\bibitem[GPR94]{MR1326617}
H.~Grauert, Th. Peternell, and R.~Remmert, editors.
\newblock {\em Several complex variables. {VII}}, volume~74 of {\em
  Encyclopaedia of Mathematical Sciences}.
\newblock Springer-Verlag, Berlin, 1994.
\newblock Sheaf-theoretical methods in complex analysis, A reprint of {{\i}t
  Current problems in mathematics. Fundamental directions. Vol. 74} (Russian),
  Vseross. Inst. Nauchn. i Tekhn. Inform. (VINITI), Moscow.

\bibitem[GS19]{MR3937327}
Dragos Ghioca and Matthew Satriano.
\newblock Density of orbits of dominant regular self-maps of semiabelian
  varieties.
\newblock {\em Trans. Amer. Math. Soc.}, 371(9):6341--6358, 2019.

\bibitem[Gue10]{MR2932434}
Vincent Guedj.
\newblock Propri\'{e}t\'{e}s ergodiques des applications rationnelles.
\newblock In {\em Quelques aspects des syst\`emes dynamiques polynomiaux},
  volume~30 of {\em Panor. Synth\`eses}, pages 97--202. Soc. Math. France,
  Paris, 2010.

\bibitem[Ino74]{Ino}
Masahisa Inoue.
\newblock On surfaces of {C}lass {${\rm VII}_{0}$}.
\newblock {\em Invent. Math.}, 24:269--310, 1974.

\bibitem[Kau18]{MR3784253}
Lucas Kaufmann.
\newblock Commuting pairs of endomorphisms of {$\mathbb P^2$}.
\newblock {\em Ergodic Theory Dynam. Systems}, 38(3):1025--1047, 2018.

\bibitem[Kaw81]{MR622451}
Yujiro Kawamata.
\newblock Characterization of abelian varieties.
\newblock {\em Compositio Math.}, 43(2):253--276, 1981.

\bibitem[Kol96]{MR1440180}
J\'{a}nos Koll\'{a}r.
\newblock {\em Rational curves on algebraic varieties}, volume~32 of {\em
  Ergebnisse der Mathematik und ihrer Grenzgebiete. 3. Folge. A Series of
  Modern Surveys in Mathematics [Results in Mathematics and Related Areas. 3rd
  Series. A Series of Modern Surveys in Mathematics]}.
\newblock Springer-Verlag, Berlin, 1996.

\bibitem[Laz04]{MR2095471}
Robert Lazarsfeld.
\newblock {\em Positivity in algebraic geometry. {I}}, volume~48 of {\em
  Ergebnisse der Mathematik und ihrer Grenzgebiete. 3. Folge. A Series of
  Modern Surveys in Mathematics [Results in Mathematics and Related Areas. 3rd
  Series. A Series of Modern Surveys in Mathematics]}.
\newblock Springer-Verlag, Berlin, 2004.
\newblock Classical setting: line bundles and linear series.

\bibitem[LP07]{MR2337401}
Frank Loray and Jorge~Vit\'{o}rio Pereira.
\newblock Transversely projective foliations on surfaces: existence of minimal
  form and prescription of monodromy.
\newblock {\em Internat. J. Math.}, 18(6):723--747, 2007.

\bibitem[LPT16]{MR3522824}
Frank Loray, Jorge~Vit\'{o}rio Pereira, and Fr\'{e}d\'{e}ric Touzet.
\newblock Representations of quasi-projective groups, flat connections and
  transversely projective foliations.
\newblock {\em J. \'{E}c. polytech. Math.}, 3:263--308, 2016.

\bibitem[LPT17]{2017arXiv171208330L}
Frank {Loray}, Jorge {Pereira}, and Frédéric {Touzet}.
\newblock {Deformation of Rational Curves Along Foliations}.
\newblock {\em arXiv e-prints}, page arXiv:1712.08330, Dec 2017.

\bibitem[LPT18]{MR3842065}
Frank Loray, Jorge~Vit\'{o}rio Pereira, and Fr\'{e}d\'{e}ric Touzet.
\newblock Singular foliations with trivial canonical class.
\newblock {\em Invent. Math.}, 213(3):1327--1380, 2018.

\bibitem[Mal10]{malgrange:hal-00469778}
Bernard Malgrange.
\newblock {Pseudogroupes de Lie et th{\'e}orie de Galois diff{\'e}rentielle}.
\newblock working paper or preprint, 2010.

\bibitem[MP13]{MR3128985}
Michael McQuillan and Daniel Panazzolo.
\newblock Almost \'{e}tale resolution of foliations.
\newblock {\em J. Differential Geom.}, 95(2):279--319, 2013.

\bibitem[OT05]{OT}
Karl Oeljeklaus and Matei Toma.
\newblock Non-{K}\"{a}hler compact complex manifolds associated to number
  fields.
\newblock {\em Ann. Inst. Fourier (Grenoble)}, 55(1):161--171, 2005.

\bibitem[PS02]{MR1957664}
Jorge~Vit\'{o}rio Pereira and Percy~Fern\'{a}ndez S\'{a}nchez.
\newblock Transformation groups of holomorphic foliations.
\newblock {\em Comm. Anal. Geom.}, 10(5):1115--1123, 2002.

\bibitem[PS19]{MR3957403}
Jorge~Vit\'{o}rio Pereira and Roberto Svaldi.
\newblock Effective algebraic integration in bounded genus.
\newblock {\em Algebr. Geom.}, 6(4):454--485, 2019.

\bibitem[Voj96]{MR1408559}
Paul Vojta.
\newblock Integral points on subvarieties of semiabelian varieties. {I}.
\newblock {\em Invent. Math.}, 126(1):133--181, 1996.

\end{thebibliography}
\bibliographystyle{alpha}

\end{document}